\newtheorem{theorem}{Theorem}
\numberwithin{theorem}{section}
\newtheorem{lemma}[theorem]{Lemma}
\newtheorem{corollary}[theorem]{Corollary}
\newtheorem{remark}{Remark}
\numberwithin{remark}{section}
\newtheorem{example}{Example}
\numberwithin{example}{section}
\newcommand\C{\mathbb C}
\newcommand\N{\mathbb N}
\newcommand\Z{\mathbb Z}
\newcommand\Aa{\mathcal A}
\newcommand\Bb{\mathcal B}
\newcommand\Ff{\mathcal F}
\newcommand\Gg{\mathcal G}
\newcommand\Ii{\mathcal I}
\newcommand\Jj{\mathcal J}
\newcommand\Mm{\mathcal M}
\newcommand\Oo{\mathcal O}
\newcommand\Pp{\mathcal P}
\newcommand\Qq{\mathcal Q}
\newcommand\Ss{\mathcal S}
\newcommand\id{\text{id}}
\newcommand\Supp{\text{Supp}}
\newcommand\Forests{\text{Forests}}
\newcommand\Graphs{\text{Graphs}}
\newcommand\RTrees{\text{RootedTrees}}
\newcommand\Set{\text{Set}}
\newcommand\Trees{\text{Trees}}
\newcommand\inv{^{-1}}
\newcommand\qi{\simeq_q}
\DeclareMathOperator\Adj{Adj}
\DeclareMathOperator\Aut{Aut}
\DeclareMathOperator\aut{aut}
\DeclareMathOperator\co{co}
\DeclareMathOperator\Diag{Diag}
\DeclareMathOperator\Epi{Epi}
\DeclareMathOperator\epi{epi}
\DeclareMathOperator\Hom{Hom}
\DeclareMathOperator\Jor{Jor}
\DeclareMathOperator\Mon{Mon}
\DeclareMathOperator\mon{mon}
\DeclareMathOperator\Qu{Qu}
\DeclareMathOperator\Quo{Quo}
\DeclareMathOperator\quo{quo}
\DeclarePairedDelimiter{\abs}{\lvert}{\rvert}
\DeclarePairedDelimiter{\norm}{\lVert}{\rVert}
\let\oldabs\abs
\def\abs{\@ifstar{\oldabs}{\oldabs*}}
\let\oldnorm\norm
\def\norm{\@ifstar{\oldnorm}{\oldnorm*}}
\let\oldbracket\bracket
\def\bracket{\@ifstar{\oldbracket}{\oldbracket*}}
\let\oldpare\pare
\def\pare{\@ifstar{\oldpare}{\oldpare*}}
\title{Quantum properties of $\Ff$-cographs}
\author{Paul Meunier\textsuperscript{*}}\thanks{\textsuperscript{*}Research primarily supported by the ENS Lyon, France, and partially supported by Methusalem grant METH/21/03 of the Flemish Government and by PhD grant 11PAL24N funded by the Research Foundation Flanders (FWO)}
\address{Mathematics department, KU Leuven}
\date\today
\begin{document}

\begin{abstract}
	We initiate a systematic study of quantum properties of finite graphs, namely, quantum asymmetry, quantum symmetry, and quantum isomorphism. 
	We define the Schmidt alternative for a class of graphs, which reveals to be a useful tool for studying quantum symmetries of graphs. 
	After showing that quantum isomorphic graphs have quantum isomorphic centers and connected components, we solve the aforementioned problems for the classes of cographs and forests. 
	We also compute their quantum automorphism groups for the first time. 
	In doing so, we extend to the noncommutative setting a theorem of Jordan. 
	Using general results on $\Ff$-cographs, we extend the precedent results to $\Gg_5$-cographs and tree-cographs, two distinct strictly proper superclasses of cographs and forests respectively. 
	Finally, we show that quantum isomorphic planar graphs are isomorphic.
\end{abstract}

\maketitle

\addtocontents{toc}{\protect\setcounter{tocdepth}{1}}
\tableofcontents

\section{Introduction}

Noncommutative combinatorics is the piece of land in the noncommutative world hosting combinatorics. 
First manifestations of noncommutative combinatorics include using combinatorial techniques for operator algebras: one can think of free probability theory for von Neumann algebras, or graph $C^*$-algebras and the Eliott program, or more recently the study of easy quantum groups through combinatorial categories. 
While until recently this aspect was more developped, already at the birth of noncommutative geometry Alain Connes noticed that combinatorial objects had noncommutative structure (see the noncommutative insight on Penrose tilings in~\cite{ConnesBook}, Chapter 2, Section 3). 
In a recent work~\cite{ManRob}, Laura Man\v{c}inska and David Roberson shed light on purely graph theoretical notions going back to a work of L\'aszl\'o Lovász in 1967~\cite{LovOpStruct} by using the theory of quantum groups. 
While interactions go both ways, noncommutative combinatorics can be seen not only as two theories talking to each other but also as the unveiling of noncommutative structures hidden in combinatorial objects or the exploration of the combinatorial organisation of noncommutative spaces: and this is only the beginning.

The present work settles in the field of quantum graph theory. 
We emphasize that by this we mean the study of quantum properties of graphs, and not the study of the operator systems known as ``noncommutative graphs'' or sometimes ``quantum graphs.'' 
We are confident in the use of the adjective ``quantum'' since the properties of graphs studied are really the quantum analogue of the classical ones, in the physical sense initially described by the Bell inequalities~\cite{Bell}. 
The study of quantum symmetries of combinatorial objects can be traced back to the following question of Connes, who, according to Shuzhou Wang~\cite{Wang}, asked at the Les Houches Summer School of 1995:
\begin{center}
	\textit{what is the quantum automorphism group of a space?}
\end{center}
Wang initiated an answer to this general question in 1998~\cite{Wang} by introducing the \textit{quantum permutation group} $S_n^+$ and showing it is suited for being the quantum automorphism group of a set of $n$ points (here $n \in \N$). 
Following this definition, Julien Bichon introduced in 2003~\cite{Bichon2003} a first definition for the \textit{quantum automorphism group of a graph}. 
It is worth noting that his definition is slightly different from the modern one we use here (coming from the work of Teodor Banica~\cite{banica2005quantum}). 
Given a (finite simple) graph $G$, one obtains a compact matrix quantum pseudogroup $\Qu(G)$, which is a quantum subgroup of $S_n^+$, where $n\geq 1$ is the number of vertices of~$G$. 

During two decades, studies of quantum automorphism groups of finite graphs have been on the sideroad of the study of quantum groups and not much is known about them. 
The main line of results has been to look at specific graphs, often very regular (mostly Cayley graphs) and compute their quantum automorphism group. 
The field gained recent attraction thanks to results of Lupini, Man\v{c}inska, and Roberson, who in~\cite{LupManRob} incorporated notions from algebraic graph theory and from quantum information theory to unify some notions from across the literature, in particular, the notion of \textit{quantum isomorphism}. 
Following this, the breakthrough work of Man\v{c}inska and Roberson~\cite{ManRob} answers deep and purely combinatorial questions using heavily the quantum group machinery, and provides an operator-algebra free description of quantum isomorphism: they obtain that two graphs $G$ and $H$ are quantum isomorphic if and only if they have the same number of morphisms from every planar graph. 

In the present work, we aim at initiating a systematic study of quantum properties of graphs. 
This goes along with the following ideas:
\begin{enumerate}
	\item rather than confining the study to very regular graphs, we want to consider arbitrary graphs, and use graph-theoretical results to decompose them (see Theorem~\ref{thm:cographs} or Theorem~\ref{thm:decomposition_F_cographs} for instance, as well as the inductive decomposition of rooted trees presented in Section~\ref{sec:forests}),
	\item rather than directly trying to compute the quantum automorphism group of a graph, we are investigating more general properties, such as quantum asymmetry, quantum symmetry, and quantum isomorphism,
	\item finally, following an approach common in structural graph theory, rather than focusing on particular graphs, we shift the focus to study the properties of graph classes. 
\end{enumerate}

Regarding the second point, these properties are directly related to the underlying $C^*$-algebras of the quantum groups, about which still very little is known. 
Not only does understanding the $C^*$-algebra seems a necessary first step into understanding the quantum group structure of the quantum automorphism group of a graph, it already allows us to compute the quantum automorphism groups in some cases. 

We systematise the study of quantum asymmetry, quantum symmetry, and quantum isomorphism for graph classes by introducing the following properties for a class of graphs $\Ff$ (the technical terms will be defined later, see Section~\ref{sec:preli}):
\begin{itemize}
	\item[(QA)]\label{ax:quantum_asymmetry} (quantum asymmetry) for $G\in \Ff$, $\Aut(G)$ is trivial if and only if $\Qu(G)$ is trivial,
	\item[(QI)]\label{ax:quantum_iso} (quantum isomorphism) for $G, H\in \Ff$, we have that $G$ is quantum isomorphic to $H$ if and only if $G$ is isomorphic to $H$,
	\item[(SA)]\label{ax:schmidt_alternative} (Schmidt alternative\footnote{The name is inspired by the Tits alternative in group theory.}) for $G\in \Ff$, the graph $G$ has quantum symmetry if and only if it satisfies the Schmidt criterion.
\end{itemize}

We introduce the Schmidt alternative as it appears to be a useful tool to solve the quantum symmetry problem for graph classes. 
We call a class satisfying all three axioms (QA), (QI), and (SA) a \textit{tractable} class. 
We prove that the classes of cographs and forests are tractable, as well as the class of $\Ff$-cographs for every tractable class $\Ff$. 
This allows us to obtain the quantum automorphism groups of the graphs in the class.\\

In Section~\ref{sec:preli}, we recall elementary notions of graph theory and quantum group theory, and gather general results that will be useful in the rest of the article. 
In particular, we give a short new proof that connectedness is preserved under quantum isomorphism, and we generalise the criterion of Schmidt for quantum symmetry. 
In Section~\ref{sec:qi_sums}, we prove structural results about quantum isomorphism. 
We show that quantum isomorphic graphs have quantum isomorphic centers (as a consequence of a more general result, see Theorem~\ref{thm:qi_center}), and show that quantum isomorphic graphs have quantum isomorphic connected components. 
In Section~\ref{sec:cographs}, we compute our first example by studying the class of cographs. 
In Section~\ref{sec:tractable}, we obtain general results regarding the axioms (QA), (SA), (QI), and a strengthening of the latter called superrigitiy, which allow us to study generally the class of $\Ff$-cographs when $\Ff$ is a class of graphs with certain stability properties. 
This also allows us to compute the quantum automorphism groups of $\Ff$-cographs as a function of the quantum automorphism groups of the graphs in $\Ff$. 
As an example, we show that the class of $\Gg_5$-cographs is tractable and superrigid and we compute the quantum automorphism groups of its graphs. 
In Section~\ref{sec:forests}, we study the quantum properties of forests. 
We show that they form a tractable and superrigid class of graphs and obtain for the first time their quantum automorphism groups; they are exactly the quantum groups obtained from the trivial one by taking free products and wreath products with a quantum permutation group. 
This generalises to the noncommutative setting a theorem of Jordan~\cite{jordan1869assemblages}. 
These results immediately extend to tree-cographs thanks to the results of Section~\ref{sec:tractable}. 
In Appendix~\ref{app:lovasz}, we give a modern exposition of results of Lovász~\cite{LovOpStruct} and adapt them to the notion of $\Ff$-isomorphism introduced by Mančinska and Roberson in~\cite{ManRob}. 
This allows us to prove that quantum isomorphic planar graphs are isomorphic. 

We emphasize that the computational results shall not be seen as the aim of the article but rather as the illustration of our techniques and general approach. 
Indeed, the door is now open to follow a similar approach for many classes of graphs, in order to obtain a more systematic understanding of quantum properties of graphs.

Most of the results of the present article (including the results for cographs, forests, and the computation of their quantum automorphism groups) were obtained by the author as part of his studies for the diploma of ENS Lyon, France, in the summer of 2022 (see a related presentation~\cite{MeIHES}). 
While we were completing the writing of this article, we learned that the results concerning the quantum automorphism groups of trees were recently obtained independently by De Bruyn, Kar, Roberson, Schmidt, and Zeman in the preprint~\cite{de2023quantum}. 

\section{Preliminaries}\label{sec:preli}

We recall the theoretical background needed for the article, and introduce the terminology and the notations that will be used in the rest of the article. 
In subsection~\ref{subsec:graph_intro}, we recall basics of graph theory. 
In subsection~\ref{subsec:qu_G}, we recall the definition of the quantum automorphism group of a graph. 
In subsection~\ref{subsec:qu_symm_schmidt}, we introduce the Schmidt alternative, and generalise a result by Schmidt (see Theorem~\ref{thm:qu_schmidt}). 
In subsection~\ref{subsec:qu_iso}, we recall the notion of quantum isomorphism, and prove folklore results. 
We point out that while these results and their proofs seem well-known, there does not always seem to be an explicit proof in the literature, so this might be the first time they are written down (we think of Lemma~\ref{lem:transitivity_qi} in particular). 
Finally, in subsection~\ref{subsec:metric_qu}, we gather useful rigidity results about magic unitaries, and prove a new result (Lemma~\ref{lem:hall_mu}).

\subsection{Succinct reminder of graph theory}\label{subsec:graph_intro}

For the standard definitions and notations of graph theory that we do not introduce here, we refer the reader to~\cite{BondyMurty}. 

A \textit{graph} is a pair $G = (V(G),E(G))$ where $V(G)$ is a finite set and $E(G) \subset \Pp(V(G))$ is such that for any $e \in E(G)$ we have $\# e = 2$. 
The elements of $V(G)$ are called the \textit{vertices} of $G$ and the elements of $E(G)$ are called the \textit{edges} of $G$. 
This definition is sometimes referred to in the literature as the one of \textit{finite simple graphs}, but since all of our graphs will be finite and simple, we will simply say graph. 

A \textit{graph morphism} $f \colon G \to H$ is a function $f \colon V(G) \to V(H)$ such that if $xy \in E(G)$ then $f(x)f(y) \in E(H)$. 
It is easy to check that graphs and graph morphisms satisfy the axioms of a category (with usual composition and identities). 
Two graphs $G$ and $H$ are \textit{isomorphic} if they are in this category, that is, if there exist two graph morphisms $f \colon G\to H$ and $g \colon H\to G$ such that $fg = 1_H$ and $gf = 1_G$.

\begin{remark}\label{rk:iso_graphs}
	Since we work up to isomorphism, it is common practice to identify isomorphic graphs. 
	In particular, every graph is isomorphic to a graph whose vertex set is of the form $\{1,\ldots,n\}$, 
	So by a graph we will often mean the isomorphism class of graphs with vertex set $\{1,\ldots,n\}$. 
	This also allows us to talk about the set of all graphs.
\end{remark}

Let $G$ be a graph. 
If $\{x,y\} \in E(G)$, we say that $x$ and $y$ are \textit{neighbors} and denote it by $x\sim_G y$, or simply $x\sim y$ when the graph is clear from the context. 
For $x\in V(G)$, the set of all its neighbors is called the \textit{neighborhood} of $x$ and is denoted by $N_G(x)$, or simply $N(x)$. 
The \textit{degree} of $x$ is the number of its neighbors, we denote it by $deg_G(x)$ or $deg(x)$. 

If $H$ is a graph such that $V(H) \subset V(G)$ and $E(H) \subset E(G)$, then $H$ is called a \textit{subgraph} of $G$. 
If moreover we have that $E(H) = E(G) \cap \Pp(V(H))$, then $H$ is called an \textit{induced subgraph} of $G$. 
Given $X\subset V(G)$, there is exactly one induced subgraph of $G$ with vertex set $X$, we denote it by $G[X]$. 

A \textit{class} of graphs is a subset of the set of all graphs (see Remark~\ref{rk:iso_graphs}). 
It is said to be \textit{hereditary} if it is closed under taking induced subgraphs. 

A set $S\subset V(G)$ is called a \textit{stable} set if the induced subgraph $G[S]$ has no edges. 
A graph $G$ is a \textit{bipartite graph} if its vertex set can be partitionned into two stable sets.

A \textit{matching} in a graph $G$ is a subset $R \subset E(G)$ such that its elements are two-by-two disjoint. 
Given $X \subset V(G)$, a matching $R$ is said to be \textit{saturating $X$ } if $X \subset \bigcup R$. 

\begin{theorem}[Hall's marriage theorem]\label{thm:hall}
	Let $G = (A\sqcup B, E)$ be a bipartite graph. 
	Then $G$ admits a matching saturating $A$ if and only if it satisfies the following condition:
	\begin{center}\label{hall_condition}
		\begin{itemize}
			\item[(H)] for every subset $S \subset A$, we have $\abs{S} \leq \abs{N(S)}$
		\end{itemize}
	\end{center}
	where $N(S) = \bigcup_{x\in S} N(x)$. 
\end{theorem}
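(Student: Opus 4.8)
The necessity of condition (H) is immediate: if $R$ is a matching saturating $A$, then $R$ defines an injection $A \to B$ sending each $a \in A$ to its partner, which lies in $N(a)$; hence for $S \subseteq A$ the partners of the elements of $S$ are $\abs{S}$ distinct vertices of $N(S)$, giving $\abs{S} \leq \abs{N(S)}$. The substance is the converse, which I would prove by induction on $\abs{A}$. The case $\abs{A} \leq 1$ is trivial: if $A = \{a\}$ then (H) forces $N(a) \neq \emptyset$, and any edge at $a$ is the desired matching.

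For the inductive step I would split into two cases according to whether (H) is ever tight on a proper nonempty subset. First, suppose $\abs{N(S)} \geq \abs{S} + 1$ for every nonempty proper $S \subsetneq A$. Choose any $a \in A$ and any neighbor $b \in N(a)$ (which exists by (H)), and pass to $G' := G[(A \setminus \{a\}) \sqcup (B \setminus \{b\})]$. Deleting the single vertex $b$ decreases each neighborhood by at most one, so the slack assumption guarantees that $G'$ still satisfies (H); by induction $G'$ has a matching saturating $A \setminus \{a\}$, and adjoining the edge $ab$ finishes this case. Otherwise there is a nonempty $S_0 \subsetneq A$ with $\abs{N(S_0)} = \abs{S_0}$. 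The induced subgraph $G[S_0 \sqcup N(S_0)]$ inherits (H) and has strictly fewer vertices in $A$, so by induction it admits a matching $R_1$ saturating $S_0$. For the complementary subgraph $G[(A \setminus S_0) \sqcup (B \setminus N(S_0))]$, one checks that for $T \subseteq A \setminus S_0$ the neighborhood computed there is $N(T) \setminus N(S_0)$, whose size equals $\abs{N(T \cup S_0)} - \abs{N(S_0)} \geq \abs{T \cup S_0} - \abs{S_0} = \abs{T}$ by (H) applied to $T \cup S_0$ together with $\abs{N(S_0)} = \abs{S_0}$; so this subgraph also satisfies (H) and, by induction, has a matching $R_2$ saturating $A \setminus S_0$. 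Since the two subgraphs are vertex-disjoint, $R_1 \cup R_2$ is a matching of $G$ saturating $A$.

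The only delicate point is verifying that the Hall condition is inherited by the complementary subgraph in the second case; this is precisely where the tightness $\abs{N(S_0)} = \abs{S_0}$ is used, as it is what converts the inequality available for $T \cup S_0$ into the one needed for $T$. Everything else — the base case, the first case, and the disjointness of the two glued matchings — is routine bookkeeping.
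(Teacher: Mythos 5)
Your proof is correct: the necessity argument is fine, and the sufficiency argument is the classical Halmos--Vaughan induction, with the two cases (strict slack on every proper nonempty subset versus a tight set $S_0$) handled properly. In particular, the two points that usually need care are both verified correctly: in the first case, deleting one vertex of $B$ lowers each $\abs{N(S)}$ by at most one, so the slack hypothesis keeps (H) valid; in the second case, the identity $\abs{N(T)\setminus N(S_0)} = \abs{N(T\cup S_0)} - \abs{N(S_0)}$ together with tightness of $S_0$ yields (H) for the complementary subgraph, and the two matchings live on disjoint vertex sets, so their union saturates $A$. Note that the paper itself does not prove this statement: Hall's marriage theorem is recalled there as a standard result (it is only used as an input, e.g.\ in Lemma~\ref{lem:hall_mu}), so there is no proof in the paper to compare against; your argument is the standard textbook one and is complete.
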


Let $G$ be a graph on $n$ vertices. 
We define the following matrix, known as the \textit{adjacency matrix} of $G$, as the characteristic function of $E(G)$: that is, writing $\Adj(G) = A$, we have $A = (a_{xy})_{x,y\in V(G)}$, with $a_{xy} =1$ if $xy \in E(G)$ and 0 otherwise. 
Notice that the adjacency matrix of a graph is a self-adjoint matrix.

Given an integer $k\geq 1$, a \textit{walk} of length $k$ in $G$ is a sequence $x_1,\ldots,x_k \in V(G)$ of vertices in $G$ such that $x_i \sim_G x_{i+1}$ for all $1\leq i\leq k-1$. 
A \textit{path} is an injective walk. 
Given $x$ and $y\in V(G)$, we denote by $d_G(x,y)$, or sometimes $d(x,y)$, the minimum of the length of a walk in $G$ from $x$ to $y$. 
Notice this quantity can be infinite when $x$ and $y$ are not in the same connected components of $G$. 
It is easy to see that $d_G(\cdot,\cdot)$ defines a metric on $V(G)$, sometimes referred to as the \textit{graph metric} on $G$.

The following results are elementary graph theory results and we recall them without proof.

\begin{lemma}\label{lem:power_adja}
	Let $k\geq 1$ and fix $x$, $y\in V(G)$. 
	Then $[\Adj(G)^k]_{xy}$ is the number of walks of length $k$ from $x$ to $y$. 
	In particular, $[\Adj(G)^2]_{xx} = deg(x)$. 
\end{lemma}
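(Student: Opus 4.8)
The plan is to argue by induction on $k$, exploiting the recursive structure of walks. For the base case $k = 1$ I would simply note that a walk of length $1$ from $x$ to $y$ is an edge joining $x$ and $y$, so the number of such walks equals $a_{xy} = [\Adj(G)]_{xy}$ by the very definition of the adjacency matrix.

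For the inductive step, assume the statement holds for some $k \geq 1$ and write $A = \Adj(G)$. Matrix multiplication gives
\[
	[A^{k+1}]_{xy} = \sum_{z \in V(G)} [A^k]_{xz}\, a_{zy},
\]
and the key point is that every walk of length $k+1$ from $x$ to $y$ is obtained in exactly one way by concatenating a walk of length $k$ from $x$ to some neighbor $z$ of $y$ with the edge $zy$, and conversely every such choice produces a walk of length $k+1$ from $x$ to $y$. Since by the induction hypothesis $[A^k]_{xz}$ counts the walks of length $k$ from $x$ to $z$, and $a_{zy}$ is $1$ exactly when $zy \in E(G)$ and $0$ otherwise, the right-hand side counts precisely the walks of length $k+1$ from $x$ to $y$, which closes the induction.

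For the last assertion I would specialise to $k = 2$ and $y = x$: a walk of length $2$ from $x$ to $x$ has the form $x, z, x$ with $z \sim_G x$, so it amounts to the choice of a neighbor of $x$, whence $[A^2]_{xx} = \# N(x) = deg(x)$.

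Since the whole argument is a routine induction, I do not expect any genuine obstacle; the only point requiring a little care is to keep the convention for the length of a walk consistent with the one fixed above, which is exactly where the base case is pinned down.
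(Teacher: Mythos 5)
Your induction is correct and is the standard argument; the paper in fact states this lemma without proof (it is listed among elementary results recalled without proof), so there is nothing to compare against beyond noting that your argument is exactly the expected one. One remark on the point you yourself flag: the paper's stated definition of a walk of length $k$ (a sequence of $k$ vertices $x_1,\ldots,x_k$ with $x_i\sim x_{i+1}$) counts vertices rather than edges, and under that literal convention the lemma would be off by one; your base case quietly adopts the edge-counting convention (a walk of length $1$ is a single edge), which is the convention under which the lemma and the consequence $[\Adj(G)^2]_{xx}=\deg(x)$ are true, and is clearly what the author intends elsewhere (e.g.\ in the proof of Lemma~\ref{lem:distance}). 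So your proof is fine; just state explicitly that length means number of edges.
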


The \textit{complement} of a graph $G$ is the graph $G^c$ with $V(G^c) = V(G)$ and such that for every $x$, $y\in V(G)$ with $x\neq y$, we have $xy \in E(G^c)$ if and only if $xy \notin E(G)$. 
It is easy to see that $\Adj(G^c) = J_n - \Adj(G) - I_n$, where $J_n$ is the matrix with all coefficients equal to 1. 

\begin{lemma}\label{lem:complement}
	If $G$ is disconnected, then $G^c$ has diameter at most 2. 
	In particular, $G^c$ is connected.
\end{lemma}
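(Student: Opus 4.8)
The plan is to prove the quantitative statement $\diam(G^c)\le 2$ directly, from which connectedness of $G^c$ follows immediately, since finiteness of the diameter forces every pair of vertices to be joined by a walk. So fix two distinct vertices $x,y\in V(G^c)=V(G)$; I want to exhibit a walk of length at most $2$ from $x$ to $y$ in $G^c$, and I split into two cases according to whether $x$ and $y$ lie in the same connected component of $G$.

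If $x$ and $y$ lie in different components of $G$, then $xy\notin E(G)$, hence $xy\in E(G^c)$ by the definition of the complement, so $d_{G^c}(x,y)=1$. If instead $x$ and $y$ lie in the same component $C$ of $G$, I use the hypothesis that $G$ is disconnected: there is some component $C'\neq C$, and for any vertex $z\in C'$ neither $xz$ nor $yz$ belongs to $E(G)$ (as $x,y\in C$ and $z\in C'$ lie in distinct components), so both $xz$ and $yz$ belong to $E(G^c)$. Then $x,z,y$ is a walk of length $2$ in $G^c$, whence $d_{G^c}(x,y)\le 2$. In both cases $d_{G^c}(x,y)\le 2$; since $x,y$ were arbitrary, $\diam(G^c)\le 2$, and in particular $G^c$ is connected.

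There is no genuine obstacle here: the only point worth flagging is that the disconnectedness hypothesis is used exactly once, and precisely where it is needed, namely to guarantee the existence of the ``pivot'' vertex $z$ sitting in a component of $G$ different from the common component of $x$ and $y$; everything else is just the definition of $G^c$ and of the graph metric. One should also note that $G$ being disconnected forces $V(G)$ to have at least two elements, so the statement is not vacuous.
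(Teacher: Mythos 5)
Your proof is correct and complete: the case split on whether $x$ and $y$ lie in the same component of $G$, with the pivot vertex $z$ from a different component handling the harder case, is exactly the standard argument for this fact. The paper itself states this lemma without proof (it is listed among the elementary graph-theory results recalled without proof), so there is nothing in the paper to compare against; your write-up supplies precisely the argument one would expect, and it uses the disconnectedness hypothesis only where it is genuinely needed.
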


We will often use the Kronecker symbol $\delta_{x,y}$ which is equal to 1 when $x=y$ and 0 otherwise, and in different contexts.

For $n\geq 1$, we denote by $S_n$ the permutation group on $n$ elements. 
For $\sigma \in S_n$, we denote by $P_\sigma$ the permutation matrix associated to it, which we recall is defined by $[P_\sigma]_{ij} = \delta_{i,\sigma(j)}$.

\begin{lemma}\label{lem:isomorphism}
	Let $G$ and $H$ be two graphs with vertex set $\{1,\ldots,n\}$ and let $\sigma \in S_n$. 
	Then $P_{\sigma\inv} \Adj(H) P_\sigma = \Adj(G)$ if and only if $\sigma$ induces an isomorphism between $G$ and $H$.
\end{lemma}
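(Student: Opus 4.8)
The plan is to convert the matrix identity into a pointwise condition on edges and then recognise that condition as the definition of an isomorphism induced by $\sigma$.

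First I would record the elementary conjugation formula for permutation matrices. From $[P_\sigma]_{ij} = \delta_{i,\sigma(j)}$ (equivalently, $P_\sigma$ sends the $j$-th standard basis vector to the $\sigma(j)$-th one, and $P_{\sigma\inv} = P_\sigma^{-1} = P_\sigma^{\mathsf T}$) one gets $[P_{\sigma\inv}]_{ik} = \delta_{i,\sigma\inv(k)} = \delta_{\sigma(i),k}$, so that for any $n\times n$ matrix $M$,
\[
  [P_{\sigma\inv} M P_\sigma]_{ij} = \sum_{k,l} \delta_{\sigma(i),k}\, M_{kl}\, \delta_{l,\sigma(j)} = M_{\sigma(i),\sigma(j)}.
\]
Applying this with $M = \Adj(H)$, the equation $P_{\sigma\inv}\Adj(H)P_\sigma = \Adj(G)$ is equivalent to $[\Adj(H)]_{\sigma(i),\sigma(j)} = [\Adj(G)]_{ij}$ for all $i,j\in\{1,\ldots,n\}$, that is, to the equivalence $ij \in E(G) \iff \sigma(i)\sigma(j) \in E(H)$ for all $i,j$. (The diagonal entries cause no trouble: edges have two distinct vertices and $\sigma$ is injective, so both sides are vacuously false there, matching the fact that adjacency matrices have zero diagonal.)

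It then remains to check that this last equivalence holds if and only if $\sigma$ induces an isomorphism $G\to H$, i.e. if and only if both $\sigma\colon V(G)\to V(H)$ and $\sigma\inv\colon V(H)\to V(G)$ are graph morphisms. If both are morphisms, then $ij\in E(G)$ forces $\sigma(i)\sigma(j)\in E(H)$, and $\sigma(i)\sigma(j)\in E(H)$ forces $ij = \sigma\inv(\sigma(i))\sigma\inv(\sigma(j))\in E(G)$, giving the equivalence. Conversely, the equivalence immediately gives that $\sigma$ is a morphism; and if $kl\in E(H)$, write $k=\sigma(i)$, $l=\sigma(j)$ using surjectivity of $\sigma$, so $ij\in E(G)$ and hence $\sigma\inv(k)\sigma\inv(l)\in E(G)$, showing $\sigma\inv$ is a morphism too. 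Since $\sigma\sigma\inv = 1_H$ and $\sigma\inv\sigma = 1_G$, this is precisely an isomorphism in the sense defined in Subsection~\ref{subsec:graph_intro}.

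There is no genuine obstacle here; the only point that requires care is bookkeeping with the convention $[P_\sigma]_{ij} = \delta_{i,\sigma(j)}$, so that the conjugation $P_{\sigma\inv}(\cdot)P_\sigma$ produces the entry $M_{\sigma(i),\sigma(j)}$ rather than $M_{\sigma\inv(i),\sigma\inv(j)}$ (otherwise one would end up proving the statement for $\sigma\inv$).
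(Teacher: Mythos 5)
Your proof is correct, and the computation $[P_{\sigma\inv} M P_\sigma]_{ij} = M_{\sigma(i),\sigma(j)}$ under the convention $[P_\sigma]_{ij} = \delta_{i,\sigma(j)}$ is exactly the standard argument; the paper states this lemma without proof as an elementary fact, and your write-up supplies precisely the verification it leaves implicit, including the correct handling of the direction of $\sigma$ and the harmless diagonal entries.
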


\begin{corollary}\label{coro:automorphism}
	Let $G$ be a graph with $V(G) = \{1,\ldots,n\}$ and let $\sigma \in S_n$. 
	Then $P_\sigma$ and $\Adj(G)$ commute if and only if $\sigma$ induces an automorphism of $G$.
\end{corollary}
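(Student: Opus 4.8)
The plan is to derive this directly from Lemma~\ref{lem:isomorphism} by specialising to the case $H = G$. Applying that lemma with $H = G$ gives at once that $P_{\sigma\inv}\Adj(G)P_\sigma = \Adj(G)$ if and only if $\sigma$ induces an isomorphism from $G$ to itself, that is, an automorphism of $G$. So the only thing left to do is to rewrite the matrix identity $P_{\sigma\inv}\Adj(G)P_\sigma = \Adj(G)$ as the commutation relation $P_\sigma\Adj(G) = \Adj(G)P_\sigma$.

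For this, I would first record that $\sigma \mapsto P_\sigma$ is a group homomorphism: starting from $[P_\sigma]_{ij} = \delta_{i,\sigma(j)}$ one computes
\[
	[P_\sigma P_\tau]_{ij} = \sum_{k} \delta_{i,\sigma(k)}\delta_{k,\tau(j)} = \delta_{i,\sigma(\tau(j))} = [P_{\sigma\tau}]_{ij},
\]
so $P_\sigma P_\tau = P_{\sigma\tau}$, and in particular $P_{\sigma\inv} = P_\sigma\inv$ (equivalently, one checks $[P_\sigma^{\top}]_{ij} = \delta_{i,\sigma\inv(j)} = [P_{\sigma\inv}]_{ij}$, so $P_\sigma$ is orthogonal and $P_\sigma\inv = P_\sigma^{\top} = P_{\sigma\inv}$). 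Then the identity $P_\sigma\inv\Adj(G)P_\sigma = \Adj(G)$ is equivalent, after left-multiplying both sides by $P_\sigma$, to $\Adj(G)P_\sigma = P_\sigma\Adj(G)$, which is exactly the assertion that $P_\sigma$ and $\Adj(G)$ commute. Chaining the two equivalences yields the corollary.

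There is essentially no genuine obstacle in this argument, since it is a direct specialisation of the preceding lemma; the only point requiring a moment's attention is the bookkeeping distinguishing $P_{\sigma\inv}$ from $P_\sigma\inv$, which is resolved by the homomorphism property above.
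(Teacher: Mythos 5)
Your proposal is correct and is exactly the intended route: the paper presents this as an immediate corollary of Lemma~\ref{lem:isomorphism} (stated without a written proof), obtained by taking $H=G$ and rewriting the conjugation identity as commutation. Your extra bookkeeping that $\sigma\mapsto P_\sigma$ is a homomorphism, so $P_{\sigma\inv}=P_\sigma\inv$, is precisely the small point needed to make that rewriting legitimate.
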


For a graph $G$, the set of automorphisms of $G$ naturally forms a group, the automorphism group of $G$, denoted by $\Aut(G)$. 
For $f\in \Aut(G)$, the \textit{support} of $f$, denoted by $\Supp(f)$, is defined by
\[ \Supp(f) = \{ x\in V(G) \mid f(x) \neq x\}. \]

$G$ is said to satisfy \textit{Schmidt's criterion} if it has two nontrivial automorphisms with disjoint support. 
This definition is due to Schmidt~\cite{SchmCrit}.

\subsection{The quantum automorphism group of a graph}\label{subsec:qu_G}

Since we will only work with compact matrix quantum groups which are subgroups of the permutation quantum groups, we follow the practice of Freslon~\cite{FreslonBook} and define them directly. 
We refer the reader to~\cite{FreslonBook} for a general reference on the theory of compact matrix quantum groups.

Recall that given an integer $n\geq 0$, we have the involutive ring of $*$-polynomials on $n$ variables $\C[x_1,\ldots,x_n,x_1^*,\ldots,x_n^*]$, whose elements are called \textit{$*$-polynomials}. 

Fix $n\in \N$ and let $x =(x_{ij})_{1\leq i,j\leq n}$ be $n^2$ variables. 
Consider the following sets of $*$-polynomials:
\begin{enumerate}
	\item $\Pp_0 = \{ x_{ij}^* - x_{ij} \mid 1\leq i,j\leq n\} \cup \{ x_{ij}^2-x_{ij} \mid 1\leq i,j\leq n\}$,
	\item $\Pp_1 = \{ \sum_{j=1}^n x_{ij} - 1 \mid 1\leq i\leq n\} \cup \{ \sum_{i=1}^n x_{ij} -1 \mid 1\leq j\leq n\}$,
	\item $\Pp_2 = \{ x_{ij}x_{ik} - \delta_k^jx_{ij} \mid 1\leq i,j,k\leq n\} \cup \{ x_{ij}x_{kj} - \delta_k^ix_{ij} \mid 1\leq i,j,k\leq n\}$.
\end{enumerate}
Let $\Pp_s = \Pp_0 \cup \Pp_1 \cup \Pp_2$. 
We define the $*$-algebra $\Aa_s(n)$ to be the universal $*$-algebra generated by $n^2$ variables with relations given by $\Pp_s$. 

\begin{remark}\label{rem:univ_c*_permutation}
	The relations $\Pp_s$ define a universal $C^*$-algebra in which $\Aa_s(n)$ embeds itself. 
	In a $C^*$-algebra, relations $\Pp_2$ would be redundant, but they might not be for a $*$-algebra. 
	At the time of the writing, we do not know of a counter-example, while thinking it should exist.
\end{remark}

The following theorem is due to Wang~\cite{Wang}. 
The present approach is a reformulation of Wang's definition of $S_n^+$.

\begin{theorem}\label{thm:wang_sn+}
	There is a $*$-morphism $\Delta \colon \Aa_s(n) \to \Aa_s(n)\otimes \Aa_s(n)$ such that $\Delta(x_{ij}) = \sum_{k=1}^n x_{ik}\otimes x_{kj}$.
\end{theorem}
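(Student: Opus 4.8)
The plan is to verify that the prescribed formula $\Delta(x_{ij}) = \sum_{k=1}^n x_{ik}\otimes x_{kj}$ respects the defining relations $\Pp_s$ of $\Aa_s(n)$, so that by the universal property of $\Aa_s(n)$ it extends (uniquely) to a $*$-morphism. Concretely, I set $y_{ij} := \sum_{k=1}^n x_{ik}\otimes x_{kj} \in \Aa_s(n)\otimes \Aa_s(n)$ and check that the family $(y_{ij})$ satisfies each relation in $\Pp_0$, $\Pp_1$, $\Pp_2$; the universal property then hands us the $*$-morphism sending $x_{ij}\mapsto y_{ij}$, which is the desired $\Delta$.

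First I would handle $\Pp_0$: self-adjointness of $y_{ij}$ is immediate since $y_{ij}^* = \sum_k x_{ik}^*\otimes x_{kj}^* = \sum_k x_{ik}\otimes x_{kj} = y_{ij}$ using $x_{ab}^* = x_{ab}$. For idempotency, $y_{ij}^2 = \sum_{k,l}(x_{ik}x_{il})\otimes(x_{kj}x_{lj})$; applying the $\Pp_2$ relations $x_{ik}x_{il} = \delta_k^l x_{ik}$ in the first leg collapses the sum to $\sum_k x_{ik}\otimes(x_{kj}x_{kj}) = \sum_k x_{ik}\otimes x_{kj} = y_{ij}$, using $x_{kj}^2 = x_{kj}$ from $\Pp_0$. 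Next, the $\Pp_1$ relations: $\sum_j y_{ij} = \sum_{j,k} x_{ik}\otimes x_{kj} = \sum_k x_{ik}\otimes\big(\sum_j x_{kj}\big) = \sum_k x_{ik}\otimes 1 = \big(\sum_k x_{ik}\big)\otimes 1 = 1\otimes 1$, and symmetrically for the column sums $\sum_i y_{ij}$. Then the $\Pp_2$ relations for the $y$'s: $y_{ij}y_{ik} = \sum_{l,m}(x_{il}x_{im})\otimes(x_{lj}x_{mk})$; the first-leg relation $x_{il}x_{im} = \delta_l^m x_{il}$ reduces this to $\sum_l x_{il}\otimes(x_{lj}x_{lk}) = \sum_l x_{il}\otimes \delta_j^k x_{lj} = \delta_j^k y_{ij}$, using the second-leg relation $x_{lj}x_{lk} = \delta_j^k x_{lj}$; the column version $y_{ij}y_{kj} = \delta_i^k y_{ij}$ is obtained analogously, contracting the second leg first.

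Having checked all relations, the universal property of $\Aa_s(n)$ (as the universal $*$-algebra on $n^2$ generators with relations $\Pp_s$) yields a unique $*$-homomorphism $\Delta\colon \Aa_s(n)\to \Aa_s(n)\otimes\Aa_s(n)$ with $\Delta(x_{ij}) = y_{ij}$, which is the claim. The only genuinely delicate point is bookkeeping: one must apply the $\Pp_2$ relations in the correct tensor leg (rows in one leg, columns in the other), and be careful that the relation $x_{ij}^2 = x_{ij}$ is what finishes the idempotency computation — there is no serious obstacle, just the need to keep the double sums and the two Kronecker contractions straight. One small subtlety worth a remark: by Remark~\ref{rem:univ_c*_permutation} the same formulas define a comultiplication on the universal $C^*$-completion, but since we only need the purely algebraic statement, the universal property of the $*$-algebra $\Aa_s(n)$ is all that is invoked here.
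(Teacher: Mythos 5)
Your proposal is correct and follows exactly the route the paper takes: Remark~\ref{rk:wang_sn+} reduces the theorem by universality to checking that $y_{ij} = \sum_k x_{ik}\otimes x_{kj}$ satisfies $\Pp_s$, and your relation-by-relation verification matches the computation the paper carries out (in the more general setting of Lemma~\ref{lem:convo_mu}). No gaps.
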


\begin{remark}\label{rk:wang_sn+}
	To prove Theorem~\ref{thm:wang_sn+}, by universality, it is enough to show that if $p = (p_{ij})_{1\leq i,j\leq n}$ satisfies relations~$\Pp_s$, then so does $(\sum_{k=1}^n p_{ik}\otimes p_{kj})_{1\leq i,j\leq n}$. 
	The latter is true and easy to check, we will use it freely.
\end{remark}

In this article, we will only work with quantum groups which are quantum subgroups of $S_n^+$ for some $n\geq 1$. 
We refer to them as \textit{quantum permutation groups}. 
They are compact quantum groups, and even quantum compact matrix pseudogroups. 
However, there is a difficulty in the literature regarding the general definition of a compact quantum group: it involves a tensor product of $C^*$-algebras. 
This can bring unnecessary analytical complications when one is dealing with algebraic notions, which in the case of quantum permutation groups case can be avoided by working at the level of $*$-algebras and sticking to the algebraic tensor product of $*$-algebras. 

Hence, following Freslon~\cite{FreslonBook}, for $n\in \N$, we call a pair $(\Aa,p)$, where $\Aa$ is a unital $*$-algebra generated by $p = (p_{ij})_{1\leq i,j\leq n}$, a \textit{quantum permutation group of size $n$} if:
\begin{enumerate}
	\item the relations $\Pp_s$ are satisfied,
	\item\label{rel:comultiplication} there is a $*$-morphism $\Delta \colon \Aa \to \Aa \otimes \Aa$ satisfying $\Delta(p_{ij}) = \sum_{k=1}^n p_{ik}\otimes p_{kj}$.
\end{enumerate}
Notice that, when it exists, the morphism $\Delta$ is unique, since the image of the generators of the $*$-algebra $\Aa$ is given. 
The morphism $\Delta$ is called the \textit{comultiplication} in $\Aa$. 
If $\Gamma = (A,p)$ is a quantum permutation group of size $n$, we call $\Aa$ its \textit{algebra of coefficients} and denote it by $\Oo(\Gamma)$. 
Because it is generated by projections (hence of bounded norm in a representation), it admits a universal $C^*$-enveloppe that we will denote by~$C(\Gamma)$. 

We will also use the same notation $C(G)$ when $G$ is a classical group to refer to the Gelfand dual of $G$, that is, the set of continuous functions from $G$ to $\C$. 
This is a compact quantum group with abelian $C^*$-algebra. 
When $G$ is finite, it fits into our context in the following way: let $n = \# G$ and label the elements of $G$ by $g_1,\ldots,g_n$. 
For $1\leq i,j\leq n$, define $f_{ij} \colon G\to \C$ by $f_{ij}(g) = \delta_{g_i,gg_j}$. 
Now let $\Aa$ be the $*$-subalgebra of $C(G)$ spanned by the $f_{ij}$. 
It is easily observed that $(\Aa,f)$, where $f = (f_{ij})$, satisfies the axioms of a quantum permutation group. 
Moreover, the comultiplication is the dual of the usual multiplication in the group. 
Thus we also denote $\Aa$ by $\Oo(G)$ in that case. 

The result of Wang can then be restated as the fact that the pair $(\Aa_s(n),p)$ is a quantum permutation group (of size $n$). 
Following the usual convention, we will denote it by $S_n^+$. 

Let $n$ and $m \in \N$ and consider $\Aa = (A,(x_{ij})_{1\leq i,j\leq n})$ and $\Bb = (B,(y_{kl})_{1\leq k,l\leq m})$ two quantum permutation groups, of size $n$ and $m$ respectively. 
Denote by $\Delta_A$ (resp. $\Delta_B$) the comultiplication in $\Aa$ (resp. in $\Bb$). 
A \textit{morphism of quantum groups} from $\Aa$ to $\Bb$ is a $*$-morphism $f \colon A \to B$ such that the following diagram commutes:
\begin{center}
	\begin{tikzcd}
		A \arrow[r, "\Delta_A"] \arrow[d, "f" swap] & A\otimes A \arrow[d, "f\otimes f"]\\
		B \arrow[r, "\Delta_B"] & B\otimes B.
	\end{tikzcd}
\end{center}
Quantum permutation groups (of arbitrary sizes) and morphisms of quantum groups satisfy the axioms of a category, allowing one to talk about isomorphism of quantum groups. 
The following lemma allows us to easily build quantum permutation groups.

\begin{lemma}\label{lem:qu_subgroups}
	Let $n\in \N$ and let $R$ be a set of $*$-polynomials in $n^2$ variables $x=(x_{ij})_{1\leq i,j\leq n}$. 
	Let $A$ be the universal $*$-algebra generated by $x$ and the relations $\Pp_s \cup R$. 
	For $1\leq i,j\leq n$, define $y_{ij} \in A\otimes A$ by $y_{ij} = \sum_{k=1}^n x_{ik}\otimes x_{kj}$, and let $y = (y_{ij})_{1\leq i,j\leq n}$. 
	If $P(y) = 0$ for all $P\in R$, then $(A,x)$ is a quantum permutation group.
\end{lemma}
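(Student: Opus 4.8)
The plan is to verify the two defining conditions of a quantum permutation group of size $n$ for the pair $(A,x)$, using the universal property of $A$.

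Condition (1) is immediate: by construction $A$ is the universal $*$-algebra generated by $x = (x_{ij})_{1\le i,j\le n}$ subject to the relations $\Pp_s \cup R$, so in particular the relations $\Pp_s$ hold in $A$. What remains is condition (2), the existence of a $*$-morphism $\Delta \colon A \to A\otimes A$ with $\Delta(x_{ij}) = \sum_{k=1}^n x_{ik}\otimes x_{kj} = y_{ij}$. To produce $\Delta$ I would apply the universal property of $A$ with target the unital $*$-algebra $A\otimes A$ (the algebraic tensor product of $A$ with itself, which is again a unital $*$-algebra). That property yields such a $*$-morphism — necessarily unique, since the images of the generators are prescribed — as soon as the family $(y_{ij})_{1\le i,j\le n}$ in $A\otimes A$ satisfies every relation of $\Pp_s \cup R$.

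Now the relations coming from $R$ are satisfied by hypothesis, since $P(y) = 0$ for all $P\in R$. The relations $\Pp_s$ are satisfied because $x$ satisfies $\Pp_s$ in $A$ and, by Remark~\ref{rk:wang_sn+}, whenever a matrix $p = (p_{ij})$ of elements of a $*$-algebra satisfies $\Pp_s$, then so does the matrix $\bigl(\sum_{k=1}^n p_{ik}\otimes p_{kj}\bigr)$; applying this with $p = x$ shows that $y$ satisfies $\Pp_s$ in $A\otimes A$. Hence $\Delta$ exists with the desired formula, both conditions hold, and $(A,x)$ is a quantum permutation group.

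There is no genuine obstacle here: the only nontrivial ingredient is the verification that $y$ satisfies $\Pp_s$, which is exactly the computation underlying Wang's Theorem~\ref{thm:wang_sn+} and is already recorded in Remark~\ref{rk:wang_sn+}. The proof is otherwise a direct invocation of universality.
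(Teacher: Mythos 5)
Your proof is correct and follows essentially the same route as the paper's: both reduce the statement to the existence of the comultiplication, invoke the universal property of $A$ with target $A\otimes A$, and check that $y$ satisfies $\Pp_s$ via Remark~\ref{rk:wang_sn+} and $R$ by hypothesis. Your write-up is slightly more explicit about uniqueness of $\Delta$ and about $\Pp_s$ holding in $A$ by construction, but the argument is the same.
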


\begin{proof}
	The only thing we need to check is the existence of the comultiplication. 
	By universality of $A$, it is enough to check that $y$ satisfy the relations $\Pp_s\cup R$ in $A\otimes A$. 
	It follows from remark~\ref{rk:wang_sn+} that $y$ satisfies the relations $\Pp_s$, and by assumption, $y$ satisfies the relations $R$. 
	This implies the result.
\end{proof}

Lemma~\ref{lem:qu_subgroups} gives a surjective quantum group morphism from $S_n^+$ to $(A,x)$. 
In the noncommutative setting, this motivates us to call $(A,x)$ a \textit{quantum subgroup} of $S_n^+$. 
In fact, all quantum permutation groups of size $n$ are quantum subgroups of $S_n^+$.

\begin{example}\label{ex:sn}
	Let $R$ be the relations given by the commutators of all pairs of variables. 
	Then one obtains for $A$ the algebra $C(S_n)$ of continuous complex functions on the permutation group $S_n$ with pointwise multiplication. 
	As mentioned before about finite groups, the generators can be identified by considering $x_{ij} \colon \sigma \mapsto \delta_{\sigma(i),j}$. 
	The comultiplication is the Gelfand dual of the multiplication in $S_n$. 
	Finally, let us mention that $\Oo(S_n^+)$ is commutative for $n\geq 3$ and noncommutative and infinite-dimensional for $n\geq 4$.
\end{example}

Bichon~\cite{Bichon2003} defines the quantum automorphism group of a graph on $n$ vertices as a certain quantum subgroup of $S_n^+$. 
Actually, his definition is slightly different from the modern one that we give here, and which seems to come from work of Banica (see for instance~\cite{banica2005quantum}). 
Theorem~\ref{thm:qu_G} is a rewriting of these now standard notions with our terminology.

Let $G$ be a graph with vertex set $V(G) = \{1,\ldots,n\}$ for some integer $n\geq 1$. 
Given two matrices $u$ and $v$, we denote their commutator by $[u,v] = uv-vu$. 
We define a set $R_G$ of $*$-polynomials on $n^2$ variables by $R_G = \{ [Adj(G),\cdot] \}$. 
This makes sense since the function $x \mapsto [\Adj(G), x]$ is a $*$-polynomial in $n^2$ variables. 

\begin{theorem}\label{thm:qu_G}
	Let $A$ be the universal unital $*$-algebra generated by a family $p$ of $n^2$ variables satisfying relations $\Pp_s\cup R_G$. 
	Then $(A,p)$ is a quantum permutation group. 
\end{theorem}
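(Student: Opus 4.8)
The plan is to apply Lemma~\ref{lem:qu_subgroups} with $R = R_G = \{[\Adj(G),\cdot]\}$. The lemma gives us exactly the statement we want, provided we can verify its hypothesis: namely, that if $x = (x_{ij})_{1\leq i,j\leq n}$ denotes the generators of $A$ and we set $y_{ij} = \sum_{k=1}^n x_{ik}\otimes x_{kj}$, then $[\Adj(G), y] = 0$ in $A\otimes A$. Since $\Pp_s\cup R_G$ is the defining set of relations for $A$, the generators $x$ satisfy $[\Adj(G),x]=0$, and we must propagate this to $y$.

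First I would set $A_G = \Adj(G)$ and write out the matrix $\Adj(G)\cdot y$ and $y\cdot\Adj(G)$ entrywise. For fixed $i,j$ we have
\begin{align*}
	[A_G y]_{ij} &= \sum_{l=1}^n (a_{il} y_{lj}) = \sum_{l=1}^n \sum_{k=1}^n a_{il}\, x_{lk}\otimes x_{kj}, \\
	[y A_G]_{ij} &= \sum_{l=1}^n (y_{il} a_{lj}) = \sum_{l=1}^n \sum_{k=1}^n x_{ik}\otimes x_{kl}\, a_{lj},
\end{align*}
where $a_{il}$ are the (scalar) entries of $A_G$. The key step is then to recognize that each of these double sums factors through the relation $A_G x = x A_G$ holding for the generators. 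Concretely, $\sum_l a_{il} x_{lk} = [A_G x]_{ik} = [x A_G]_{ik} = \sum_l x_{il} a_{lk}$, and similarly on the other side; substituting this into the tensor expressions and reindexing shows $[A_G y]_{ij} = [y A_G]_{ij}$. This is precisely the computation alluded to in Remark~\ref{rk:wang_sn+} for $\Pp_s$, carried out now for the extra relation $R_G$; it is a routine manipulation of finite sums with scalar coefficients commuting past the tensor factors.

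With the hypothesis of Lemma~\ref{lem:qu_subgroups} verified, the lemma immediately yields that $(A,p)$ is a quantum permutation group, establishing the theorem. I do not anticipate a genuine obstacle here: the only mild subtlety is bookkeeping with the indices and being careful that the scalars $a_{il}$ may be freely moved across the algebraic tensor product $\otimes$, which is unproblematic since they are complex numbers. One could alternatively phrase the whole verification by noting that $x\mapsto [\Adj(G),x]$ is linear and that $\Delta$ (the candidate comultiplication) intertwines the obvious left and right multiplication actions by the scalar matrix $\Adj(G)$, but the direct entrywise check is cleaner and self-contained.
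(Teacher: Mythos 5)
Your proposal is correct and follows essentially the same route as the paper: both verify the hypothesis of Lemma~\ref{lem:qu_subgroups} by an entrywise computation showing $[\Adj(G),y]=0$, using the commutation of the generators with $\Adj(G)$ in each tensor leg (note this relation must indeed be applied twice, once per factor, as your ``similarly on the other side'' indicates). No gap here.
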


\begin{proof}
	Let $q_{ij} = \sum_{k=1}^n p_{ik}\otimes p_{kj} \in A\otimes A$ and let $q = (q_{ij})_{1\leq i,j\leq n}$. 
	By lemma~\ref{lem:qu_subgroups}, we only need to check that $[\Adj(G),q] = 0$. 
	By a slight abuse, we work with matrices directly indexed by $V(G)^2$. 
	For $x$, $y\in V(G)$, we have that the coefficient $xy$ of $\Adj(G)q$ is:
	\begin{align*}
		[\Adj(G)q]_{xy} &= \sum_{z\in V(G)} a_{xz}q_{zy}\\
		&= \sum_{z\in V(G)} \sum_{t\in V(G)} a_{xz} p_{zt}\otimes p_{ty}\\
		&= \sum_{t\in V(G)} \left( \sum_{z\in V(G)} a_{xz}p_{zt}\right) \otimes p_{ty}\\
		&= \sum_{t\in V(G)} \left(\sum_{z\in V(G)} p_{xz}a_{zt} \right) \otimes p_{ty}\ \text{since $\Adj(G)$ and $p$ commute}\\
		&= \sum_{z\in V(G)} p_{xz} \otimes \left(\sum_{t\in V(G)} a_{zt} p_{ty}\right)\\
		&= \sum_{z\in V(G)} p_{xz}\otimes \left( \sum_{t\in V(G)} p_{zt}a_{ty}\right)\ \text{idem}\\
		&= \sum_{t\in V(G)} q_{xt}a_{ty}\\
		&= [q\Adj(G)]_{xy},
	\end{align*}
	which concludes the proof.
\end{proof}

\begin{remark}\label{rem:qu_G_abelian}
	Theorem~\ref{thm:qu_G} and Example~\ref{ex:sn} show that the abelianisation $(B,q)$ of $(A,p)$ is a quotient of $C(S_n)$: it is the quotient of $C(S_n)$ by the relations $R_G$. 
	Writing it down, we see that the relations $R_G$ imply that the generators of $(B,q)$ are given by the permutation matrices which commute with $\Adj(G)$, that is, by the classical automorphisms of $G$ by Corollary~\ref{coro:automorphism}. 
	This shows that $(B,q)$ corresponds to the dual of $\Aut(G)$ -- that is, $B = C(\Aut(G))$, and the comultiplication in $B$ corresponds to the dual of the group multiplication in $\Aut(G)$. 
\end{remark}

\subsection{Quantum symmetry and the Schmidt alternative}\label{subsec:qu_symm_schmidt}

Remark~\ref{rem:qu_G_abelian} has the following important consequence: $C(\Qu(G)) = C(\Aut(G))$ if and only if $C(\Qu(G))$ is abelian. 

When it is not, one can argue that the quantum automorphism group of $G$ contains strictly more than the classical symmetries of $G$, in which case we say that $G$ has \textit{quantum symmetry}. 
Notice how this is a property of the underlying $*$-algebra of $\Qu(G)$. 

We point out something that can lead to confusion: following standard terminology, by a \textit{quantum asymmetric} graph, we mean a graph with trivial quantum automorphism group, \emph{not} a graph which does not have quantum symmetry. 
This will be always clear from context (and we point out that a quantum asymmetric graph indeed does not have quantum symmetry).

Let $X$ be a unital $C^*$-algebra and let $n\in \N$.
A \textit{magic unitary} with coefficients in $X$ is a matrix $U \in \Mm_n(X)$ satisfying relations $\Pp_s$. 
Let us show that in this case relations $\Pp_2$ are redundant.

\begin{lemma}\label{lem:mu_p2_redundant}
	Let $U\in \Mm_n(X)$. 
	The matrix $U$ is a magic unitary if and only if it satisfies relations $\Pp_0\cup \Pp_1$, that is, its coefficients are self-adjoint projections summing to 1 on rows and columns.
\end{lemma}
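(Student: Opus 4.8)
The plan is to show that in a $C^*$-algebra the relations $\Pp_2$ (partial-isometry-type orthogonality relations $x_{ij}x_{ik} = \delta_{jk}x_{ij}$ on rows, and $x_{ij}x_{kj} = \delta_{ik}x_{ij}$ on columns) are automatic consequences of $\Pp_0$ (self-adjoint idempotents) together with $\Pp_1$ (row and column sums equal $1$). One direction is trivial: a magic unitary by definition satisfies $\Pp_s = \Pp_0 \cup \Pp_1 \cup \Pp_2$, so it satisfies $\Pp_0 \cup \Pp_1$. For the converse, suppose $U = (u_{ij})$ has self-adjoint projection entries with $\sum_j u_{ij} = 1$ for each $i$ and $\sum_i u_{ij} = 1$ for each $j$; I must derive the orthogonality relations in $\Pp_2$.

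The key step is the standard positivity argument: in a $C^*$-algebra, a sum of projections that equals $1$ forces the projections to be pairwise orthogonal. Concretely, fix a row $i$ and distinct column indices $j \neq k$. From $\sum_\ell u_{i\ell} = 1$ and $u_{ij} = u_{ij}^2$, multiply on the left by $u_{ij}$: $u_{ij} = u_{ij}\sum_\ell u_{i\ell} = u_{ij} + \sum_{\ell \neq j} u_{ij}u_{i\ell}$, hence $\sum_{\ell\neq j} u_{ij}u_{i\ell} = 0$. Now sandwich: $u_{ij}\bigl(\sum_{\ell\neq j}u_{i\ell}\bigr)u_{ij} = 0$, and since $\sum_{\ell\neq j}u_{i\ell}$ is a sum of positive elements (hence positive) and conjugation by $u_{ij}$ preserves positivity, each summand $u_{ij}u_{i\ell}u_{ij}$ is positive with zero sum, so $u_{ij}u_{i\ell}u_{ij} = 0$ for every $\ell \neq j$. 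Writing $p = u_{ij}$, $q = u_{i\ell}$ projections with $pqp = 0$, we get $(qp)^*(qp) = pq^2p = pqp = 0$, so $qp = 0$ by the $C^*$-identity $\norm{a^*a} = \norm{a}^2$, and taking adjoints $pq = 0$. This is exactly $u_{ij}u_{ik} = 0 = \delta_{jk}u_{ij}$ for $j\neq k$ (the case $j=k$ being $u_{ij}^2 = u_{ij}$), i.e. the first family in $\Pp_2$. The argument for columns is identical using $\sum_i u_{ij} = 1$.

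I do not expect a serious obstacle here; the only thing to be careful about is that the argument genuinely uses the $C^*$-norm (the implication fails at the level of bare $*$-algebras, which is precisely the subtlety flagged in Remark~\ref{rem:univ_c*_permutation}), so the proof must invoke the $C^*$-identity and positivity of sums of projections rather than purely algebraic manipulation. Thus the hypothesis that $X$ is a unital $C^*$-algebra is essential and should be used explicitly at the step $qp = 0 \iff (qp)^*(qp) = 0$.
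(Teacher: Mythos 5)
Your proof is correct and follows essentially the same route as the paper: use the row-sum relation to produce a sum of sandwiched terms $u_{ij}u_{i\ell}u_{ij}$ equal to zero, invoke positivity in the $C^*$-algebra to conclude each sandwich vanishes, and then the $C^*$-identity to get $u_{ij}u_{i\ell}=0$, with the column relations handled symmetrically. The only cosmetic difference is that the paper sandwiches $\sum_k u_{ik}$ between two copies of $u_{ij}$ in one step rather than first extracting $\sum_{\ell\neq j}u_{ij}u_{i\ell}=0$; the substance is identical.
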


\begin{proof}
	The only thing to prove is that if $U$ satisfies $\Pp_0\cup \Pp_1$, then it satisfies $\Pp_2$. 
	Let $1\leq i,j,l \leq n$ with $l\neq j$. 
	It comes:
	\[ u_{ij} = u_{ij}\left(\sum_{k=1}^n u_{ik}\right)u_{ij} = \sum_{k=1}^n u_{ij}u_{ik}u_{ij}.\]
	Since all coefficients are projections, we obtain that $0 = \sum_{k\neq j} u_{ij}u_{ik}u_{ij}$. 
	Now, since we are working in a $C^*$-algebra, we have $0\leq u_{ij}u_{il}u_{ij} \leq \sum_{k\neq j} u_{ij}u_{ik}u_{ij} = 0$, so $u_{ij}u_{il}u_{ij}=0$. 
	So we have $(u_{ij}u_{il})(u_{ij}u_{il})^*=0$, which implies that $u_{ij}u_{il}=0$, as desired. 

	Finally, noticing that the transpose of $U$ also satisfies relations $\Pp_0\cup \Pp_1$, we can apply what precedes to show that $u_{ji}u_{li} = 0$ for $1\leq i,j,l\leq n$ with $j\neq l$. 
	This concludes the proof.
\end{proof}

\begin{remark}
	Notice that a magic unitary is a unitary in $\Mm_n(X)$.
\end{remark}

Let $G$ be a graph with vertex set $\{1,\ldots,n\}$. 
We say that a magic unitary is \textit{adapted to $G$} if it commutes with the adjacency matrix of $G$. 

Magic unitaries correspond to representations of $\Aa_s(n)$ in a $C^*$-algebra. 
Magic unitaries adapted to $G$ correspond to representations of $\Oo(\Qu(G))$ in a $C^*$-algebra. 
Since $\Oo(\Qu(G))$ is abelian if and only if its enveloppe is, we can characterise quantum symmetry with magic unitaries. 

\begin{lemma}\label{lem:characterisation_qs}
	Let $n\in \N$ and let $G$ be a graph with $V(G) = \{1,\ldots,n\}$. 
	Then $G$ has quantum symmetry if and only if there exists a unital $C^*$-algebra $X$ and a magic unitary $U \in \Mm_n(X)$ adapted to $G$ with at least two noncommuting coefficients.
\end{lemma}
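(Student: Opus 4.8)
The plan is to unwind both sides through the correspondence between magic unitaries adapted to $G$ and $*$-representations of $\Oo(\Qu(G))$ into a $C^*$-algebra, together with the standard fact that $\Oo(\Qu(G))$ embeds into its universal $C^*$-envelope $C(\Qu(G))$ (Remark~\ref{rem:univ_c*_permutation} applied to the quotient). Recall from the discussion preceding Lemma~\ref{lem:characterisation_qs} that, by Remark~\ref{rem:qu_G_abelian}, the graph $G$ has quantum symmetry precisely when $\Oo(\Qu(G))$ is noncommutative, equivalently when $C(\Qu(G))$ is noncommutative (since a $C^*$-algebra is commutative iff its dense $*$-subalgebra is).

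\textbf{The forward direction.} Suppose $G$ has quantum symmetry, so $C(\Qu(G))$ is noncommutative: there exist $a,b \in C(\Qu(G))$ with $ab \neq ba$. Since $C(\Qu(G))$ is generated as a $C^*$-algebra by the coefficients $(p_{ij})$ of the defining magic unitary, the commutator $ab-ba$ cannot vanish if all pairs $p_{ij}p_{kl} = p_{kl}p_{ij}$, because then the generated $*$-algebra, hence its closure, would be commutative. Therefore there are indices with $p_{ij}p_{kl} \neq p_{kl}p_{ij}$. Take $X = C(\Qu(G))$ and $U = (p_{ij})$: this is by construction a magic unitary (relations $\Pp_0 \cup \Pp_1$ hold, so by Lemma~\ref{lem:mu_p2_redundant} it is magic) adapted to $G$ (the relations $R_G$ say exactly that $U$ commutes with $\Adj(G)$), with at least two noncommuting coefficients.

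\textbf{The reverse direction.} Conversely, suppose such an $X$ and $U = (u_{ij}) \in \Mm_n(X)$ exist with, say, $u_{ij}u_{kl} \neq u_{kl}u_{ij}$. Since $U$ is a magic unitary (relations $\Pp_s$, using Lemma~\ref{lem:mu_p2_redundant}) commuting with $\Adj(G)$, its coefficients satisfy $\Pp_s \cup R_G$, so by the universal property of $\Oo(\Qu(G))$ there is a $*$-morphism $\pi \colon \Oo(\Qu(G)) \to X$ with $\pi(p_{ij}) = u_{ij}$. If $\Oo(\Qu(G))$ were commutative then $p_{ij}p_{kl} = p_{kl}p_{ij}$ would hold there, and applying $\pi$ would give $u_{ij}u_{kl} = u_{kl}u_{ij}$, a contradiction. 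Hence $\Oo(\Qu(G))$ is noncommutative, i.e.\ $G$ has quantum symmetry.

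\textbf{Main obstacle.} There is no serious obstacle here; the only point requiring a little care is the forward direction, where one must pass from noncommutativity of the $C^*$-envelope $C(\Qu(G))$ to the existence of two noncommuting \emph{generators} $p_{ij}$, $p_{kl}$ (rather than two noncommuting general elements). This is a soft density argument: a $C^*$-algebra generated by a commuting family of self-adjoint elements is commutative. One should also make sure the chosen algebra $X = C(\Qu(G))$ is legitimately a unital $C^*$-algebra with the $p_{ij}$ as a genuine magic unitary adapted to $G$, which is immediate from the definitions recalled above.
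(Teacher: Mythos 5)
Your proof is correct and follows essentially the same route as the paper: both directions rest on the embedding $\Oo(\Qu(G)) \hookrightarrow C(\Qu(G))$ to exhibit the fundamental representation as the required magic unitary, and on the universal property (you invoke it at the level of $\Oo(\Qu(G))$, the paper at the level of $C(\Qu(G))$, which is an immaterial difference) to pull noncommutativity back from $U$. The extra observation you spell out, that noncommuting elements in a $C^*$-algebra generated by the $p_{ij}$ force two noncommuting generators, is the same soft density point implicit in the paper's argument.
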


\begin{proof}
	Let $(A,p) = \Qu(G)$. 
	If $G$ does not have quantum symmetry, by definition, $\Oo(\Qu(G))$ is not abelian, and so the coefficients of $p$ do not commute. 
	Since $\Oo(\Qu(G))$ embeds into its universal $C^*$-enveloppe $C(\Qu(G))$, we have that $p \in \Mm_n(C(\Qu(G)))$ is a magic unitary adapted to $G$ with noncommuting coefficients, as desired. 

	Conversely, assume there exists $X$ and $U$ as in the statement. 
	By universality of $C(\Qu(G))$, there is a $*$-morphism $f \colon C(\Qu(G)) \to X$ such that $f(p_{ij}) = u_{ij}$, for $1\leq i,j\leq n$. 
	Since not all coefficients of $U$ commute, not all coefficients of $p$ commute neither, so $\Oo(\Qu(G))$ is not abelian, and $G$ has quantum symmetry, as desired. 

	This concludes the proof.
\end{proof}

It is natural to ask whether there actually are graphs with quantum symmetry. 
The answer is positive.

\begin{lemma}\label{lem:k4}
	The complete graph on 4 vertices $K_4$ has quantum symmetry.
\end{lemma}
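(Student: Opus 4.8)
The plan is to exhibit explicitly a unital $C^*$-algebra $X$ and a magic unitary $U \in \Mm_4(X)$ adapted to $K_4$ with two noncommuting coefficients, and then invoke Lemma~\ref{lem:characterisation_qs}. The key observation that makes this work is that $\Adj(K_4) = J_4 - I_4$, so a magic unitary $U$ (which automatically satisfies $UJ_4 = J_4 = J_4 U$ since rows and columns sum to $1$) commutes with $\Adj(K_4)$ \emph{automatically}: every magic unitary is adapted to $K_4$. Hence $\Qu(K_4) = S_4^+$, and the problem reduces to producing a magic unitary of size $4$ with noncommuting coefficients in some $C^*$-algebra.

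To build such a magic unitary, I would use the standard block construction. Pick two noncommuting projections $p, q$ in a unital $C^*$-algebra $X$; for concreteness one can take $X = C^*(\Z_2 * \Z_2)$, or more elementarily the free product, or even the continuous functions valued in $2\times 2$ matrices so that $p, q$ are two genuinely noncommuting rank-one projections in $M_2(\C) \subset X$. Then form the $4\times 4$ matrix
\[
U = \begin{pmatrix} p & 1-p & 0 & 0 \\ 1-p & p & 0 & 0 \\ 0 & 0 & q & 1-q \\ 0 & 0 & 1-q & q \end{pmatrix}.
\]
Each entry is a self-adjoint projection; each row and each column sums to $1$; so by Lemma~\ref{lem:mu_p2_redundant} this is a magic unitary. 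Its coefficients $p$ and $q$ do not commute by choice. By the remark above, $U$ is adapted to $K_4$. Lemma~\ref{lem:characterisation_qs} then yields that $K_4$ has quantum symmetry.

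There is essentially no obstacle here; the only thing to get right is the verification that $U$ commutes with $\Adj(K_4)$, and the clean way to see this is the identity $\Adj(K_4) = J_4 - I_4$ together with the fact that a magic unitary commutes with $J_4$ (because its row and column sums are the identity of $X$) and trivially with $I_4$. An alternative, should one wish to avoid choosing a concrete $X$, is to argue structurally: the relations $\Pp_s \cup R_{K_4}$ are exactly $\Pp_s$ (since $R_{K_4} = \{[J_4 - I_4, \cdot]\}$ is implied by $\Pp_s$), so $\Oo(\Qu(K_4)) = \Aa_s(4)$, which is noncommutative because $\Oo(S_4^+)$ is noncommutative, as recorded at the end of Example~\ref{ex:sn}. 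Either route closes the proof; I would present the explicit magic unitary, since it is the most self-contained.
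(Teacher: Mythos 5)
Your proposal is correct and follows essentially the same route as the paper: observe that every $4\times 4$ magic unitary commutes with $\Adj(K_4)$ (the paper states this directly; your $J_4-I_4$ identity is the clean way to justify it), then exhibit the same block magic unitary built from two noncommuting projections and conclude via Lemma~\ref{lem:characterisation_qs}. No gaps.
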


\begin{proof}
	Indeed, notice that the relations defining magic unitaries already imply commutation with $\Adj(K_4)$, that is, every $4\times 4$ magic unitary commutes with $\Adj(K_4)$. 
	So by lemma~\ref{lem:characterisation_qs} it is enough to exhibit a magic unitary of dimension 4 with noncommuting coefficients. 
	Let $(p,q)$ be any pair of noncommuting projections, for instance, $p = \begin{pmatrix}
		1 & 0\\
		0 & 0
	\end{pmatrix}$ and $q = \frac{1}{2} \begin{pmatrix}
		1 & 1\\
		1 & 1
	\end{pmatrix}$. 
	Now $U = \begin{pmatrix}
		p & 1-p & 0 & 0\\
		1-p & p & 0 & 0\\
		0 & 0 & q & 1-q\\
		0 & 0 & 1-q & q
	\end{pmatrix}$ is a magic unitary with coefficients in $\Mm_2(\C)$ which are not all commuting.
\end{proof}

The precedent lemma can be generalised as follows. 
Let $G$ be a graph with $V(G) = \{1,\ldots,n\}$ and let $U$ be a magic unitary adapted to $G$ with coefficients in a unital $C^*$-algebra $X$. 
We define its \textit{support} to be $\Supp(U) = \{x \in V(G) \mid u_{xx}\neq 1\}$. 

\begin{theorem}\label{thm:qu_schmidt}
	Let $G$ be a graph on $n$ vertices for some $n\in \N$. 
	Let $X$ and $Y$ be two unital $C^*$-algebras, and let $U \in \Mm_n(X)$ and $V\in \Mm_n(Y)$ be two magic unitaries adapted to $G$. 
	Assume that:
	\begin{itemize}
		\item both $U\neq I_n$ and $V\neq I_n$,
		\item and $\Supp(U)\cap \Supp(V) = \varnothing$.
	\end{itemize}
	Then, there are two nontrivial $C^*$-algebras $X'$ and $Y'$ such that their unital free product $X'*_1 Y'$ is a quotient of $C(\Qu(G))$. 
	In particular, $C(\Qu(G))$ is nonabelian and infinite-dimensional, and $G$ has quantum symmetry.
\end{theorem}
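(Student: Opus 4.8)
The plan is to build the free product $X' *_1 Y'$ as a suitable quotient of $C(\Qu(G))$ by exhibiting a magic unitary adapted to $G$ with coefficients in $X *_1 Y$ and controlling its image. The starting observation is that both $U$ and $V$ are magic unitaries adapted to $G$, and their supports are disjoint; intuitively, off the support a magic unitary acts like the identity matrix, so $U$ and $V$ "live" on disjoint blocks of vertices. I would first make this precise: if $x \notin \Supp(U)$, then $u_{xx} = 1$, and since the $x$-th row and column of $U$ consist of projections summing to $1$, all other entries $u_{xy}$ and $u_{yx}$ vanish (use the $C^*$-algebra inequality $0 \le u_{xy} \le 1 - u_{xx} = 0$, exactly as in the proof of Lemma~\ref{lem:mu_p2_redundant}). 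Hence $U$ is block-diagonal with an identity block on $V(G) \setminus \Supp(U)$, and likewise $V$ has an identity block on $V(G) \setminus \Supp(V)$.

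Next I would form $W \in \Mm_n(X *_1 Y)$ by "gluing": on the vertices of $\Supp(U)$ use the corresponding block of $U$ (with coefficients mapped into the copy of $X$ inside $X *_1 Y$), on the vertices of $\Supp(V)$ use the block of $V$ (mapped into the copy of $Y$), and on the remaining vertices put $1$ on the diagonal. Because $\Supp(U) \cap \Supp(V) = \varnothing$, these prescriptions do not overlap, so $W$ is well-defined. One checks $W$ is a magic unitary: each row and column is either a row/column of $U$, of $V$, or the trivial one, so relations $\Pp_0 \cup \Pp_1$ hold, and Lemma~\ref{lem:mu_p2_redundant} gives $\Pp_s$. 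For adaptedness, I would verify $W$ commutes with $\Adj(G)$ by splitting the matrix product into the three vertex-blocks; on the $\Supp(U)$-part this reduces to $[\Adj(G), U] = 0$ restricted appropriately, and similarly for $V$ — here one must be slightly careful that edges between $\Supp(U)$ and its complement are handled, but since $U$ is block-diagonal with respect to $\Supp(U)$ and commutes with $\Adj(G)$ globally, the block restriction still commutes with the corresponding block of $\Adj(G)$. By the universal property of $C(\Qu(G))$ (via Lemma~\ref{lem:characterisation_qs} and the discussion preceding it), there is a $*$-homomorphism $\varphi \colon C(\Qu(G)) \to X *_1 Y$ sending $p_{ij} \mapsto w_{ij}$.

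Finally I would identify the image $\varphi(C(\Qu(G)))$ with a free product of two nontrivial $C^*$-algebras. Let $X'$ be the $C^*$-subalgebra of $X$ generated by the coefficients $\{u_{xy} : x,y \in \Supp(U)\}$ and $Y'$ the analogous subalgebra of $Y$ generated by the $\Supp(V)$-block of $V$; both are nontrivial (i.e. $\ne \C$), because $U \ne I_n$ forces some $u_{xx} \ne 1$, hence $u_{xx}$ is a nontrivial projection and $X'$ is a noncommutative-or-at-least-nonscalar algebra — more carefully, $X'$ contains a projection that is neither $0$ nor $1$, so $X' \ne \C$; likewise $Y' \ne \C$. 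The image of $\varphi$ is generated by the copies of $X'$ and $Y'$ sitting freely inside $X *_1 Y$, and since $X' *_1 Y'$ maps onto this image, it suffices to note that $X' *_1 Y'$ is itself a quotient of $C(\Qu(G))$: compose $\varphi$ with the canonical surjection onto the subalgebra generated, or rather observe that the universal property yields $\varphi$ factoring through $C(\Qu(G)) \twoheadrightarrow X' *_1 Y'$ by mapping into the free product and using its universal property in the other direction. The free product $X' *_1 Y'$ of two nontrivial unital $C^*$-algebras is well-known to be nonabelian and infinite-dimensional (it contains the free product of two copies of $\C^2$, which surjects onto $C^*(\Z/2 * \Z/2)$, an infinite-dimensional nonabelian algebra), so $C(\Qu(G))$ has a nonabelian infinite-dimensional quotient, hence is itself nonabelian; by the discussion in Subsection~\ref{subsec:qu_symm_schmidt}, $G$ has quantum symmetry.

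The main obstacle I anticipate is the bookkeeping in constructing $W$ and proving $[\Adj(G), W] = 0$ cleanly — in particular making rigorous the claim that a magic unitary adapted to $G$ restricts, on the complement of its support, to the identity and, on its support, to a magic unitary adapted to the induced subgraph $G[\Supp(U)]$ that still interacts correctly with edges leaving $\Supp(U)$. The right way to organize this is probably to first prove a small lemma: if $U$ is a magic unitary adapted to $G$ with support $S$, then $u_{xy} = 0$ whenever exactly one of $x,y$ lies in $S$, so $U = U_S \oplus I_{V(G) \setminus S}$ as block matrices, and $U_S$ commutes with $\Adj(G)[S]$ as well as with the rectangular adjacency block between $S$ and its complement (the latter forcing no constraint since that block of $U$ is zero). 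Once that structural lemma is in hand, the gluing and the verification that $W$ is adapted become essentially formal.
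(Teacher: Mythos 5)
Your construction of the glued magic unitary $W = U_{\Supp(U)} \oplus V_{\Supp(V)} \oplus I$ over the free product, and the use of universality to get a $*$-morphism out of $C(\Qu(G))$, is exactly the paper's mechanism. But there is a genuine gap in the step where you claim $X' \neq \C$ and $Y' \neq \C$. You argue that $U \neq I_n$ forces some $u_{xx} \neq 1$, ``hence $u_{xx}$ is a projection that is neither $0$ nor $1$.'' This is false: $u_{xx} \neq 1$ allows $u_{xx} = 0$, and more to the point $U$ can be a permutation matrix $P_f$ for a nontrivial automorphism $f$ of $G$, in which case every coefficient of $U$ is $0$ or $1$ and your $X'$ is just $\C$. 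This is not a marginal case --- it is precisely the classical Schmidt criterion (two nontrivial automorphisms with disjoint supports), which the theorem must cover and which is the situation used throughout the paper (e.g.\ in the cograph and forest arguments). In that case $X' *_1 Y'$ can be $\C$, the image of your morphism is scalar, and nothing about noncommutativity or infinite-dimensionality of $C(\Qu(G))$ follows. The paper handles this by a preliminary reduction you are missing: if all coefficients of $U$ are scalar, then $U = P_f$ with $f \neq \id$ of some finite order $d \geq 2$, and one replaces $U$ by the magic unitary $T \in \Mm_n(\C^d)$ defined by $T(k) = P_{f^k}$ (the ``regular representation'' of $\langle f \rangle$), which is still adapted to $G$, has the same support, and now has nonscalar coefficients; similarly for $V$. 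After this replacement (and after shrinking $X$, $Y$ to the subalgebras generated by the coefficients) the free-product argument goes through.

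A secondary, fixable issue: to conclude that $X' *_1 Y'$ itself is a quotient of $C(\Qu(G))$, you should define $W$ with coefficients in $X' *_1 Y'$ from the start (using the canonical inclusions $X' \to X' *_1 Y'$ and $Y' \to X' *_1 Y'$), so that surjectivity is immediate because the coefficients of $W$ generate $X' *_1 Y'$. Your route --- mapping into $X *_1 Y$ and then passing to the subalgebra generated by $X'$ and $Y'$ --- only exhibits that subalgebra as a quotient of $X' *_1 Y'$ (and identifying it with the free product would require a nontrivial embedding theorem for full free products), so the surjection points the wrong way for the statement you want. The structural lemma you propose (a magic unitary adapted to $G$ is block-diagonal, identity off its support) is correct and is implicitly what the paper's choice of vertex ordering with $U = \Diag(U_1,1,1)$, $V = \Diag(1,V_1,1)$ encodes.
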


\begin{proof}
	We need to bring ourselves back to the case where $U$ and $V$ have nonscalar coefficients. 
	Assume all coefficients of $U$ are scalar. 
	Then $U = P_f$ is a permutation matrix with $f$ an automorphism of $G$. 
	Since $U\neq 1$, we have $f\neq \id$, so $f$ has a finite order $d\neq 1$. 
	Notice that the function $T \colon \{1,\ldots,d\} \to \Mm_n(\C)$ defined by $T(k) = P_{f^k}$ gives a well-defined magic unitary (with slight abuse of notation) $T \in \Mm_n(\C^d)$ whose coefficients are not all scalar (this is actually the unitary representation of $\Z/d\Z$ given by $f$ as in the classical proof of Schmidt's criterion). 
	We claim it is adapted to $G$: indeed, this is equivalent to the fact that $P_{f^k}$ commutes with $\Adj(G)$ for all $k$, which is immediate. 
	Now $T$ is a magic unitary adapted to $G$ with nonscalar coefficients, and it clearly has the same support as $U = P_f$. 
	This shows that up to changing $U$ into $T$ and $X$ into $\C^d$ we can assume that both $U$ and $V$ have nonscalar coefficients.

	So from now on we assume that $U$ and $V$ have nonscalar coefficients. 
	Up to replacing $X$ and $Y$ with the $C^*$-subalgebras generated by the coefficients of $U$ and $V$ respectively, we bring ourselves to the situation where the coefficients of $U$ generate $X$, the coefficients of $V$ generate $Y$, and $X\neq \C$ and $Y\neq \C$. 

	Fix an ordering of $V(G)$ with $\Supp(U) <  \Supp(V) \leq V(G)\setminus (\Supp(U) \cup \Supp(V))$. 
	Notice this is possible since $\Supp(U) \cap \Supp(V) = \varnothing$. 
	With this ordering, $U$ and $V$ are block-diagonal with $U = \Diag(U_1,1,1)$ and $V= \Diag(1,V_1,1)$, where $U_1$ and $V_1$ are magic unitaries of the corresponding dimensions, and where 1 stands for the identity matrices of the correct dimensions. 
	Let $Z = X *_1 Y$ be the unital free product of $X$ and $Y$, which comes together with the inclusions $i_X \colon X \to Z$ and $i_Y \colon Y \to Z$. 
	We write $W_1=i_X(U_1)$, $W_2 = i_Y(V_1)$ and $W_3 = 1$. 
	Set $W = \Diag(W_1,W_2,1)$: it is clearly a well-defined magic unitary with coefficients in $Z$. 
	Let us check it is adapted to $G$. 
	Let us write the adjacency matrix of $G$ in the corresponding block form $A = (A_{ij})_{1\leq i,j\leq 3}$. 
	It comes:
	\begin{align*}
		WA &= \begin{pmatrix}
			i_X(U_1A_{11}) & i_X(U_1A_{12}) & i_X(U_1A_{13})\\
			i_Y(V_1A_{21}) & i_Y(V_1A_{22}) & i_Y(V_1A_{23})\\
			A_{31} & A_{32} & A_{33}
		\end{pmatrix}\\
		&= \begin{pmatrix}
			i_X(A_{11}U_1) & A_{12} & A_{13}\\
			A_{21} & i_Y(A_{22}V_1) & A_{23}\\
			A_{31} & A_{32} & A_{33}
		\end{pmatrix}\ \text{since $U$ and $V$ commute with $A$}\\
		&= AW.
	\end{align*}
	So $W$ is a magic unitary adapted to $G$. 
	By universality, there is a $*$-morphism $f\colon C(\Qu(G))$ to $Z$ defined by $f(u_{ij}) = w_{ij}$, where $u = (u_{ij})_{1\leq i,j\leq n}$ is the fundamental representation of $\Qu(G)$. 
	By construction, coefficients of $U_1$ generate $X$, and the coefficients of $V_1$ generate $Y$, so the coefficients of $W$ generate $Z$. 
	So $f$ is a surjective $*$-morphism. 
	This shows the desired result. 
\end{proof}

The particular case when $U$ and $V$ are both permutation matrices is known as \textit{Schmidt's criterion} (see~\cite{SchmCrit}, Theorem 2.2 for the original statement, and also~\cite{FreslonBook}, Exercise 7.6 and its correction). 
Hence we refer to Theorem~\ref{thm:qu_schmidt} as the \textit{quantum Schmidt criterion}. 

Schmidt's criterion will be of particular interest in the present article. 
Indeed, this criterion is a sufficient condition for quantum symmetry and is relatively easy to check (in particular, it is a decidable criterion). 
However, it is in general not necessary: in a recent preprint, Schmidt and Roberson~\cite{RobSchm} prove the existence of a graph $G$ which has quantum symmetry and such that $C(\Qu(G))$ is finite-dimensional. 
By Theorem~\ref{thm:qu_schmidt}, we have that $G$ cannot satisfy the quantum Schmidt criterion -- let alone the classical one. 
So $G$ does not satisfy Schmidt's criterion and has quantum symmetry, which proves that Schmidt's criterion is not a necessary condition for quantum symmetry. 

This motivated the author to define the \textit{Schmidt alternative.} 
Let $\Ff$ be a class of graphs. 
We say that $\Ff$ satisfies the Schmidt alternative if a graph of $\Ff$ has quantum symmetry if and only if it satisfies Schmidt's criterion. 
When this is the case, this gives a satisfactory answer to the quantum symmetry problem for $\Ff$. 
In order to give some examples, we will prove that cographs, forests, tree-cographs, and $\Gg_5$-cographs (respectively Theorem~\ref{thm:cographs_QS}, Theorem~\ref{thm:tractable_forests}, Theorem~\ref{thm:tractable_tree_cographs}, and Theorem~\ref{thm:G5}) satisfy the Schmidt alternative. 
These results should not be seen as an objective \textit{per se} but rather as a motivation for this line of research. 
We will also prove general results about the Schmidt alternative in Section~\ref{sec:tractable}, which suggest that families satisfying the Schmidt alternative might have an accessible quantum structure, prompting them as natural first candidates for a systematic study of quantum properties of graphs.

\subsection{Quantum isomorphism}\label{subsec:qu_iso}

A \textit{quantum isomorphism} from a graph $G$ to a graph $H$ is a magic unitary $P$ such that $P\Adj(G) = \Adj(H)P$. 
If there is a quantum isomorphism from $G$ to $H$, we say that $G$ is \textit{quantum isomorphic} to $H$. 
Notice that by definition $G$ and $H$ have the same number of vertices in that case. 

The fact that this is an equivalence relation is the object of the following lemmas. 
The proof is folklore and has been communicated to the author by Freslon. 
However, we did not find it written anywhere, so we include it here for completeness. 
We start with an operation on magic unitaries which is interesting on its own.

Let $X$ and $Y$ be two $*$-algebras, and let $P\in \Mm_n(X)$ and $Q\in \Mm_n(Y)$ be two matrices satisfying relations $\Pp_s$, where $n\in \N$ is some integer. 
Let $Z = X \otimes Y$ be the tensor product of $X$ and $Y$ (this is the algebraic tensor product of $*$-algebras). 
Define $P*Q = R \in \Mm_n(Z)$ by
\[ r_{ij} = \sum_{k=1}^n p_{ik}\otimes q_{kj}.\]

\begin{lemma}\label{lem:convo_mu}
	The matrix $P*Q \in \Mm_n(X\otimes Y)$ satisfies relations $\Pp_s$. 
\end{lemma}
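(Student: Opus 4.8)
The statement to prove is Lemma~\ref{lem:convo_mu}: if $P \in \Mm_n(X)$ and $Q \in \Mm_n(Y)$ satisfy the relations $\Pp_s$, then $R = P*Q$, defined by $r_{ij} = \sum_{k=1}^n p_{ik} \otimes q_{kj}$, also satisfies $\Pp_s$. The plan is to verify the three families $\Pp_0$, $\Pp_1$, $\Pp_2$ in turn, exploiting that the tensor product of $*$-algebras has the obvious involution $(a\otimes b)^* = a^*\otimes b^*$ and multiplication $(a\otimes b)(c\otimes d) = ac \otimes bd$, together with the fact — already recorded in Remark~\ref{rk:wang_sn+} — that the "comultiplication-type" expression $\sum_k p_{ik}\otimes p_{kj}$ built from a single magic unitary satisfies $\Pp_s$. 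In fact the present lemma is a mild generalisation of that remark, where the two factors are allowed to be magic unitaries over two different algebras, so the computations are the same in spirit.

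First I would check $\Pp_0$: each $r_{ij}$ is self-adjoint because $r_{ij}^* = \sum_k p_{ik}^* \otimes q_{kj}^* = \sum_k p_{ik}\otimes q_{kj} = r_{ij}$, using that the $p$'s and $q$'s are self-adjoint; and $r_{ij}^2 = \sum_{k,l} p_{ik}p_{il}\otimes q_{kj}q_{lj} = \sum_{k,l} \delta_{kl} p_{ik} \otimes \delta_{kl} q_{kj}$ by the $\Pp_2$-relations for $P$ (on rows: $p_{ik}p_{il} = \delta_{kl}p_{ik}$) and for $Q$ (on columns: $q_{kj}q_{lj} = \delta_{kl}q_{kj}$), which collapses the double sum to $\sum_k p_{ik}\otimes q_{kj} = r_{ij}$. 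Next, $\Pp_1$: $\sum_j r_{ij} = \sum_k p_{ik}\otimes \left(\sum_j q_{kj}\right) = \sum_k p_{ik}\otimes 1 = \left(\sum_k p_{ik}\right)\otimes 1 = 1\otimes 1$, using that $Q$ sums to $1$ on rows and $P$ sums to $1$ on rows; symmetrically $\sum_i r_{ij} = \left(\sum_i p_{ik}\right)\otimes q_{kj}$ summed over... more precisely $\sum_i r_{ij} = \sum_k \left(\sum_i p_{ik}\right)\otimes q_{kj} = \sum_k 1\otimes q_{kj} = 1\otimes 1$, using columns of $P$ and columns of $Q$.

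Finally $\Pp_2$: for the row relations, $r_{ij}r_{ij'} = \sum_{k,l} p_{ik}p_{il}\otimes q_{kj}q_{lj'} = \sum_k p_{ik}\otimes q_{kj}q_{kj'}$ after applying $p_{ik}p_{il} = \delta_{kl}p_{ik}$, and then $q_{kj}q_{kj'} = \delta_{jj'}q_{kj}$ gives $\delta_{jj'}\sum_k p_{ik}\otimes q_{kj} = \delta_{jj'}r_{ij}$; the column relations $r_{ij}r_{i'j} = \delta_{ii'}r_{ij}$ follow the same way, first collapsing the $q$-sum via $q_{kj}q_{lj} = \delta_{kl}q_{kj}$ and then using $p_{ik}p_{i'k} = \delta_{ii'}p_{ik}$. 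There is no real obstacle here — it is a routine bookkeeping argument — the only point requiring a modicum of care is keeping straight which of the two $\Pp_2$-relations (row versus column) is applied to $P$ and which to $Q$ in each computation, since in every case one relation comes from the left factor and the other from the right. Assembling these verifications shows $R = P*Q$ satisfies $\Pp_s$, as claimed.
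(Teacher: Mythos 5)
Your proof is correct and follows essentially the same route as the paper: a direct verification of $\Pp_0$, $\Pp_1$, and $\Pp_2$ for $R=P*Q$ using the multiplication and involution of the algebraic tensor product, collapsing double sums via the orthogonality relations of $P$ and $Q$. The bookkeeping of which row/column relation is applied to which factor matches the paper's computation, so nothing further is needed.
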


\begin{proof}
	Let $R=P*Q$. 
	For $1\leq i,j\leq n$, it is clear that $r_{ij}$ is self-adjoint, and it comes:
	\begin{align*}
		r_{ij}^2 &= \sum_{k=1}^n \sum_{l=1}^n p_{ik}p_{il}\otimes q_{kj}q_{lj}\\
		&= \sum_{k=1}^n p_{ik} \otimes q_{kj}\quad \text{by $\Pp_2$}\\
		&= r_{ij},
	\end{align*}
	so relations $\Pp_0$ are satisfied.
	
	For $1\leq i\leq n$, we have:
	\begin{align*}
		\sum_{j=1}^n r_{ij} &= \sum_{j=1}^n \sum_{k=1}^n p_{ik}\otimes q_{kj}\\
		&= \sum_{k=1}^n p_{ik}\otimes \left(\sum_{j=1}^n q_{kj}\right)\\
		&= \left(\sum_{k=1}^n p_{ik}\right) \otimes 1\\
		&= 1\otimes 1,
	\end{align*}
	as desired, and:
	\begin{align*}
		\sum_{j=1}^n r_{ji} &= \sum_{j=1}^n \sum_{k=1}^n p_{jk}\otimes q_{ki}\\
		&= \sum_{k=1}^n \left(\sum_{j=1}^n p_{jk}\right) \otimes q_{ki}\\
		&= 1 \otimes \left(\sum_{k=1}^n q_{ki} \right)\\
		&= 1\otimes 1,
	\end{align*}
	as desired, so relations $\Pp_1$ are satisfied. 

	Finally, let $1\leq i,j,k\leq n$. 
	We have:
	\begin{align*}
		r_{ij}r_{ik} &= \sum_{l=1}^n \sum_{m=1}^n p_{il}p_{im}\otimes q_{lj}q_{mk}\\
		&= \sum_{l=1}^n p_{il} \otimes q_{lj}q_{lk}\\
		&= \delta_k^j \sum_{l=1}^n p_{il}\otimes q_{lj}\\
		&= \delta_k^j r_{ij},
	\end{align*}
	and similarly $r_{ij}r_{kj} = \delta_k^j r_{ij}$, so relations $\Pp_2$ are satisfied. 
	This concludes the proof that $R$ satisfies $\Pp_s = \Pp_0 \cup \Pp_1 \cup \Pp_2$.
\end{proof}

\begin{lemma}\label{lem:transitivity_qi}
	Let $G$, $H$, and $L$ be three graphs. 
	Assume that $P$ is a quantum isomorphism from $G$ to $H$ and that $Q$ is a quantum isomorphism from $H$ to $L$. 
	Then $Q*P$ gives rise to a quantum isomorphism from $G$ to $L$.
\end{lemma}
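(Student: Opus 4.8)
The plan is simple: take $R=Q*P$, which already satisfies the magic unitary relations $\Pp_s$ by Lemma~\ref{lem:convo_mu}, verify by a direct computation that it intertwines the adjacency matrices of $G$ and $L$, and then take care of the (purely formal) point that the coefficients of $R$ a priori live only in a $*$-algebra, whereas a quantum isomorphism should have coefficients in a $C^*$-algebra.

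First I would fix unital $C^*$-algebras $X$ and $Y$ with $P\in\Mm_n(X)$ and $Q\in\Mm_n(Y)$, so that $R=Q*P\in\Mm_n(Z)$ with $Z=X\otimes Y$ the algebraic tensor product and $r_{ij}=\sum_{k=1}^n q_{ik}\otimes p_{kj}$. By Lemma~\ref{lem:convo_mu} the matrix $R$ satisfies $\Pp_s$. Writing $A=\Adj(G)$, $B=\Adj(H)$, $C=\Adj(L)$, and using that scalars commute with everything, the core computation is, for $1\le i,y\le n$:
\begin{align*}
	[RA]_{iy} &= \sum_{j=1}^n r_{ij}A_{jy} = \sum_{k=1}^n q_{ik}\otimes [PA]_{ky}\\
	&= \sum_{k=1}^n q_{ik}\otimes [BP]_{ky}\quad\text{since $PA=BP$}\\
	&= \sum_{t=1}^n [QB]_{it}\otimes p_{ty}\\
	&= \sum_{t=1}^n [CQ]_{it}\otimes p_{ty}\quad\text{since $QB=CQ$}\\
	&= \sum_{s=1}^n C_{is}\,r_{sy} = [CR]_{iy},
\end{align*}
so $R\Adj(G)=\Adj(L)R$ in $\Mm_n(Z)$.

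Finally, to land in the category used in the definition of quantum isomorphism, I would embed the $*$-subalgebra of $Z$ generated by the coefficients $r_{ij}$ into a $C^*$-algebra $\widetilde Z$ — for instance into the minimal (or maximal) $C^*$-tensor product of $X$ and $Y$, or simply its $C^*$-enveloppe, noting that the coefficients, being self-adjoint projections, are bounded in any representation. The embedding is a $*$-homomorphism, hence preserves the relations $\Pp_s$ and the relation $R\Adj(G)=\Adj(L)R$; so the image of $R$ in $\Mm_n(\widetilde Z)$ is a magic unitary with $R\Adj(G)=\Adj(L)R$, i.e.\ a quantum isomorphism from $G$ to $L$. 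The argument is entirely routine: the only mild point of care is the displayed computation (which parallels the proof of Theorem~\ref{thm:qu_G}) together with the observation that the algebraic tensor product of $C^*$-algebras must be completed, and there is no real obstacle.
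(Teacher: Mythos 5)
Your proposal is correct and follows essentially the same route as the paper: apply Lemma~\ref{lem:convo_mu} to get the relations $\Pp_s$ for $R=Q*P$, perform the same intertwining computation $R\Adj(G)=\Adj(L)R$, and resolve the algebraic-versus-$C^*$ issue by passing to a $C^*$-algebra containing the algebraic tensor product (the paper simply takes $X\otimes_{\max}Y$, or any $C^*$-algebra containing $X\otimes_{alg}Y$, which is your completion step). No gaps.
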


\begin{proof}
	Let $X$ and $Y$ be the $C^*$-algebras of coefficients of $P$ and $Q$ respectively. 
	Let $Z = X\otimes_{max} Y$ (or any $C^*$-algebras that contain $X\otimes_{alg} Y$). 
	By Lemma~\ref{lem:convo_mu}, we know that $P*Q$ is a magic unitary with coefficients in $Z$. 
	So we only have to check the commutation relations. 
	Let $A = \Adj(G)$, $B=\Adj(H)$, $C=\Adj(L)$, and $R = Q*P$. 
	By assumption, we have $PA = BP$ and $QB = CQ$. 
	For $x \in V(L)$ and $y\in V(G)$, it comes:
	\begin{align*}
		[RA]_{xy} &= \sum_{z\in V(G)} r_{xz}a_{zy}\\
		&= \sum_{z\in V(G)} \sum_{t \in V(H)} q_{xt}\otimes p_{tz}a_{zy}\\
		&= \sum_{t\in V(H)} q_{xt} \otimes [PA]_{ty}\\
		&= \sum_{t\in V(H)} p_{xt} \otimes [BP]_{ty}\\
		&= \sum_{t\in V(H)} \sum_{z\in V(H)} q_{xt} \otimes b_{tz}p_{zy}\\
		&= \sum_{z\in V(H)} [QB]_{xz} \otimes p_{zy}\\
		&= \sum_{z\in V(H)} [CQ]_{xz} \otimes p_{zy}\\
		&= \sum_{z\in V(H)} \sum_{t\in V(L)} c_{xt} q_{tz} \otimes p_{zy}\\
		&= \sum_{t\in V(L)} c_{xt} r_{ty}\\
		&= [CR]_{xy},
	\end{align*}
	as desired. 
	This concludes the proof.
\end{proof}

We can now prove the following.

\begin{theorem}\label{thm:qi_is_equivalence_relation}
	Quantum isomorphism is an equivalence relation.
\end{theorem}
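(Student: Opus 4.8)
The plan is to show that quantum isomorphism is reflexive, symmetric, and transitive, treating transitivity as already done via Lemma~\ref{lem:transitivity_qi} and the convolution operation of Lemma~\ref{lem:convo_mu}.

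\textbf{Reflexivity.} For any graph $G$ with $V(G)=\{1,\ldots,n\}$, the identity matrix $I_n \in \Mm_n(\C)$ is a magic unitary: its coefficients are the scalars $\delta_{ij}$, which are self-adjoint projections summing to $1$ on each row and column, so relations $\Pp_s$ hold (using Lemma~\ref{lem:mu_p2_redundant} if one wants to avoid checking $\Pp_2$ by hand). Moreover $I_n \Adj(G) = \Adj(G) = \Adj(G) I_n$, so $I_n$ is a quantum isomorphism from $G$ to itself. Hence $G$ is quantum isomorphic to $G$.

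\textbf{Symmetry.} Suppose $P \in \Mm_n(X)$ is a quantum isomorphism from $G$ to $H$, so $P\Adj(G) = \Adj(H)P$. I would first recall (from the Remark following Lemma~\ref{lem:mu_p2_redundant}, or directly) that a magic unitary is a unitary element of $\Mm_n(X)$, so $P^* P = P P^* = I_n$, and in fact for a magic unitary $P^* = P^\top$, the transpose, since each entry is self-adjoint. The transpose $P^\top$ again satisfies relations $\Pp_s$ (this is built into Lemma~\ref{lem:mu_p2_redundant}, whose proof explicitly treats both $U$ and $U^\top$), so $P^\top$ is a magic unitary with coefficients in $X$. From $P\Adj(G) = \Adj(H)P$, multiplying on the left and right by $P^*=P^\top$ and using that $\Adj(G)$, $\Adj(H)$ are symmetric real matrices, one gets $P^\top \Adj(H) = \Adj(G) P^\top$: indeed, taking adjoints of $P\Adj(G)=\Adj(H)P$ gives $\Adj(G)P^* = P^*\Adj(H)$, i.e. $\Adj(G)P^\top = P^\top\Adj(H)$. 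Thus $P^\top$ is a quantum isomorphism from $H$ to $G$, so $H$ is quantum isomorphic to $G$.

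\textbf{Transitivity.} This is exactly the content of Lemma~\ref{lem:transitivity_qi}: given quantum isomorphisms $P$ from $G$ to $H$ and $Q$ from $H$ to $L$, the convolution $Q * P$ (with coefficients in the algebraic tensor product, embedded in, say, the maximal $C^*$-tensor product of the coefficient algebras) is a magic unitary by Lemma~\ref{lem:convo_mu} and satisfies the intertwining relation $(Q*P)\Adj(G) = \Adj(L)(Q*P)$, hence is a quantum isomorphism from $G$ to $L$. Combining the three properties gives the theorem. The only mildly delicate point is the handling of the coefficient $C^*$-algebras and the tensor product in the transitivity step, but this is already absorbed into the statement and proof of Lemma~\ref{lem:transitivity_qi}; in the symmetry step the one thing worth being careful about is the identity $P^* = P^\top$ for magic unitaries, which follows since every entry of $P$ is a self-adjoint element.
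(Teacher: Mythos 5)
Your proof is correct and follows essentially the same route as the paper: reflexivity via the identity matrix, symmetry via the adjoint (transpose) of a magic unitary intertwining the adjacency matrices, and transitivity via Lemma~\ref{lem:transitivity_qi}. The extra detail you supply for the symmetry step (taking adjoints of $P\Adj(G)=\Adj(H)P$ and using self-adjointness of the entries) is exactly the verification the paper leaves implicit.
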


\begin{proof}
	Since the identity matrix in $\Mm_n(\C)$ is a magic unitary, and since the adjoint of a magic unitary is a magic unitary, it is clear that quantum isomorphism os reflexive and symmetric. 
	Finally, it is transitive by Lemma~\ref{lem:transitivity_qi}. 
	Thus quantum isomorphism is an equivalence relation.
\end{proof}

When $G$ and $H$ are quantum isomorphic, we write $G\qi H$. 
We conclude this section by a few useful results and remarks. 

\begin{lemma}\label{lem:classical_qi_is_i}
	Let $G$ and $H$ be two graphs and let $U$ be a quantum isomorphism from $G$ to $H$. 
	Assume that the coefficients of $U$ commute. 
	Then $G$ is isomorphic to $H$.
\end{lemma}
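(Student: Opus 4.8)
The plan is to use the commutativity hypothesis to reduce the magic unitary $U$ to a permutation matrix and then invoke Lemma~\ref{lem:isomorphism}. First I would observe that the coefficients $u_{ij}$ of $U$ are self-adjoint projections (by relations $\Pp_0$), and since they all commute, the unital $*$-algebra they generate is a commutative $C^*$-algebra, hence by Gelfand duality isomorphic to $C(\Omega)$ for some nonempty compact Hausdorff space $\Omega$ (the space of characters). Evaluating at any character $\chi \colon C(\Omega) \to \C$ gives scalars $c_{ij} = \chi(u_{ij}) \in \C$, and these still satisfy relations $\Pp_s$ as well as the commutation relation $C\Adj(G) = \Adj(H)C$ for $C = (c_{ij})$, because $*$-morphisms preserve all polynomial relations.

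Next I would argue that a scalar magic unitary is a permutation matrix. Indeed, each $c_{ij} \in \C$ is a self-adjoint idempotent, so $c_{ij} \in \{0,1\}$; the row and column sum conditions from $\Pp_1$ then say that in every row and every column there is exactly one entry equal to $1$. Hence $c_{ij} = \delta_{i,\sigma(j)}$ for a unique $\sigma \in S_n$, i.e.\ $C = P_\sigma$. The commutation relation $P_\sigma \Adj(G) = \Adj(H) P_\sigma$ rewrites as $P_{\sigma\inv}\Adj(H)P_\sigma = \Adj(G)$, so by Lemma~\ref{lem:isomorphism}, $\sigma$ induces an isomorphism between $G$ and $H$, which gives the conclusion.

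The only genuine point requiring care is that $\Omega$ is nonempty, equivalently that the $*$-algebra generated by the $u_{ij}$ is not the zero algebra; but this $*$-algebra is unital (it contains the identity, as a nontrivial magic unitary has $\sum_j u_{1j} = 1$), and a nonzero commutative $C^*$-algebra always admits a character, so there is no obstacle here. An alternative, more self-contained route avoiding Gelfand duality: take a maximal ideal $\mathfrak{m}$ of the commutative unital ring generated by the $u_{ij}$ (which exists as the ring is nonzero), pass to the quotient field, and note the images of the projections still satisfy $c^2 = c$ forcing them into $\{0,1\}$; this reproduces $P_\sigma$ purely algebraically. Either way the argument is short, and the main ``obstacle'' is merely the bookkeeping of checking that passing through a scalar-valued point preserves exactly the relations needed.
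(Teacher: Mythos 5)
Your proof is correct and follows essentially the same route as the paper's: pass to the commutative $C^*$-algebra generated by the coefficients of $U$, use Gelfand duality, evaluate at a point/character to obtain a scalar magic unitary intertwining the adjacency matrices, observe that it is a permutation matrix, and conclude via Lemma~\ref{lem:isomorphism}. The purely algebraic aside via a maximal ideal is a harmless variation but not needed; the main argument matches the paper.
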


\begin{proof}
	Let $n = \abs{V(G)} = \abs{V(H)}$ by definition. 
	Let $X$ be the $C^*$-algebra generated by the coefficients of $U$. 
	By assumption, it is a commutative unital $C^*$-algebra, so by the classification of abelian $C^*$-algebras there exists a nonempty compact Hausdorff space $X$ such that $X$ is isomorphic to the $C^*$-algebra $C(X)$. 
	We will work through this isomorphism and consider $u_{ij} \in C(X)$ for all $1\leq i,j\leq n$. 
	For $1\leq i,j\leq n$, we have $u_{ij}^2 = u_{ij}$, so, for every $x \in X$, we have $u_{ij}(x) \in \{0,1\}$. 
	Fix $x_0 \in X$. 
	Now, notice that the evaluation at $x_0$ is a unital $*$-morphism from $C(X)$ to $\C$. 
	This implies that $U(x_0) = (u_{ij}(x_0))_{1\leq i,j\leq n} \in \Mm_n(\{0,1\})$ is a quantum isomorphism from $G$ to $H$. 
	Since its coefficients are 0 and 1, it is a permutation matrix. 
	Hence, by Lemma~\ref{lem:isomorphism}, it is given by an isomorphism between $G$ and $H$, and $G$ and $H$ are isomorphic. 
	This concludes the proof.
\end{proof}

\begin{lemma}\label{lem:square_mu}
	Let $X$ be a unital $C^*$-algebra and let $m$ and $n\in \N$. 
	Let $P\in \Mm_{m,n}(X)$. 
	Assue that, for all $1\leq i\leq m$ and for all $1\leq j\leq n$, we have
	\[ \sum_{k=1}^n p_{ik} = 1_X = \sum_{k=1}^m p_{kj}.\]
	Then $m=n$.
\end{lemma}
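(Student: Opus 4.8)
The statement to prove is Lemma~\ref{lem:square_mu}: if $P \in \Mm_{m,n}(X)$ has all row sums and all column sums equal to $1_X$, then $m = n$. The natural tool is a trace-like or state-based argument that turns the two sum conditions into a scalar identity. The plan is to pick a state $\varphi$ on $X$ (which exists because $X$ is a unital $C^*$-algebra, e.g.\ take any state, or a tracial state if one prefers) and apply it entrywise to $P$, obtaining a scalar matrix $\widetilde P = (\varphi(p_{ij})) \in \Mm_{m,n}(\C)$. Since $\varphi$ is linear and unital, $\widetilde P$ still has all row sums equal to $1$ and all column sums equal to $1$.

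\emph{Key steps, in order.} First, reduce to the scalar case via a state as above; this is where the $C^*$-hypothesis is used, just to guarantee a unital linear functional. Second, from the row-sum condition, $\widetilde P \mathbf{1}_n = \mathbf{1}_m$ where $\mathbf{1}_k$ denotes the all-ones column vector of length $k$; from the column-sum condition, $\mathbf{1}_m^{\,t} \widetilde P = \mathbf{1}_n^{\,t}$, equivalently $\widetilde P^{\,t} \mathbf{1}_m = \mathbf{1}_n$. Third, compute the ``total sum'' of all entries of $\widetilde P$ in two ways: $\mathbf{1}_m^{\,t} \widetilde P \mathbf{1}_n = \mathbf{1}_m^{\,t} (\widetilde P \mathbf{1}_n) = \mathbf{1}_m^{\,t} \mathbf{1}_m = m$, and also $\mathbf{1}_m^{\,t} \widetilde P \mathbf{1}_n = (\mathbf{1}_m^{\,t} \widetilde P) \mathbf{1}_n = \mathbf{1}_n^{\,t} \mathbf{1}_n = n$. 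Hence $m = n$.

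Alternatively, and perhaps cleaner to write without invoking a state at all: sum the identity $\sum_{k=1}^n p_{ik} = 1_X$ over $i = 1, \ldots, m$ to get $\sum_{i,k} p_{ik} = m \cdot 1_X$, and sum $\sum_{k=1}^m p_{kj} = 1_X$ over $j = 1, \ldots, n$ to get $\sum_{k,j} p_{kj} = n \cdot 1_X$. Both left-hand sides are the sum of all entries of $P$, so $m \cdot 1_X = n \cdot 1_X$ in $X$, and since $X$ is a nonzero unital ring (it is a unital $C^*$-algebra, hence $1_X \neq 0$), this forces $m = n$ as integers. This avoids states entirely and is the version I would actually include.

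\emph{Main obstacle.} There is essentially no obstacle here — the only subtlety is the trivial-algebra edge case: one must know $1_X \neq 0_X$, i.e.\ $X \neq 0$, which is implicit in ``$X$ is a unital $C^*$-algebra'' under the usual convention, so that $m \cdot 1_X = n \cdot 1_X$ genuinely implies $m = n$. The second argument above is completely elementary and direct; I would present that one.
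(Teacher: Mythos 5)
Your proposal is correct, and the version you say you would actually include --- summing all entries of $P$ two ways to get $m\cdot 1_X = n\cdot 1_X$ and using $1_X \neq 0$ --- is exactly the double-summation argument the paper gives. The state-based variant is a fine alternative but unnecessary, as you note.
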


\begin{proof}
	This is a double-summation:
	\[m.1_X = \sum_{i=1}^m \sum_{j=1}^n p_{ij} = n.1_X.\]
	Since $X$ is nontrivial, we have $1_X \neq 0$ so $m=n$, as desired.
\end{proof}

Lemma~\ref{lem:square_mu} has the following consequence: if one had allowed magic unitaries to be rectangular, that is, had defined them to be projection-valued rectangular matrices with rows and columns summing up to 1, then one would not have obtained any new magic unitaries. 
In particular, defining quantum isomorphism in this way would not change the relation. 
This justifies the saying that quantum isomorphism preserves the number of vertices (even though in our context it is part of the definition). 
Beside that consequence, Lemma~\ref{lem:square_mu} will also prove itself useful in a few proofs.

The following lemma will often be used without explicit reference.

\begin{lemma}\label{lem:mu_vs_qi}
	Let $G$ and $H$ be two graphs on $n\geq 1$ vertices and let $U$ be a magic unitary of size $n$. 
	Then $U$ is a quantum isomorphism from $H$ to $G$ if and only if $\begin{pmatrix}
		0 & U\\
		U^* & 0
	\end{pmatrix}$ is a magic unitary adapted to $G+H$ with vertices of $G$ enumerated before the ones of $H$.
\end{lemma}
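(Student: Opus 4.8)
The plan is to unwind both sides of the equivalence into concrete matrix relations and check they coincide. Write $M = \begin{pmatrix} 0 & U \\ U^* & 0 \end{pmatrix}$ as a $2n \times 2n$ matrix indexed by $V(G+H) = V(G) \sqcup V(H)$, with the vertices of $G$ listed first. Since $G+H$ is the disjoint union, its adjacency matrix has the block form $\Adj(G+H) = \begin{pmatrix} \Adj(G) & 0 \\ 0 & \Adj(H) \end{pmatrix}$.

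First I would check the magic-unitary condition on $M$. By Lemma~\ref{lem:mu_p2_redundant}, it suffices to verify that the coefficients of $M$ are self-adjoint projections whose rows and columns each sum to $1$. Self-adjointness of $M$ is immediate from the block structure, since $(U^*)^* = U$. For the projection and row/column-sum conditions, one observes that these hold for $M$ if and only if they hold for $U$: indeed $M^2 = \begin{pmatrix} UU^* & 0 \\ 0 & U^*U \end{pmatrix}$, and $U$ being a magic unitary is exactly the statement that $U$ has self-adjoint projection entries summing to $1$ on rows and columns, which (again invoking Lemma~\ref{lem:mu_p2_redundant} in the other direction, together with the fact that $U^*$ is then automatically also a magic unitary) gives $UU^* = U^*U = I_n$ and transfers all the $\Pp_0 \cup \Pp_1$ relations to $M$. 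So $M$ is a magic unitary precisely when $U$ is one; note this part does not use any assumption about $G$ or $H$ individually, only that $U$ is a size-$n$ magic unitary, which is a standing hypothesis.

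Next I would handle the commutation relation, which is where the content lies. Compute
\[ M \cdot \Adj(G+H) = \begin{pmatrix} 0 & U\,\Adj(H) \\ U^*\,\Adj(G) & 0 \end{pmatrix}, \qquad \Adj(G+H) \cdot M = \begin{pmatrix} 0 & \Adj(G)\,U \\ \Adj(H)\,U^* & 0 \end{pmatrix}. \]
Equating these two block matrices, $M$ is adapted to $G+H$ if and only if both $U\,\Adj(H) = \Adj(G)\,U$ and $U^*\,\Adj(G) = \Adj(H)\,U^*$. The first equation says exactly that $U$ is a quantum isomorphism from $H$ to $G$; the second is its adjoint (using that adjacency matrices are self-adjoint, by the remark following Lemma~\ref{lem:power_adja}), so the two are equivalent. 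Combining with the previous paragraph: $M$ is a magic unitary adapted to $G+H$ iff $U$ is a magic unitary with $U\,\Adj(H) = \Adj(G)\,U$, i.e.\ iff $U$ is a quantum isomorphism from $H$ to $G$, which is the claim.

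The only mild subtlety — and the one place I would be careful — is the passage between "$U$ has projection entries summing to $1$ on rows and columns" and "$UU^* = U^*U = I_n$", needed to confirm the block multiplications $M M^*$, etc., behave as claimed; this is handled cleanly by Lemma~\ref{lem:mu_p2_redundant} and the observation that the transpose (hence adjoint) of a magic unitary is a magic unitary, both already available in the excerpt. Everything else is a routine block-matrix computation.
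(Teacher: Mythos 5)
Your proof is correct and is essentially the ``direct computation'' the paper leaves implicit: block-multiplying against $\Adj(G+H) = \begin{pmatrix}\Adj(G) & 0\\ 0 & \Adj(H)\end{pmatrix}$ reduces adaptedness to $U\Adj(H) = \Adj(G)U$ together with its adjoint (equivalent since adjacency matrices are self-adjoint), while the magic-unitary property of the block matrix follows from the standing hypothesis on $U$. One small remark: the detour through $M^2$ and $UU^* = U^*U = I_n$ is unnecessary, since being a magic unitary is the entrywise condition $\Pp_0\cup\Pp_1$ (Lemma~\ref{lem:mu_p2_redundant}), which transfers directly because each row and column of the block matrix is a row or column of $U$ or of $U^*$ padded with zeros.
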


\begin{proof}
	Direct computation.
\end{proof}

\begin{lemma}\label{lem:mu_complement}
	Let $G$ be a graph on $n\geq 1$ vertices and $U$ a magic unitary of size $n$. 
	Then $[U,\Adj(G)] = 0$ if and only if $[U,\Adj(G^c)] = 0$. 
\end{lemma}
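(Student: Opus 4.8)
The plan is to use the explicit relation between the adjacency matrices of a graph and its complement, namely $\Adj(G^c) = J_n - \Adj(G) - I_n$, where $J_n$ is the all-ones matrix. Thus $[U, \Adj(G^c)] = [U, J_n] - [U, \Adj(G)] - [U, I_n]$. The term $[U, I_n]$ vanishes trivially. The crucial observation is that $[U, J_n] = 0$ for \emph{every} magic unitary $U$ of size $n$: indeed, using that the rows of $U$ sum to $1_X$ we get $[U J_n]_{ij} = \sum_k u_{ik} = 1_X$, and using that the columns of $U$ sum to $1_X$ we get $[J_n U]_{ij} = \sum_k u_{kj} = 1_X$, so $U J_n = J_n U$. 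Here I would appeal to Lemma~\ref{lem:mu_p2_redundant} to recall that a magic unitary satisfies exactly the row- and column-sum relations $\Pp_1$ needed for this computation (together with the projection relations, which are not needed here).

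Putting these together, $[U, \Adj(G^c)] = -[U, \Adj(G)]$, so one of these commutators vanishes if and only if the other does. This gives both implications at once, and by symmetry (since $(G^c)^c = G$) there is nothing further to check.

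There is essentially no obstacle: the only mild subtlety is to make sure one is entitled to use the row/column sum conditions, which is immediate from the definition of a magic unitary (or from Lemma~\ref{lem:mu_p2_redundant}), and to note that $J_n$, $I_n$ have scalar entries so they commute with $U$ entrywise in the relevant sense. I would write this out in two or three lines:
\begin{align*}
	[U, \Adj(G^c)] &= [U, J_n - \Adj(G) - I_n]\\
	&= [U, J_n] - [U, \Adj(G)] - [U, I_n]\\
	&= -[U, \Adj(G)],
\end{align*}
using $[U, J_n] = 0$ and $[U, I_n] = 0$. Hence $[U, \Adj(G)] = 0 \iff [U, \Adj(G^c)] = 0$.
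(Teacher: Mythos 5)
Your proof is correct and follows the same route as the paper: both rest on the identity $\Adj(G^c) = J_n - I_n - \Adj(G)$ together with the observation that any magic unitary commutes with $J_n$ (via the row and column sum relations) and with $I_n$. The only cosmetic remark is that the row/column sums are part of the definition of a magic unitary (relations $\Pp_1$), so the appeal to Lemma~\ref{lem:mu_p2_redundant} is unnecessary, though harmless.
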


\begin{proof}
	This follows the definition of a magic unitary and from the fact that $\Adj(G^c) = J_n -I_n - \Adj(G)$, where $I_n$ is the identity matrix and $J_n$ is the matrix with all coefficients equal to 1.
\end{proof}

It follows immediately that $\Qu(G) = \Qu(G^c)$.

\begin{remark}\label{rk:auto_complement}
	Since permutation matrices are magic unitary, it follows from Lemma~\ref{lem:mu_complement} that $G$ and $G^c$ have the same automorphisms. 
	In particular, we have that $\Aut(G) = \Aut(G^c)$, and that $G$ satisfies Schmidt's criterion if and only if $G^c$ does, that $G$ is asymmetric if and only if $G^c$ is.
\end{remark}

\subsection{Metric aspects of quantum automorphisms}\label{subsec:metric_qu}

Quantum isomorphism is a weaker relation than isomorphism: in~\cite{QiNotI}, the authors exhibit a pair of quantum isomorphic graphs which are not isomorphic. 
However, in practice, many properties remain preserved under quantum isomorphism. 
Indeed, one source of rigidity for quantum isomorphism is that it preserves some metric aspects of the graphs. 
In this section, we gather a few of these results. 
Lemma~\ref{lem:hall_mu} is ours (though the result is well-known in the context of doubly stochastic matrices). 
The rest of the results of this section seem to be more or less well-known but some proofs are new. 
We try to mention sources when we know them.

We start with a key lemma about magic unitaries.

\begin{lemma}\label{lem:hall_mu}
	Let $X$ be a unital $C^*$-algebra. 
	Let $n\geq 1$ and let $P \in \Mm_n(X)$ be a magic unitary. 
	Then there exists a permutation $\sigma \in S_n$ such that, for all $1\leq i\leq n$, we have $p_{i\sigma(i)} \neq 0$.
\end{lemma}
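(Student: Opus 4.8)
The plan is to deduce this from Hall's marriage theorem (Theorem~\ref{thm:hall}) applied to a suitable bipartite graph built out of the support pattern of $P$. I would let $A = \{1,\ldots,n\}$ index the rows, $B = \{1',\ldots,n'\}$ index the columns, and form the bipartite graph $\Gamma$ on $A \sqcup B$ with $i \sim j'$ exactly when $p_{ij} \neq 0$. A permutation $\sigma$ with $p_{i\sigma(i)} \neq 0$ for all $i$ is precisely a perfect matching in $\Gamma$ (equivalently a matching saturating $A$, which by Lemma~\ref{lem:square_mu}-type counting is automatically perfect since $|A| = |B| = n$), so it suffices to verify Hall's condition (H): for every $S \subseteq A$, $|S| \leq |N(S)|$.

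To check (H), fix $S \subseteq A$ and let $T = N(S) \subseteq B$, i.e.\ $T = \{j' : p_{ij} \neq 0 \text{ for some } i \in S\}$. The key observation is that for each $i \in S$, the row relation $\sum_{j=1}^n p_{ij} = 1_X$ together with the fact that $p_{ij} = 0$ for $j' \notin T$ gives $\sum_{j' \in T} p_{ij} = 1_X$. Summing over $i \in S$ yields $\sum_{i \in S}\sum_{j' \in T} p_{ij} = |S|\cdot 1_X$. On the other hand, grouping the same double sum by columns and using that each column of the magic unitary sums to $1_X$ (so each partial column sum $\sum_{i \in S} p_{ij}$ is a self-adjoint projection, being a sub-sum of orthogonal projections summing to $1$, hence of norm $\leq 1$), we get that $\bigl\|\sum_{i\in S}\sum_{j'\in T} p_{ij}\bigr\| \leq \sum_{j' \in T} \bigl\|\sum_{i\in S} p_{ij}\bigr\| \leq |T|$. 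Comparing, $|S| = |S|\cdot\|1_X\| \leq |T| = |N(S)|$, which is exactly (H). (Here one uses $1_X \neq 0$, valid since $X$ is unital.)

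Applying Theorem~\ref{thm:hall} then produces a matching saturating $A$; since $|A| = |B|$, it is a perfect matching, which defines the desired permutation $\sigma \in S_n$ with $p_{i\sigma(i)} \neq 0$ for all $i$. The main obstacle — or rather the one point needing care — is the norm estimate on the column partial sums: one must justify that a sum of a subfamily of mutually orthogonal projections (the $\Pp_2$ relations along a column guarantee orthogonality) is itself a projection and hence has norm $0$ or $1$, so that the triangle inequality over the $|T|$ columns gives the clean bound $|T|$. Everything else is the standard Hall-type double-counting argument transported from doubly stochastic matrices to the noncommutative setting, where scalars are replaced by $C^*$-norm estimates.
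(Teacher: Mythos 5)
Your proposal is correct and follows essentially the same route as the paper: the same bipartite ``positivity'' graph on rows and columns with an edge exactly when $p_{ij}\neq 0$, Hall's marriage theorem, and the same double sum $\sum_{i\in S}\sum_{j\in N(S)}p_{ij}=\abs{S}\cdot 1_X$ using the row relations. The only cosmetic difference is how the numerical bound is extracted: the paper compares positive elements directly, bounding the double sum above by $\abs{N(S)}\cdot 1_X$ via the full column sums, whereas you bound its norm by $\abs{N(S)}$ using that each partial column sum is a projection; both are valid.
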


\begin{proof}
	This is a consequence of Hall's marriage theorem. 
	Indeed, let $G$ be the following bipartite graph: let $A = \{ (r,i) \mid 1\leq i\leq n\}$ and $B = \{ (c,j) \mid 1\leq j\leq n\}$ and define $V(G) = A \sqcup B$. 
	For $1\leq i,j\leq n$, we set $(r,i)(c,j) \in E(G)$ if $p_{ij}\neq 0$. 
	Notice that $G$ is a bipartite graph. 
	We claim that $G$ satisfies condition~\ref{hall_condition}. 
	Let $S \subset A$. 
	It comes:
	\begin{align*}
		\abs{S}.1_X &= \sum_{(r,i) \in S} \sum_{j=1}^n p_{ij}\\
		&= \sum_{(r,i) \in S} \sum_{(c,j) \in N(S)} p_{ij}\ \text{since by construction $p_{ij}=0$ if $(c,j) \notin N(S)$}\\
		&= \sum_{(c,j)\in N(S)} \sum_{(r,i) \in S} p_{ij}\\
		&\leq \sum_{(c,j) \in N(S)} \sum_{i=1}^n p_{ij}\\
		&= \abs{N(S)}.1_X,
	\end{align*}
	as desired. 
	Hence, $G$ satisfies condition~\ref{hall_condition}, so, by Hall's marriage theorem (Theorem~\ref{thm:hall}), there exists a matching of $G$ saturating $A$. 
	That is, there is an injective function $\sigma$ from $\{1,\ldots,n\}$ to itself -- hence, a permutation in $S_n$ -- such that, for every $1\leq i\leq n$, we have $(r,i)(c,\sigma(i)) \in E(G)$, i.e. $p_{i\sigma(i)}\neq 0$. 
	This concludes the proof.
\end{proof}

The graph $G$ in the proof of Lemma~\ref{lem:hall_mu} is sometimes referred to as the \textit{positivity graph} of $P$ (in the context of doubly stochastic matrices).

The next lemma is a direct consequence of Lemma 3.2.3 of~\cite{Fulton2006}, with the same proof (that we include for the sake of completeness).

\begin{lemma}\label{lem:degree}
	Let $G$ be a graph, let $U$ be a magic unitary adapted to it, and let $x$, $y\in V(G)$ be such that $u_{xy}\neq 0$. 
	Then $deg(x)=deg(y)$.
\end{lemma}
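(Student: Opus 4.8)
The plan is to exploit the fact that a magic unitary $U$ adapted to $G$ commutes not only with $A := \Adj(G)$ but also with $A^2$, combined with the relations $\Pp_2$, which allow one to collapse a sum indexed over a fixed row or a fixed column down to a single term. The relevant input is Lemma~\ref{lem:power_adja}, which gives $[A^2]_{xx} = \deg(x)$ for every vertex $x$.

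Concretely, I would first write out the $(x,y)$-entry of the identity $A^2 U = U A^2$:
\[ \sum_{z\in V(G)} [A^2]_{xz}\, u_{zy} = \sum_{z\in V(G)} u_{xz}\, [A^2]_{zy}. \]
Then I would multiply this equation on the left by the projection $u_{xy}$. On the left-hand side, the column relation from $\Pp_2$ gives $u_{xy} u_{zy} = \delta_{x,z}\, u_{xy}$, so only the term $z = x$ survives and the left-hand side becomes $[A^2]_{xx}\, u_{xy} = \deg(x)\, u_{xy}$. On the right-hand side, the row relation from $\Pp_2$ gives $u_{xy} u_{xz} = \delta_{y,z}\, u_{xy}$, so only the term $z = y$ survives and the right-hand side becomes $u_{xy}\, [A^2]_{yy} = \deg(y)\, u_{xy}$.

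Putting the two sides together yields $(\deg(x) - \deg(y))\, u_{xy} = 0$ in $X$. Since $X$ is a nontrivial unital $C^*$-algebra, $u_{xy} \neq 0$, and $\deg(x) - \deg(y)$ is a scalar (hence invertible as soon as it is nonzero), we conclude $\deg(x) = \deg(y)$. I do not expect any genuine obstacle here; the only point requiring care is keeping the bookkeeping of the two $\Pp_2$ relations straight, i.e.\ remembering that multiplying on the \emph{left} by $u_{xy}$ is what forces the left-hand sum to collapse at $z=x$ (same column $y$) and the right-hand sum at $z=y$ (same row $x$).
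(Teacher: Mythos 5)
Your proof is correct and follows essentially the same route as the paper's: take the $(x,y)$-entry of $A^2U = UA^2$, multiply on the left by $u_{xy}$ so that the column and row relations of $\Pp_2$ collapse the two sums at $z=x$ and $z=y$ respectively, and conclude via Lemma~\ref{lem:power_adja} and $u_{xy}\neq 0$. No gaps.
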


\begin{proof}
	Because $U$ is adapted to $G$, it commutes with $A$, and so with $A^2$, and we have: 
	\[ \sum_{z\in V(G)} [A^2]_{xz} u_{zy} = [A^2U]_{xy} = [UA^2]_{xy} = \sum_{z\in V(G)} u_{xz} [A^2]_{zy}.\]
	Multiplying by $u_{xy}$ on the left, we reach:
	\[ [A^2]_{xx} u_{xy} = [A^2]_{yy} u_{xy}.\]
	Now because $u_{xy} \neq 0$, we have $[A^2]_{xx} = [A^2]_{yy}$, and by Lemma~\ref{lem:power_adja} we have $[A^2]_{zz} = deg(z)$ for any vertex $z$, which concludes the proof.
\end{proof}

Lemmas~\ref{lem:hall_mu} and Lemma~\ref{lem:degree} already allow us to obtain in an explicit way that quantum isomorphism preserves the number of edges.

\begin{lemma}\label{lem:qi_same_nb_edges}
	Two quantum isomorphic graphs have the same number of edges.
\end{lemma}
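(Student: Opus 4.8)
The plan is to combine Lemma~\ref{lem:hall_mu} and Lemma~\ref{lem:degree} to transfer a degree count from one graph to the other. Let $G$ and $H$ be quantum isomorphic, so there is a magic unitary $U$ with $U\Adj(G) = \Adj(H)U$, and by Lemma~\ref{lem:mu_vs_qi} the matrix $W = \begin{pmatrix} 0 & U \\ U^* & 0 \end{pmatrix}$ is a magic unitary adapted to $G+H$. First I would apply Lemma~\ref{lem:hall_mu} to $U$ (as an $n\times n$ magic unitary) to obtain a permutation $\sigma \in S_n$ with $u_{i\sigma(i)} \neq 0$ for all $i$. Relabelling the vertices of $H$ by $\sigma$, which does not change its number of edges, we may assume $u_{ii} \neq 0$ for all $i\in V(G) = V(H)$.

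Next I would apply Lemma~\ref{lem:degree} to the magic unitary $W$ adapted to $G+H$. Since $u_{ii} \neq 0$ corresponds to a nonzero off-diagonal coefficient of $W$ linking vertex $i$ of $G$ to vertex $i$ of $H$, the lemma gives $\deg_G(i) = \deg_H(i)$ for every $i$. Summing over all vertices, $\sum_i \deg_G(i) = \sum_i \deg_H(i)$, and since the sum of degrees is twice the number of edges, we conclude $\abs{E(G)} = \abs{E(H)}$.

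The only subtlety is making sure Lemma~\ref{lem:degree} applies in the form stated: it is phrased for a single graph with a magic unitary adapted to it, so I would explicitly invoke $G+H$ and the identification of $W$ with a magic unitary adapted to it via Lemma~\ref{lem:mu_vs_qi}, noting that the degree of a vertex of $G$ inside the disjoint union $G+H$ equals its degree in $G$ (and similarly for $H$). I do not expect any real obstacle here; the argument is a short chaining of the two preceding lemmas, with the relabelling-by-$\sigma$ step being the only thing that needs a word of justification.
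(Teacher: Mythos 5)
Your proposal is correct and follows essentially the same route as the paper: apply Lemma~\ref{lem:hall_mu} to reorder so that the diagonal coefficients of $U$ are nonzero, pass to the block magic unitary adapted to $G+H$ via Lemma~\ref{lem:mu_vs_qi}, use Lemma~\ref{lem:degree} to match degrees, and sum. The relabelling and the identification of degrees inside $G+H$ are exactly the points the paper's proof also handles, so there is nothing missing.
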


\begin{proof}
	Let $G$ and $H$ be two quantum isomorphic graphs on $n\geq 1$ vertices and let $U$ be a quantum isomorphism between them. 
	By Lemma~\ref{lem:hall_mu}, there is an ordering of the vertices of $G$ and $H$ such that $u_{ii} \neq 0$ for all $1\leq i\leq n$. 
	Let $A = \Adj(G)$ and $B= \Adj(H)$ for this ordering. 
	Notice that $\Diag(A,B) = \Adj(G+H)$ and that $\begin{pmatrix}
		0 & U\\
	U^* & 0\end{pmatrix}$ is a magic unitary adapted to $G+H$, so we can apply lemma~\ref{lem:degree} and find that $\deg_G(x_i) = \deg_H(y_i)$, where $(x_1,\ldots,x_n)$ is the enumeration of $V(G)$ used and $(y_1,\ldots,y_n)$ the one of $V(H)$. 
	So $2\abs{E(G)} = \sum_{i=1}^n \deg_G(x_i) = \sum_{i=1}^n \deg_H(y_i) = 2\abs{E(H)}$ and we are done.
\end{proof}

Notice that preservation of the number of vertices (in the sense given by Lemma~\ref{lem:square_mu}) and edges (as given by Lemma~\ref{lem:qi_same_nb_edges}) can also be otbained directly from~\cite{ManRob}: indeed,  if $G \qi H$, because $K_1$ and $K_2$ are planar graphs, we have that $\# V(G) = \# \Hom(K_1,G) = \# \Hom(K_1,H) = \# V(H)$ and $2\# E(G) = \# \Hom(K_2,G) = \# \Hom(K_2,H) = 2\# E(H)$, as desired. 
However, this does not seem to give an explicit way to work with the magic unitaries.

The next lemma is already hinted at in Section 3 of Fulton's thesis~\cite{Fulton2006}, and is already known for the fundamental representation of $\Qu(G)$ (see Lemma~3.2 in~\cite{Schm2020}). 
We generalise it to arbitrary quantum isomorphisms.

\begin{lemma}\label{lem:distance}
	Let $G$, $H$ be a two graphs and $U$ be a quantum isomorphism from $G$ to $H$. 
	Let $x$, $y \in V(H)$ and $a$, $b \in V(G)$ and suppose that $u_{xa}u_{yb}\neq 0$. 
	Then $d_H(x,y) = d_G(a,b)$.
\end{lemma}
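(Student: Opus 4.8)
The plan is to compare walk counts of every length between the two pairs of vertices, exploiting that a quantum isomorphism intertwines \emph{all} powers of the adjacency matrices. Set $A = \Adj(G)$ and $B = \Adj(H)$, so that $UA = BU$ by hypothesis, and an immediate induction gives $UA^k = B^kU$ for all $k \geq 0$ (the case $k=0$ being trivial). By Lemma~\ref{lem:power_adja} the entries $[A^k]_{ab}$ and $[B^k]_{xy}$ are nonnegative integers counting walks of length $k$, and hence $d_G(a,b) = \min\{k \geq 0 : [A^k]_{ab} \neq 0\}$ and likewise $d_H(x,y) = \min\{k \geq 0 : [B^k]_{xy} \neq 0\}$, with the convention that the minimum over the empty set is $+\infty$ (this also covers the case of vertices in distinct connected components). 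So it suffices to show $[A^k]_{ab} = [B^k]_{xy}$ for every $k \geq 0$.

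To this end, I would take the $(x,b)$-entry of the identity $UA^k = B^kU$, which reads
\[ \sum_{c \in V(G)} u_{xc}\,[A^k]_{cb} \;=\; [UA^k]_{xb} \;=\; [B^kU]_{xb} \;=\; \sum_{z \in V(H)} [B^k]_{xz}\,u_{zb}, \]
and then multiply this equation on the left by $u_{xa}$ and on the right by $u_{yb}$. On the left-hand side the row relations in $\Pp_2$ give $u_{xa}u_{xc} = \delta_{ac}u_{xa}$, collapsing the sum to $[A^k]_{ab}\,u_{xa}u_{yb}$; on the right-hand side the column relations in $\Pp_2$ give $u_{zb}u_{yb} = \delta_{zy}u_{zb}$, collapsing the sum to $[B^k]_{xy}\,u_{xa}u_{yb}$. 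Hence
\[ [A^k]_{ab}\,u_{xa}u_{yb} \;=\; [B^k]_{xy}\,u_{xa}u_{yb}. \]

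Finally, since $u_{xa}u_{yb} \neq 0$ by hypothesis and $[A^k]_{ab}, [B^k]_{xy}$ are scalars, the difference $[A^k]_{ab} - [B^k]_{xy}$ must vanish (if a nonzero scalar annihilated $u_{xa}u_{yb}$, one could divide by it and conclude $u_{xa}u_{yb}=0$). Thus $[A^k]_{ab} = [B^k]_{xy}$ for all $k \geq 0$, and taking the least $k$ for which this common value is nonzero yields $d_G(a,b) = d_H(x,y)$. I do not expect any genuine obstacle: the only delicate point is to apply the two families of relations in $\Pp_2$ at the correct place, which is precisely why the argument uses a two-sided multiplication by coefficients of $U$ rather than a one-sided one as in the proof of Lemma~\ref{lem:degree}.
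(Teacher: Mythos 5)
Your proposal is correct and is essentially the paper's own argument: the key step in both is to take the $(x,b)$-entry of $UA^k = B^kU$, multiply by $u_{xa}$ on the left and $u_{yb}$ on the right, and collapse the sums via the magic-unitary relations to obtain $[A^k]_{ab}\,u_{xa}u_{yb} = [B^k]_{xy}\,u_{xa}u_{yb}$. The only difference is presentational: the paper runs the contrapositive at the single exponent $k=\min(d_H(x,y),d_G(a,b))$, while you conclude directly that $[A^k]_{ab}=[B^k]_{xy}$ for all $k$ (a mildly stronger statement) and then read off the distances.
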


\begin{proof}
	Let us prove the contrapositive. 
	Up to symmetry, we can assume that $d(x,y) < d(a,b)$. 
	We set $k = d(x,y)$ and by hypothesis we have $k < +\infty$. 
	Let us write $A = \Adj(G)$ and $B=\Adj(H)$. 
	We have:
	\begin{align*}
		u_{xa} [UA^k]_{xb} u_{yb} &= u_{xa} \left( \sum_{c \in V(G)} u_{xc}[A^k]_{cb} \right) u_{yb}\\
		&= u_{xa}u_{yb} [A^k]_{ab}
	\end{align*}
	on the one hand and
	\begin{align*}
		u_{xa} [B^kU]_{xb} u_{yb} &= u_{xa} \left( \sum_{z \in V(H)} [B^k]_{xz}u_{zb} \right) u_{yb}\\
		&= u_{xa}u_{yb} [B^k]_{xy}
	\end{align*}
	on the other hand. 
	Since $U$ is a quantum isomorphism from $G$ to $H$, we have $UA^k = B^kU$, so $[UA^k]_{xb} = [B^kU]_{xb}$. 
	By what precedes, we reach:
	\[ u_{xa}u_{yb}[B^k]_{xy} = u_{xa}u_{yb} [A^k]_{ab}.\]
	By Lemma~\ref{lem:power_adja}, we have $[A^k]_{ab} = 0$, so $u_{xa}u_{yb}[B^k]_{xy} = 0$. 
	Since $[B^k]_{xy} \neq 0$ (again by lemma~\ref{lem:power_adja}), we obtain that $u_{xa}u_{yb} = 0$, as desired. 
	This concludes the proof.
\end{proof}

This gives us the following corollary. 
Even though the result seems to be well-known, the following elementary proof is apparently not and we include it. 

\begin{corollary}\label{coro:connected_qi}
	If $G \qi H$ and $G$ is connected, then $H$ is connected.
\end{corollary}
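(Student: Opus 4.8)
The plan is to use Lemma~\ref{lem:distance} together with the combinatorial structure of a magic unitary to transfer connectedness from $G$ to $H$. Suppose $U$ is a quantum isomorphism from $G$ to $H$, with $G$ connected, and let $n = \abs{V(G)} = \abs{V(H)}$. I want to show any two vertices $x$, $y \in V(H)$ satisfy $d_H(x,y) < \infty$. The idea is that the column of $U$ indexed by a fixed $a \in V(G)$ cannot be identically zero (its entries sum to $1 \neq 0$ in the $C^*$-algebra of coefficients), so for each $a \in V(G)$ there exists some $x \in V(H)$ with $u_{xa} \neq 0$; symmetrically each row indexed by $x \in V(H)$ is nonzero, so for each $x$ there is some $a$ with $u_{xa} \neq 0$.

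First I would fix $x$, $y \in V(H)$ and pick $a$, $b \in V(G)$ with $u_{xa} \neq 0$ and $u_{yb} \neq 0$. If it happened that $u_{xa}u_{yb} \neq 0$, then Lemma~\ref{lem:distance} would immediately give $d_H(x,y) = d_G(a,b) < \infty$ since $G$ is connected, and we would be done. The obstacle is that a priori the product $u_{xa}u_{yb}$ could vanish even when each factor is nonzero, so this naive argument does not close. To get around this, I would instead argue along a walk: since $G$ is connected, and since I can join any two vertices of $G$ by a path, I would like to reduce to the case of \emph{adjacent} vertices of $H$ and chain the resulting equalities.

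The cleaner route is the following. Using Lemma~\ref{lem:hall_mu}, reorder the vertices of $G$ and $H$ so that $u_{ii} \neq 0$ for all $1 \leq i \leq n$; write $V(G) = \{a_1,\ldots,a_n\}$ and $V(H) = \{x_1,\ldots,x_n\}$ for these orderings. Now for each pair $i$, $j$ we have $u_{x_i a_i} \neq 0$ and $u_{x_j a_j} \neq 0$ by construction, but more importantly $u_{x_i a_i} u_{x_j a_j}$ need not vanish — however we can test it against the adjacency data exactly as in the proof of Lemma~\ref{lem:distance}: if $u_{x_i a_i} u_{x_j a_j} \neq 0$ then $d_H(x_i,x_j) = d_G(a_i,a_j)$, and if $u_{x_i a_i} u_{x_j a_j} = 0$ we need another argument. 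So instead I would take the contrapositive stance of Lemma~\ref{lem:distance} and combine it with the orthogonality relations $\Pp_2$: fix $x$, $y \in V(H)$; pick $a$ with $u_{xa} \neq 0$; then $\sum_{b \in V(G)} u_{xa} u_{yb} = u_{xa}\cdot 1 = u_{xa} \neq 0$, so there is some $b \in V(G)$ with $u_{xa}u_{yb} \neq 0$. Applying Lemma~\ref{lem:distance} to this choice gives $d_H(x,y) = d_G(a,b)$, which is finite since $G$ is connected. As $x$ and $y$ were arbitrary, $H$ is connected. The main obstacle, namely ruling out the vanishing of the relevant product, is precisely what the column-sum relation $\sum_b u_{yb} = 1$ resolves: multiplying on the left by the nonzero projection $u_{xa}$ forces at least one summand $u_{xa}u_{yb}$ to be nonzero.
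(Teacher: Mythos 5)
Your final argument is exactly the paper's proof: fix $x,y\in V(H)$, pick $a$ with $u_{xa}\neq 0$, multiply the row sum $\sum_b u_{yb}=1$ by $u_{xa}$ to produce some $b$ with $u_{xa}u_{yb}\neq 0$, and conclude via Lemma~\ref{lem:distance} and the connectedness of $G$. The preliminary detours (reordering via Lemma~\ref{lem:hall_mu}, chaining along walks) are unnecessary and correctly abandoned, so the proposal is correct and essentially identical to the paper's argument.
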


\begin{proof}
	Let $U$ be a quantum isomorphism from $G$ to $H$ and assume that $G$ is connected. 
	Take $a$, $b\in V(H)$. 
	We have $1 = \sum_{x\in V(G)} u_{xa}$ so there exists $x\in V(G)$ with $u_{xa}\neq 0$. 
	Now $u_{xa} = u_{xa} 1 = u_{xa} \sum_{y\in V(G)} u_{yb} = \sum_{y\in V(G)} u_{xa}u_{yb}$, so there exists $y\in V(G)$ such that $u_{xa}u_{yb} \neq 0$. 
	By Lemma~\ref{lem:distance}, we have $d(a,b) = d(x,y) < +\infty$. 
	This being true for any $a$, $b\in V(H)$, we conclude that $H$ is connected.
\end{proof}

\section{Quantum isomorphism, center, and sums of graphs}\label{sec:qi_sums}

In this section, we obtain structural results about quantum isomorphism that will be key for the rest of this article. 
In subsection~\ref{subsec:center}, we prove that quantum isomorphic graphs have quantum isomorphic centers (and slightly more) in Theorem~\ref{thm:qi_center}. 
In subsection~\ref{subsec:qi_sums}, we prove that quantum isomorphic graphs have quantum isomorphic connected components in Theorem~\ref{thm:qi_for_disconnected_graphs}. 
In subsection~\ref{subsec:wreath}, we recall the work of Bichon on wreath products~\cite{Bichon2004} and adapt it to our context.

\subsection{Center}\label{subsec:center}

The aim of this section is Theorem~\ref{thm:qi_center}, which has the immediate consequence that quantum isomorphic graphs have quantum isomorphic centers. 
This should be seen as a generalisation Fulton's similar work on center of trees~\cite{Fulton2006}.

Let $G$ be a graph. 
For $x\in V(G)$, we use the notation:
\[ r_G(x) = \sup_{y\in V(G)} d_G(x,y).\]
We sometimes write $r$ instead of $r_G$ when it is clear from context which graph we are working with. 
Notice that by finiteness of $V(G)$ there is always a vertex $y\in V(G)$ such that $r(x) = d(x,y)$.

The \textit{center} of $G$ is defined as
\[ Z(G) = \{ x\in V(G) \mid r(x) \text{ is minimal}\}.\]
Notice that $Z(G) \neq \varnothing$. 
Also, if $G$ is not connected, then $Z(G) = G$, so this notion presents more interest for connected graphs. 
Even though we define the center of $G$ as a set of vertices, we will often commit the abuse of writing $Z(G)$ both for the set of vertices and for the induced subgraph of $G$ induced by this set of vertices. 

The notion of center is already present in the article of Jordan of 1869~\cite{jordan1869assemblages}. 
Since we are not aware of any earlier definition, we believe the notion might be due to Jordan. 

We start by showing that the function $r$ is preserved by quantum isomorphisms.

\begin{lemma}\label{lem:qi_r}
	Let $G$ and $H$ be two graphs and let $U$ be a quantum isomorphism between them. 
	Let $x\in V(H)$ and $a\in V(G)$. 
	If $u_{xa}\neq 0$, then $r_H(x) = r_G(a)$.
\end{lemma}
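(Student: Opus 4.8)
The plan is to mimic the proof of Lemma~\ref{lem:distance}, using that $r_H(x) = d_H(x,y)$ for some $y\in V(H)$ and that $r_G(a) \geq d_G(a,b)$ for all $b\in V(G)$, and to bootstrap the non-vanishing hypothesis $u_{xa}\neq 0$ into a non-vanishing product $u_{xa}u_{yb}\neq 0$ for a suitable vertex $b$. First I would pick $y\in V(H)$ realising the eccentricity, i.e.\ with $d_H(x,y) = r_H(x)$, which exists by finiteness of $V(H)$. Since $u_{xa}\neq 0$ and $\sum_{b\in V(G)} u_{yb} = 1$, multiplying on the right gives $u_{xa} = \sum_{b\in V(G)} u_{xa}u_{yb}$, so there is at least one $b\in V(G)$ with $u_{xa}u_{yb}\neq 0$. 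By Lemma~\ref{lem:distance} applied to this $b$, we get $d_G(a,b) = d_H(x,y) = r_H(x)$, which immediately yields $r_G(a) \geq r_H(x)$.

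For the reverse inequality I would run the symmetric argument. Since $U$ is a quantum isomorphism from $G$ to $H$, the adjoint $U^*$ is a quantum isomorphism from $H$ to $G$ (by Theorem~\ref{thm:qi_is_equivalence_relation}, or directly), and its coefficients satisfy $(U^*)_{ax} = u_{xa}^* \neq 0$ precisely when $u_{xa}\neq 0$. So pick $b'\in V(G)$ with $d_G(a,b') = r_G(a)$, use $(U^*)_{ax}\neq 0$ together with $\sum_{y'\in V(H)} (U^*)_{b'y'} = 1$ to find $y'\in V(H)$ with $(U^*)_{ax}(U^*)_{b'y'}\neq 0$, and apply Lemma~\ref{lem:distance} to $U^*$ to conclude $d_H(x,y') = d_G(a,b') = r_G(a)$, hence $r_H(x)\geq r_G(a)$. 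Combining the two inequalities gives $r_H(x) = r_G(a)$.

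The only mild subtlety — and the step I would be most careful about — is making sure Lemma~\ref{lem:distance} is stated in enough generality to be applied to $U^*$ as a quantum isomorphism from $H$ to $G$; but since quantum isomorphism is symmetric (Theorem~\ref{thm:qi_is_equivalence_relation}) and Lemma~\ref{lem:distance} is stated for arbitrary quantum isomorphisms, this is immediate, and one just has to track which vertex set indexes rows versus columns. Everything else is a routine use of the magic unitary identities $\sum_b u_{yb}=1$ and the fact that $u_{xa}\neq 0$ forces some summand in $u_{xa}\cdot 1 = u_{xa}\sum_b u_{yb}$ to be nonzero. No genuine obstacle is expected; this is essentially a corollary of Lemma~\ref{lem:distance}.
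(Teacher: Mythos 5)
Your proposal is correct and follows essentially the same route as the paper: pick a vertex realising each eccentricity, use $u_{xa}=u_{xa}\sum u_{\cdot\cdot}$ to produce a nonvanishing product, and invoke Lemma~\ref{lem:distance} for both inequalities. The only cosmetic difference is that for the reverse inequality the paper applies Lemma~\ref{lem:distance} directly to $U$ (via $u_{xa}=\sum_{z}u_{xa}u_{zb}$ with $b$ realising $r_G(a)$) rather than passing to $U^*$, which is an unnecessary but harmless detour since the coefficients are self-adjoint.
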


\begin{proof}
	Assume that $u_{xa} \neq 0$. 
	Take $y\in V(H)$ such that $d(x,y) = r_H(x)$ and $b \in V(G)$ such that $d(a,b) = r_G(a)$. 
	Now we have $u_{xa} = \sum_{c\in V(G)} u_{xa}u_{yc}$ and $u_{xa} = \sum_{z\in V(H)} u_{xa}u_{zb}$ so there exist $c\in V(G)$ and $z\in V(H)$ such that $u_{xa}u_{yc} \neq 0$ and $u_{xa}u_{zb} \neq 0$. 
	By Lemma~\ref{lem:distance}, this means that
	\[ r_H(x) = d(x,y) = d(a,c) \leq r_G(a)\]
	and
	\[ r_G(a) = d(a,b) = d(x,z) \leq r_H(x),\]
	hence the result.
\end{proof}

\begin{corollary}\label{coro:qi_r}
	Let $G$ and $H$ be two quantum isomorphic graphs. 
	Then there is a bijection $f\colon V(G) \to V(H)$ such that $r_H\circ f = r_G$.
\end{corollary}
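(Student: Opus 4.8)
The plan is to combine Lemma~\ref{lem:qi_r} with the combinatorial matching lemma Lemma~\ref{lem:hall_mu}. Let $U$ be a quantum isomorphism from $G$ to $H$ (which exists since $G$ and $H$ are quantum isomorphic), with coefficients in some unital $C^*$-algebra. By Lemma~\ref{lem:hall_mu} applied to the magic unitary $U$, after fixing enumerations $V(G) = \{a_1,\dots,a_n\}$ and $V(H) = \{x_1,\dots,x_n\}$ there is a permutation $\sigma \in S_n$ with $u_{x_i a_{\sigma(i)}} \neq 0$ for every $1 \leq i \leq n$; equivalently, there is a bijection $f \colon V(G) \to V(H)$ such that $u_{f(a)\, a} \neq 0$ for all $a \in V(G)$.

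The main point is then immediate: for each $a \in V(G)$, since $u_{f(a)\,a} \neq 0$, Lemma~\ref{lem:qi_r} gives $r_H(f(a)) = r_G(a)$, i.e. $r_H \circ f = r_G$ as functions on $V(G)$. This is exactly the claimed statement, so the proof would be just these two sentences plus the invocation of Lemma~\ref{lem:hall_mu} to produce $f$.

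I do not anticipate a genuine obstacle here — the corollary is essentially the ``globalisation'' of the pointwise statement in Lemma~\ref{lem:qi_r}, and Lemma~\ref{lem:hall_mu} is precisely the tool that upgrades ``some nonzero entry in each row'' to ``a single bijection selecting nonzero entries.'' The only mild subtlety worth a word is that $f$ is a bijection of \emph{sets of vertices}, not a graph isomorphism, and that is all that is asserted; one should also note in passing that $G$ and $H$ have the same number of vertices (built into the definition of quantum isomorphism, or via Lemma~\ref{lem:square_mu}), so that $\sigma$ can indeed be taken in $S_n$. If one wanted, one could remark that this $f$ therefore restricts to a bijection between the centers $Z(G)$ and $Z(H)$, since membership in the center depends only on the value of $r$ being minimal, but that observation belongs to whatever follows rather than to the proof of this corollary.
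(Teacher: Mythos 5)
Your proof is correct and follows exactly the paper's own argument: invoke Lemma~\ref{lem:hall_mu} to select a permutation of nonvanishing entries of the quantum isomorphism, then apply Lemma~\ref{lem:qi_r} entrywise to conclude $r_H\circ f = r_G$. No gaps.
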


\begin{proof}
	Up to a permutation of the vertices, by Lemma~\ref{lem:hall_mu}, we can assume that all diagonal coefficients of $U$ are nonzero. 
	Sending the $i$-th vertex of $G$ to the $i$-th vertex of $H$ gives us a bijection $f\colon V(G) \to V(H)$. 
	By Lemma~\ref{lem:qi_r}, we have that $r_H\circ f = r_G$, as desired.
\end{proof}

\begin{corollary}\label{coro:center}
	Let $G$ and $H$ be two graphs and $U$ a quantum isomorphism from $G$ to $H$. 
	Then $U$ preserves the centers of $G$ and $H$.
\end{corollary}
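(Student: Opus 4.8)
\textbf{Proof plan for Corollary~\ref{coro:center}.}
The plan is to reduce the statement about the center---a set defined by minimality of the eccentricity function $r$---to the fact, already established in Corollary~\ref{coro:center}'s immediate predecessors, that a quantum isomorphism respects $r$ along its nonzero entries. First I would recall what it means for $U$ to ``preserve the centers'': concretely, I want to show that for $x \in V(H)$ and $a \in V(G)$ with $u_{xa} \neq 0$, one has $x \in Z(H)$ if and only if $a \in Z(G)$; equivalently, that $u_{xa} = 0$ whenever exactly one of $x$, $a$ lies in the center. This is the natural block-decomposition statement that feeds into the inductive arguments of later sections.

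The key computation is that the minimum value of $r_G$ over $V(G)$ equals the minimum value of $r_H$ over $V(H)$. This follows at once from Corollary~\ref{coro:qi_r}: there is a bijection $f\colon V(G)\to V(H)$ with $r_H\circ f = r_G$, so the two functions have the same image, hence the same minimum, call it $\rho$. Now suppose $u_{xa}\neq 0$. By Lemma~\ref{lem:qi_r} we have $r_H(x) = r_G(a)$. Therefore $r_H(x) = \rho$ iff $r_G(a) = \rho$, i.e. $x\in Z(H)$ iff $a\in Z(G)$, which is exactly the claim. If one prefers the magic-unitary formulation: for $a\in Z(G)$ and $x\notin Z(H)$ we get $r_G(a)=\rho < r_H(x)$, contradicting $r_H(x)=r_G(a)$ unless $u_{xa}=0$; symmetrically with the roles of $G$ and $H$ swapped, using that $U^*$ is a quantum isomorphism from $H$ to $G$.

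I would then package this in whichever concrete form is used downstream---most likely that, after reordering vertices via Lemma~\ref{lem:hall_mu} so the diagonal of $U$ is nonzero, the induced bijection $f$ restricts to a bijection $Z(G)\to Z(H)$, and moreover the corresponding principal submatrix of $U$ (rows and columns indexed by $Z(G)$, resp.\ $Z(H)$) is itself a magic unitary giving a quantum isomorphism $Z(G)\simeq_q Z(H)$. The block-triangular vanishing $u_{xa}=0$ for $x\in Z(H)$, $a\notin Z(G)$ (and its transpose) is precisely what makes the restriction of $U$ to the central block row- and column-stochastic, hence a magic unitary adapted to the induced subgraphs.

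I do not expect a serious obstacle here: the corollary is a formal consequence of Lemma~\ref{lem:qi_r} and Corollary~\ref{coro:qi_r}, and the only mildly delicate point is bookkeeping---making sure that ``$U$ preserves the centers'' is stated in the precise block form needed later (namely that restricting $U$ to the central coordinates yields a genuine quantum isomorphism of the centers, which requires the off-block entries to vanish on both sides, and that is symmetric in $G$ and $H$ because $U^*$ is also a quantum isomorphism). Everything else is the one-line eccentricity comparison above.
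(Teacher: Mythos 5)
Your proposal is correct, and its core is the same as the paper's: everything rests on Lemma~\ref{lem:qi_r} (nonzero entries of $U$ force equal eccentricities). The mechanism differs slightly, though. You invoke Corollary~\ref{coro:qi_r} to see that $r_G$ and $r_H$ have the same image, hence the same minimum $\rho$, and then the vanishing $u_{xa}=0$ for $a\in Z(G)$, $x\notin Z(H)$ (and symmetrically) drops out directly from $r_H(x)=r_G(a)$; this is a clean, direct argument. The paper does not use Corollary~\ref{coro:qi_r} here: it argues by contradiction, showing that if $u_{xa}\neq 0$ with $a$ central and $x$ not, then for any $y\in Z(H)$ every entry $u_{yb}$ vanishes, contradicting the row sum $\sum_b u_{yb}=1$ of the magic unitary, and then handles the other inclusion via $U^*$. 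Both routes are legitimate and of comparable length; yours buys directness at the cost of quoting the bijection corollary, while the paper's stays closer to the bare magic-unitary relations. One bookkeeping remark: the final packaging you describe (the central block of $U$ being square and a quantum isomorphism $Z(G)\simeq_q Z(H)$, after reordering via Lemma~\ref{lem:hall_mu}) is not needed for this corollary --- it is exactly the content of Theorem~\ref{thm:qi_center}, which the paper proves separately right afterwards --- but including it does no harm.
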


\begin{proof}
	Let $a\in Z(G)$ and $x\in V(H)\setminus Z(G)$. 
	We want to show that $u_{xa} = 0$. 
	By contradiction, let us assume that $u_{xa}\neq 0$. 
	By Lemma~\ref{lem:qi_r}, we have $r_G(a) = r_H(x)$. 
	Let $y\in Z(H)$. 
	For every $b\in V(G)$, we have:
	\[ r_G(b) \geq r_G(a) = r_H(x) > r_H(y),\]
	so by Lemma~\ref{lem:qi_r} we get $u_{yb} = 0$. 
	Since this is true for every $b\in V(G)$, and since $U$ is a magic unitary, we have $0 = \sum_{b\in V(G)} u_{yb} = 1$: this is absurd. 
	Hence $u_{xa} = 0$, as wanted.

	For the converse, let $a\in V(G)\setminus Z(G)$ and let $x\in Z(H)$. 
	Now $U^*$ is a quantum isomorphism from $H$ to $G$, so by what precedes we have $0 = [U^*]_{xa} = u_{ax}^* = u_{ax}$, hence $u_{ax}=0$. 
	
	This concludes the proof.
\end{proof}

We now obtain our main theorem.

\begin{theorem}\label{thm:qi_center}
	Let $G$ and $H$ be two graphs and let $U$ be a quantum isomorphism between them. 
	Let $I = r_G(V(G)) \subset \N$, and, for $i\in I$, let $C_i = r_G\inv(\{i\})$ and $D_i = r_H\inv(\{i\})$. 
	We let $U_{ij}$ be the block in $U$ indexed by $D_i\times C_j$. 
	We write $U_i = U_{ii}$. 
	For $i$ and $j\in \N$, we claim that:
	\begin{enumerate}
		\item $U_{ij} = 0$ if $i\neq j$,
		\item $\# C_i = \# D_i$,
		\item $U_i$ is a quantum isomorphism from $G[C_i]$ to $H[D_i]$.
	\end{enumerate}
	In particular, letting $i_0 = \min r_G(V(G))$, we have that $U_{i_0}$ is a quantum isomorphism from $Z(G)$ to $Z(H)$.
\end{theorem}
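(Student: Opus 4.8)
The plan is to establish the three claims more or less simultaneously by a "peeling from the outside" argument on the level sets $C_i$ and $D_i$, ordered by the value of the radius function, using Lemma~\ref{lem:qi_r} as the main engine. First I would recast Corollary~\ref{coro:center} in the following sharper form: whenever $u_{xa} \neq 0$, Lemma~\ref{lem:qi_r} forces $r_H(x) = r_G(a)$, i.e.\ $a \in C_i$ and $x \in D_i$ for the \emph{same} $i$. This is precisely the statement that $U_{ij} = 0$ for $i \neq j$, so claim~(1) is immediate from Lemma~\ref{lem:qi_r} and requires no separate induction. The content of Corollary~\ref{coro:center} is then just the special case $i = i_0$.

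Next, for claim~(2), I would argue that each row and each column of the block $U_i$ sums to $1_X$. Fix $x \in D_i$; since $U$ is a magic unitary, $\sum_{a \in V(G)} u_{xa} = 1_X$, but by claim~(1) the only nonzero terms come from $a \in C_i$, so $\sum_{a \in C_i} u_{xa} = 1_X$. Symmetrically, fixing $a \in C_i$ and summing over the column, $\sum_{x \in D_i} u_{xa} = 1_X$ (here one uses that $U^*$ is a quantum isomorphism from $H$ to $G$, so the analogue of claim~(1) holds for $U^*$, giving $u_{xa} = 0$ unless $x \in D_i$). Thus $U_i \in \Mm_{\#D_i, \#C_i}(X)$ has all row sums and all column sums equal to $1_X$, and Lemma~\ref{lem:square_mu} gives $\#C_i = \#D_i$.

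For claim~(3), I would first check that $U_i$ is a magic unitary of size $\#C_i = \#D_i$: its coefficients are self-adjoint projections (being coefficients of $U$), and by claim~(2) they sum to $1_X$ on rows and columns, so by Lemma~\ref{lem:mu_p2_redundant} (applied in the universal $C^*$-envelope, or more elementarily using that $U$ already satisfies $\Pp_2$ and the off-block entries vanish) it satisfies all of $\Pp_s$. It then remains to verify the intertwining relation $U_i \Adj(G[C_i]) = \Adj(H[D_i]) U_i$. The key point is that the level sets $C_i$ are \emph{closed under taking neighbours within a bounded distance in a way that respects $\Adj$}; concretely, for $a \in C_i$ and $a' \in V(G)$ with $a' \sim_G a$, one does not necessarily have $a' \in C_i$, so I cannot simply restrict the global relation $U\Adj(G) = \Adj(H)U$ blockwise. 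Instead I would extract the relation by sandwiching: for $x \in D_i$, $b \in C_i$, compute $[U\Adj(G)]_{xb} = [\Adj(H)U]_{xb}$, expand both sides, and observe that the cross terms involving $a \notin C_i$ are killed because $u_{xa} = 0$ (claim~(1)), while on the other side the terms with $z \notin D_i$ satisfy $u_{zb} = 0$. Since $\Adj(G[C_i])$ is the restriction of $\Adj(G)$ to $C_i \times C_i$ and likewise for $H$, what survives on the left is $[U_i \Adj(G[C_i])]_{xb}$ and on the right $[\Adj(H[D_i]) U_i]_{xb}$, giving the desired intertwiner. The main obstacle is exactly this last bookkeeping: one must be careful that entries $a_{zb}$ with $z \in V(G) \setminus C_i$ but $z \sim_G b$ really are annihilated — and they are, because the relevant magic-unitary coefficient multiplying them is $u_{xz}$ with $x \in D_i$, $z \in C_j$, $j \neq i$, hence $0$ by claim~(1). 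The final sentence about $U_{i_0}$ being a quantum isomorphism $Z(G) \to Z(H)$ is then immediate, since by definition $Z(G) = C_{i_0}$ and $Z(H) = D_{i_0}$ with $i_0 = \min r_G(V(G)) = \min r_H(V(H))$, the equality of minima following from Corollary~\ref{coro:qi_r}.
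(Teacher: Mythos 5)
Your proposal is correct and follows essentially the same route as the paper: claim (1) is read off directly from Lemma~\ref{lem:qi_r}, claim (2) from Lemma~\ref{lem:square_mu}, and claim (3) by restricting the relation $U\Adj(G)=\Adj(H)U$ to the diagonal blocks, which is exactly the paper's blockwise argument written out entrywise. The only superfluous step is invoking $U^*$ for the column sums, since claim (1) already kills all off-diagonal blocks in both directions.
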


\begin{proof}
	Notice $I = r_H(V(H))$ by Corollary~\ref{coro:qi_r} so the construction of $(D_i)_{i\in I}$ makes sense as well.

	(1) is a direct consequence of Lemma~\ref{lem:qi_r}. 
	This also implies that $U_i$ satisfies the condition of Lemma~\ref{lem:square_mu}, so it is square, and $\# C_i= \# D_i$ for all $i\in I$. 
	This proves (2).

	Finally, let us prove (3). 
	We work in the block-decomposition given by the $C_i$ and $D_i$ for $i\in I$, and we write $A = (A_{ij})_{i,j\in I}$ for the adjacency matrix of $G$ in that decomposition, and similarly we write $B = (B_{ij})_{i,j\in I}$ for the adjacency matrix of $H$. 
	Now by (2) $U$ is diagonal by block in this decomposition, that we write $U = \Diag(U_i)_{i\in I}$, and every diagonal block is a magic unitary. 
	Since $UA = BU$, we immediately obtain that $U_iA_{ii} = B_{ii}U_i$ for all $i\in I$. 
	Now by construction $A_{ii} = \Adj(G[C_i])$ and $B_{ii} = \Adj(H[D_i])$, so this proves (3). 

	The conclusion follows since by definition of the center we have $C_{i_0} = Z(G)$ and $D_{i_0} = Z(H)$.
\end{proof}

\subsection{Quantum isomorphism of disconnected graphs}\label{subsec:qi_sums}

The main goal of this section is Theorem~\ref{thm:qi_for_disconnected_graphs}, namely, that quantum isomorphic graphs have quantum isomorphic connected components. 
On the way, we also prove results that are interesting on their own and will be regularly used later on.

Let $G$ and $H$ be two graphs, with connected components $G_1,\ldots,G_k$ and $H_1,\ldots,H_l$ for some $k$ and $l\in \N$. 
Let $U$ be a quantum isomorphism from $G$ to $H$. 
For simplicity of notation, we will consider that $U$ is indexed by $V(H)\times V(G)$.

For $1\leq i \leq l$, $1\leq j\leq k$, $a\in V(G)$, and $x\in V(H)$, we define:
\begin{itemize}
	\item $p_i(U)(a) = \sum_{y\in V(H_i)} u_{ya}$,
	\item $q_j(U)(x) = \sum_{b\in V(G_j)} u_{xb}$.
\end{itemize}
When no confusion arises, we will feel free to write $p_i(a)$ (resp. $q_j(x)$) instead of $p_i(U)(a)$ (resp. $q_j(U)(x)$).

The case with $k=2=l$ in the next lemma seems to be folklore and can be for instance found as an intermediate step in the proof of Theorem 4.4 of~\cite{LupManRob}. 
We generalise it here for an arbitrary number of connected components. 
It will be key for the main theorem of this section, Theorem~\ref{thm:qi_for_disconnected_graphs}, as well as simplify the proof of Theorem~\ref{thm:wreath_product}.
We have not encountered the present generalisation in the literature.

\begin{lemma}\label{lem:p_ij}
	Let $G$ and $H$ be two graphs, with connected components $G_1,\ldots,G_k$ and $H_1,\ldots,H_l$. 
	Let $U$ be a quantum isomorphism from $G$ to $H$. 
	Fix $1\leq i\leq l$ and $1\leq j\leq k$. 
	The function $a \mapsto p_i(a)$ (resp. $x \mapsto q_j(x)$) is constant on the connected components of $G$ (resp. $H$). 
	Moreover, writing $p_{ij}$ (resp. $q_{ji}$) for $p_i(a)$ for any $a\in V(G_j)$ (resp. for $q_j(x)$ for any $x\in V(H_i)$), the following are true:
	\begin{enumerate}
		\item $p_{ij}$ and $q_{ij}$ are self-adjoint projections,
		\item $p_{ij} = q_{ji}$,
		\item $\sum_{s=1}^k p_{is} = 1 = \sum_{s=1}^l p_{sj}$.
	\end{enumerate}
	In particular, $k=l$ and the matrix $(p_{ij})_{1\leq i,j\leq k}$ is a magic unitary.
\end{lemma}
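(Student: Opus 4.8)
The plan is to exploit that $U$ and $U^*$ are magic unitaries intertwining the adjacency matrices, and that being in the same connected component of $G$ is detected by $\Adj(G)$ (together with the identity), via the observation that a vertex $a'$ lies in the same component as $a$ iff there is some walk, i.e. iff $[(I+\Adj(G))^N]_{a'a}\neq 0$ for $N$ large. First I would show that $a\mapsto p_i(a)$ is constant on each $G_j$. Fix $a\sim_G a'$ in $G$. Using $U\Adj(G)=\Adj(H)U$, sum the identity $[U\Adj(G)]_{ya}=[\Adj(H)U]_{ya}$ over $y\in V(H_i)$: the right-hand side becomes $\sum_{y\in V(H_i)}\sum_{z\sim_H y}u_{za}$, and since $H_i$ is a connected component, $z\sim_H y$ with $y\in V(H_i)$ forces $z\in V(H_i)$, so this is an expression involving only $\{u_{za}:z\in V(H_i)\}$; similarly the left-hand side involves only $\{u_{ya'}:y\in V(H_i),\ a'\sim_G a\}$. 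Iterating along walks and using that every vertex of $G_j$ is joined to $a$ by a walk inside $G_j$, one propagates the equality of the relevant partial sums; more cleanly, replace $\Adj(G)$ by $(I+\Adj(G))^N$ for $N\geq |V(G)|$, whose $(a',a)$ entry is nonzero exactly when $a',a$ are in the same component, and run the same computation to get $p_i(a')\,(\text{positive scalar})=p_i(a)\,(\text{same scalar})$ after multiplying on the right by $u_{\cdot a}$-type projections — this is the same trick as in the proofs of Lemma~\ref{lem:distance} and Lemma~\ref{lem:qi_r}. The statement for $q_j$ follows by applying this to the quantum isomorphism $U^*$ from $H$ to $G$.

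Next, the three itemized claims. For (1): $p_{ij}=\sum_{y\in V(H_i)}u_{ya}$ is a sum of self-adjoint projections with pairwise orthogonal ranges (column-orthogonality of the magic unitary $U$, relations $\Pp_2$), hence itself a self-adjoint projection; same for $q_{ij}$. For (2): this is just Fubini — $p_{ij}=\sum_{y\in V(H_i)}u_{ya}$ for $a\in V(G_j)$ equals $\sum_{y\in V(H_i)}\sum_{b\in V(G_j)}\frac{1}{|V(G_j)|}u_{yb}$... no, better: pick any $a\in V(G_j)$ and any $x\in V(H_i)$; then $p_{ij}=p_i(a)=\sum_{y\in V(H_i)}u_{ya}$ and $q_{ji}=q_j(x)=\sum_{b\in V(G_j)}u_{xb}$, and one checks $\sum_{y\in V(H_i)}\sum_{b\in V(G_j)}u_{yb}$ equals both $\sum_{b\in V(G_j)}p_i(b)=|V(G_j)|$-independent... the clean argument: since $a\mapsto p_i(a)$ is constant $=p_{ij}$ on $V(G_j)$ and $x\mapsto q_j(x)$ is constant $=q_{ji}$ on $V(H_i)$, we have $|V(G_j)|\,p_{ij}=\sum_{b\in V(G_j)}\sum_{y\in V(H_i)}u_{yb}=\sum_{y\in V(H_i)}\sum_{b\in V(G_j)}u_{yb}=|V(H_i)|\,q_{ji}$; but also summing $p_{i\cdot}$ over all components (item (3)) will force $|V(G_j)|=|V(H_i)|$ once we know $p_{ij}=q_{ji}$, so instead I would prove (2) by a more symmetric manipulation: $p_{ij}q_{ji}=p_{ij}$ and $q_{ji}p_{ij}=q_{ji}$ by absorbing partial sums into each other using $\Pp_2$, and since both are projections this gives $p_{ij}=q_{ji}$. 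For (3): $\sum_{s=1}^k p_{is}=\sum_{s=1}^k\sum_{y\in V(H_i)}u_{ya}=\sum_{y\in V(H_i)}\sum_{b\in V(G)}u_{yb}=\sum_{y\in V(H_i)}1=1$ using row-sums of $U$ — wait, $\sum_{b\in V(G)}u_{yb}=1$, so $\sum_{s}p_{is}=\sum_{y\in V(H_i)}\cdot$ nothing; let me recompute: $\sum_{s=1}^k p_{is}$ with $p_{is}=\sum_{y\in V(H_i)}u_{ya_s}$ for $a_s\in V(G_s)$ — these use different column indices, so I should instead fix one $y\in V(H_i)$ is wrong too since $p_{is}$ is a sum over $y$. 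The correct computation: $\sum_{s=1}^k p_{is}$; write each $p_{is}=p_i(a)$ is constant but for $s$ we need $a\in V(G_s)$, so $\sum_s p_{is}=\sum_s p_i(\text{rep of }G_s)$, and since within each $G_s$ all $u_{yb}$, $b\in V(G_s)$ have the same column-sum-over-$H_i$, we get $\sum_s p_{is}=\sum_s\frac{1}{|V(G_s)|}\sum_{b\in V(G_s)}\sum_{y\in V(H_i)}u_{yb}$... the honest clean route is: $\sum_{s=1}^k p_{is}|V(G_s)|=\sum_{s=1}^k\sum_{b\in V(G_s)}p_i(b)=\sum_{b\in V(G)}\sum_{y\in V(H_i)}u_{yb}=\sum_{y\in V(H_i)}\sum_{b\in V(G)}u_{yb}=\sum_{y\in V(H_i)}1=|V(H_i)|$; combined with $p_{ij}=q_{ji}$ (and the symmetric computation $\sum_j q_{ji}|V(H_i)|=|V(G_j)|$... ) one extracts both $\sum_s p_{is}=1$ and $|V(G_j)|=|V(H_i)|$ whenever $p_{ij}\neq 0$, and then $k=l$ and $(p_{ij})$ is a magic unitary by Lemma~\ref{lem:mu_p2_redundant} (self-adjoint projections, rows and columns summing to $1$).

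The main obstacle I expect is the very first step — proving that $a\mapsto p_i(a)$ is genuinely constant on connected components rather than merely that the partial sums over $V(H_i)$ propagate along single edges "up to multiplication by a nonzero projection." The subtlety is that $p_i(a)u_{a'a}$-style absorption only gives equality after multiplying by an element that may be a proper projection, so one must argue carefully — most safely by working with $M=(I+\Adj(G))^N$ and $M'=(I+\Adj(H))^N$ for $N=|V(G)|$, noting $UM=M'U$, and then for $a,a'$ in the same component of $G$ and $y\in V(H_i)$, summing $[UM]_{ya}=[M'U]_{ya}$ over $y\in V(H_i)$: the right side, because $H_i$ is a full component, only involves $u_{za}$ with $z$ in the component of... hmm, this still requires that the component of $y$ in $H$ maps correctly, which is exactly what Lemma~\ref{lem:distance} (applied to the disjoint union $G+H$ with magic unitary $\begin{pmatrix}0&U\\U^*&0\end{pmatrix}$) already guarantees: $u_{ya}\neq 0$ forces $y$ and $a$ to be at finite mutual distance in $G+H$, impossible unless... no — in $G+H$ they are in different halves. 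The clean fix: first establish via Lemma~\ref{lem:distance} that $u_{ya}u_{y'a'}\neq 0$ with $y,y'\in V(H)$, $a,a'\in V(G)$ implies $d_H(y,y')=d_G(a,a')$ — hence $u_{ya}\neq 0$, $u_{y'a}\neq 0$ with $y\in V(H_i)$ forces $y'\in V(H_i)$ too (finite distance), and $u_{ya}\neq 0$, $u_{ya'}\neq 0$ forces $d_G(a,a')=d_H(y,y)=0$?? no that's false. The actual tool is: $u_{ya}u_{y'a'}\neq 0$, so if $a,a'$ are in different components of $G$ then $d_G(a,a')=\infty$ hence $d_H(y,y')=\infty$ hence $y,y'$ in different components of $H$; equivalently, for fixed $a$, the set $\{y:u_{ya}\neq 0\}$ lies in a single component of $H$ — this is what makes $p_i(a)\in\{0\}\cup\{\text{single-component sum}\}$ well-behaved, and then constancy on components of $G$ follows because $a,a'$ in the same $G_j$ and a walk between them forces (via the intertwining) the corresponding sums to match. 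I would present this carefully as the heart of the proof, deriving everything from Lemma~\ref{lem:distance} and the relations $\Pp_s$, and flag it as the one place where care is genuinely needed.
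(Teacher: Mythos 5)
Your instinct about the right engine is correct — Lemma~\ref{lem:distance} is indeed what drives the whole lemma, and the ``absorb, then compare projections'' skeleton is the right shape — but the decisive steps are never actually carried out, and two of the justifications you do give are wrong. The heart of the statement, constancy of $a\mapsto p_i(a)$ on components, ends in an acknowledged hand-wave (``a walk between them forces the sums to match''), after two false starts: the walk/intertwining propagation which, as you yourself note, only gives equalities after multiplying by projections, and the claim that for fixed $a$ the set $\{y: u_{ya}\neq 0\}$ lies in a single component of $H$, which is simply false (take $G=H=2K_1$ and $U$ the fundamental representation of $S_2^+$, so $u_{11}=u_{22}=p$, $u_{12}=u_{21}=1-p$ with $p$ a nontrivial projection). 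The missing argument is short: for $a,b$ in the same component of $G$, expand
\[ p_i(a)\bigl(1-p_i(b)\bigr)=\sum_{x\in V(H_i)}\ \sum_{y\in V(H)\setminus V(H_i)} u_{xa}u_{yb}, \]
and every term vanishes by Lemma~\ref{lem:distance}, since $d_H(x,y)=\infty$ while $d_G(a,b)<\infty$; hence $p_i(a)=p_i(a)p_i(b)$, symmetrically $p_i(b)=p_i(b)p_i(a)$, and taking adjoints of the second identity gives $p_i(a)=p_i(b)$. This is exactly the trick you gesture at but do not execute, and it is how the paper proceeds.

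The same defect recurs in (2) and (3). Your absorption identities $p_{ij}q_{ji}=p_{ij}$ and $q_{ji}p_{ij}=q_{ji}$ do not follow ``using $\Pp_2$'': the cross terms $u_{ya}u_{xb}$ with $y\neq x$ and $b\neq a$ share neither a row nor a column, so the magic unitary relations say nothing about them; one must again invoke Lemma~\ref{lem:distance} (finite distance inside $H_i$ against infinite distance from $G_j$ to its complement) to kill them, and then conclude $p_{ij}=q_{ji}$ from the two absorptions by self-adjointness. For (3), the step ``one extracts $\sum_s p_{is}=1$'' from the weighted identity $\sum_s p_{is}\,\abs{V(G_s)}=\abs{V(H_i)}$ is not a valid inference — nothing lets you cancel the weights. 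The intended computation is elementary once (2) is available: $\sum_{s=1}^{l} p_{sj}=\sum_{y\in V(H)}u_{ya}=1$ by the column sums of $U$, and $\sum_{s=1}^{k} p_{is}=\sum_{s} q_{si}=\sum_{b\in V(G)}u_{xb}=1$ by the row sums; the conclusion $k=l$ then comes from Lemma~\ref{lem:square_mu} (not Lemma~\ref{lem:mu_p2_redundant}, which only handles the redundancy of $\Pp_2$). As submitted, then, the proposal identifies the correct tool but does not constitute a proof: the constancy step, the absorption identities, and the row/column sums all need to be redone along the lines above.
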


\begin{proof}
	Let us start by proving that $p_i$ is constant on connected components of $G$. 
	Let $a$, $b\in V(G)$ be in the same connected component. 
	It is clear that $p_i(a)$ and $p_i(b)$ are self-adjoint projections. 
	We claim that $p_i(a) = p_i(a)p_i(b)$. 
	It comes:
	\begin{align*}	
		p_i(a)(1-p_i(b)) &= \left( \sum_{x \in V(H_i)} u_{xa} \right) \left( \sum_{x \in V(H)} u_{xb} - \sum_{x \in V(H_i)} u_{xb} \right)\\
		&= \left( \sum_{x \in V(H_i)} u_{xa} \right) \left( \sum_{x \in V(H\setminus H_i)} u_{xb} \right)\\
		&= \sum_{\substack{x\in V(H_i) \\ y \in V(H\setminus H_i)}} u_{xa}u_{yb}\\
		&=0
	\end{align*}
	by Lemma~\ref{lem:distance}, since $d(x,y) < +\infty = d(a,b)$. 
	Thus $p_i(x) = p_i(x)p_i(y)$. 
	
	Applying what precedes to $y$ and $x$, we obtain $p_i(y) = p_i(y)p_i(x)$. 
	It comes:
	\[ p_i(y) = p_i(y)^* = p_i(x)p_i(y) = p_i(x),\]
	as desired. 
	This concludes the proof that $p_{ij}$ is a well-defined self-adjoint projection.

	Notice that $U^*$ is a quantum isomorphism from $H$ to $G$. 
	For $x \in V(H)$, we have
	\[ p_j(U^*)(x) = \sum_{c \in V(G_j)} [U^*]_{cx} = \sum_{c \in V(G_j)} u_{xc} = q_j(U)(x).\]
	Hence applying what precedes to $U^*$ shows that $q_{ji}$ is a well-defined self-adjoint projection. 
	This concludes the proof of (1).

	Fix $a \in V(G_j)$ and $x \in V(H_i)$. 
	It comes (we omit $U$ again):
	\begin{align*}
		p_{ij}(1-q_{ji}) &= p_i(a)(1- q_j(x))\\
		&= \left( \sum_{y\in V(H_i)} u_{ya} \right) \left( \sum_{b \in V(G)} u_{xb} - \sum_{b\in V(G_j)} u_{xb} \right)\\
		&= \left( \sum_{y\in V(H_i)} u_{ya} \right) \left( \sum_{b\in V(G\setminus G_j)} u_{xb} \right)\\
		&= \sum_{\substack{y\in V(H_i) \\ b\in V(G\setminus G_j)}} u_{ya}u_{xb}\\
		&= 0
	\end{align*}	
	by Lemma~\ref{lem:distance} since $d(y,x) < +\infty = d(a,b)$. 
	So $p_{ij} = p_{ij}q_{ji}$.

	Now, notice that $q_{ji}(U) = q_j(U)(x) = p_j(U^*)(x) = p_{ji}(U^*)$. 
	Combining everything, it comes:
	\[ q_{ji}(U) = p_{ji}(U^*) = p_{ji}(U^*)^* = (p_{ji}(U^*)q_{ij}(U^*))^* = q_{ij}(U^*)p_{ji}(U^*) = p_{ij}(U)q_{ji}(U) = p_{ij}(U).\]
	This concludes the proof of (2).

	Assertion (3) is immediate and the conclusion follows by Lemma~\ref{lem:square_mu}.
\end{proof}

We use the following notation: for $Y \subset V(H)$ and $X \subset V(G)$, we denote by $U[Y,X]$ the submatrix of $U$ indexed by $Y\times X$. 
Given a graph $L$ and a vertex $x \in V(L)$, we write $C_L(x)$, or simply $C(x)$ when the context is clear, for the connected component of $x$ in $L$.

\begin{theorem}\label{thm:qi_for_connected_components}
	Let $G$ and $H$ be two graphs and $U$ be a quantum isomorphism from $G$ to $H$ with coefficients in a unital $C^*$-algebra $X$. 
	Let $x \in V(H)$ and $a \in V(G)$ such that $u_{xa} \neq 0$. 
	Set $V = U[C_H(x),C_G(a)]$. 
	Then there exists a nonzero projection $p\in X$ such that:
	\begin{enumerate}
		\item for every $y\in C(x)$, we have $\sum_{b\in V(C(a))} v_{yb} = p$,
		\item for every $b\in C(a)$, we have $\sum_{y\in V(C(x))} v_{yb} = p$,
		\item $V\Adj(C_G(a)) = \Adj(C_H(x))V$.
	\end{enumerate}
	In particular, $V$ is a square matrix, and it is a quantum isomorphism from $C_G(a)$ to $C_H(x)$ with coefficients in the $C^*$-subalgebra of $X$ generated by the coefficients of $V$.
\end{theorem}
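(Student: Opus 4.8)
The plan is to derive all three conclusions from Lemma~\ref{lem:p_ij} together with the fact that an adjacency matrix is block-diagonal with respect to the partition of its vertex set into connected components. Write $G_1,\dots,G_k$ and $H_1,\dots,H_l$ for the connected components of $G$ and $H$, and fix $j$, $i$ with $C_G(a)=G_j$ and $C_H(x)=H_i$. In the notation of Lemma~\ref{lem:p_ij}, I would set $p=p_{ij}(U)$; by that lemma $p$ is a well-defined self-adjoint projection and $p=q_{ji}(U)$. Then for $y\in V(H_i)$ one has $\sum_{b\in V(C(a))}v_{yb}=\sum_{b\in V(G_j)}u_{yb}=q_j(U)(y)=q_{ji}(U)=p$, which is (1); symmetrically, for $b\in V(G_j)$ one has $\sum_{y\in V(C(x))}v_{yb}=\sum_{y\in V(H_i)}u_{yb}=p_i(U)(b)=p_{ij}(U)=p$, which is (2). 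To see $p\neq 0$, note $p-u_{xa}=\sum_{y\in V(H_i)\setminus\{x\}}u_{ya}$ is a sum of self-adjoint projections, hence positive, so $p\geq u_{xa}\neq 0$.

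For (3) I would restrict the identity $U\Adj(G)=\Adj(H)U$ to the block of rows indexed by $V(H_i)$ and columns indexed by $V(G_j)$. Since $\Adj(G)$ vanishes off the diagonal blocks indexed by connected components, for $y\in V(H_i)$ and $b\in V(G_j)$ the only columns $c$ contributing to $[U\Adj(G)]_{yb}$ lie in $C_G(b)=G_j$; likewise, since $\Adj(H)$ is block-diagonal, the only rows $z$ contributing to $[\Adj(H)U]_{yb}$ lie in $C_H(y)=H_i$. Hence $[V\,\Adj(C_G(a))]_{yb}=[\Adj(C_H(x))\,V]_{yb}$ for all such $y$, $b$, which is exactly (3).

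Finally, for the ``in particular'' clause: by (1) every coefficient of $V$ satisfies $v_{yb}\leq p$, so $pv_{yb}=v_{yb}p=v_{yb}$ and $V$ has all its entries in the corner $pXp$, a unital $C^*$-algebra with unit $p$ (nontrivial since $p\neq 0$). In $pXp$ all row and column sums of $V$ equal the unit $p$, so Lemma~\ref{lem:square_mu}, applied with $pXp$ in place of $X$, yields $\#C(x)=\#C(a)$, i.e.\ $V$ is square. Letting $Y\subset X$ be the $C^*$-subalgebra generated by the coefficients of $V$, one has $p=\sum_{b\in V(C(a))}v_{yb}\in Y$, so $Y$ is unital with unit $p$; then $V$ is a magic unitary over $Y$ by Lemma~\ref{lem:mu_p2_redundant}, and by (3) it intertwines the adjacency matrices, hence is a quantum isomorphism from $C_G(a)$ to $C_H(x)$.

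I do not expect a genuine obstacle here: once Lemma~\ref{lem:p_ij} is available, the block-diagonality of adjacency matrices makes (1)--(3) essentially automatic. The only point requiring care is bookkeeping the ambient algebra --- the coefficients of $V$ naturally live in the corner $pXp$, with unit $p$ rather than $1_X$ --- which is why one must apply Lemma~\ref{lem:square_mu} relative to $pXp$ in order to obtain squareness of $V$.
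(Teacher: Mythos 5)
Your proof is correct and follows essentially the same route as the paper: items (1) and (2) via Lemma~\ref{lem:p_ij}, squareness via Lemma~\ref{lem:square_mu} applied with the unit $p$ of the corner $pXp$, and item (3) by restricting the identity $U\Adj(G)=\Adj(H)U$ to the block indexed by $C_H(x)\times C_G(a)$. Your treatment of (3) is in fact slightly more direct than the paper's, which writes the full block identity and multiplies by $p$ to annihilate the cross terms, whereas you note those terms vanish outright because the adjacency matrices are block-diagonal over connected components; your explicit verifications that $p\geq u_{xa}\neq 0$ and that $p$ is a unit for the subalgebra generated by the coefficients of $V$ are details the paper leaves implicit.
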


\begin{proof}
	The first two assertions follow immediately from Lemma~\ref{lem:p_ij}. 
	This also implies that $V$ is a square matrix by Lemma~\ref{lem:square_mu}. 
	It remains to prove that $V\Adj(C(a)) = \Adj(C(x)) V$. 
	Let us fix an enumeration of $V(G)$ finishing by the vertices of $C(a)$ and an enumeration of $V(H)$ finishing by the vertices of $C(x)$. 
	In block form, $U$ is of the following form:
	\[ U = \begin{pmatrix}
		V_1 & V_2\\
		V_3 & V
	\end{pmatrix}.
	\]
	We similarly write
	\[ A = \Adj(G) = \begin{pmatrix}
		A_1 & A_2\\
		A_3 & A_4
	\end{pmatrix},
	\]
	with $A_4$ indexed by $C(a)\times C(a)$, and
	\[ B = \Adj(H) = \begin{pmatrix}
		B_1 & B_2\\
		B_3 & B_4
	\end{pmatrix},
	\]
	with $B_4$ indexed by $C(x)\times C(x)$. 
	Now, since $UA=BU$, it comes:
	\begin{align*}
		0 &= [UA-BU]_{22}\\
		&= [UA]_{22} - [BU]_{22}\\
		&= V_3 A_2 + V A_4 - B_3 V_2 - B_4 V.
	\end{align*}
	Notice that if $q$ is a coefficient of $V_3$ or $V_2$, then $pq = 0 = qp$ by orthogonality of the rows of $U$, and if $q$ is a coefficient of $V$, then $q\leq p$. 
	This implies that $pV_3 = 0$, $V_2p = 0$, and $pVp = V$. 
	Moreover, the coefficients of $A_4$ and $B_4$ are scalar, so they commute with $p$. 
	Multiplying by $p$ on both sides, we get
	\[ 0 = pV A_4 p - pB_4 Vp = VA_4 - B_4V,\]
	as desired. 
	This concludes the proof of assertion (3).

	Finally, let $Y$ be the $C^*$-subalgebra of $X$ generated by the coefficients of $V$. 
	Notice $p\in Y$ by (1), and $p$ is a unit for $Y$ (in particular, $Y\neq 0$ since $p\neq 0$). 
	Now (with a slight abuse) we consider $V$ to be with coefficients in the unital $C^*$-algebra $Y$, and it is a magic unitary by (1) and (2), and a quantum isomoprhism from $C(x)$ to $C(a)$ by (3), as desired. 
	This concludes the proof.
\end{proof}

In particular, two vertices in the same quantum orbit have quantum isomorphic connected components.

The next theorem follows from the precedent one through an \textit{ad hoc} application of Hall's marriage theorem through Lemma~\ref{lem:hall_mu}.

\begin{theorem}\label{thm:qi_for_disconnected_graphs}
	Two graphs $G = G_1 + \ldots + G_k$ and $H = H_1 + \ldots + H_l$, written as sums of their connected components, are quantum isomorphic if and only if $k=l$ and there exists a permutation $\sigma \in S_k$ such that for all $1\leq i\leq k$ the graph $G_i$ is quantum isomorphic to $H_{\sigma(i)}$.
\end{theorem}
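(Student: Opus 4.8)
The plan is to deduce this from Theorem~\ref{thm:qi_for_connected_components} together with a Hall-type matching argument via Lemma~\ref{lem:hall_mu}. The ``if'' direction is the easy half: if $k=l$ and $G_i \qi H_{\sigma(i)}$ for all $i$ via magic unitaries $U_i$, then the block-diagonal matrix obtained by placing the $U_i$ along the (suitably permuted) diagonal is a magic unitary intertwining $\Adj(G)$ and $\Adj(H)$ — one checks this is a magic unitary by Lemma~\ref{lem:convo_mu}-style bookkeeping (or directly, since it is block diagonal with magic unitary blocks), and the intertwining relation holds blockwise because $\Adj(G)$ and $\Adj(H)$ are themselves block diagonal over their connected components.

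For the ``only if'' direction, suppose $U$ is a quantum isomorphism from $G$ to $H$. First I would apply Lemma~\ref{lem:p_ij} to the decompositions into connected components: this already gives $k=l$ and produces the magic unitary $(p_{ij})_{1\le i,j\le k}$ with scalar-valued... no, \emph{projection}-valued entries, where $p_{ij} = \sum_{y \in V(H_i)} u_{ya}$ for any $a \in V(G_j)$. Since $(p_{ij})$ is a magic unitary (with coefficients in the $C^*$-algebra $X$ of coefficients of $U$), Lemma~\ref{lem:hall_mu} furnishes a permutation $\sigma \in S_k$ with $p_{\sigma(j)j} \neq 0$ for all $j$. Fix such a $\sigma$. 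Then for each $j$, since $p_{\sigma(j)j} = \sum_{y \in V(H_{\sigma(j)}),\ b \in V(G_j)} \dots$ is a sum of $u_{yb}$'s that is nonzero, there exist $x \in V(H_{\sigma(j)})$ and $a \in V(G_j)$ with $u_{xa} \neq 0$; here I should be slightly careful that $p_{\sigma(j)j} = p_i(a)$ for a \emph{fixed} $a \in V(G_j)$, so nonzeroness of $p_i(a) = \sum_{y \in V(H_i)} u_{ya}$ gives some $y=x$ with $u_{xa}\neq 0$ — good enough.

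Now apply Theorem~\ref{thm:qi_for_connected_components} to this pair $(x,a)$: the submatrix $V := U[C_H(x), C_G(a)] = U[V(H_{\sigma(j)}), V(G_j)]$ is a square matrix and a quantum isomorphism from $C_G(a) = G_j$ to $C_H(x) = H_{\sigma(j)}$, with coefficients in a suitable $C^*$-subalgebra. Doing this for each $j = 1, \dots, k$ yields $G_j \qi H_{\sigma(j)}$ for all $j$, which is exactly the claim. The main thing to verify carefully is that the indices line up: that $x \in V(H_i)$ forces $p_i(a) \neq 0$, i.e.\ that $u_{xa}\neq 0$ with $x \in V(H_i)$ and $a \in V(G_j)$ is compatible with $p_{ij}\neq 0$ — this is immediate from the definition of $p_{ij}$ — and, conversely, that $p_{\sigma(j)j}\neq 0$ does produce such a pair, which holds because $p_{\sigma(j)j}$ is literally a sum of the relevant $u_{xa}$'s and a sum of positive elements vanishes iff each summand does. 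The genuinely substantive work has already been done in Theorem~\ref{thm:qi_for_connected_components} and Lemma~\ref{lem:p_ij}; the only real obstacle here is the clean packaging of the matching argument, and in particular making sure the permutation $\sigma$ obtained from Lemma~\ref{lem:hall_mu} applied to $(p_{ij})$ is simultaneously compatible with \emph{all} the connected-component isomorphisms, which it is because Lemma~\ref{lem:hall_mu} gives a single global permutation with all diagonal entries nonzero at once.
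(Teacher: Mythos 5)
Your proposal is correct and follows essentially the same route as the paper: the easy direction via a block-diagonal magic unitary, and the forward direction by combining Lemma~\ref{lem:p_ij}, the Hall-type matching of Lemma~\ref{lem:hall_mu} applied to the projection-valued matrix $(p_{ij})$, and Theorem~\ref{thm:qi_for_connected_components} applied to a pair $(x,a)$ with $u_{xa}\neq 0$ extracted from a nonzero $p_{\sigma(j)j}$. Your extra care about extracting such a pair from the nonvanishing sum of projections is exactly the (implicit) step in the paper's argument, so nothing is missing.
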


\begin{proof}
	The reverse implication is immediate (consider the diagonal matrix $\Diag(U_1,\ldots,U_l)$ where $U_i$ is a quantum isomorphism from $G_i$ to $H_{\sigma(i)}$). 
	Let us prove the forward implication.

	Assume that $G$ and $H$ are quantum isomorphic and let $U$ be a quantum isomorphism from $H$ to $G$. 
	By Lemma~\ref{lem:p_ij}, we obtain that $k=l$ and a magic unitary $P = (p_{ij}(U))_{1\leq i,j\leq k}$. 
	Applying Lemma~\ref{lem:hall_mu}, we obtain a permutation $\sigma\in S_k$ such that $p_{i\sigma(i)}\neq 0$ for all $1\leq i\leq k$. 
	So for $1\leq i\leq k$ we have by Theorem~\ref{thm:qi_for_connected_components} that $G_i$ is quantum isomorphic to $H_{\sigma(i)}$, as desired. 
	This concludes the proof.
\end{proof}

\subsection{Wreath product}\label{subsec:wreath}

The construction of the wreath product of a compact quantum group with $S_n^+$ is due to Bichon (Section 3 of~\cite{Bichon2004}), who then proceeds to show that $\Qu(d.G) = \Qu(G) \wr S_d^+$ when $G$ is a connected graph (Section 4 of~\cite{Bichon2004}). 
However, at the time of the writing of~\cite{Bichon2004}, the theory was still at a very early stage. 
In particular, the definition of the quantum automorphism group of a graph was slightly different. 
For the sake of coherence and completeness, we include a proof of his result here. 
This proof essentially follows along the same lines as the original one, and is slightly facilitated by our preparation and the modern terminology.

A detailed description of the wreath product in the case of matrix compact quantum groups can be found in Section 7.2.2 of~\cite{FreslonBook}. 
In particular, when $A$ is a matrix compact quantum group of size $m\geq 1$, one can obtain the fundamental representation of $A \wr S_n^+$ for some $n\geq 1$ as a matrix compact quantum group of size $mn$, by giving the fundamental representation. 
We believe the formula for the generators of the fundamental representation to be due to Freslon and we recall it here since it will be particularly useful.

Let $m$ and $n\geq 1$ be integers. 
Consider $X = (A,u)$ a quantum permuation group of size $m$ and write $S_n^+ = (S,p)$. 
The \textit{wreath product} of $X$ by $S_n^+$, denoted by $X \wr S_n^+$, is defined as the quantum permutation group with algebra $W = A^{*n}*S / \{ [\nu_i(a),p_{ij}]\}$ and fundamental representation of size $mn$ given in block form by $w = (w_{ij,kl})_{1\leq i,j\leq n,\ 1\leq k,l\leq m}$ with $w_{ij,kl} = \nu_i(u_{kl})p_{ij}$. 
Here $*$ stands for the unital fre product of $*$-algebras and $\nu_i \colon A\to A^{*n}$ is the $i$-th canonical inclusion. 

We can now state and prove Bichon's theorem with the modern definition of $\Qu(G)$.

\begin{theorem}\label{thm:wreath_product}
	Let $d\geq 1$ and $G$ be a connected graph. 
	Then $\Qu(d.G) = \Qu(G) \wr S_d^+$.
\end{theorem}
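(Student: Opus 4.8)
The plan is to establish the isomorphism $\Qu(d.G) \cong \Qu(G) \wr S_d^+$ by constructing mutually inverse quantum group morphisms in both directions. The heart of the argument is to verify that a suitable magic unitary adapted to $d.G$ factors through the wreath product algebra and vice versa, so that both universal properties kick in.

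First I would fix notation: label the $d$ copies of $G$ inside $d.G$ as $G^{(1)}, \ldots, G^{(d)}$, so that $V(d.G) = \bigsqcup_{i=1}^d V(G^{(i)})$ and $\Adj(d.G) = \Diag(\Adj(G), \ldots, \Adj(G))$. Write $\Qu(G) = (A, u)$ with $u = (u_{kl})_{1 \le k,l \le m}$ where $m = |V(G)|$, and $S_d^+ = (S, p)$. Recall from the excerpt that $\Qu(G) \wr S_d^+$ has fundamental representation $w = (w_{ij,kl})$ with $w_{ij,kl} = \nu_i(u_{kl}) p_{ij}$, where $\nu_i \colon A \to A^{*d}$ is the $i$-th inclusion; this is a magic unitary of size $md$. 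I would first check that $w$ is adapted to $\Adj(d.G)$: in block form this amounts to checking $\sum_{t} w_{ij,kt}\,a_{tl} = \sum_t a_{kt}\, w_{ij,tl}$ where $a = \Adj(G)$, which follows because $\nu_i(u)$ commutes with $\Adj(G)$ (as $u$ does, $\nu_i$ being a $*$-morphism) and $p_{ij}$ is scalar with respect to the $A$-copies — actually $p_{ij}$ commutes with $\nu_i(a_{kl})$ since the $a_{kl}$ are scalars. This gives a surjection $C(\Qu(d.G)) \twoheadrightarrow C(\Qu(G)\wr S_d^+)$, i.e. the wreath product is a quantum subgroup of $\Qu(d.G)$.

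For the reverse direction — the genuine content — I would take a generic magic unitary $U$ adapted to $d.G$ with coefficients in a $C^*$-algebra $X$ and show it factors through $w$. Decompose $U$ into blocks $U_{ij} = U[V(G^{(i)}), V(G^{(j)})]$ of size $m$. Applying Lemma~\ref{lem:p_ij} to the connected components $G^{(1)}, \ldots, G^{(d)}$ of both copies of $d.G$, we get projections $p_{ij} := p_i(U)(a)$ for $a \in V(G^{(j)})$, well-defined and forming a magic unitary $(p_{ij})_{1 \le i,j \le d}$; concretely $p_{ij} = \sum_{y \in V(G^{(i)})} u_{y,a}$ is the sum of all entries of the block $U_{ij}$ along any column. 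Then I would use Theorem~\ref{thm:qi_for_connected_components}: whenever a block $U_{ij}$ has a nonzero entry, it is (after compressing by $p_{ij}$) a quantum isomorphism $G^{(j)} \to G^{(i)}$, hence — since all copies equal $G$ — a magic unitary adapted to $G$ with unit $p_{ij}$. This should let me define $u_{kl}^{(i)} := \sum_{j} (U_{ij})_{kl}$, check these give, for each $i$, a magic unitary adapted to $G$ (summing the compressed blocks over $j$, using orthogonality of rows/columns across blocks), and then verify $(U_{ij})_{kl} = u_{kl}^{(i)} p_{ij}$ and the commutation relations $[u^{(i)}_{kl}, p_{ij}] = 0$. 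By universality of $A^{*d} * S / \{[\nu_i(a), p_{ij}]\}$ this yields a $*$-morphism $C(\Qu(G) \wr S_d^+) \to X$ sending $w_{ij,kl} \mapsto U_{ij,kl}$, and applying this to the fundamental representation of $\Qu(d.G)$ itself gives the surjection $C(\Qu(G) \wr S_d^+) \twoheadrightarrow C(\Qu(d.G))$. Combined with the previous paragraph and uniqueness of the comultiplication, the two morphisms are mutually inverse.

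The main obstacle I anticipate is the bookkeeping in the reverse direction: establishing cleanly that $(U_{ij})_{kl} = u^{(i)}_{kl} p_{ij}$ with the $u^{(i)}$ independent of $j$ in the right sense, and that they satisfy relations $\Pp_s$ together with commuting with $p_{ij}$. The subtlety is that $U_{ij}$ is only a magic unitary "relative to the projection $p_{ij}$" (its rows and columns sum to $p_{ij}$, not $1$), so one must track these local units carefully and argue that $u^{(i)}_{kl} = \sum_j (U_{ij})_{kl}$ really does sum to $1$ on rows and columns over $k$ (resp. $l$) — this uses that $\sum_j p_{ij} = 1$ from Lemma~\ref{lem:p_ij}. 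The commutation relation $[u^{(i)}_{kl}, p_{ij'}] = 0$ requires noting that $(U_{ij})_{kl} \le p_{ij}$ and $p_{ij} p_{ij'} = \delta_{jj'} p_{ij}$, so $u^{(i)}_{kl} p_{ij'} = (U_{ij'})_{kl} = p_{ij'} u^{(i)}_{kl}$. Everything else is a direct computation of the kind already performed in the proofs of Theorems~\ref{thm:qu_G} and~\ref{thm:qi_for_connected_components}.
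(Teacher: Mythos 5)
Your proposal is correct and follows essentially the same route as the paper: one direction by checking that $w_{ij,kl}=\nu_i(u_{kl})p_{ij}$ is a magic unitary adapted to $d.G$ and invoking universality of $\Oo(\Qu(d.G))$, the other by applying Lemma~\ref{lem:p_ij} to the block decomposition of a magic unitary adapted to $d.G$ and forming the block sums $u^{(i)}_{kl}=\sum_j (U_{ij})_{kl}$ (the paper's $z_{i,kl}$), with the same key identity $(U_{ij})_{kl}=u^{(i)}_{kl}p_{ij}=p_{ij}u^{(i)}_{kl}$ feeding the universal property of the wreath product algebra. The only cosmetic difference is at the end, where the paper verifies compatibility with the comultiplication by an explicit computation, while you note that a $*$-isomorphism matching the two fundamental representations entrywise automatically intertwines the generator-determined comultiplications, which is legitimate once one accepts the cited description of $w$ as the fundamental representation of $\Qu(G)\wr S_d^+$.
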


\begin{proof}
	This proof contains heavy notation, so let us fix it.

	Let us write $H=d.G$ and label its connected components $G_1,\ldots,G_d$, where each component is isomorphic to $G$. 
	Let $V$ be the fundamental representation of $\Qu(d.G)$. 
	Up to fixing an enumeration of $V(G)$, we can assume that $V(G) = \{1,\ldots,n\}$ for some $n\in \N$, and, similarly, we set $V(G_i) = \{(i,1),\ldots,(i,n)\}$ for $1\leq i\leq d$. 
	For $1\leq i,j\leq d$ and $1\leq k,l\leq n$, we denote by $v_{ij,kl}$ the coefficient of $V$ indexed by the pair $((i,k),(j,l))$. 
	We think of $v_{ij,kl}$ as ``the coefficient correponding to the $k$-th vertex in $G_i$ and the $l$-th vertex of $G_j$.'' 
	This allows us to obtain the block form $V = (V_{ij})_{1\leq i,j\leq d}$, where $V_{ij} = (v_{ij,kl})_{1\leq k,l\leq n}$ is $V[V(G_i),V(G_j)]$. 
	Notice that since $V$ is a magic unitary, we have by Lemma~\ref{lem:p_ij} the magic unitary $P(V) = (p_{ij}(V))_{1\leq i,j\leq d}$, where, for any $1\leq l\leq n$, we have $p_{ij}(V) = \sum_{x \in V(G_i)} v_{x,(j,l)} = \sum_{k=1}^n v_{ij,kl}$. 

	We denote the fundamental representation of $\Qu(G)$ by $U = (u_{kl})_{1\leq k,l\leq n}$ and of $S_d^+$ by $S=(s_{ij})_{1\leq i,j\leq d}$. 
	We also write $A = \Adj(G)$ and $B = \Adj(H)$. 
	Notice that $B = \Diag(A,\ldots,A) = I_d\otimes A$. 

	To prove the result, it is sufficient to exhibit an isomorphism of $*$-algebras $\varphi \colon \Oo(\Qu(d.G)) \to \Oo(\Qu(G) \wr S_d^+)$ such that $\varphi$ commutes with the coproducts on algebraic tensors. 
	We start by exhibiting the desired isomorphism. 

	Write $\Aa = \Oo(\Qu(d.G))$ and $\Bb = \Oo(\Qu(G) \wr S_d^+)$. 
	Modulo a slight abuse justified, we consider $\Oo(S_d^+)$ as a embedded in $\Bb$. 
	Recall that $\Bb = \Oo(S_d^+) * \Oo(\Qu(G))^{*d} / \Ii$ for some ideal $\Ii$, and is equipped with insertion maps $\nu_i \colon \Oo(\Qu(G)) \to \Bb$ for $1\leq i\leq d$. 
	For $1\leq i,j\leq d$ and $1\leq k,l\leq n$, let $w_{ij,kl} = s_{ij}\nu_i(u_{kl})$, where we recall that $\nu_i \colon \Oo(\Qu(G)) \to \Oo(\Qu(G) \wr S_d^+)$ is the insertion in the $i$-th coordinate. 
	Notice that this gives us a matrix $W \in \Mm_{dn}(\Bb)$. 
	Moreover, since $s_{ij}$ and $\nu_i(u_{kl})$ commute, we have that $w_{ij,kl}$ is a self-adjoint projection. 
	Fixing $1\leq i\leq d$ and $1\leq k\leq n$, it comes:
	\[ \sum_{j=1}^d \sum_{l=1}^n w_{ij,kl} = \sum_{j=1}^d \sum_{l=1}^n s_{ij}\nu_i(u_{kl}) = \left(\sum_{j=1}^d s_{ij}\right) \nu_i\left(\sum_{l=1}^n u_{kl}\right) = 1,\]
	as well as:
	\[ \sum_{j=1}^d \sum_{l=1}^n w_{ji,lk} = \sum_{j=1}^d \sum_{l=1}^n s_{ji} \nu_j(u_{lk}) = \sum_{j=1}^d s_{ji} \nu_j\left(\sum_{l=1}^n u_{lk}\right) = \sum_{j=1}^d s_{ij} = 1.\]
	This shows that $W$ is a magic unitary. 
	Finally, noticing that it is of the same dimension as $B$, and writing it in the natural block form, we notice that $W$ commutes with $B$ if and only if $W_{ij} = (w_{ij,kl})_{1\leq k,l\leq n}$ commutes with $A$ for every $1\leq i,j\leq d$. 
	So, given $1\leq i,j \leq d$ and using the fact that the coefficients of $A$ are scalar, it comes:
	\begin{align*}
		W_{ij}A &= s_{ij}\nu_i(U)A = s_{ij} \nu_i(UA) = s_{ij}\nu_i(AU)\ \text{(since $U$ is adapted to $G$)}\\
		&= s_{ij} A\nu_i(U) = As_{ij}\nu_i(U) = AW_{ij},
	\end{align*}
	as desired. 
	Thus $W \in \Mm_{nd}(\Bb)$ is a magic unitary adapted to $d.G$.

	By the universal property of $\Oo(\Qu(d.G))$, there is a unital $*$-morphism $\varphi \colon \Aa \to \Bb$ such that $\varphi(v_{ij,kl}) = s_{ij}\nu_i(u_{kl})$ for $1\leq i,j\leq d$ and $1\leq k,l\leq n$. 

	Let us exhibit an inverse to $\varphi$. 
	Since, by Lemma~\ref{lem:p_ij}, the matrix $P(V) = (p_{ij}(V))_{1\leq i,j\leq n}$ is a magic unitary, by the universal property of $\Oo(S_d^+)$, there is a unital $*$-morphism $f_0 \colon \Oo(S_d^+) \to \Aa$ such that $f_0(s_{ij}) = p_{ij}(V)$ for all $1\leq i,j\leq d$. 
	Now, let us fix $1\leq i\leq d$. 
	For $1\leq k,l\leq n$, we set $z_{i,kl} = \sum_{r=1}^d v_{ir,kl}$, it is clearly a self-adjoint projection. 
	We define $Z_i = (z_{i,kl})_{1\leq k,l\leq n} \in \Mm_n(\Aa)$. 
	We claim that $Z_i$ is a magic unitary commuting with $A$. 
	For $1\leq k \leq n$, it comes:
	\[ \sum_{l=1}^n z_{i,kl} = \sum_{l=1}^n \sum_{r=1}^d v_{ir,kl} = 1,\]
	and:
	\[ \sum_{l=1}^n z_{i,lk} = \sum_{l=1}^n \sum_{r=1}^d v_{ir,lk} = \sum_{r=1}^d p_{ir}(V) = 1.\]
	Finally, since $V$ commutes with $B = \Diag(A,\ldots,A)$, we have that the block $V_{ij} = (v_{ij,kl})_{1\leq k,l\leq n}$ commutes with $A$ for any $1\leq j\leq d$. 
	Given $1\leq k,l\leq n$, we reach:
	\begin{align*}
		[Z_i A]_{kl} &= \sum_{t=1}^n [Z_i]_{kt} a_{tl} = \sum_{t=1}^n \sum_{r=1}^n v_{ir,kt}a_{tl} = \sum_{r=1}^d [V_{ir}A]_{kl} = \sum_{r=1}^d [AV_{ir}]_{kl}\\
		&= \sum_{r=1}^d \sum_{t=1}^n a_{kt}v_{ir,tl} = \sum_{t=1}^n a_{kt} [Z_i]_{tl} = [AZ_i]_{kl},
	\end{align*}
	as desired. 
	This shows that $Z_i \in \Mm_n(\Aa)$ is a magic unitary adapted to $G$. 
	By the universal property of $\Oo(\Qu(G))$, there is a unital $*$-morphism $f_i \colon \Oo(\Qu(G)) \to \Aa$ such that $f_i(u_{kl}) = \sum_{r=1}^d v_{ir,kl}$ for any $1\leq k,l\leq n$.

	We are now given unital $*$-morphisms $f_0 \colon \Oo(S_d^+) \to \Aa$ and $f_1,\ldots,f_d \colon \Oo(\Qu(G)) \to \Aa$. 
	By universal property of the coproduct, that is, the unital free product, we have a unital $*$-morphism $f \colon \Oo(S_d^+)*\Oo(\Qu(G))^{*d} \to \Aa$ such that, modulo slight abuses of notation, the following are true for any $1\leq i,j\leq d$ and any $1\leq k,l\leq n$:
	\begin{itemize}
		\item $f(s_{ij}) = p_{ij}(V)$,
		\item $f(\nu_i(u_{kl})) = z_{i,kl}$.
	\end{itemize}
	We claim that for any $1\leq i\leq d$ and any $1\leq k,l\leq n$, we have $p_{ij}(V)z_{i,kl} = v_{ij,kl} = z_{i,kl}p_{ij}(V)$. 
	Indeed, by Lemma~\ref{lem:p_ij}, we have
	\[ p_{ij}(V) = q_{ji}(V) = q_j(V)((i,k)) = \sum_{t=1}^n v_{ij,kt}.\]
	Hence, it comes:
	\begin{align*}
		z_{i,kl}p_{ij}(V) &= \left( \sum_{r=1}^d v_{ir,kl} \right) \left( \sum_{t=1}^n v_{ij,kt} \right)\\
		&= \sum_{\substack{1\leq r\leq d\\1\leq t\leq n}} v_{ir,kl}v_{ij,kt}\\
		&= v_{ij,kl}
	\end{align*}
	since $V$ is a magic unitary, as well as
	\begin{align*}
		p_{ij}(V)z_{i,kl} &= \left( \sum_{t=1}^n v_{ij,kt} \right)\left( \sum_{r=1}^d v_{ir,kl} \right)\\
		&= \sum_{\substack{1\leq r\leq d\\1\leq t\leq n}} v_{ij,kt}v_{ir,kl}\\
		&= v_{ij,kl}.
	\end{align*}
	By definition of $\Oo(\Qu(G)\wr S_d^+)$, this implies that we have a unital $*$-morphism $\psi \colon \Bb \to \Aa$ such that $f = \psi \circ \pi$, where $\pi \colon \Oo(S_d^+)*\Oo(\Qu(G))^{*d} \to \Bb$ is the canonical quotient map. 
	In particular, $\psi$ is a unital $*$-morphism from $\Bb$ to $\Aa$ such that $\psi(s_{ij}) = p_{ij}(V)$ and $\psi(\nu_i(u_{kl})) = z_{i,kl}$ for any $1\leq i,j\leq d$ and $1\leq k,l\leq n$.

	Let us check that $\psi$ and $\varphi$ are inverse to each other. 
	Fix $1\leq i,j\leq d$ and $1\leq k,l\leq n$. 
	It comes:
	\[ (\psi \circ \varphi)(v_{ij,kl}) = \psi(s_{ij}\nu_i(u_{kl})) = p_{ij}(V)z_{i,kl} = v_{ij,kl}\]
	by the above computation. 
	Conversely, we have:
	\[ (\varphi \circ \psi)(s_{ij}) = \varphi(p_{ij}(V)) = \sum_{t=1}^n \varphi(v_{ij,tl}) = \sum_{t=1} s_{ij}\nu_i(u_{tl}) = s_{ij}\nu_i\left(\sum_{t=1}^n u_{tl}\right) = s_{ij},\]
	as well as:
	\[ (\varphi\circ \psi)(\nu_i(u_{kl})) = \varphi(z_{i,kl}) = \sum_{r=1}^d \varphi(v_{ir,kl}) = \sum_{r=1}^d s_{ir}\nu_i(u_{kl}) = \nu_i(u_{kl}).\]
	Thus $\varphi$ and $\psi$ are two $*$-isomorphisms inverse to each other, and $\Aa$ and $\Bb$ are isomorphic $C^*$-algebras.

	In order to conclude the proof, we only need to check that $\varphi$ preserves the comultiplication. 
	For $1\leq i,j\leq d$ and $1\leq k,l\leq n$, it comes:
	\begin{align*}
		\Delta(\varphi(v_{ij,kl})) &= \Delta(s_{ij}\nu_i(u_{kl}))\\
		&= \Delta(s_{ij})\Delta(\nu_i(u_{kl}))\\
		&= \left( \sum_{t=1}^d s_{it}\otimes s_{tj} \right) \left( \sum_{t=1}^d (\nu_i\otimes \nu_t)(\Delta(u_{kl}))(s_{it}\otimes 1) \right)\\
		&= \left( \sum_{t=1}^d s_{it}\otimes s_{tj} \right) \left( \sum_{t=1}^d (\nu_i\otimes \nu_t) \left(\sum_{r=1}^n u_{kr}\otimes u_{rl}\right)(s_{it}\otimes 1) \right)\\
		&= \sum_{1\leq t_1,t_2 \leq d} \sum_{r=1}^n (s_{it_1}\otimes s_{t_1j}) (\nu_i(u_{kr})\otimes \nu_{t_2}(u_{rl})) (s_{it_2}\otimes 1)\\
		&= \sum_{1\leq t_1,t_2 \leq d} \sum_{r=1}^n (s_{it_1}\nu_i(u_{kr}) s_{it_2}) \otimes (s_{t_1j} \nu_{t_2}(u_{rl}))\\
		&= \sum_{1\leq t_1,t_2 \leq d} \sum_{r=1}^n (s_{it_1}s_{it_2} \nu_i(u_{kr})) \otimes (s_{t_1j} \nu_{t_2}(u_{rl}))\ \text{since $\nu_i(u_{kr})$ and $s_{it_2}$ commute}\\
		&= \sum_{t=1}^d \sum_{r=1}^n (s_{it}\nu_i(u_{kr})) \otimes (s_{tj}\nu_t(u_{rl}))\ \text{since $s_{it_1}s_{it_2} = 0$ if $t_1\neq t_2$}\\
		&= \sum_{t=1}^d \sum_{r=1}^n \varphi(v_{it,kr})\otimes \varphi(v_{tj,rl})\\
		&= (\varphi \otimes \varphi) \left( \sum_{t=1}^d \sum_{r=1}^n v_{it,kr}\otimes v_{tj,rl} \right)\\
		&= (\varphi \otimes \varphi) (\Delta(v_{ij,kl})),
	\end{align*}
	as desired.

	Thus, $\varphi \colon \Qu(d.G) \to \Qu(G)\wr S_d^+$ is an isomorphism of quantum groups.
\end{proof}

\section{Cographs}\label{sec:cographs}

We thank Pournajafi for introducing the class of cographs to the author, and suggesting them as natural candidates. 
Indeed, the class of cographs is the smallest class of graphs containing $K_1$ and closed under sums and complements, so the results about sums of graphs of the precedent section naturally apply.

In this section, we establish the Schmidt alternative for the class of cographs, as well as give a complete description of cographs with quantum symmetry. 
We conclude by proving that quantum isomorphic cographs are isomorphic. 
Altogether, this shows that cographs are a tractable class, and are thus a first example of an infinite tractable class. 
The computation of their quantum automorphism groups is left to the next section.

\subsection{Definition and symmetries}

The class of \textit{cographs} is the smallest class of graphs (for inclusion) containing $K_1$ and being stable by finite sums and complement. 
It is a class of graphs with a well-understood structure, and, in particular, multiple equivalent characterisations. 
We will only use the following ones, that we recall without proof.

\begin{theorem}\label{thm:cographs}
	Let $G$ be a graph. 
	The following are equivalent:
	\begin{enumerate}
		\item $G$ is a cograph,
		\item $G=K_1$ or there are two cographs $G_1$, $G_2$ such that $G= G_1 + G_2$ or $G^c = G_1 + G_2$,
		\item all induced subgraphs of $G$ on at least two vertices have twin vertices,
		\item $G$ does not contain $P_4$ as an induced subgraph.
	\end{enumerate}
\end{theorem}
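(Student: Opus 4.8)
The plan is to establish $(1)\Leftrightarrow(2)$, $(2)\Leftrightarrow(4)$ and $(2)\Rightarrow(3)\Rightarrow(4)$, which together make all four statements equivalent. Every implication is proved by strong induction on $\#V(G)$, using that the graphs described recursively in $(2)$ are nonempty, so that in a decomposition $G=G_1+G_2$ or $G^c=G_1+G_2$ both pieces have strictly fewer vertices than $G$.

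For $(1)\Leftrightarrow(2)$, let $\mathcal C$ be the family defined by $(2)$ and let $\mathcal X$ be the smallest family containing $K_1$ and closed under finite sums and complement. The inclusion $\mathcal C\subseteq\mathcal X$ is an immediate induction; for $\mathcal X\subseteq\mathcal C$ it suffices, by minimality of $\mathcal X$, to check that $\mathcal C$ contains $K_1$, is closed under binary sums (hence under finite sums), and is closed under complement, all of which read straight off the three clauses of $(2)$. For $(2)\Rightarrow(4)$, induct on $\#V(G)$: $K_1$ has no induced $P_4$; if $G=G_1+G_2$, an induced $P_4$ would be contained in one $G_i$ since $P_4$ is connected, contradicting the inductive hypothesis; and if $G^c=G_1+G_2$, the same argument shows $G^c$ is $P_4$-free, hence so is $G$ because $P_4$ is self-complementary and complementation commutes with taking induced subgraphs.

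For $(2)\Rightarrow(3)$, I would unwind $(2)$ into the \emph{cotree} of $G$: a rooted tree whose leaves are the vertices of $G$ and whose internal nodes are of two types, ``union'' and ``join''. Deleting a leaf and suppressing any resulting degree-two node produces a cotree of the corresponding induced subgraph, so $\mathcal C$ is hereditary; thus every induced subgraph $H$ of $G$ with $\#V(H)\geq 2$ has a cotree, hence an internal node all of whose children are leaves, and those leaf-children are pairwise twins (false twins if the node is a union node, true twins if it is a join node, since for every outside vertex the adjacency to each such leaf is governed by the same lowest common ancestor). So $H$ has twin vertices. Finally $(3)\Rightarrow(4)$ is the remark that $P_4$ itself has no pair of twin vertices, so a graph containing an induced $P_4$ violates $(3)$.

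It remains to prove $(4)\Rightarrow(2)$, the substantive step. By induction on $\#V(G)$, with $G$ assumed $P_4$-free on $\geq 2$ vertices: if $G$ or $G^c$ is disconnected, writing the disconnected one as a sum $H_1+H_2$ of two nonempty $P_4$-free graphs and invoking the inductive hypothesis places $G$ in $\mathcal C$ via clause two or clause three. So the entire difficulty is the claim that a $P_4$-free graph on $\geq 2$ vertices cannot have both itself and its complement connected; equivalently, a connected graph on $\geq 4$ vertices with connected complement contains an induced $P_4$. I would prove this claim directly: if $\diam(G)\geq 3$ or $\diam(G^c)\geq 3$ then a shortest path of length $3$ is an induced $P_4$ (in $G$ or in $G^c$, using $P_4^c\cong P_4$); otherwise both diameters equal $2$, and starting from an induced path $u-w-v$ in $G$ together with a vertex non-adjacent to $w$ (one exists because $G^c$ is connected), a short case analysis on its adjacencies to $u$ and $v$ produces an induced $P_4$ in $G$ or in $G^c$. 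This case analysis --- the classical core of Seinsche's theorem --- is where I expect the only genuine work to be; the remaining implications are bookkeeping.
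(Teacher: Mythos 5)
First, a point of comparison: the paper does not prove Theorem~\ref{thm:cographs} at all — it is recalled without proof as a standard characterisation of cographs — so your attempt can only be judged on its own terms. Your architecture is sound: $(1)\Leftrightarrow(2)$ by checking that the family described in $(2)$ contains $K_1$ and is closed under sums and complements; $(2)\Rightarrow(4)$ by induction, using that $P_4$ is connected and self-complementary; $(2)\Rightarrow(3)$ via a deepest internal node of a cotree, whose leaf-children are twins in the sense used in the paper ($N(x)\setminus\{y\}=N(y)\setminus\{x\}$); $(3)\Rightarrow(4)$ because $P_4$ has no twins; and the reduction of $(4)\Rightarrow(2)$ to the claim that a $P_4$-free graph on at least two vertices has $G$ or $G^c$ disconnected (Seinsche's theorem) is exactly the right pivot, with the diameter-$\geq 3$ subcase handled correctly by a shortest path.

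The genuine gap is in your treatment of the remaining subcase, where $G$ and $G^c$ are both connected of diameter $2$. Taking an induced path $u\!-\!w\!-\!v$ and a vertex $z$ with $zw\notin E(G)$, the case analysis on the adjacencies of $z$ to $u$ and $v$ produces an induced $P_4$ only when $z$ is adjacent to exactly one of $u,v$ (then $z\!-\!u\!-\!w\!-\!v$, say, is induced). If $z$ is adjacent to both $u$ and $v$, then $\{u,v,w,z\}$ induces $C_4$ in $G$ and $2K_2$ in $G^c$; if $z$ is adjacent to neither, it induces $P_3+K_1$ in $G$ and a triangle with a pendant vertex in $G^c$. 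In neither of these two cases is there an induced $P_4$ on those four vertices, in $G$ or in $G^c$, so the ``short case analysis'' does not close as stated: the key combinatorial step of $(4)\Rightarrow(2)$ is asserted rather than proved. To repair it you need a further idea — for instance a minimal-counterexample or induction-on-$\abs{V(G)}$ argument (the textbook proof of Seinsche's theorem), or an extremal choice of the triple $(u,w,v)$ and of $z$ followed by an argument using connectedness of $G$ and $G^c$ to upgrade the $C_4$ or $P_3+K_1$ configurations to an induced $P_4$. Everything else in your plan is correct bookkeeping.
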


In particular, cographs form a hereditary class of graphs.

\begin{corollary}\label{coro:cographs}
	An induced subgraph of a cograph is a cograph. 
	In particular, a connected component of a cograph is a cograph.
\end{corollary}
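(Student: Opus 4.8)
The plan is to read this off directly from Theorem~\ref{thm:cographs}, using the characterisation of cographs by a forbidden induced subgraph (item~(4)), which behaves well under passing to induced subgraphs. First I would record the elementary transitivity of the induced-subgraph relation: if $K$ is an induced subgraph of $H$ and $H$ is an induced subgraph of $G$, then $K$ is an induced subgraph of $G$. Indeed $V(K)\subset V(H)\subset V(G)$, and
\[
E(K) = E(H)\cap \Pp(V(K)) = \bigl(E(G)\cap \Pp(V(H))\bigr)\cap \Pp(V(K)) = E(G)\cap \Pp(V(K)),
\]
using $\Pp(V(K))\subset \Pp(V(H))$. Equivalently, $K = G[V(K)]$, so $K$ is the induced subgraph of $G$ on $V(K)$.

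Now let $G$ be a cograph and $H$ an induced subgraph of $G$. If $H$ contained $P_4$ as an induced subgraph, then by the transitivity above $G$ would contain $P_4$ as an induced subgraph, contradicting item~(4) of Theorem~\ref{thm:cographs}. Hence $H$ has no induced $P_4$, and again by item~(4) of Theorem~\ref{thm:cographs}, $H$ is a cograph. This is exactly the statement that the class of cographs is hereditary, as asserted just before the corollary.

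For the second assertion, I would observe that for any vertex $x\in V(G)$ the connected component $C_G(x)$ equals the induced subgraph $G[X]$, where $X = \{y\in V(G)\mid d_G(x,y) < +\infty\}$ is the set of vertices at finite distance from $x$; in particular $C_G(x)$ is an induced subgraph of $G$, so the first part applies and $C_G(x)$ is a cograph. I do not expect any real obstacle here: the only point requiring a line of care is checking that a connected component is genuinely an induced subgraph, which is immediate from the definition of $G[X]$ recalled in Subsection~\ref{subsec:graph_intro}.
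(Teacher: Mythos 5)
Your proof is correct and follows the same route as the paper, which simply cites item (4) of Theorem~\ref{thm:cographs} (the $P_4$-free characterisation); you have merely spelled out the transitivity of the induced-subgraph relation and the fact that connected components are induced subgraphs, which the paper leaves implicit.
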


\begin{proof}
	This follows immediately from (4) of Theorem~\ref{thm:cographs}.
\end{proof}

\begin{lemma}\label{lem:symmetric_cographs}
	A cograph on at least two vertices admits a nontrivial automorphism.
\end{lemma}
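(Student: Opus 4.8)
The plan is to deduce this directly from characterisation~(3) in Theorem~\ref{thm:cographs}. Since $G$ is a cograph and $G$ is an induced subgraph of itself on at least two vertices, it has a pair of twin vertices $u \neq v$; recall this means $N(u) \setminus \{v\} = N(v) \setminus \{u\}$ (this single condition covers both the case $u \sim v$ and the case $u \not\sim v$). I claim the transposition $\tau$ of $V(G)$ that exchanges $u$ and $v$ and fixes every other vertex is a nontrivial automorphism of $G$.

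To check that $\tau \in \Aut(G)$, I would verify that $\tau$ preserves $E(G)$. An edge meeting neither $u$ nor $v$ is fixed by $\tau$, hence stays an edge. For an edge $uw$ with $w \notin \{u,v\}$ we have $w \in N(u) \setminus \{v\} = N(v) \setminus \{u\}$, so $\tau(uw) = vw \in E(G)$, and symmetrically for edges $vw$. Finally $\tau$ fixes the pair $\{u,v\}$, which is or is not an edge regardless of $\tau$. So $\tau$ maps edges to edges; applying the same reasoning to $\tau^{-1} = \tau$ shows it also maps non-edges to non-edges, so $\tau$ is an automorphism, and $\tau \neq \id$ because $u \neq v$. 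This finishes the argument.

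I do not anticipate a genuine obstacle: the only point requiring a little care is to phrase the twin condition in a way that handles true twins and false twins simultaneously, which $N(u)\setminus\{v\} = N(v)\setminus\{u\}$ does. As an alternative route, one could induct on $\# V(G)$ using characterisation~(2) of Theorem~\ref{thm:cographs} together with Remark~\ref{rk:auto_complement}: after possibly replacing $G$ by $G^c$ (which has the same automorphism group), write $G = G_1 + G_2$ with $G_1, G_2$ nonempty cographs; if $G_1 \cong G_2$ then swapping the two summands is a nontrivial automorphism, and otherwise $G_1$ and $G_2$ are not both $K_1$, so one of them has at least two vertices and, by the inductive hypothesis, a nontrivial automorphism, which extends to $G$ by acting as the identity on the other summand.
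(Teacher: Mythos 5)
Your proposal is correct and follows essentially the same route as the paper: extract a pair of twin vertices from characterisation~(3) of Theorem~\ref{thm:cographs} and check that the transposition swapping them (with the twin condition phrased as $N(u)\setminus\{v\} = N(v)\setminus\{u\}$, covering both adjacent and non-adjacent twins) is a nontrivial automorphism. The alternative inductive argument you sketch is a valid variant but not needed.
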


\begin{proof}
	Let $G$ be a cograph on at least two vertices. 
	By Theorem~\ref{thm:cographs}, $G$ admits two twin vertices $x$ and $y$. 
	We claim that the function $\sigma \colon V(G) \to V(G)$ which exchanges $x$ and $y$ and fixes all of the other vertices is a nontrivial automorphism of $G$. 
	It is clearly a well-defined nontrivial transposition of $V(G)$, so we only need to check that it is a graph morphism.  
	Notice that on $V(G) \setminus \{x,y\}$, it coincides with the identity. 
	So it is enough to check that $\sigma$ preserves edges which admit at least one endpoint in $\{x,y\}$. 

	Let $z\in V(G)$. 
	Because $x$ and $y$ play symmetric roles, it is enough to check that if $xz\in E(G)$, then $\sigma(x)\sigma(z) \in E(G)$. 
	Thus, let us assume that $xz\in E(G)$. 
	If $z = y$ we have $\sigma(x)\sigma(y) = yx = zx \in E(G)$ as desired. 
	Otherwise, we have $z\neq y$, so $z\in N(x)\setminus \{y\}$. 
	Because $x$ and $y$ are twin, we have $z \in N(y) \setminus \{x\}$. 
	Hence it comes
	\[ \sigma(x)\sigma(z) = yz \in E(G),\]
	also as desired. 
	This concludes the proof.
\end{proof}

This has the following corollary.

\begin{corollary}\label{coro:symmetric_cographs}
	A cograph is quantum asymmetric if and only if it is asymmetric, in which case it is $K_1$.
\end{corollary}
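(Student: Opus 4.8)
The plan is to combine Lemma~\ref{lem:symmetric_cographs} with the general fact that quantum asymmetry forces classical asymmetry, and then to identify the only surviving graph. First I would recall from Remark~\ref{rem:qu_G_abelian} that the abelianisation of $\Oo(\Qu(G))$ is $C(\Aut(G))$. Hence if $G$ is quantum asymmetric, i.e.\ $\Oo(\Qu(G)) = \C$, then its abelianisation is $\C$ as well, so $C(\Aut(G)) = \C$ and $\Aut(G)$ is trivial. This direction does not use that $G$ is a cograph and shows that every quantum asymmetric graph is asymmetric.

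Next I would treat the converse, which is where the cograph hypothesis enters. Suppose $G$ is an asymmetric cograph. Every cograph has at least one vertex, being built from $K_1$ by sums and complements. By the contrapositive of Lemma~\ref{lem:symmetric_cographs}, an asymmetric cograph cannot have two or more vertices, since any cograph on at least two vertices has a nontrivial automorphism. Therefore $G$ has exactly one vertex, that is, $G = K_1$.

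Finally I would check that $K_1$ is quantum asymmetric: the only magic unitary of size $1$ is the scalar $1$ (the row-sum relation in $\Pp_1$ forces $u_{11} = 1$), so $\Oo(\Qu(K_1)) = \C$ and $\Qu(K_1)$ is trivial. Assembling the three steps, a quantum asymmetric cograph is asymmetric, an asymmetric cograph is $K_1$, and $K_1$ is quantum asymmetric; this yields the stated equivalence together with the identification $G = K_1$ in the asymmetric case. There is no real obstacle here: the whole content is already packaged in Lemma~\ref{lem:symmetric_cographs} and in Remark~\ref{rem:qu_G_abelian}, and this corollary is simply their combination.
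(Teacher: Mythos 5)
Your proposal is correct and follows essentially the same route as the paper: the forward direction is the standard fact that quantum asymmetry implies classical asymmetry, and the converse is exactly the contrapositive of Lemma~\ref{lem:symmetric_cographs} identifying the graph as $K_1$, which is clearly quantum asymmetric. Your additional unpacking of the forward direction via Remark~\ref{rem:qu_G_abelian} and the explicit check for $K_1$ are just finer detail on the same argument.
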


\begin{proof}
	We already know that a quantum asymmetric graph is asymmetric, so it is in particular true for a cograph. 
	Conversely, if $G$ is an asymmetric cograph, then by Lemma~\ref{lem:symmetric_cographs} we have $G=K_1$, and it is clearly quantum asymmetric. 
	This concludes the proof.
\end{proof}

\subsection{The Schmidt alternative for cographs}

Let us now prove the Schmidt alternative for cographs.

We start by defining useful sequences of graphs. 
Consider $G$ a graph. 
We define inductively the following sequence $(Z_n(G))_{n\geq 1}$: we set $Z_1(G) = G$ and for $n\in \N$ we set
\[Z_{n+1}(G) = K_1 + Z_n(G)^c.\]

\begin{lemma}\label{lem:Zsequences}
	Let $G$ be a graph. 
	For $n\geq 2$, we claim that:
	\begin{enumerate}
		\item $Z_n(G)$ is disconnected,
		\item if $G$ is a cograph, then $Z_n(G)$ is a cograph,
		\item $\# V(Z_n(G)) = n-1 + \# V(G)$,
		\item if $n\geq 3$, writing $x$ the vertex of $K_1$ in $Z_n(G) = K_1 + Z_{n-1}(G)^c$, we have that $x$ is the unique vertex of degree 0 of $Z_n(G)$,
		\item if $Z_2(G)$ does not satisfy Schmidt criterion, then $Z_n(G)$ neither,
		\item if $Z_2(G)$ does not have quantum symmetry, then $Z_n(G)$ neither.
	\end{enumerate}
\end{lemma}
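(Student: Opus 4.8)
The plan is to establish the six assertions essentially in order, by induction on $n$, exploiting the recursion $Z_{n}(G) = K_1 + Z_{n-1}(G)^c$ together with the facts that $\Aut(H) = \Aut(H^c)$ (Remark~\ref{rk:auto_complement}) and that a magic unitary is adapted to $H$ if and only if it is adapted to $H^c$ (Lemma~\ref{lem:mu_complement}). Throughout we assume $V(G) \neq \varnothing$, which is already needed for (1). Assertions (1)--(3) are routine: for (3) one has $\# V(Z_{n+1}(G)) = 1 + \# V(Z_n(G))$ and iterates; for (2) one uses that the class of cographs contains $K_1$ and is closed under complement and finite sums (Theorem~\ref{thm:cographs}); and for (1), $Z_n(G) = K_1 + Z_{n-1}(G)^c$ is a disjoint union of two nonempty graphs (the second being nonempty by (3) and the hypothesis $\# V(G)\geq 1$), hence disconnected.

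Assertion (4) is the key geometric input. The vertex $x$ has degree $0$ in $Z_n(G) = K_1 + Z_{n-1}(G)^c$ by construction, so it remains to see that every vertex of $Z_{n-1}(G)^c$ has a neighbour. A vertex $v$ is isolated in $H^c$ precisely when it is adjacent in $H$ to every other vertex; hence it suffices to show that $Z_{n-1}(G)$ has no vertex adjacent to all the others. For $n \geq 3$ we may write $Z_{n-1}(G) = K_1 + Z_{n-2}(G)^c$, which is disconnected by (1) and has at least two vertices by (3), and in a disconnected graph on at least two vertices no vertex is adjacent to all the others. This proves (4).

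Assertions (5) and (6) then follow from (4) by a common reduction, by induction on $n \geq 2$ with trivial base case $n = 2$. For $n \geq 3$, assertion (4) makes the $K_1$-vertex $x$ invisible to symmetries. Indeed, any automorphism of $Z_n(G)$ must fix $x$ (the unique isolated vertex) and therefore restricts to an automorphism of $Z_{n-1}(G)^c$ with the same support, while conversely every automorphism of $Z_{n-1}(G)^c$ extends by fixing $x$. Likewise, by Lemma~\ref{lem:degree} any magic unitary $U$ adapted to $Z_n(G)$ has $u_{xy} = u_{yx} = 0$ for $y \neq x$ (the degrees differ), hence $u_{xx} = 1$ and $U = \Diag(1, U')$ with $U'$ a magic unitary adapted to $Z_{n-1}(G)^c$; conversely every such $U'$ gives $\Diag(1,U')$ adapted to $Z_n(G)$. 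Combining these with Remark~\ref{rk:auto_complement}, resp.\ Lemma~\ref{lem:mu_complement}, and observing that the support of an automorphism and the $*$-algebra generated by a magic unitary are unchanged under the passages from $Z_n(G)$ to $Z_{n-1}(G)^c$ to $Z_{n-1}(G)$, we get that $Z_n(G)$ satisfies Schmidt's criterion if and only if $Z_{n-1}(G)$ does, and (invoking Lemma~\ref{lem:characterisation_qs}, the two noncommuting coefficients necessarily lying inside the $U'$ block) that $Z_n(G)$ has quantum symmetry if and only if $Z_{n-1}(G)$ does. The inductive hypothesis then yields (5) and (6).

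The only real obstacle is assertion (4): once $x$ is known to be the unique isolated vertex of $Z_n(G)$, all the symmetry statements reduce to those for $Z_{n-1}(G)$ and the rest is bookkeeping. The one point to handle with care is to check that supports of automorphisms and subalgebras generated by magic unitaries genuinely survive the identifications $\Aut(Z_n(G)) = \Aut(Z_{n-1}(G)^c) = \Aut(Z_{n-1}(G))$ and the analogous identification of adapted magic unitaries, so that ``satisfies Schmidt's criterion'' and ``has quantum symmetry'' transfer faithfully along the induction.
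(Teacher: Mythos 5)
Your proposal is correct and follows essentially the same route as the paper: (4) is deduced from (1) and (3) applied to $Z_{n-1}(G)$, and (5)--(6) proceed by induction, pinning the unique isolated vertex via degree preservation (Lemma~\ref{lem:degree} in the quantum case), restricting to the block on $Z_{n-1}(G)^c$, and transferring back to $Z_{n-1}(G)$ via Remark~\ref{rk:auto_complement} and Lemma~\ref{lem:mu_complement}. The only cosmetic difference is that in (4) you argue directly that a disconnected graph has no dominating vertex instead of citing Lemma~\ref{lem:complement}, which changes nothing of substance.
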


\begin{proof}
	(1), (2), and (3) are clear. 
	For (4), we have $deg(x) = 0$ by construction. 
	Moreover, we have $n-1\geq 2$, so, by (1), $Z_{n-1}(G)$ is disconnected. 
	Then, $Z_{n-1}(G)^c$ is connected by Lemma~\ref{lem:complement}. 
	Because $n-1\geq 2$, by (3), we have $\# V(Z_{n-1}(G)) = n-2 + \# V(G) \geq n-1 \geq 2$, so $Z_{n-1}(G)^c$ is connected graph on at least two vertices: this implies that all of its vertices have degree at least 1, which proves (4).\\

	Let us prove (5). 
	Assume that $Z_2(G)$ does not satisfy Schmidt criterion and let us prove that $Z_n(G)$ neither by induction on $n\geq 2$. 
	For $n=2$, there is nothing to prove. 
	So let us assume that $Z_n(G)$ does not satisfy Schmidt criterion for some $n\geq 2$ and consider $f$ and $g \in \Aut(Z_{n+1}(G))$ two non-trivial automorphisms of $Z_{n+1}(G)$. 
	Notice that $n+1\geq 3$ so by (4) $x$ is the unique vertex of degree 0, where $\{x\} = V(K_1)$ in the decomposition $Z_{n+1} = K_1 + Z_n(G)^c$. 
	Because $f$ and $g$ preserve degrees, we have $f(x) = x = g(x)$. 
	Hence $f$ and $g$ preserve $Z_n(G)^c$ and their restrictions $\tilde f$ and $\tilde g$ are automorphisms of $Z_n(G)^c$. 
	Moreover, they are non-trivial, as otherwise they would be the identity on $Z_{n+1}(G)$. 
	By induction hypothesis, we have that $Z_n(G)$ does not satisfy Schmidt criterion, so by remark~\ref{rk:auto_complement}, $Z_n(G)^c$ neither. 
	Then $\varnothing \neq \Supp(\tilde f) \cap \Supp(\tilde g) \subset \Supp(f) \cap \Supp(g)$, so $f$ and $g$ do not have disjoint support. 
	This being true for any two non-trivial automorphisms of $Z_{n+1}(G)$, we conclude that $Z_{n+1}(G)$ does not satisfy Schmidt criterion. 
	This concludes the induction and the proof of (5).\\

	Finally, let us prove (6) in the same way. 
	We assume that $Z_2(G)$ does not have quantum symmetry and we prove that $Z_n(G)$ neither by induction on $n\geq 2$. 
	For $n=2$, there is once again nothing to prove. 
	So we assume that $Z_n(G)$ does not have quantum symmetry for some $n\geq 2$ and we want to prove that $Z_{n+1}(G)$ neither. 
	Once again, we have $n+1\geq 3$ so by (4) $x$ is the unique vertex of degree 0. 
	We enumerate the vertices of $Z_{n+1}(G)$, denoting them by $x_1,\ldots,x_m$, with $x_m=x$, where $m = \# V(Z_{n+1}(G))$. 
	Let $U$ be a magic unitary adapted to $Z_{n+1}(G)$ with this enumeration. 
	By Lemma~\ref{lem:degree} and (4), we have that $u_{m,m} = 1$ and all other coefficients in the last row and column are 0. 
	So we can write $U$ in blocks in the following way:
	\[ U = \begin{pmatrix}
		& & & 0\\
		& V & & \vdots\\
		& & & 0\\
		0 & \ldots & 0 & 1
	\end{pmatrix},\]
	with $V$ being a block of dimension $m-1$. 
	Notice that $V$ is a magic unitary. 
	Now with the same enumeration, writing $A = \Adj(Z_{n+1}(G))$, we have
	\[ A = \begin{pmatrix}
		& & & 0\\
		& B & & \vdots\\
		& & & 0\\
		0 & \ldots & 0 & 0
	\end{pmatrix},\]
	with $B = \Adj(Z_n(G)^c)$. 
	So both $U$ and $A$ are block-diagonal with blocks of corresponding dimensions $n$ and 1: the fact that $UA = AU$ implies that $VB = BV$. 
	This shows that $V$ is a magic unitary adapted to $Z_n(G)^c$. 
	Thus $V$ is also adapted to $Z_n(G)$ by Lemma~\ref{lem:complement}. 
	Now, by induction hypothesis, we have that $Z_n(G)$ does not have quantum symmetry, so all of the coefficients of $V$ commute. 
	But the remaining coefficients of $U$ are 0 and 1, so all of the coefficients of $U$ commute too. 
	This being true for any magic unitary adapted to $Z_{n+1}(G)$, we conclude that $Z_{n+1}(G)$ does not have quantum symmetry. 
	It ends the induction, the proof of (6), and the proof of the lemma.
\end{proof}

We define two sequences of cographs by $X_n = Z_n(K_1)$ and $Y_{n+1} = Z_n(K_2)$ for $n\geq 1$ (notice that $K_2$ is indeed a cograph). 
The shift of indices for $(Y_n)_{n\geq 2}$ is simply to ensure that $\# V(Y_n) = n$.

We can now prove the main result of this section.

\begin{theorem}\label{thm:cographs_QS}
	Let $G$ be a cograph. 
	Then the following are equivalent:
	\begin{enumerate}
		\item $G$ does not have quantum symmetry,
		\item $G$ does not satisfy Schmidt criterion,
		\item $G=K_1$ or letting $n=\# V(G)$ we have $G \in \{X_n,X_n^c,Y_n,Y_n^c\}$.
	\end{enumerate}
\end{theorem}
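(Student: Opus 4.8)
The plan is to prove the cycle of implications $(1)\Rightarrow(2)\Rightarrow(3)\Rightarrow(1)$, the last of which is the substantial one. The implication $(1)\Rightarrow(2)$ is immediate from the quantum Schmidt criterion (Theorem~\ref{thm:qu_schmidt}): if $G$ satisfied Schmidt's criterion it would have two classical automorphisms with disjoint support, hence quantum symmetry. For $(3)\Rightarrow(1)$, I would argue that none of the listed graphs has quantum symmetry: since $X_n = Z_n(K_1)$ and $Y_{n+1} = Z_n(K_2)$, it suffices by Lemma~\ref{lem:Zsequences}(6) to check that $Z_2(K_1)$ and $Z_2(K_2)$ have no quantum symmetry, and by $\Qu(G)=\Qu(G^c)$ the complements come for free. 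Now $Z_2(K_1) = K_1 + K_1^c = K_1 + K_1 = 2K_1$, the graph on two isolated vertices, and $Z_2(K_2) = K_1 + K_2^c = K_1 + 2K_1 = 3K_1$; both are edgeless graphs on $\le 3$ vertices. A magic unitary adapted to an edgeless graph on $n\le 3$ vertices is just an arbitrary $n\times n$ magic unitary, and since $\Oo(S_n^+)$ is commutative for $n\le 3$ (Example~\ref{ex:sn}), such graphs have no quantum symmetry. (One must double-check $K_1$ itself, which is trivial.)

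The heart of the matter is $(2)\Rightarrow(3)$, i.e.\ that a cograph not satisfying Schmidt's criterion must be $K_1$ or one of $X_n, X_n^c, Y_n, Y_n^c$. I would proceed by strong induction on $n = \#V(G)$. The base cases $n=1,2,3$ can be handled by inspection (listing the few cographs and checking which satisfy Schmidt's criterion). For the inductive step, using Theorem~\ref{thm:cographs}(2) and $\Qu(G)=\Qu(G^c)$ together with Remark~\ref{rk:auto_complement}, I may assume after possibly replacing $G$ by $G^c$ that $G = G_1 + \dots + G_k$ is \emph{disconnected}, written as its connected components. The first key observation: if some component $G_i$ is not rigid (has a nontrivial automorphism) \emph{and} $G$ has another component, or if two components are isomorphic, or if one component has a vertex outside it to build a disjoint second automorphism, then $G$ satisfies Schmidt's criterion. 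More precisely: any nontrivial automorphism of a single component $G_i$ extends by the identity to an automorphism of $G$ with support inside $V(G_i)$; so if there are $\ge 2$ components each admitting a nontrivial automorphism, or one component admitting a nontrivial automorphism while another admits one (even a transposition of isomorphic components), Schmidt's criterion holds. By Lemma~\ref{lem:symmetric_cographs} every component on $\ge 2$ vertices has a nontrivial automorphism. Hence for $G$ to \emph{fail} Schmidt's criterion, at most one component can have $\ge 2$ vertices, and if one does, there can be no other source of a disjoint nontrivial automorphism — in particular there can be at most one isolated vertex (two isolated vertices could be swapped, and that swap has support disjoint from an automorphism of the large component or from another swap).

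Carrying this further: the only surviving configurations for a disconnected cograph failing Schmidt's criterion are (a) $G$ is a single isolated vertex together with one nontrivial connected component $C$, i.e.\ $G = K_1 + C$ where $C$ is connected on $\ge 2$ vertices, or (b) $G$ has exactly two components and both are single vertices, i.e.\ $G = 2K_1 = X_2$, or (c) $G = K_1$ (not disconnected, handled in the base case). Wait — in case (a), since $C$ is a connected cograph on $\ge 2$ vertices, $C^c$ is \emph{disconnected} (Theorem~\ref{thm:cographs}(2) forces $C = D_1+D_2$ is impossible as $C$ is connected, so $C^c = D_1 + D_2$), and $C^c$ is a cograph on the same vertex set failing Schmidt's criterion — indeed an automorphism of $C$ is one of $C^c$ by Remark~\ref{rk:auto_complement}, and moreover $K_1 + C$ failing Schmidt's criterion forces $C$ (hence $C^c$) to fail it, since the isolated vertex contributes nothing. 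So $C = (C^c)^c$ with $C^c$ a disconnected cograph on $n-1$ vertices failing Schmidt's criterion; by the induction hypothesis applied to $C^c$ (which must itself be handled: $C^c$ disconnected failing Schmidt forces $C^c = K_1 + C'$ or $C^c = 2K_1$ by the case analysis just done, and $C'$ again a smaller instance), we get that $C^c \in \{X_{n-1}, X_{n-1}^c, Y_{n-1}, Y_{n-1}^c\}$ or $C^c=K_1$, whence $G = K_1 + (C^c)^c = Z_n(\text{something})$ lands in the desired list. I would organize this recursion cleanly by noting that a disconnected cograph failing Schmidt's criterion is either $2K_1$, or of the form $K_1 + H^c$ where $H$ is a (possibly connected) cograph on fewer vertices also failing Schmidt's criterion — i.e.\ $G = Z_2(H)$ up to complement — and then unwind. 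The main obstacle is bookkeeping: making the base cases and the ``$K_1+C$'' recursion airtight, in particular verifying that when $C$ is connected on $\ge 2$ vertices, $K_1 + C$ fails Schmidt's criterion \emph{if and only if} $C$ does, and tracking the index shifts between $X_n$ (built from $K_1$) and $Y_n$ (built from $K_2$) so that the terminal case of the recursion is exactly $K_1$ or $K_2$, matching the definitions $X_n = Z_n(K_1)$, $Y_{n+1} = Z_n(K_2)$.
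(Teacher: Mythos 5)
Your proof is correct and takes essentially the same route as the paper: (1)$\Rightarrow$(2) is immediate, (3)$\Rightarrow$(1) follows from Lemma~\ref{lem:Zsequences}(6) once the edgeless graphs $2K_1$ and $3K_1$ are checked (with complements free since $\Qu(G)=\Qu(G^c)$), and (2)$\Rightarrow$(3) is the same induction on the number of vertices, using complement-invariance and Lemma~\ref{lem:symmetric_cographs} to force the shape $K_1+C$ and then unwinding the $Z_2$-recursion (the paper performs this unwinding by listing the eight candidates $H+K_1$, $(H+K_1)^c$ with $H\in R_n$ and discarding four via Schmidt's criterion, which is exactly the bookkeeping you defer). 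One small imprecision: your blanket description of the disconnected cographs failing Schmidt's criterion omits $3K_1=Y_3$; this is harmless because it has $3$ vertices and so lies in your base cases, but the case analysis should be asserted only for the inductive step $n\geq 4$.
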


\begin{proof}
	The implication (1) $\Rightarrow$ (2) is the contrapositive of Schmidt criterion, which we know is true.\\

	Let us check that (3) $\Rightarrow$ (1). 
	$K_1$ does not have quantum symmetry. 
	Together with Lemma~\ref{lem:complement}, this shows that we only need to check that $X_n$ and $Y_n$ do not have quantum symmetry for all $n\geq 2$.
	
	For $(X_n)$, we have that $X_2 = Z_2(K_1) = 2K_1$ does not have quantum symmetry. 
	Applying Lemma~\ref{lem:Zsequences}, we see that, for all $n\geq 2$, the graph $X_n = Z_n(K_1)$ does not have quantum symmetry.

	For $(Y_n)$, we have that $Y_2 = Z_1(K_2) = K_2$ does not have quantum symmetry. 
	Moreover, remark~\ref{rem:qu_G_abelian} ensures that $Y_3 = K_1 + Y_2^c = K_1 + K_2^c = 3K_1$ neither. 
	So $Z_2(K_2) = Y_3$ does not have quantum symmetry, which, by Lemma~\ref{lem:Zsequences}, implies that $Y_{n+1} = Z_n(K_2)$ neither, for all $n\geq 2$. 
	This concludes the proof that (3) $\Rightarrow$ (1).\\

	Finally, let us prove that (2) $\Rightarrow$ (3). 
	For $n\geq 2$, we write $R_n = \{X_n,X_n^c,Y_n,Y_n^c\}$. 
	Let us start by checking that the implication is true if $G$ has at most two vertices. 
	If $\# V(G)=1$, then $G=K_1$, so (3) is satisfied, and also (2) $\Rightarrow$ (3). 
	If $\# V(G) =2$, then
	\[G \in \{K_2,2K_1\} = \{Y_2,Y_2^c\} \subset R_2,\]
	so (3) is satisfied, and also (2) $\Rightarrow$ (3).

	Let us now prove that (2)$\Rightarrow$ (3) by induction on $n=\# V(G)$ for all $n\geq 3$. 
	
	Consider the initial case $n=3$. 
	It is easy to check that
	\[ R_3 = \{ K_1 + K_2, (K_1 + K_2)^c, 3K_1,K_3\},\]
	and that those are all the graphs on three vertices. 
	Hence (3) is true, and so is (2)$\Rightarrow$(3).

	Now, suppose that for some integer $n\geq 3$, the cographs on $n$ vertices which do not satisfy Schmidt criterion are exactly $\{X_n,X_n^c,Y_n,Y_n^c\}=R_n$. 
	Let $G$ be a cograph on $n+1$ vertices which does not satisfy Schmidt criterion. 
	We want to prove that $G \in R_{n+1}$. 
	Because $G$ has $n+1 \neq 1$ vertices, by Theorem~\ref{thm:cographs}, we have that there are two cographs $H_1$ and $H_2$ such that $G=H_1+H_2$ or $G^c=H_1+H_2$. 
	By assumption and by remark~\ref{rk:auto_complement}, neither $G$ nor $G^c$ satisfy Schmidt criterion. 
	With Lemma~\ref{lem:symmetric_cographs}, we see that if both $H_1$ and $H_2$ have at least two vertices, they admit nontrivial automorphisms, and so $H_1+H_2$ satisfies Schmidt criterion: this shows by contrapositive that $H_1=K_1$ or $H_2=K_1$. 
	Hence, there is a cograph $H$ such that $G=H+K_1$ or $G^c=H+K_1$. 
	Since $G$ does not satisfy Schmidt criterion, $H$ neither. 
	All in all, $H$ is a cograph on $n$ vertices which does not satisfy Schmidt criterion. 
	Applying the induction hypothesis, we reach ($Y_{n-1}$ is well-defined since $n-1\geq 2$):
	\[ H \in R_n = \{X_n,X_n^c,Y_n,Y_n^c\} = \{X_{n-1}^c + K_1, (X_{n-1}^c+K_1)^c, Y_{n-1}^c + K_1, (Y_{n-1}^c+K_1)^c\}. \]
	It comes:
	\begin{align*}
		G &\in  \{H+K_1,(H+K_1)^c\}\\
		&\subset \{ X_{n-1}^c + 2K_1, (X_{n-1}^c+K_1)^c+K_1, Y_{n-1}^c + 2K_1, (Y_{n-1}^c+K_1)^c + K_1\}\\
		&\cup \{ (X_{n-1}^c + 2K_1)^c, ((X_{n-1}^c+K_1)^c+K_1)^c, (Y_{n-1}^c + 2K_1)^c, ((Y_{n-1}^c+K_1)^c + K_1)^c\}.
	\end{align*}
	Noticing that $X_{n-1}^c$ and $Y_{n-1}^c$ are cographs on $n-1\geq 2$ vertices, they have a nontrivial automorphism by Lemma~\ref{lem:symmetric_cographs}. 
	Hence $X_{n-1}^c+ 2K_1$ and $Y_{n-1}^c+2K_1$ satisfy Schmidt criterion, and, by remark~\ref{rk:auto_complement}, their complements too. 
	Because $G$ does not satisfy Schmidt criterion, we obtain that:
	\begin{align*}
		G &\in \{ (X_{n-1}^c + K_1)^c + K_1, (Y_{n-1}^c + K_1)^c + K_1, ((X_{n-1}^c + K_1)^c + K_1)^c, ((Y_{n-1}^c + K_1)^c + K_1)^c \}\\
		&= \{ X_{n+1},Y_{n+1},X_{n+1}^c,Y_{n+1}^c\},
	\end{align*}
	as desired. 
	This finishes the proof of heredity and the proof of the theorem is complete.
\end{proof}

\subsection{Quantum isomorphism for cographs}

The following result could be proved directly in the spirit of the current section or as a consequence of Theorem~\ref{thm:qi_for_disconnected_graphs}. 
We give it here in the latter presentation.

\begin{theorem}\label{thm:qi_cographs}
	Two cographs are isomorphic if and only if they are quantum isomorphic.
\end{theorem}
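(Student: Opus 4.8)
The plan is to prove the forward implication trivially and the converse by induction on the number of vertices, using the structure theorem for cographs together with Theorem~\ref{thm:qi_for_disconnected_graphs}. The forward direction is immediate: an isomorphism $G\cong H$ is witnessed by a permutation matrix, which is a magic unitary intertwining the adjacency matrices. For the converse I would induct on $n=\#V(G)=\#V(H)$, the base case $n=1$ being trivial since then $G=H=K_1$. Before the induction proper, I would record that quantum isomorphism passes to complements: if $P$ is a magic unitary with $P\Adj(G)=\Adj(H)P$, then the rows and columns of $P$ sum to $1$, so $PJ_n=J_n=J_nP$, and from $\Adj(G^c)=J_n-\Adj(G)-I_n$ and $\Adj(H^c)=J_n-\Adj(H)-I_n$ one gets $P\Adj(G^c)=PJ_n-P-P\Adj(G)=J_n-P-\Adj(H)P=\Adj(H^c)P$; this is the quantum-isomorphism analogue of Lemma~\ref{lem:mu_complement}. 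Since the class of cographs is closed under complementation and $G^c\cong H^c$ iff $G\cong H$, I am then free to replace $(G,H)$ by $(G^c,H^c)$ at will.

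The next step is to reduce to the disconnected case. Suppose $n\geq 2$ and $G$ is connected. As $G$ is a cograph different from $K_1$, Theorem~\ref{thm:cographs}(2) supplies cographs $G_1,G_2$ with $G=G_1+G_2$ or $G^c=G_1+G_2$; the first is impossible since a disjoint sum of two nonempty graphs is disconnected, hence $G^c$ is disconnected. Replacing $(G,H)$ by $(G^c,H^c)$ as above, I may thus assume $G$ is disconnected; then $H$ is disconnected as well, by Corollary~\ref{coro:connected_qi} applied to the quantum isomorphism $H\qi G$ (using symmetry of $\qi$, Theorem~\ref{thm:qi_is_equivalence_relation}).

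Finally I would invoke Theorem~\ref{thm:qi_for_disconnected_graphs}. Writing $G=G_1+\cdots+G_k$ and $H=H_1+\cdots+H_l$ as sums of connected components, that theorem gives $k=l$ and a permutation $\sigma\in S_k$ with $G_i\qi H_{\sigma(i)}$ for every $i$. Each $G_i$ and each $H_{\sigma(i)}$ is a connected component of a cograph, hence a cograph by Corollary~\ref{coro:cographs}, and since $G$ is disconnected we have $k\geq 2$, so every such component has strictly fewer than $n$ vertices. The induction hypothesis then yields $G_i\cong H_{\sigma(i)}$ for all $i$, and therefore $G\cong H$, closing the induction.

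The argument is essentially formal once Theorem~\ref{thm:qi_for_disconnected_graphs} is in hand, so I do not expect a genuine obstacle; the only point requiring care is the shape of the induction, since the connected case is not reduced to a smaller instance directly but is first converted, via the complement trick, into a disconnected case on the same number of vertices, whose connected components are then genuinely smaller.
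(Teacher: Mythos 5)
Your proposal is correct and follows essentially the same route as the paper's proof: induction on the number of vertices, reduction to the disconnected case via complementation and Theorem~\ref{thm:cographs}, then Theorem~\ref{thm:qi_for_disconnected_graphs} together with Corollary~\ref{coro:cographs} and the induction hypothesis. The extra details you supply (the explicit check that quantum isomorphism passes to complements, and the appeal to Corollary~\ref{coro:connected_qi}) are correct elaborations of steps the paper states more briefly.
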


\begin{proof}
	We prove it by induction on the number of vertices $n\geq 1$. 
	It is clear for $n=1$. 
	We now assume it true for any $k<n$ for some $n\geq 2$. 
	Let $G$ and $H$ be two quantum ismorphic cographs on $n$ vertices. 
	Since isomorphism and quantum isomorphism are preserved by taking complements, we can assume that $G$ is disconnected by Theorem~\ref{thm:cographs}. 
	We write the decomposition of $G$ and $H$ into connected components as $G = G_1 + \ldots + G_r$ and $H = H_1 + \ldots + H_l$ for some integers $r\geq 2$ and $l\geq 1$. 
	By Theorem~\ref{thm:qi_for_disconnected_graphs}, we have $r=l$ and up to relabelling $G_i \qi H_i$, and in particular $\# V(G_i) = \#V(H_i)$. 
	Since $r\geq 2$, we reach $\# V(G_i) < \# G = n$. 
	Moreover, $G_i$ and $H_i$ are cographs by Corollary~\ref{coro:cographs}. 
	The induction hypothesis then implies that $G_i = H_i$. 
	This immediately leads to the equality $G=H$, as desired, which concludes the induction and the proof.
\end{proof}

\section{Tractable classes of graphs}\label{sec:tractable}

In this section, we introduce the notion of a tractable class of graphs and study some of its general properties. 
In subsection~\ref{subsec:sums}, we study how the quantum properties considered, as well as the quantum automorphism group of a graph, behave with respect to sums. 
In subsection~\ref{subsec:F_cographs}, we introduce the class $\co(\Ff)$ of $\Ff$-cographs associated to a class $\Ff$ and we study how the axioms behave when going from $\Ff$ to $\Ff$-cographs.  
As a result, under some conditions, we obtain in Theorem~\ref{thm:quantum_Jor_vs_Fcographs} the computation of the quantum automorphism groups of graphs in $\co(\Ff)$ as a function of the ones of the graphs in $\Ff$.\\

The notion of tractable class generalises some properties observed for the class of cographs. 
Let $\Ff$ be a class of graphs. 
We introduce the following axioms:
\begin{itemize}
	\item[(QA)] for all $G \in \Ff$, we have that $\Qu(G)$ is trivial if and only if $\Aut(G)$ is trivial,
	\item[(QI)] for all $G$ and $H\in \Ff$, we have that $G\qi H$ if and only if $G= H$,
	\item[(SA)] for all $G\in \Ff$, we have that $G$ has quantum symmetry if and only if $G$ satisfies Schmidt's criterion.
\end{itemize}
The symbols of the axioms stand respectively for quantum asymmetry, quantum isomorphism, and Schmidt alternative. 
When $\Ff$ satisfies all three, we say that $\Ff$ is \textit{tractable}. 
Notice that tractability is inherited by subfamilies.

It is noteworthy that none of these axioms are true for the class of all graphs. 
Indeed, in a recent preprint by de Bruyn, Roberson, and Schmidt~\cite{debruyn2024asymmetric}, the authors show that there are asymmetric graphs with quantum symmetry, thus showing that the axiom (QA) is not true in general. 
This also gives counter-examples to the Schmidt alternative, since asymmetric graphs do not satisfy Schmidt's criterion. 
The fact that (QI) is not true in general was shown already in 2019 in~\cite{QiNotI} by Atserias, Man{\v{c}}inska, Roberson, {\v{S}}{\'a}mal, Severini, and Varvitsiotis, where they give explicit examples of graphs that are quantum isomorphic graphs but not isomorphic.

The axioms relate to one another. 
The Schmidt alternative has some strong implications: we show that it implies (QA) in Lemma~\ref{lem:sa_qa}, and we show in Theorem~\ref{thm:asym_qasym} that, under some conditions, we can recover (QI) from (SA) for asymmetric graphs. 
However, in practice, it turns out to be often more convenient to prove first the other axioms in order to obtain the Schmidt alternative. 

\begin{lemma}\label{lem:sa_qa}
	The Schmidt alternative implies (QA).
\end{lemma}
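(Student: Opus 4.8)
The plan is to prove the two implications of (QA) separately, observing that only one of them actually uses the Schmidt alternative. Fix $G \in \Ff$.

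The direction ``$\Qu(G)$ trivial $\Rightarrow$ $\Aut(G)$ trivial'' holds for every graph, with no hypothesis needed. By Remark~\ref{rem:qu_G_abelian}, $C(\Aut(G))$ is the abelianisation of $\Oo(\Qu(G))$, hence a quotient of it; so if $\Oo(\Qu(G)) = \C$ then $C(\Aut(G)) = \C$ as well, which forces $\Aut(G)$ to be trivial.

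For the converse, ``$\Aut(G)$ trivial $\Rightarrow$ $\Qu(G)$ trivial'', I would argue as follows. If $\Aut(G)$ is trivial, then $G$ has no nontrivial automorphism at all, so \emph{a fortiori} it cannot possess two nontrivial automorphisms with disjoint support; thus $G$ does not satisfy Schmidt's criterion. Since $G \in \Ff$ and $\Ff$ satisfies the Schmidt alternative, we conclude that $G$ does not have quantum symmetry, that is, $\Oo(\Qu(G))$ is abelian. By Remark~\ref{rem:qu_G_abelian}, when $\Oo(\Qu(G))$ is abelian it coincides with its abelianisation $C(\Aut(G))$, which equals $\C$ because $\Aut(G)$ is trivial. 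Therefore $\Qu(G)$ is trivial, as desired.

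There is no serious obstacle here; this is a short deduction, and the only point requiring care is to invoke the correct form of Remark~\ref{rem:qu_G_abelian}, namely that abelianness of $\Oo(\Qu(G))$ makes it literally equal to $C(\Aut(G))$ and not merely a quotient of it.
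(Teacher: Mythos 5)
Your proof is correct and follows essentially the same route as the paper: the forward direction is the standard fact that the abelianisation $C(\Aut(G))$ is a quotient of $\Oo(\Qu(G))$, and for the converse you note that a trivial automorphism group precludes Schmidt's criterion, apply (SA) to get absence of quantum symmetry, and then use that abelianness of $\Oo(\Qu(G))$ forces $C(\Qu(G)) = C(\Aut(G)) = \C$. The only difference is that you spell out the (folklore) forward implication which the paper simply cites as known.
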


\begin{proof}
	Let $\Ff$ be a class satisfying (SA). 
	Let $G\in \Ff$. 
	We already know that if $\Qu(G)$ is trivial, then so is $\Aut(G)$. 
	Conversely, assume that $\Aut(G)$ is trivial. 
	In particular, $G$ does not satisfy the Schmidt criterion, so, by (SA), it does not have quantum symmetry. 
	This means that $C(\Qu(G)) = C(\Aut(G)) = \C$, hence, $\Qu(G)$ is trivial, as desired.
\end{proof}

Finally, let us present a strengthening of the axiom (QI). 
A class of graphs $\Ff$ is \textit{superrigid} if, given two graphs $G$ and $H$ with $G\in \Ff$ and $G \qi H$, then $G=H$. 
This notion is due to Freslon (private communication) and is inspired by the notion of superrigidity for group von Neumann algebras. 
We will see in subsection~\ref{subsec:F_cographs} that it behaves particularly well under sums and complements.

\subsection{Sums of graphs}\label{subsec:sums}

For $\Ff$ a class of graphs, we write $\Ff^+ = \{ G_1 + \ldots + G_k \mid k\geq 1,\ G_1,\ldots,G_k\in \Ff\}$. 
Let us start by exploring how the quantum automorphism group behaves with respect to sums. 
In the classical case, it is well-known that the automorphism group behaves as follows (we state it here without proof).

\begin{theorem}\label{thm:aut_sums}
	Let $n\in \N$ and consider $n$ connected nonisomorphic graphs $G_1,\ldots,G_n$ as well as $n$ nonzero integers $a_1,\ldots,a_n \geq 1$. 
	Let $G= \sum_{i=1}^n a_iG_i$. 
	Then $\Aut(G) = \prod_{i=1}^n \Aut(G_i) \wr S_{a_i}$ and its action is the natural one.
\end{theorem}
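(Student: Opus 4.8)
The plan is to reduce the computation to a single isomorphism type of connected component and then identify the resulting structure with a wreath product. The key classical fact is that a graph automorphism maps walks to walks, hence preserves the relation of lying in the same connected component; so $\Aut(G)$ acts on the set of connected components of $G$, and since the restriction of an automorphism to a component is an isomorphism onto its image, it maps each component to a component of the same isomorphism type. The connected components of $G = \sum_{i=1}^n a_i G_i$ are, with multiplicity, exactly the $G_i$ — here I use that each $G_i$ is connected — and, since the $G_i$ are pairwise non-isomorphic, an automorphism of $G$ can only permute the $a_i$ copies of a fixed $G_i$ among themselves. Writing $aG$ for the disjoint union of $a$ copies of a graph, it follows that the restriction maps give a group isomorphism $\Aut(G) \cong \prod_{i=1}^n \Aut(a_i G_i)$: the factors act on pairwise disjoint vertex sets (the unions of the copies of the respective $G_i$), so they commute and the map is onto.

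It then remains to prove that $\Aut(aG) \cong \Aut(G) \wr S_a = \Aut(G)^a \rtimes S_a$ for $G$ connected and $a \ge 1$. I would fix, for each $1 \le k \le a$, an identification $\iota_k$ of the $k$-th copy of $G$ inside $aG$ with $G$. An automorphism $\varphi \in \Aut(aG)$ permutes the copies via some $\sigma \in S_a$, where $\varphi$ carries the $\sigma^{-1}(k)$-th copy onto the $k$-th copy, and through the identifications this restriction yields an element $f_k = \iota_k \circ \varphi \circ \iota_{\sigma^{-1}(k)}^{-1} \in \Aut(G)$. I would then check that $\varphi \mapsto \bigl((f_k)_{1\le k\le a},\,\sigma\bigr)$ is a bijection onto $\Aut(G)^a \rtimes S_a$ and a group homomorphism: composing two automorphisms corresponds, after chasing the identifications $\iota_k$, to the semidirect-product law $\bigl((f_k)_k,\sigma\bigr)\bigl((g_k)_k,\tau\bigr) = \bigl((f_k\, g_{\sigma^{-1}(k)})_k,\ \sigma\tau\bigr)$, which is precisely the multiplication of $\Aut(G)\wr S_a$ with $S_a$ acting on $\Aut(G)^a$ by permuting coordinates. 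Combining with the first paragraph gives $\Aut(G) \cong \prod_{i=1}^n \Aut(G_i)\wr S_{a_i}$, and the ``natural'' (imprimitive) action on $V(G)$ is read off directly from this description of the isomorphism.

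I do not expect a genuine obstacle here — the statement is classical folklore — and the only real care is bookkeeping: getting the indexing of the tuple $(f_k)_k$ by the \emph{target} copy right so that composition matches the standard wreath-product law (an index-by-source convention produces the transposed action and must be avoided), and phrasing the reduction in the first paragraph so that the direct-product decomposition is manifest, which is immediate once one observes that automorphisms preserving setwise the copies of $G_i$ and those preserving setwise the copies of $G_j$ for $i \ne j$ move only vertices in disjoint sets.
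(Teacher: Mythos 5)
Your proof is correct: the paper states this result without proof (it is invoked as a classical fact), so there is nothing to compare against, and your argument is the standard one. The two steps — the reduction $\Aut(G)\cong \prod_{i}\Aut(a_iG_i)$ using that automorphisms send components to isomorphic components and that the $G_i$ are connected and pairwise non-isomorphic, followed by the identification $\Aut(aG)\cong \Aut(G)\wr S_a$ via fixed identifications of the copies — are both sound, and your composition law $\bigl((f_k)_k,\sigma\bigr)\bigl((g_k)_k,\tau\bigr)=\bigl((f_k\,g_{\sigma^{-1}(k)})_k,\sigma\tau\bigr)$ is indeed the correct (target-indexed) wreath-product multiplication, so the bookkeeping you flag as the only delicate point is handled correctly.
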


We have an analogue in the quantum case. 
Unfortunatetly (or fortunately for the sake of the theory), when in presence of nonisomorphic but quantum isomorphic graphs, the analogy breaks. 

\begin{theorem}\label{thm:qu_aut_sums}
	Let $n\in \N$ and consider $n$ connected non quantum isomorphic graphs $G_1,\ldots,G_n$ as well as $n$ nonzero integers $a_1,\ldots,a_n \geq 1$. 
	Let $G= \sum_{i=1}^n a_iG_i$. 
	Then $\Qu(G) = *_{i=1}^n \Qu(G_i)\wr S_{a_i}^+$ and its action is the natural one. 
\end{theorem}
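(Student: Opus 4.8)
The plan is to construct a pair of mutually inverse $*$-isomorphisms between the algebras of coefficients $\Oo(\Qu(G))$ and $\Oo(*_{i=1}^n\Qu(G_i)\wr S_{a_i}^+)$ carrying one fundamental representation to the other. Once these are in hand, the fact that they intertwine the comultiplications is automatic, being read off the defining formula $\Delta(u_{ij})=\sum_k u_{ik}\otimes u_{kj}$ on generators, and, since the vertex labellings match block by block, the claim about the actions follows at once. So everything reduces to producing the two $*$-algebra maps.

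First I would fix the decomposition $G=\sum_{i=1}^n a_i.G_i$ into connected components and group them into blocks $B_1,\dots,B_n$, where $B_i$ collects the $a_i$ copies of $G_i$; then $\Adj(G)$ is block-diagonal with respect to the induced partition of $V(G)$, and no component in $B_i$ is quantum isomorphic to one in $B_j$ for $i\neq j$. Writing $V=(v_{xy})$ for the fundamental representation of $\Qu(G)$, I apply Lemma~\ref{lem:p_ij} to the quantum isomorphism $V$ from $G$ to itself to get the magic unitary $P(V)=(p_{\alpha\beta}(V))$ indexed by the components, and then Theorem~\ref{thm:qi_for_connected_components}: $p_{\alpha\beta}(V)\neq 0$ forces $\alpha$ and $\beta$ to be quantum isomorphic. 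By the choice of blocks this means $p_{\alpha\beta}(V)=0$ whenever $\alpha,\beta$ lie in different blocks; since $v_{xy}$ is a subprojection of $\sum_{x'\in\alpha}v_{x'y}=p_{\alpha\beta}(V)$ for $x\in\alpha$, $y\in\beta$, all such coefficients vanish, and $V=\Diag(V^{(1)},\dots,V^{(n)})$ with $V^{(i)}$ indexed by $V(a_i.G_i)$. Each $V^{(i)}$ is then a magic unitary (its rows and columns sum to $1$ because all off-block entries of $V$ vanish) adapted to $a_i.G_i$ (restrict $[V,\Adj(G)]=0$ to the $i$-th diagonal block).

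Now I assemble the maps. By Theorem~\ref{thm:wreath_product}, $\Oo(\Qu(a_i.G_i))\cong\Oo(\Qu(G_i)\wr S_{a_i}^+)$, so the universal property of $\Oo(\Qu(a_i.G_i))$ applied to $V^{(i)}$ produces a $*$-morphism $\psi_i\colon\Oo(\Qu(G_i)\wr S_{a_i}^+)\to\Oo(\Qu(G))$ sending the fundamental representation of the $i$-th wreath product to $V^{(i)}$, and the universal property of the free product packages the $\psi_i$ into $\psi\colon\Oo(*_{i=1}^n\Qu(G_i)\wr S_{a_i}^+)\to\Oo(\Qu(G))$. Conversely, let $W=\Diag(W^{(1)},\dots,W^{(n)})$ be the fundamental representation of the free product, $W^{(i)}$ that of $\Qu(G_i)\wr S_{a_i}^+\cong\Qu(a_i.G_i)$; since each $W^{(i)}$ is adapted to $a_i.G_i$ and $\Adj(G)$ is block-diagonal, $W$ is a magic unitary adapted to $G$, and the universal property of $\Oo(\Qu(G))$ gives $\varphi\colon\Oo(\Qu(G))\to\Oo(*_{i=1}^n\Qu(G_i)\wr S_{a_i}^+)$ with $\varphi(V)=W$. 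That $\varphi$ and $\psi$ are mutually inverse is then a routine verification on generators, parallel to the closing computation in the proof of Theorem~\ref{thm:wreath_product}.

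The hard part is the block-diagonalization in the second paragraph: showing that the fundamental representation of $\Qu(G)$ does not mix connected components from distinct quantum-isomorphism classes. This is exactly where the hypothesis that $G_1,\dots,G_n$ are pairwise \emph{not} quantum isomorphic is indispensable, and it is the failure of precisely this step for merely non-isomorphic but quantum-isomorphic components — not any defect of the wreath-product construction — that breaks the naive quantum analogue of Theorem~\ref{thm:aut_sums}. Everything afterwards is bookkeeping with Theorem~\ref{thm:wreath_product} and the universal properties of free products and of $\Oo(\Qu(-))$.
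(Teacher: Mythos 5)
Your proposal is correct and takes essentially the same route as the paper: you block-diagonalize the fundamental representation of $\Qu(G)$ along the quantum-isomorphism classes of connected components via Theorem~\ref{thm:qi_for_connected_components} (exactly where the pairwise non-quantum-isomorphism hypothesis enters), deduce $\Qu(G)=*_{i=1}^n\Qu(a_iG_i)$, and conclude with Theorem~\ref{thm:wreath_product}. The only difference is that you spell out the universal-property and inverse-morphism bookkeeping that the paper compresses into ``it is clear.''
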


\begin{proof}
	By Theorem~\ref{thm:qi_for_connected_components}, it is clear that $\Qu(G) = *_{i=1}^n \Qu(a_iG_i)$, with $\Qu(a_iG_i)$ acting on $a_iG_i$. 
	Now the conclusion follows from Theorem~\ref{thm:wreath_product}.
\end{proof}

Let us now characterise when a sum of connected graphs does not have quantum symmetry.

\begin{theorem}\label{thm:sum_no_qs}
	Let $n\in \N$ and consider $n$ connected nonisomorphic graphs $G_1,\ldots,G_n$ as well as $n$ nonzero integers $a_1,\ldots,a_n \geq 1$. 
	Let $G= \sum_{i=1}^n a_iG_i$. 
	Then $G$ does not have quantum symmetry if and only if $G_i \not \qi G_j$ when $i\neq j$ and there is an integer $1\leq l\leq n$ such that:
	\begin{enumerate}
		\item for every $i\neq l$, we have $a_i = 1$ and $\Qu(G_i) = 1$,
		\item $a_l\leq 3$,
		\item $G_l$ does not have quantum symmetry,
		\item if $a_l >1$, then $\Qu(G_l) = 1$.
	\end{enumerate}
\end{theorem}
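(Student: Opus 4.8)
The plan is to combine three facts already at our disposal: Theorem~\ref{thm:qu_aut_sums}, which gives $\Qu(G) = *_{i=1}^n \Qu(G_i)\wr S_{a_i}^+$; the quantum Schmidt criterion (Theorem~\ref{thm:qu_schmidt}) as a source of quantum symmetry; and the fact that a nontrivial free product of quantum permutation groups always has quantum symmetry. The overall strategy is to analyze when the free product $*_{i=1}^n \Qu(G_i)\wr S_{a_i}^+$ has commutative algebra of coefficients. First I would note that $\Oo(\Qu(G))$ is abelian if and only if each factor $\Oo(\Qu(G_i)\wr S_{a_i}^+)$ is abelian \emph{and} there is at most one nontrivial factor, since a unital free product $A *_1 B$ with both $A \neq \C$ and $B\neq\C$ is nonabelian (and in fact infinite-dimensional, as in Theorem~\ref{thm:qu_schmidt}). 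So the condition ``$G$ has no quantum symmetry'' becomes: at most one index $l$ has $\Qu(G_l)\wr S_{a_l}^+$ nontrivial, and that one factor must itself have abelian coefficient algebra.

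Next I would unwind the wreath product factor $\Qu(G_i)\wr S_{a_i}^+$ using the description recalled before Theorem~\ref{thm:wreath_product}: its algebra is $\Oo(\Qu(G_i))^{*a_i} * \Oo(S_{a_i}^+)$ modulo the commutation relations $[\nu_i(a),p_{ij}]$. For this to be trivial we need $a_i = 1$ and $\Qu(G_i) = 1$; this yields condition (1) for $i \neq l$. For the single distinguished index $l$, we need $\Qu(G_l)\wr S_{a_l}^+$ to have abelian (but possibly nontrivial) coefficient algebra. Here I would use two inputs: $\Oo(S_{a_l}^+)$ is noncommutative for $a_l \geq 4$ (Example~\ref{ex:sn}), forcing $a_l \leq 3$, which is condition (2); and a free product $\Oo(\Qu(G_l))^{*a_l}$ is abelian only if $a_l = 1$ or $\Qu(G_l) = 1$, which is condition (4). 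Finally, when the factor reduces to just $\Oo(\Qu(G_l))$ (the case $a_l = 1$) or to $\Oo(S_{a_l}^+)$ with $a_l\in\{2,3\}$ (the case $\Qu(G_l)=1$, where $\Oo(S_2^+)=\Oo(S_2)$ and $\Oo(S_3^+)=\Oo(S_3)$ are abelian), abelianness of the whole thing is equivalent to $\Oo(\Qu(G_l))$ being abelian, i.e.\ $G_l$ not having quantum symmetry, which is condition (3). The hypothesis $G_i \not\qi G_j$ for $i\neq j$ is needed precisely so that Theorem~\ref{thm:qu_aut_sums} applies and the connected components group correctly into quantum-isomorphism classes; I would state this as a necessary condition by invoking Theorem~\ref{thm:qi_for_connected_components}, since if two distinct connected summands were quantum isomorphic one could build, via Lemma~\ref{lem:mu_vs_qi} and Theorem~\ref{thm:qu_schmidt}, a magic unitary adapted to $G$ with noncommuting coefficients supported on those two components.

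The main obstacle I anticipate is the algebraic bookkeeping in the wreath-product factor: one must argue carefully, at the level of $*$-algebras rather than $C^*$-algebras, that $\Oo(\Qu(G_l)\wr S_{a_l}^+)$ is abelian exactly under conditions (2)--(4). The subtle direction is showing abelianness \emph{fails} when, say, $a_l = 2$ and $\Qu(G_l)\neq 1$: one needs to exhibit noncommuting elements, which I would do by taking a nontrivial magic unitary with coefficients in a $C^*$-algebra adapted to $G_l$ (existing because $\Qu(G_l)\neq 1$, so by Theorem~\ref{thm:qu_schmidt}-style reasoning or just Lemma~\ref{lem:characterisation_qs} applied after noting nontriviality gives a noncommuting or at least nonscalar representation) and placing two disjoint copies of it on the two components of $2G_l$, producing disjoint-support magic unitaries and hence quantum symmetry by the quantum Schmidt criterion. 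For the converse (these conditions do suffice), one assembles the known abelian building blocks: under (1)--(4) the algebra $\Oo(\Qu(G))$ is a tensor-type combination of $\Oo(\Qu(G_l))$ (abelian by (3)) with a copy of $\Oo(S_{a_l})$ for $a_l\leq 3$, and is therefore abelian. I would close by remarking that everything is consistent with Theorem~\ref{thm:sum_no_qs} being the ``quantum shadow'' of the classical statement that $G$ has no nontrivial automorphism with small support.
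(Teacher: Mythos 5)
Your proposal is correct in substance and reaches the same conclusion, but it organizes the necessity direction differently from the paper. The paper proves all the necessary conditions by hand, through a sequence of contrapositive constructions of magic unitaries and applications of the classical and quantum Schmidt criteria (Theorem~\ref{thm:qu_schmidt}), and only invokes the decomposition $\Qu(G)=*_{i}\,\Qu(G_i)\wr S_{a_i}^+$ of Theorem~\ref{thm:qu_aut_sums} in the sufficiency direction --- exactly as you do for sufficiency. You instead first establish $G_i\not\qi G_j$ (essentially as the paper does, via a block magic unitary built from a quantum isomorphism and Lemma~\ref{lem:mu_vs_qi}), then apply Theorem~\ref{thm:qu_aut_sums} already in the necessity direction and read conditions (1)--(4) off an abelianness analysis of the free product of wreath-product algebras. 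This is a legitimate and somewhat more conceptual route: since each $\Oo(\Qu(a_iG_i))$, each $\Oo(\Qu(G_i))$, and $\Oo(S_{a_l}^+)$ arise as quotients of $\Oo(\Qu(G))$ (concretely, by extending adapted magic unitaries by the identity on the remaining components), conditions (1), (2), (3) fall out cleanly; what the paper's route buys is explicitness --- every failed condition is witnessed by a concrete pair of disjointly supported magic unitaries --- and it needs no structural input on free or wreath products beyond Theorem~\ref{thm:qu_schmidt}. At the one genuinely delicate point, showing the wreath factor is nonabelian when $a_l\geq 2$ and $\Qu(G_l)\neq 1$ (where abstract free-product reasoning does not directly apply, because the wreath algebra is only a quotient of $\Oo(\Qu(G_l))^{*a_l}*\Oo(S_{a_l}^+)$, a subtlety you correctly flag), you fall back on exactly the paper's construction, namely $\Diag(U,1)$ and $\Diag(1,U)$ on $2G_l$ together with Theorem~\ref{thm:qu_schmidt}; so the two arguments coincide there.

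Two small repairs. First, to get noncommuting coefficients from a quantum isomorphism $U$ between $G_i$ and $G_j$ with $i\neq j$, the right tool is Lemma~\ref{lem:classical_qi_is_i}: if all coefficients of $U$ commuted, $G_i$ and $G_j$ would be isomorphic, contradicting the hypothesis. Neither Theorem~\ref{thm:qi_for_connected_components} nor Theorem~\ref{thm:qu_schmidt} gives this (the block matrix of Lemma~\ref{lem:mu_vs_qi} is a single magic unitary, not a pair with disjoint supports). Second, your blanket claim that a unital free product of two algebras distinct from $\C$ is nonabelian needs a word at this level of generality: in the present situation it suffices to note that a nonabelian factor is itself a quotient of $\Oo(\Qu(G))$, while two nontrivial abelian factors are function algebras of nontrivial finite automorphism groups, so $G$ already satisfies the classical Schmidt criterion; alternatively one argues exactly as in the proof of Theorem~\ref{thm:qu_schmidt}. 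With these points made precise, your argument goes through.
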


\begin{proof}
	We start by proving the conditions necessary by contrapositive.

	Clearly, if there is $1\leq i\leq n$ such that $G_i$ has quantum symmetry, then so does $G$. 
	So none of the $G_i$ have quantum symmetry if $G$ does not have quantum symmetry.

	Assume that there are $i\neq j$ such that $G_i \qi G_j$. 
	Then, by Lemma~\ref{lem:classical_qi_is_i}, there is a quantum isomorphism $U$ from $G_i$ to $G_j$ whose coefficients do not all commute. 
	By Lemma~\ref{lem:mu_vs_qi} we have that $V = \begin{pmatrix}
		0 & U\\
		U^* & 0
	\end{pmatrix}$ is a magic unitary adapted to $G_i+G_j$ with noncommutative coefficients. 
	Hence, $G_i+G_j$ has quantum symmetry, and so does $G$. 
	This shows that $G_i\not \qi G_j$ when $i\neq j$ if $G$ does not have quantum symmetry.

	Assume that there are $i\neq j$ such that $\Qu(G_i) \neq 1$ and $\Qu(G_j) \neq 1$. 
	Then, by the quantum Schmidt's criterion (Theorem~\ref{thm:qu_schmidt}), $G$ has quantum symmetry. 
	This shows that there is at most one $1\leq l\leq n$ such that $\Qu(G_l)\neq 1$ if $G$ does not have quantum symmetry.

	Assume that there are $i\neq j$ such that $a_i\geq 2$ and $a_j\geq 2$. 
	Then $a_iG_i + a_jG_j$ satisfies Schmidt's criterion (since $2G_i + 2G_j$ does) and so has quantum symmetry, which implies that $G$ has quantum symmetry. 
	This shows that there is at most one $1\leq k\leq n$ such that $a_k\geq 2$ if $G$ does not have quantum symmetry.

	Assume there is $1\leq l\leq n$ such that $\Qu(G_l) \neq 1$ and assume that $a_l\geq2$. 
	We have that $2G_l$ satisfies the quantum Schmidt criterion: indeed, let $U$ be a nontrivial magic unitary adapted to $G_l$, then $\Diag(U,1)$ and $\Diag(1,U)$ are two nontrivial magic unitaries adapted to $2G_l$, so Theorem~\ref{thm:qu_schmidt} applies, and $2G_l$ has quantum symmetry, hence $G$ too. 
	This shows that $a_l=1$. 
	Assume now there is $k\neq l$ such that $a_k\geq 2$. 
	Then $G_l + 2G_k$ satisfies the quantum Schmidt criterion: this can be seen by taking one magic unitary to be a classical automorphism exchanging the two copies of $G_k$, the other one is a nontrivial magic unitary adapted to $G_l$. 
	Hence $G_l+2G_k$ has quantum symmetry, and so does $G$. 
	All this shows that if $G$ does not have quantum symmetry, if there is $l$ such that $\Qu(G_l)\neq 1$, then $a_i=1$ for all $1\leq i\leq n$, and if there is $k$ such that $a_k\geq 2$, then $\Qu(G_i) = 1$ for all $1\leq i\leq n$. 

	Now assume that $G$ does not have quantum symmetry. 
	We know that $G_i\not \qi G_j$ if $i\neq j$: this is the first property. 

	Let $1\leq l\leq n$ be an index such that $\Qu(G_l) \neq 0$ if there is one. 
	If not, let $1\leq l\leq n$ be such that $a_l\geq 2$ if there is one. 
	If there is no such index, let $l=1$. 
	Since $4G_l$ satisfies the Schmidt criterion, we know that $a_l\leq 3$: this proves (2). 
	By what precedes, we know that if $a_l>1$, then $\Qu(G_l) = 1$: this proves (4). 
	We also know that for all $i\neq l$, we have $a_i = 1$ and $\Qu(G_i) = 1$: this proves (1). 
	Finally, we know that $G_l$ does not have quantum symmetry: this proves (3). 
	This concludes the proof that the listed conditions are necessary.

	Conversely, let us prove the reverse implication. 
	Assume that the listed properties hold. 
	Then, by Theorem~\ref{thm:qu_aut_sums}, we have $\Qu(G) = *_{i=1}^n \Qu(G_i)\wr S_{a_i}^+ = \Qu(G_l) \wr S_{a_l}^+$. 
	If $a_l>1$, we have $\Qu(G_l) = 1$, which leads to $\Qu(G) = S_{a_l}^+$. 
	Since $a_l\leq 3$ by (2), we have that $G$ does not have quantum symmetry, since $\Oo(S_i^+)$ is abelian for $i\leq 3$. 
	If $a_l=1$, then $S_{a_l}^1 = 1$, and $\Qu(G) = \Qu(G_l)$. 
	Now $\Oo(\Qu(G_l))$ is abelian by (3) and $G$ does not have quantum symmetry. 
	This shows that $G$ does not have quantum symmetry, as desired, and concludes the proof.
\end{proof}

Notice how the Schmidt criterion behaves similarly to quantum symmetry with respect to sums.

\begin{theorem}\label{thm:sum_no_schmidt}
	Let $n\in \N$ and consider $n$ connected nonisomorphic graphs $G_1,\ldots,G_n$ as well as $n$ nonzero integers $a_1,\ldots,a_n \geq 1$. 
	Let $G= \sum_{i=1}^n a_iG_i$. 
	Then $G$ does not satisfy Schmidt criterion if and only if there is an integer $1\leq l\leq n$ such that:
	\begin{enumerate}
		\item for every $i\neq l$, we have $a_i = 1$ and $\Aut(G_i) = 1$,
		\item $a_l\leq 3$,
		\item $G_l$ does not satisfy Schmidt's criterion,
		\item if $a_l >1$, then $\Aut(G_l) = 1$.
	\end{enumerate}
\end{theorem}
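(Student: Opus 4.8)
The plan is to mirror the proof of Theorem~\ref{thm:sum_no_qs}, replacing magic unitaries by permutation matrices and ``quantum symmetry'' by ``Schmidt's criterion'' throughout. Since the $G_i$ are assumed nonisomorphic and we work up to isomorphism, there is no classical analogue of the extra hypothesis ``$G_i\not\qi G_j$'' appearing in Theorem~\ref{thm:sum_no_qs}, so the argument is somewhat shorter.

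For necessity I would argue by contrapositive, exhibiting in each bad case two nontrivial automorphisms of $G$ with disjoint support. First, if some $G_i$ satisfies Schmidt's criterion, I extend its two witnessing automorphisms by the identity outside one fixed copy of $G_i$. Next, if two distinct indices both have nontrivial automorphism group, a nontrivial automorphism supported in a copy of one and one supported in a copy of the other have disjoint support; similarly, if $a_i\ge 2$ and $a_j\ge 2$ for $i\ne j$, the transposition of two copies of $G_i$ and the transposition of two copies of $G_j$ do. If $\Aut(G_l)\ne 1$ and $a_l\ge 2$, applying a fixed nontrivial $f\in\Aut(G_l)$ to the first copy and to the second copy respectively works; if $\Aut(G_l)\ne 1$ while $a_k\ge 2$ for some $k\ne l$, I combine a nontrivial automorphism of the copy of $G_l$ with a transposition of two copies of $G_k$. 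Finally, if some $a_l\ge 4$, transposing copies $1,2$ and copies $3,4$ of $G_l$ works. Putting these together: if $G$ fails Schmidt's criterion, then no $G_i$ satisfies it, at most one index carries a nontrivial automorphism group, at most one $a_k$ exceeds $1$, and if any $\Aut(G_l)\ne 1$ then all $a_i=1$. I then take $l$ to be the unique index with $\Aut(G_l)\ne 1$ if it exists, else the unique index with $a_l\ge 2$ if it exists, else $l=1$, and read off conditions (1)--(4) from this structural description, (2) coming from the $a_l\ge 4$ observation and (3) from the first one.

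For sufficiency, assuming (1)--(4), I invoke Theorem~\ref{thm:aut_sums} together with condition (1) to get $\Aut(G)=\Aut(G_l)\wr S_{a_l}$ with its natural action, every vertex outside the copies of $G_l$ being fixed by all of $\Aut(G)$. If $a_l=1$ then $\Aut(G)=\Aut(G_l)$ and supports in $G$ coincide with supports in $G_l$, so $G$ satisfies Schmidt's criterion if and only if $G_l$ does, which it does not by (3). If $a_l>1$ then $\Aut(G_l)=1$ by (4), so $\Aut(G)=S_{a_l}$ acting blockwise on the $a_l$ copies; the vertex-support of $\tau\in S_{a_l}$ is the union of the copies it moves, so two nontrivial automorphisms $\tau_1,\tau_2$ of $G$ with disjoint support would yield two nontrivial permutations in $S_{a_l}$ with disjoint support, forcing $\abs{\Supp(\tau_1)}+\abs{\Supp(\tau_2)}\ge 4>3\ge a_l$ by (2), which is impossible.

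I do not expect a genuine obstacle: the only care needed is the bookkeeping of which automorphisms to construct in the necessity direction and, in the sufficiency direction, the distinction between the vertex-support of an automorphism of $G$ and the support of the underlying permutation of the copies; the elementary input in the last case is merely that a nontrivial permutation of a finite set has support of size at least $2$, so a set of size $\le 3$ cannot be partitioned into the supports of two nontrivial permutations.
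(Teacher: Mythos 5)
Your proposal is correct and follows essentially the same route as the paper: the sufficiency direction uses Theorem~\ref{thm:aut_sums} to reduce to $a_lG_l$, and the necessity direction exhibits the same disjoint-support automorphisms in each bad configuration before choosing $l$ (your priority for picking $l$ is reversed relative to the paper's, but this is immaterial since the structural facts show the two cases cannot overlap). The only difference is presentational: you spell out explicitly why $a_lG_l$ with $\Aut(G_l)=1$ and $a_l\leq 3$ fails Schmidt's criterion, a point the paper leaves implicit.
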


\begin{proof}
	Let us start by proving the reverse implication. 
	Assume that there is such an index $1\leq l\leq n$. 
	By Theorem~\ref{thm:aut_sums}, we have that $\Aut(G) = \prod_{i=1}^n \Aut(G_i) \wr S_{a_i} = \Aut(G_l) \wr S_{a_l}$, with the natural action. 
	In particular, $G$ satisfies the Schmidt criterion if and only if $a_lG_l$ does. 
	Now if $a_l>1$ then $\Aut(G_l) = 1$ and $a_l\leq 3$ so $a_l G_l$ does not satisfy the Schmidt criterion. 
	And if $a_l=1$ then $a_lG_l = G_l$ and we know that $G_l$ does not satisfy the Schmidt criterion, so $G$ does not satisfy it neither. 
	This proves the reverse implication.

	Let us prove the forward implication. 
	If there is $1\leq i\leq n$ such that $a_i >1$, take $l=i$. 
	If not, if there is $1\leq j\leq n$ such that $\Aut(G_i) \neq 1$, take $l=j$. 
	If not, take $l=1$. 
	We claim that $l$ satisfies all the desired criteria.

	First, if $a_l \geq 4$, then $4G_l$ is a subgraph of $G$ disjoint from the rest of $G$ and which satisfies Schmidt criterion, so clearly does $G$. 
	Hence $a_l\leq 3$. 

	If $G_l$ satisfies Schmidt criterion, so does $G$. 
	Thus $G_l$ does not satisfy Schmidt criterion.

	If $a_l >1$ and $\Aut(G_l) \neq 1$, then $2G_l$ satisfies Schmidt criterion and so does $G$. 
	So if $a_l >1$, we have $\Aut(G_l) = 1$. 

	Finally, there is only the first criterion to prove. 
	Let $i\neq l$. 
	Let us assume first that $a_l>1$. 
	Then if $a_i>1$, we have that $2G_l + 2G_i$ is a subgraph of $G$ disjoint from the rest of $G$ satisfying Schmidt criterion, and so does $G$: this shows that $a_i=1$. 
	If $\Aut(G_i)\neq 1$, then $2G_l +G_i$ satisfies Schmidt criterion, and so does $G$: this shows that $\Aut(G_i) = 1$. 
	Hence the proof is finished in the case where $a_l>1$.

	Assume now that $a_l =1$ and $\Aut(G_l) \neq 1$. 
	Similarly, if $a_i >1$, then $G_l + 2G_i$, and, if $\Aut(G_i) \neq 1$, then $G_i + G_l$, satisfy Schmidt criterion and make $G$ inherit it. 
	So once again $a_i=1$ and $\Aut(G_i) = 1$. 

	Finally, let us assume that $a_l=1$ and that $\Aut(G_l)=1$. 
	By construction, this means that $a_i = 1$ and $\Aut(G_i) = 1$, as desired. 
	This concludes the proof of the forward implication and the proof is finished.
\end{proof}

This allows us to investigate how the three axioms of tractability behave with respect to sums.

\begin{theorem}\label{thm:problems_sums}
	Let $\Ff$ be a class of graphs which contains the connected components of all its elements. 
	Then:
	\begin{enumerate}
		\item if $\Ff$ satisfies (QI), then so does $\Ff^+$,
		\item if $\Ff$ satisfies (QI) and (QA), then so does $\Ff^+$,
		\item if $\Ff$ satisfies (QI), (QA), and (SA), then so does $\Ff^+$.
	\end{enumerate}
\end{theorem}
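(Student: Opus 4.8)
The plan is to treat each of the three items by decomposing a graph $G \in \Ff^+$ into its connected components, invoking the structural theorems already established, and using the closure hypothesis on $\Ff$ to guarantee that every component lies back in $\Ff$, where the appropriate axiom is available. First I would record the routine observation that if $G \in \Ff^+$ then every connected component of $G$ lies in $\Ff$: writing $G$ as a sum of members of $\Ff$, its components are components of those members, hence in $\Ff$ by assumption.

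For (1): given $G,H \in \Ff^+$ with $G \qi H$ (the converse being trivial), write $G = G_1 + \cdots + G_k$ and $H = H_1 + \cdots + H_l$ as sums of connected components, all of which lie in $\Ff$ by the observation above. By Theorem~\ref{thm:qi_for_disconnected_graphs} we get $k = l$ and a permutation $\sigma \in S_k$ with $G_i \qi H_{\sigma(i)}$; since $\Ff$ satisfies (QI) this forces $G_i = H_{\sigma(i)}$, whence $G = H$. For (2), it remains to check (QA) for $\Ff^+$, the forward implication being automatic. Let $G \in \Ff^+$ with $\Aut(G)$ trivial, and write $G = \sum_{i=1}^n a_i G_i$ with $G_1,\ldots,G_n$ its pairwise non-isomorphic connected components. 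By Theorem~\ref{thm:aut_sums}, $\Aut(G) = \prod_i \Aut(G_i)\wr S_{a_i}$, so triviality forces $a_i = 1$ and $\Aut(G_i) = 1$ for every $i$. Each $G_i \in \Ff$, so (QA) for $\Ff$ gives $\Qu(G_i) = 1$, and (QI) for $\Ff$ upgrades pairwise non-isomorphic to pairwise non-quantum-isomorphic. Then Theorem~\ref{thm:qu_aut_sums} yields $\Qu(G) = *_{i=1}^n \Qu(G_i) = 1$ (since each $a_i = 1$), so $\Ff^+$ satisfies (QA), and with (1) it satisfies (QI) and (QA).

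For (3) it remains to prove (SA) for $\Ff^+$. One direction is the quantum Schmidt criterion (Theorem~\ref{thm:qu_schmidt}), which already shows that satisfying Schmidt's criterion implies quantum symmetry; for the other direction I would argue the contrapositive. Suppose $G \in \Ff^+$ does not satisfy Schmidt's criterion, and write $G = \sum_{i=1}^n a_i G_i$ with the $G_i$ pairwise non-isomorphic connected components, all in $\Ff$. By Theorem~\ref{thm:sum_no_schmidt} there is an index $l$ with: $a_i = 1$ and $\Aut(G_i) = 1$ for $i \neq l$; $a_l \le 3$; $G_l$ not satisfying Schmidt's criterion; and $\Aut(G_l) = 1$ whenever $a_l > 1$. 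I claim the hypotheses of Theorem~\ref{thm:sum_no_qs} hold for the same $l$: the $G_i$ are pairwise non-isomorphic members of $\Ff$, hence pairwise non-quantum-isomorphic by (QI); (QA) turns $\Aut(G_i) = 1$ into $\Qu(G_i) = 1$ for $i \neq l$ and likewise $\Qu(G_l) = 1$ when $a_l > 1$; and (SA) for $\Ff$ turns "$G_l$ has no Schmidt pair" into "$G_l$ has no quantum symmetry". Theorem~\ref{thm:sum_no_qs} then gives that $G$ has no quantum symmetry, completing (SA) for $\Ff^+$ and the proof.

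\textbf{Main obstacle.} There is no deep obstruction here: the technical weight is carried entirely by Theorems~\ref{thm:qi_for_disconnected_graphs}, \ref{thm:qu_aut_sums}, \ref{thm:sum_no_schmidt}, and \ref{thm:sum_no_qs}. The one point demanding care is the bookkeeping: one must repeatedly pass between "isomorphic" and "quantum isomorphic" (via (QI)) and between "trivial automorphism group" and "trivial quantum automorphism group" (via (QA)) at the level of connected components, and must check at each step that the closure hypothesis on $\Ff$ indeed places those components inside $\Ff$ so that the axioms apply.
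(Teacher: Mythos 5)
Your proof is correct and follows essentially the same route as the paper: decompose into connected components, apply Theorem~\ref{thm:qi_for_disconnected_graphs} for (QI), Theorems~\ref{thm:aut_sums} and~\ref{thm:qu_aut_sums} for (QA), and Theorems~\ref{thm:sum_no_schmidt} and~\ref{thm:sum_no_qs} for (SA), with the closure hypothesis placing every component back in $\Ff$. If anything, you are slightly more explicit than the paper in verifying the hypotheses of Theorem~\ref{thm:sum_no_qs} (in particular that (SA) converts the failure of Schmidt's criterion for $G_l$ into absence of quantum symmetry, and that (QI) upgrades pairwise non-isomorphic to pairwise non-quantum-isomorphic), which is exactly the bookkeeping the argument requires.
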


\begin{proof}
	Assume $\Ff$ satisfies (QI) and let $G$, $H\in \Ff^+$ be quantum isomorphic. 
	By Theorem~\ref{thm:qi_for_disconnected_graphs}, $G$ and $H$ have the same number $k\in \N$ of connected components, and we can decompose $G = G_1 + \ldots + G_k$ and $H = H_1 + \ldots + H_k$ such that $G_i \qi H_i$ for all $1\leq i\leq k$. 
	For $1\leq i\leq k$, we have that $G_i \in \Ff$ and $H_i \in \Ff$, since by construction they are connected components of elements of $\Ff$. 
	Since $\Ff$ satisfies (QI), we have $G_i = H_i$, from which we deduce that $G=H$, as desired.

	Assume now that $\Ff$ also satisfies (QA) and let $G\in \Ff^+$ be an asymmetric graph. 
	We can write $G = \sum_{i=1}^k a_i G_i$ where each $G_i \in \Ff$ is connected and $a_i \geq 1$. 
	By Theorem~\ref{thm:aut_sums}, it comes:
	\[1 = \Aut(G) = \prod_{i=1}^k \Aut(G_i)\wr S_{a_i},\]
	from which we conclude that $\Aut(G_i) = 1$ and $a_i=1$ for all $1\leq i\leq k$. 
	For all $1\leq i\leq k$, we have $G_i \in \Ff$ and $\Ff$ satisfies (QA) by assumption, so $\Qu(G_i) = 1$. 
	Since $\Ff^+$ satisfies (QI) by the first part of this proof, Theorem~\ref{thm:qu_aut_sums} gives us $\Qu(G) = *_{i=1}^k \Qu(G_i) \wr S_{a_i}^+ = *_{i=1}^k 0 \wr S_1 = 1$. 
	This shows that $\Ff^+$ satisfies (QA), as desired.

	Finally, assume that $\Ff$ also satisfies the Schmidt alternative. 
	By what precedes, we have that $\Ff^+$ satisfies both (QI) and (QA) and we want to prove it satisfies (SA) too. 
	Let $G \in \Ff^+$ and assume that $G$ does not satisfy the Schmidt criterion. 
	Let us write $G = \sum_{i=1}^k a_iG_i$ with $G_i \in \Ff$ connected for all $1\leq i\leq k$. 
	By Theorem~\ref{thm:sum_no_schmidt}, we obtain an integer $1\leq l\leq k$ with the properties stated in the theorem. 
	Since $\Ff$ satisfies (QI) and (QA), we obtain that $G$ does not have quantum symmetry by Theorem~\ref{thm:sum_no_qs}. 
	
	This concludes the proof.
\end{proof}

Finally, we show how in some cases we can even recover (QI) from (SA) for asymmetric graphs.

\begin{theorem}\label{thm:asym_qasym}
	Let $\Ff$ be a class of graphs satisfying (SA), stable by sums, and by taking connected components. 
	If $G$ and $H\in \Ff$ are asymmetric and if $G\qi H$, then $G=H$.
\end{theorem}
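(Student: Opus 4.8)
The plan is to reduce to the case of connected graphs via the decomposition into connected components. Since $G \qi H$, Theorem~\ref{thm:qi_for_disconnected_graphs} gives decompositions $G = G_1 + \cdots + G_k$ and $H = H_1 + \cdots + H_k$ into connected components together with a permutation $\sigma \in S_k$ with $G_i \qi H_{\sigma(i)}$; after relabelling the $H_j$ we may assume $\sigma = \id$, so that $G_i \qi H_i$ for every $i$. Because $\Ff$ is stable by taking connected components, each $G_i$ and each $H_i$ lies in $\Ff$, and because $G$ and $H$ are asymmetric, Theorem~\ref{thm:aut_sums} forces every $G_i$ and every $H_i$ to be asymmetric as well. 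Hence it suffices to prove $G_i = H_i$ for each $i$, since then $G = \sum_i G_i = \sum_i H_i = H$.

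Fix $i$ and suppose, for contradiction, that $G_i$ and $H_i$ are not isomorphic. Since $\Ff$ is stable by sums, $G_i + H_i \in \Ff$; and since $G_i$, $H_i$ are connected and non-isomorphic, Theorem~\ref{thm:aut_sums} gives $\Aut(G_i + H_i) = \Aut(G_i) \times \Aut(H_i) = 1$. Thus $G_i + H_i$ is asymmetric, so it does not satisfy Schmidt's criterion, and by (SA) it does not have quantum symmetry. On the other hand, $G_i \qi H_i$ yields a quantum isomorphism $U$ from $G_i$ to $H_i$, and by Lemma~\ref{lem:mu_vs_qi} the matrix $\begin{pmatrix} 0 & U \\ U^* & 0 \end{pmatrix}$ is a magic unitary adapted to $G_i + H_i$. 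As $G_i + H_i$ has no quantum symmetry, Lemma~\ref{lem:characterisation_qs} forces all coefficients of this magic unitary --- in particular all coefficients of $U$ --- to commute, whence Lemma~\ref{lem:classical_qi_is_i} gives $G_i \cong H_i$, a contradiction. Therefore $G_i = H_i$ for all $i$, and $G = H$.

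The one point requiring care is that one cannot run this argument directly with $G + H$ in place of the pairs $G_i + H_i$: if $G$ and $H$ share a common connected component, then $G + H$ admits a coordinate swap and is not asymmetric, so the appeal to (SA) would fail. Isolating the matched components $G_i, H_i$ --- and observing that nothing needs to be proved for the pairs that are already isomorphic --- is exactly what makes the reduction work; the remainder is a direct combination of the cited lemmas, so I do not expect a genuine difficulty beyond this.
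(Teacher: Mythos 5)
Your proof is correct and follows essentially the same route as the paper: the key step in both is to apply (SA) together with stability by sums to the sum of two connected asymmetric quantum isomorphic graphs, conclude absence of quantum symmetry, and deduce via Lemma~\ref{lem:mu_vs_qi}, Lemma~\ref{lem:characterisation_qs}, and Lemma~\ref{lem:classical_qi_is_i} that the quantum isomorphism has commuting coefficients. The only difference is organizational --- you assemble the disconnected case by matching components directly through Theorem~\ref{thm:qi_for_disconnected_graphs} and arguing pairwise, whereas the paper packages the same reduction by proving (QI) for the class of connected asymmetric graphs and invoking Theorem~\ref{thm:problems_sums} --- and this does not change the substance of the argument.
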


\begin{proof}
	We start by proving the connected case. 
	Consider $G$ and $H\in \Ff$ two connected quantum isomorphic asymmetric graphs. 
	By Lemma~\ref{lem:sa_qa}, we have that $\Qu(G) = 1 = \Qu(H)$. 
	Notice that $G+H$ does not satisfy the Schmidt criterion. 
	Since $\Ff$ is stable by sums and satisfies the Schmidt alternative, we have that $G+H$ does not have quantum symmetry. 
	Let $U$ be a quantum isomorphism from $G$ to $H$. 
	Let $W = \begin{pmatrix}
		0 & U\\
		U^* & 0
	\end{pmatrix}$. 
	Then $W$ is a magic unitary adapted to $G+H$ when the vertices of $G+H$ are enumerated in the appropriate order. 
	Since $G+H$ does not have quantum symmetry, we have that the coefficients of $W$ commute, and so do the ones of $U$. 
	Now by Lemma~\ref{lem:classical_qi_is_i} $G$ is isomorphic to $H$, as desired. 
	
	Let $\Ff_0 = \{ L \in \Ff \mid L\text{ is connected and asymmetric}\}$. 
	By what precedes, we have that $\Ff_0$ satisfies (QI), hence by Theorem~\ref{thm:problems_sums} $\Ff_0^+$ satisfies (QI) too. 
	Let $G$ and $H\in \Ff$ be two quantum isomorphic asymmetric graphs. 
	Notice that since an asymmetric graph has asymmetric connected components, we have that $G$ and $H$ are elements of $\Ff_0^+$. 
	Because $\Ff_0^+$ satisfies (QI), we conclude that $G$ is isomorphic to $H$, as desired. 
	This concludes the proof.
\end{proof}

\subsection{$\Ff$-cographs}\label{subsec:F_cographs}

Let $\Ff$ be a class of graphs. 
The class of $\Ff$-cographs, that we denote by $\co(\Ff)$, is the smallest class of graphs (for inclusion) that contains $\Ff$ and is stable by taking sums and complement. 
Notice that cographs are exactly $\{K_1\}$-cographs.

Building on the results obtained in Section~\ref{sec:qi_sums}, we extend the work on cographs of Section~\ref{sec:cographs} to the more general context of $\Ff$-cographs. 
Indeed, we show in Theorem~\ref{thm:tractable_F_cographs} that, under some conditions, $\co(\Ff)$ is tractable when $\Ff$ is. 
Under some conditions, this allows us to obtain the quantum automorphism groups of $\Ff$-cographs as a function of the ones of graphs in $\Ff$ in Theorem~\ref{thm:quantum_Jor_vs_Fcographs}. 
These results will later allow us in Section~\ref{sec:forests} to extend to tree-cographs our results on forests, including the computation of their quantum automorphism groups.

Let $\Qq$ be a family of quantum permutation groups. 
We define the \textit{Jordan closure} of $\Qq$, that we shall by $\Jor(\Qq)$, to be the smallest family of quantum permutation groups (for inclusion) that contains $\Qq$ and is stable by free product and wreath product with $S_n^+$ for every $n\in \N$. 

With a slight abuse, we also talk about the Jordan closure of $Q$, that we denote by $\Jor(Q)$, when $Q$ is a family of groups: this is the smallest family of groups (for inclusion) which contains $Q$ and is stable by product and wreath product with $S_n$ for every $n\in \N$. 
It will always be clear from context whether we are considering groups or quantum permutation groups.

Given $\Ff$ a class of graphs, we write $\Aut(\Ff) = \{ \Aut(G) \mid G\in \Ff\}$ and $\Qu(\Ff) = \{ \Qu(G) \mid G\in \Ff\}$.

We start with some characterisations. 
When $\Ff$ is a class of graphs, we write $\Ff^c = \{ G^c \mid G\in \Ff\}$ and $\Ff^+ = \{ G_1 + \ldots + G_k \mid k\geq 1,\ G_1,\ldots,G_k \in \Ff\}$. 
We also define by induction $\Ff_0 =\Ff$, $\Ff_{2i+1} = \Ff_{2i} \cup \Ff_{2i}^c$ and $\Ff_{2i+2} = \Ff_{2i+1}^+$ for $i\geq 0$. 

\begin{lemma}\label{lem:F_cographs}
	For $\Ff$ a class of graphs, we have that $\co(\Ff) = \bigcup_{i\geq 0} \Ff_i$.
\end{lemma}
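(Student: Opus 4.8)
The plan is to prove the two inclusions separately: the inclusion $\bigcup_{i\geq 0}\Ff_i \subseteq \co(\Ff)$ by a routine induction, and the reverse inclusion by checking that $\bigcup_{i\geq 0}\Ff_i$ has the defining closure properties of $\co(\Ff)$ and then invoking minimality.

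First I would show $\bigcup_{i\geq 0}\Ff_i \subseteq \co(\Ff)$ by induction on $i$. The base case is $\Ff_0 = \Ff \subseteq \co(\Ff)$, which holds by definition of $\co(\Ff)$. For the inductive step, suppose $\Ff_i \subseteq \co(\Ff)$. If $i+1$ is odd, say $i+1 = 2j+1$, then $\Ff_{i+1} = \Ff_{2j}\cup\Ff_{2j}^c$; since $\co(\Ff)$ is stable under taking complements and $\Ff_{2j}\subseteq\co(\Ff)$, we get $\Ff_{2j}^c\subseteq\co(\Ff)$, hence $\Ff_{i+1}\subseteq\co(\Ff)$. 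If $i+1$ is even, say $i+1 = 2j+2$, then $\Ff_{i+1} = \Ff_{2j+1}^+$; since $\co(\Ff)$ is stable under sums and $\Ff_{2j+1}\subseteq\co(\Ff)$, we get $\Ff_{2j+1}^+\subseteq\co(\Ff)$. This yields $\bigcup_{i\geq 0}\Ff_i\subseteq\co(\Ff)$.

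For the reverse inclusion, the key preliminary observation is that the sequence $(\Ff_i)_{i\geq 0}$ is nondecreasing: one has $\Ff_{2i}\subseteq\Ff_{2i}\cup\Ff_{2i}^c = \Ff_{2i+1}$, and $\Ff_{2i+1}\subseteq\Ff_{2i+1}^+ = \Ff_{2i+2}$ because $\Gg^+$ always contains $\Gg$ (take $k=1$ in the definition of $\Gg^+$). Consequently, setting $\mathcal{U} = \bigcup_{i\geq 0}\Ff_i$, any finite subset of $\mathcal{U}$ is contained in a single $\Ff_m$. Using this, I would verify that $\mathcal{U}$ contains $\Ff$ and is stable under complement and sums: it contains $\Ff = \Ff_0$; if $G\in\mathcal{U}$, then since the sequence is nondecreasing $G$ lies in $\Ff_{2j}$ for some even index $2j$, so $G^c\in\Ff_{2j}^c\subseteq\Ff_{2j+1}\subseteq\mathcal{U}$; and if $G_1,\dots,G_k\in\mathcal{U}$, then they all lie in some $\Ff_m$, hence (passing to a larger odd index if necessary, using that the sequence is nondecreasing) in some $\Ff_{2j+1}$, so $G_1+\dots+G_k\in\Ff_{2j+1}^+ = \Ff_{2j+2}\subseteq\mathcal{U}$. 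Since $\co(\Ff)$ is by definition the smallest class of graphs containing $\Ff$ and stable under sums and complement, we conclude $\co(\Ff)\subseteq\mathcal{U}$, which together with the first part gives the claimed equality.

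I do not expect a genuine obstacle in this argument; the only point demanding a little care is the index bookkeeping — keeping track of parities and exploiting the observation that $(\Ff_i)$ is nondecreasing, which is precisely what allows one to collapse finitely many membership statements into a single stage $\Ff_m$ before applying a complement or forming a sum.
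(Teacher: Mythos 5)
Your proof is correct: both inclusions are argued properly, and the monotonicity observation $\Ff_i\subseteq\Ff_{i+1}$ is exactly what makes the union stable under complements and sums before invoking minimality of $\co(\Ff)$. The paper simply declares this lemma "Clear," so your argument is the routine verification it leaves implicit, with no divergence in approach.
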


\begin{proof}
	Clear.
\end{proof}

\begin{theorem}\label{thm:decomposition_F_cographs}
	Let $\Ff$ be a class of graphs and let $G$ be a graph. 
	Then $G\in \co(\Ff)$ if and only if one of the following is true:
	\begin{enumerate}
		\item $G\in \Ff \cup \Ff^c$,
		\item there exists $k\geq 2$ and $G_1,\ldots,G_k \in \co(\Ff)$ such that $G = G_1 + \ldots + G_k$,
		\item there exists $k\geq 2$ and $G_1,\ldots,G_k \in \co(\Ff)$ such that $G^c = G_1 + \ldots + G_k$.
	\end{enumerate}
\end{theorem}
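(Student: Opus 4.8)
The plan is to prove both directions by exploiting Lemma~\ref{lem:F_cographs}, which gives $\co(\Ff) = \bigcup_{i\geq 0}\Ff_i$ with the filtration $\Ff_0 = \Ff$, $\Ff_{2i+1} = \Ff_{2i}\cup\Ff_{2i}^c$, $\Ff_{2i+2} = \Ff_{2i+1}^+$. The reverse implication is the easy half: if $G\in\Ff\cup\Ff^c$ then $G\in\Ff_1\subset\co(\Ff)$; if $G = G_1+\dots+G_k$ with $k\geq 2$ and $G_i\in\co(\Ff)$, then since $\co(\Ff)$ is by definition stable under sums we get $G\in\co(\Ff)$; and if $G^c = G_1+\dots+G_k$ with the $G_i\in\co(\Ff)$, then $G^c\in\co(\Ff)$ by the same argument, hence $G = (G^c)^c\in\co(\Ff)$ by stability under complement. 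So only the forward implication requires work.

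For the forward implication I would argue by induction on the least index $i$ with $G\in\Ff_i$. If $i\leq 1$ then $G\in\Ff_0\cup\Ff_1 = \Ff\cup\Ff^c$, giving case~(1). Now suppose $i\geq 2$ and the claim holds for all graphs appearing in some $\Ff_j$ with $j<i$. Two cases, according to the parity of $i$. If $i = 2m+1$ is odd, then $\Ff_i = \Ff_{2m}\cup\Ff_{2m}^c$; if $G\in\Ff_{2m}$ we are done by the induction hypothesis applied at index $2m<i$, and if $G = H^c$ with $H\in\Ff_{2m}$, then applying the induction hypothesis to $H$ we find that $H$ falls into one of the three cases, and in each case the corresponding statement for $G = H^c$ is immediate: if $H\in\Ff\cup\Ff^c$ then $H^c\in\Ff^c\cup\Ff\subset\Ff\cup\Ff^c$ so~(1) holds for $G$; if $H = H_1+\dots+H_k$ with $H_j\in\co(\Ff)$ and $k\geq 2$, then $G^c = H = H_1+\dots+H_k$ gives case~(3) for $G$; and if $H^c = H_1+\dots+H_k$, then $G = H^c = H_1+\dots+H_k$ gives case~(2) for $G$. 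If instead $i = 2m$ is even (and $\geq 2$), then $\Ff_i = \Ff_{2m-1}^+$, so $G = G_1+\dots+G_k$ with each $G_j\in\Ff_{2m-1}\subset\co(\Ff)$; if $k\geq 2$ this is exactly case~(2), while if $k=1$ then $G = G_1\in\Ff_{2m-1}$ and we conclude by the induction hypothesis at index $2m-1<i$.

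The only real subtlety — and the step I would treat most carefully — is the bookkeeping around the $k=1$ degenerate case of $\Ff^+$: the definition of $\Ff^+$ allows a one-term sum, so membership in $\Ff_{2m}$ does not by itself force a genuine decomposition into $\geq 2$ components, and one must drop back down the filtration in that case. Everything else is a routine unwinding of the definitions of complement and of the stability properties of $\co(\Ff)$; no graph-theoretic input beyond Lemma~\ref{lem:F_cographs} is needed, and in particular the $P_4$-free characterisation plays no role here since $\Ff$ is arbitrary.
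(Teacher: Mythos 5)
Your proof is correct and follows essentially the paper's route: both rest on Lemma~\ref{lem:F_cographs} and a parity case analysis on the least index $i$ with $G\in\Ff_i$. The only difference is cosmetic — you run a strong induction, disposing of the odd case by applying the induction hypothesis to $G^c$ and of the degenerate $k=1$ case by falling back down the filtration, whereas the paper argues directly that minimality of $i$ forces $k\geq 2$, using that the odd-indexed level $\Ff_{i-2}$ is closed under complement.
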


\begin{proof}
	If $G$ satisfies items (1), (2), or (3), then it is clear that $G\in \co(\Ff)$. 
	Let us prove the converse. 
	Assume that $G\in \co(\Ff)$ and let $i = \min \{ j\geq 0 \mid G \in \Ff_j\}$. 
	We have that $i < +\infty$ by Lemma~\ref{lem:F_cographs}. 
	If $i=0$, then $G\in \Ff_0 = \Ff$ and (1) is true.\\
	So let us assume that $i\neq 0$. 
	If $i$ is even, by construction, there exist $k\geq 1$ and $G_1,\ldots, G_k \in \Ff_{i-1}$ such that $G = G_1 + \ldots + G_k$. 
	By definition of $i$, we have that $G\notin \Ff_{i-1}$, so $k\geq 2$. 
	Since $\Ff_{i-1} \subset \co(\Ff)$, this shows that (2) is true.\\
	Finally, if $i$ is odd, by construction, we have $G \in \Ff_{i-1} \cup \Ff_{i-1}^c$. 
	Since $G\notin \Ff_{i-1}$, we have that $G^c \in \Ff_{i-1}$. 
	If $i=1$, then $G^c \in \Ff_0 = \Ff$ and (1) is true. 
	If $i>1$, by construction, there exist $k\geq 1$ and $G_1,\ldots,G_k \in \Ff_{i-2}$ such that $G^c = G_1 + \ldots + G_k$. 
	It remains to show that $k>1$. 
	We show it by contradiction. 
	Assume $k=1$. 
	Then $G^c \in \Ff_{i-2}$. 
	Since $i$ is odd, we have $\Ff_{i-2} = \Ff_{i-2}^c$ by construction, so $G\in \Ff_{i-2}$: a contradiction. 
	Hence $k\geq 2$ and (3) is true.
\end{proof}

\begin{lemma}\label{lem:base_case_coF}
	Let $\Ff$ be a class of graphs and define $n_0 = \min \{ \abs{V(G)} \mid G\in \Ff\}$. 
	Let $G\in \co(\Ff)$. 
	If $\abs{V(G)} = n_0$, then $G\in \Ff\cup \Ff^c$.
\end{lemma}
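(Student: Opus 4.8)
The plan is to combine the structural trichotomy of Theorem~\ref{thm:decomposition_F_cographs} with the elementary fact that $\co(\Ff)$ contains no graph on fewer than $n_0$ vertices.

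First I would record that observation. Using $\co(\Ff) = \bigcup_{i\geq 0}\Ff_i$ from Lemma~\ref{lem:F_cographs}, one shows by induction on $i$ that $\abs{V(H)}\geq n_0$ for every $H\in\Ff_i$: the case $i=0$ is the definition of $n_0$; passing from $\Ff_{2i}$ to $\Ff_{2i+1}=\Ff_{2i}\cup\Ff_{2i}^c$ is clear since complementation does not change the vertex set; and passing from $\Ff_{2i+1}$ to $\Ff_{2i+2}=\Ff_{2i+1}^+$ is clear since a disjoint union $G_1+\ldots+G_k$ has $\sum_j\abs{V(G_j)}\geq\abs{V(G_1)}$ vertices. (Alternatively, this is a one-line structural induction directly on $\co(\Ff)$.)

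Then I would apply Theorem~\ref{thm:decomposition_F_cographs} to $G$. In case (1) we get $G\in\Ff\cup\Ff^c$ and are done. In case (2) there are $k\geq 2$ and $G_1,\ldots,G_k\in\co(\Ff)$ with $G=G_1+\ldots+G_k$, so by the observation $\abs{V(G)}=\sum_{i=1}^k\abs{V(G_i)}\geq k\,n_0\geq 2n_0>n_0$ (using $n_0\geq 1$), contradicting the hypothesis $\abs{V(G)}=n_0$. Case (3) is identical applied to $G^c$, since $\abs{V(G^c)}=\abs{V(G)}=n_0$ and $G^c\in\co(\Ff)$. Hence only case (1) can occur, which is the claim.

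I expect no real obstacle here: the only point needing a (trivial) verification is the vertex-count monotonicity of $\co(\Ff)$, and the sole degenerate situation is $n_0=0$ — the empty graph lying in $\Ff$ — where the strict inequality $2n_0>n_0$ fails; but then $\abs{V(G)}=0$ forces $G$ to be that empty graph, which is in $\Ff$ directly, so the conclusion still holds. Under the convention that graphs are nonempty this case does not arise at all.
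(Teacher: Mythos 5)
Your proof is correct and follows essentially the same route as the paper: establish $\abs{V(H)}\geq n_0$ for all $H\in\co(\Ff)$ via Lemma~\ref{lem:F_cographs}, then use Theorem~\ref{thm:decomposition_F_cographs} and the count $\abs{V(G)}\geq 2n_0>n_0$ to rule out cases (2) and (3). The only addition is your remark on the degenerate $n_0=0$ case, which the paper implicitly excludes and which you resolve correctly anyway.
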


\begin{proof}
	By Lemma~\ref{lem:F_cographs}, we immediately have that $\abs{V(H)} \geq n_0$ for all $H\in \co(\Ff)$. 
	Now consider $G\in \co(\Ff)$ with $\abs{V(G)} = n_0$. 
	By Theorem~\ref{thm:decomposition_F_cographs}, we have that (1), (2), or (3) is true. 
	Let us prove by contrapositive that (2) and (3) are false. 
	Indeed, let $H_1,\ldots,H_k \in \co(\Ff)$, with $k\geq 2$. 
	Now for $H=H_1 + \ldots + H_k$, we have that $\abs{V(H^c)} = \abs{V(H)} \geq 2n_0 > n_0$. 
	This shows that $G$ does not satisfy (2) or (3), hence it satisfies (1) and the proof is finished.
\end{proof}

\begin{lemma}\label{lem:coF_connected_component}
	Let $\Ff$ be a class of graphs closed under taking complements and connected components. 
	Then $\co(\Ff)$ is closed under taking connected components.
\end{lemma}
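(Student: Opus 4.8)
The plan is to argue by induction on the number of vertices of $G$, using the trichotomy of Theorem~\ref{thm:decomposition_F_cographs}. The first observation is that since $\Ff$ is closed under complements we have $\Ff^c = \Ff$, and in particular $\Ff \cup \Ff^c = \Ff$; this is the only place the closure-under-complements hypothesis will enter.

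So let $G \in \co(\Ff)$, and suppose inductively that every graph in $\co(\Ff)$ with strictly fewer vertices than $G$ has all of its connected components in $\co(\Ff)$. If $G$ is connected there is nothing to prove, so assume $G$ is disconnected. By Theorem~\ref{thm:decomposition_F_cographs}, one of the following three situations occurs. If $G \in \Ff \cup \Ff^c = \Ff$, then since $\Ff$ is closed under taking connected components, every connected component of $G$ lies in $\Ff \subseteq \co(\Ff)$, and we are done. If $G = G_1 + \ldots + G_k$ with $k \geq 2$ and each $G_i \in \co(\Ff)$, then each $G_i$ has strictly fewer vertices than $G$, so by the induction hypothesis every connected component of every $G_i$ lies in $\co(\Ff)$; since the connected components of $G$ are exactly those of the $G_i$, this settles this case. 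Finally, the case $G^c = G_1 + \ldots + G_k$ with $k \geq 2$ cannot arise for a disconnected $G$: there $G^c$ would be disconnected, hence $G = (G^c)^c$ would be connected by Lemma~\ref{lem:complement}, contradicting our standing assumption. This exhausts the cases and completes the induction (the recursion bottoms out automatically, since in the second case one passes to strictly smaller graphs, and for minimal vertex count only the first case is available).

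I do not expect a genuine obstacle here, as the content is entirely structural. The only two points deserving a moment's care are recognising that closure under complements makes the term $\Ff^c$ in Theorem~\ref{thm:decomposition_F_cographs}(1) redundant, and observing that the third case of that decomposition is vacuous for a disconnected $G$ by Lemma~\ref{lem:complement}. One could equivalently run the induction on the least index $i$ with $G \in \Ff_i$ provided by Lemma~\ref{lem:F_cographs}, but induction on the vertex count keeps the bookkeeping lightest.
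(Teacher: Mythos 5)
Your argument is correct, and it is exactly the intended content behind the paper's one-line proof, which simply invokes Theorem~\ref{thm:decomposition_F_cographs}: you unpack that invocation as an induction on the vertex count, using $\Ff^c=\Ff$ to absorb case (1) and Lemma~\ref{lem:complement} to rule out case (3) for a disconnected graph. No gap; this is essentially the same route as the paper, just with the details made explicit.
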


\begin{proof}
	This follows immediately from Theorem~\ref{thm:decomposition_F_cographs}.
\end{proof}

We show that (QA) and (SA) are stable under taking complements.

\begin{lemma}\label{lem:tractable_complement}
	Let $\Ff$ be class of graphs. 
	Then we have the following:
	\begin{enumerate}
		\item if $\Ff$ satisfies (QA), so does $\Ff\cup \Ff^c$,
		\item if $\Ff$ satisfies (SA), so does $\Ff\cup \Ff^c$.
	\end{enumerate}
\end{lemma}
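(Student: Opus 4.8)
The plan is to reduce everything to the already established fact that taking complements changes none of the relevant invariants. Recall from Lemma~\ref{lem:mu_complement} and the remark immediately following it that $\Qu(G) = \Qu(G^c)$ for every graph $G$, and from Remark~\ref{rk:auto_complement} that $\Aut(G) = \Aut(G^c)$ and that $G$ satisfies Schmidt's criterion if and only if $G^c$ does. Since having quantum symmetry is a property of the underlying $*$-algebra of $\Qu(G)$, the equality $\Qu(G) = \Qu(G^c)$ also shows that $G$ has quantum symmetry if and only if $G^c$ does.

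Now let $G \in \Ff \cup \Ff^c$. If $G \in \Ff$, there is nothing to prove, since the relevant axiom holds for $\Ff$ by hypothesis. Otherwise $G = H^c$ for some $H \in \Ff$. For item (1), I would chain the equivalences: $\Qu(G)$ is trivial if and only if $\Qu(H)$ is trivial (because $\Qu(H) = \Qu(H^c) = \Qu(G)$), if and only if $\Aut(H)$ is trivial (by (QA) for $\Ff$), if and only if $\Aut(G)$ is trivial (because $\Aut(H) = \Aut(H^c) = \Aut(G)$). For item (2), I would similarly chain: $G$ has quantum symmetry if and only if $H$ has quantum symmetry (since $\Qu(G) = \Qu(H)$), if and only if $H$ satisfies Schmidt's criterion (by (SA) for $\Ff$), if and only if $G$ satisfies Schmidt's criterion (by Remark~\ref{rk:auto_complement}). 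Each chain delivers exactly the required statement.

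There is essentially no obstacle here: the content is carried entirely by Lemma~\ref{lem:mu_complement} and Remark~\ref{rk:auto_complement}, and the only points requiring a little care are to invoke each equivalence in the correct direction and to recall that the notion of quantum symmetry depends only on the $*$-algebra $\Oo(\Qu(G))$, which is literally the same object for $G$ and for $G^c$. In fact the two items can be proved simultaneously, as the argument for (SA) is a verbatim translation of the argument for (QA) under the dictionary that replaces ``trivial automorphism group'' by ``does not satisfy Schmidt's criterion'' and ``trivial quantum automorphism group'' by ``does not have quantum symmetry'', both halves of which are complement-invariant.
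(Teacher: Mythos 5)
Your proof is correct and follows essentially the same route as the paper: both reduce each item to the complement-invariance of $\Aut$, $\Qu$, Schmidt's criterion, and quantum symmetry (Lemma~\ref{lem:mu_complement} and Remark~\ref{rk:auto_complement}), the only cosmetic difference being that you chain full equivalences while the paper records only the nontrivial implication of each axiom.
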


\begin{proof}
	Assume that $\Ff$ satisfies (QA). 
	Let $G\in \Ff^c$ be an asymmetric graph. 
	Notice that $G^c$ is asymmetric too, hence $1 = \Qu(G^c) = \Qu(G)$. 
	This shows (1).

	Assume that $\Ff$ satisfies (SA). 
	Let $G\in \Ff^c$ and assume that $G$ does not satisfy Schmidt's criterion. 
	Then $G^c$ does not satisfy it neither, so $G^c \in \Ff$ does not have quantum symmetry by (SA), and since $\Qu(G^c) = \Qu(G)$, we have that $G$ does not have quantum symmetry neither. 
	This shows that $\Ff\cup \Ff^c$ satisfies the Schmidt alternative, hence (3) is true.
\end{proof}

Unfortunately, the axiom (QI) is not stable under taking complements. 
Indeed, let $G$ and $H$ be two graphs such that $G\qi H$, $G\not \qi H^c$, and $G\neq H$ (for instance the two graphs exhibited in~\cite{QiNotI}). 
Let $\Ff = \{G,H^c\}$. 
It is clear that $\Ff$ satisfies (QI). 
However, we have $\Ff \cup \Ff^c = \{G,G^c, H, H^c\}$, which does not satisfy (QI). 

To extend tractability to $\Ff$-cographs, we will then need the extra hypothesis that $\Ff$ is stable by complements.

\begin{theorem}\label{thm:tractable_F_cographs}
	Let $\Ff$ be a class of graphs. 
	Assume that $\Ff$ is stable by taking complements and connected components. 
	We have the following:
	\begin{enumerate}
		\item if $\Ff$ satisfies (QI), so does $\co(\Ff)$,
		\item if $\Ff$ satisfies (QI) and (QA), so does $\co(\Ff)$,
		\item if $\Ff$ satisfies (QI), (QA), and (SA), so does $\co(\Ff)$.
	\end{enumerate}
\end{theorem}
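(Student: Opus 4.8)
The plan is to prove the three statements simultaneously by a single induction on the number of vertices $n = \abs{V(G)}$, exploiting the recursive description of $\co(\Ff)$ given by Theorem~\ref{thm:decomposition_F_cographs}. Fix $\Ff$ stable under complements and under taking connected components. Set $n_0 = \min\{\abs{V(G)} \mid G\in\Ff\}$. For the base case, by Lemma~\ref{lem:base_case_coF} any $G\in\co(\Ff)$ with $\abs{V(G)}=n_0$ lies in $\Ff\cup\Ff^c$, so the relevant axiom for $G$ follows directly from the hypothesis on $\Ff$ together with Lemma~\ref{lem:tractable_complement} (which gives (QA) and (SA) for $\Ff\cup\Ff^c$; note we do \emph{not} need (QI) to pass to $\Ff\cup\Ff^c$ here because at the base level there is no genuine combining going on — more precisely, two quantum isomorphic graphs in $\Ff\cup\Ff^c$ on $n_0$ vertices are handled by observing that a quantum isomorphism $G\qi H$ forces, via Lemma~\ref{lem:complement} and the fact that $\Qu(G)=\Qu(G^c)$ together with preservation of connectedness in Corollary~\ref{coro:connected_qi}, that $G$ and $H$ lie on the same "side", reducing to (QI) for $\Ff$).

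For the inductive step, suppose the statements hold for all graphs in $\co(\Ff)$ on fewer than $n$ vertices, and let $G,H\in\co(\Ff)$ with $\abs{V(G)}=n > n_0$. By Theorem~\ref{thm:decomposition_F_cographs}, each of $G$ and $H$ is either in $\Ff\cup\Ff^c$ or is a sum of at least two $\co(\Ff)$-graphs or its complement is. The key structural observation is that the sub-class $\Ff' = \co(\Ff)_{<n} := \{L\in\co(\Ff) \mid \abs{V(L)} < n\}$ is, by the induction hypothesis, a tractable (resp. (QI), resp. (QI)+(QA)) class which is stable under taking connected components (this uses Lemma~\ref{lem:coF_connected_component}), and moreover all connected components of $G$ and $H$ live in $\Ff'$ (a connected component of a graph on $n>n_0$ vertices that is a nontrivial sum has $<n$ vertices; if $G$ is connected but a nontrivial sum has $G^c$, then $G$ itself is a component of... — one argues that the components of whichever of $G,G^c$ is disconnected lie in $\co(\Ff)$ by Theorem~\ref{thm:decomposition_F_cographs} and have $<n$ vertices).

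\textbf{Handling (QI).} Suppose $G\qi H$. Up to replacing the pair by complements (legitimate since $\qi$ and $\Qu$ are complement-invariant, Lemma~\ref{lem:mu_complement}, and since $\co(\Ff)$ is complement-stable), we may assume $G$ is disconnected — unless both $G$ and $H$ are connected, in which case by Theorem~\ref{thm:decomposition_F_cographs} both $G^c$ and $H^c$ are disconnected (or both $G,H\in\Ff\cup\Ff^c$, already covered). Once $G$ is disconnected, $H$ is disconnected too by Corollary~\ref{coro:connected_qi}, and Theorem~\ref{thm:qi_for_disconnected_graphs} gives $G = \sum_i G_i$, $H = \sum_i H_i$ with $G_i\qi H_i$ and each $G_i,H_i$ a connected component, hence in $\Ff'$. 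Since $\Ff'$ satisfies (QI), $G_i = H_i$ for all $i$, so $G=H$. The one subtlety: when we reduced to the disconnected case via complementation we must ensure the graph we land on still has $n$ vertices and is in $\co(\Ff)$ — both are immediate.

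\textbf{Handling (QA) and (SA).} For (QA), let $G\in\co(\Ff)$ be asymmetric with $n>n_0$ vertices. A sum of $\geq 2$ cographs-like pieces that is asymmetric forces, by Theorem~\ref{thm:aut_sums}, that the summands are pairwise non-isomorphic connected graphs each with trivial automorphism group and multiplicity one; these summands are in $\Ff'$, which satisfies (QA) by induction, so each has trivial $\Qu$. Then Theorem~\ref{thm:qu_aut_sums} (applicable since the summands, being asymmetric hence in particular not quantum isomorphic to one another — here we invoke that $\Ff'$ satisfies (QI), so non-isomorphic components of elements of $\Ff'$ are non-quantum-isomorphic) gives $\Qu(G) = \ast_i \Qu(G_i)\wr S_1^+ = 1$. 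If instead $G$ is connected but $G^c$ is a nontrivial sum, apply the same argument to $G^c$ using $\Qu(G)=\Qu(G^c)$ and $\Aut(G)=\Aut(G^c)$ (Remark~\ref{rk:auto_complement}). For (SA): assume (QI)+(QA)+(SA) for $\Ff$; then $\Ff'$ is tractable by induction. Let $G\in\co(\Ff)$, $n>n_0$, not satisfying Schmidt's criterion; reducing to the disconnected case by complementation (using Remark~\ref{rk:auto_complement}: $G$ satisfies Schmidt's criterion iff $G^c$ does, and $\Qu(G)=\Qu(G^c)$), write $G = \sum_i a_i G_i$ with the $G_i\in\Ff'$ connected and pairwise non-isomorphic. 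By Theorem~\ref{thm:sum_no_schmidt} we obtain an index $l$ with the listed properties; since $\Ff'$ is tractable, Theorem~\ref{thm:sum_no_qs} applies (its hypotheses — $G_i\not\qi G_j$ for $i\neq j$, $\Qu(G_i)=1$ where $\Aut(G_i)=1$, and $G_l$ without quantum symmetry — follow from (QI), (QA), (SA) for $\Ff'$ respectively), yielding that $G$ has no quantum symmetry.

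\textbf{Main obstacle.} The delicate point is the complementation reductions: one must consistently arrange that whenever the object at hand ($G$, or $H$, or both) is connected, passing to complements produces a disconnected graph on the \emph{same} number of vertices still lying in $\co(\Ff)$, so that the disconnected-graph machinery (Theorem~\ref{thm:qi_for_disconnected_graphs}, Theorem~\ref{thm:sum_no_qs}, Theorem~\ref{thm:sum_no_schmidt}) applies and the induction hypothesis covers the strictly smaller components — while simultaneously making sure the base class $\Ff\cup\Ff^c$ (rather than $\Ff$) is what the components might land in, which is exactly why stability of $\Ff$ under complements is hypothesized and why Lemma~\ref{lem:tractable_complement} is needed. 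The bookkeeping of which of $\{G, G^c\}$ and $\{H,H^c\}$ is disconnected, and verifying the "same number of vertices / still in $\co(\Ff)$" claims via Theorem~\ref{thm:decomposition_F_cographs}, is where care is required; everything else is an application of the already-proven sum theorems of Sections~\ref{sec:qi_sums} and~\ref{sec:tractable} combined with the inductive hypothesis on $\Ff' = \co(\Ff)_{<n}$.
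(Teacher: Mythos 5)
Your proposal is correct and follows essentially the same route as the paper: induction on the number of vertices using Theorem~\ref{thm:decomposition_F_cographs}, Lemma~\ref{lem:base_case_coF}, and Lemma~\ref{lem:coF_connected_component}, with (QI) handled by passing to complements and applying Theorem~\ref{thm:qi_for_disconnected_graphs}, and (QA)/(SA) handled via Lemma~\ref{lem:tractable_complement} together with the sum theorems (Theorems~\ref{thm:aut_sums}, \ref{thm:qu_aut_sums}, \ref{thm:sum_no_schmidt}, \ref{thm:sum_no_qs}) — exactly the induction the paper carries out, partly by invoking Theorem~\ref{thm:problems_sums}. One small remark: your parenthetical ``same side'' argument for quantum isomorphic graphs in $\Ff\cup\Ff^c$ is both unnecessary and, as stated, not valid (connectedness does not determine membership in $\Ff$ versus $\Ff^c$); since $\Ff$ is assumed stable under complements, one simply has $\Ff\cup\Ff^c=\Ff$, so (QI) there is literally the hypothesis on $\Ff$, which is how the paper disposes of that case.
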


\begin{proof}
	Let us prove (1) by induction on the number of vertices. 
	Let $n_0 = \min \{ \abs{V(G)} \mid G\in \Ff\}$ and consider $G$, $H\in \co(\Ff)$ with $\abs{V(G)} = n_0$. 
	Assume that $G\qi H$. 
	This implies that $\abs{V(H)} = n_0$ and by Lemma~\ref{lem:base_case_coF} we have that $G$ and $H$ are in $\Ff\cup \Ff^c$, which satisfies (QI) by hypothesis. 
	Thus $G=H$.\\
	Now assume that $\Ff \cap \{ X \mid \abs{V(X)} \leq m\}$ satisfies (QI) for some $m\geq n_0$. 
	Let $G$ and $H \in \co(\Ff)$ be quantum isomorphic graphs on $m+1$ vertices (notice there might not be such graphs). 
	Since $\co(\Ff)$ is closed under taking complements, we can assume that $G$ and $H$ are connected. 
	If both $G$ and $H$ are in $\Ff\cup \Ff^c = \Ff$, we have that $G=H$ and we are done. 
	So up to symmetry we can assume by Theorem~\ref{thm:decomposition_F_cographs} and Lemma~\ref{lem:coF_connected_component} that there is $k\geq 2$ and $G_1,\ldots,G_k \in \co(\Ff)$ connected such that $G^c = G_1 + \ldots G_k$. 
	Then Theorem~\ref{thm:qi_for_disconnected_graphs} implies that $H^c = H_1 + \ldots + H_k$ with $H_1,\ldots,H_k \in \co(\Ff)$ and connected and with $H_i \qi G_i$ for all $1\leq i\leq k$. 
	Now since $k\geq 2$ we have that $m\geq \abs{V(G_i)} = \abs{V(H_i)}$ for all $1\leq i\leq k$, so by induction hypothesis we have that $G_i=H_i$ for all $1\leq i\leq k$ and $G=H$, as desired. 
	Hence this concludes the proof of (1).

	Combining Lemma~\ref{lem:tractable_complement} and Theorem~\ref{thm:problems_sums}, we obtain (2) and (3) by a clear induction.
\end{proof}

Let us now show that superrigidity behaves well with respect to sums and complements.

\begin{lemma}\label{lem:superrigid_complement}
	If $\Ff$ is a superrigid class, then so is $\Ff \cup \Ff^c$.
\end{lemma}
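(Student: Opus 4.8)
The plan is to unwind the definition of superrigidity directly, splitting into two cases. So let $G \in \Ff \cup \Ff^c$ and let $H$ be an arbitrary graph with $G \qi H$; the goal is to show $G = H$.

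First, if $G \in \Ff$, then since $\Ff$ is superrigid and $G \qi H$, we immediately get $G = H$, and there is nothing more to do.

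The remaining case is $G \in \Ff^c$, say $G = K^c$ with $K \in \Ff$. Here I would first record the (folklore, and already used in the proof of Theorem~\ref{thm:qi_cographs}) fact that quantum isomorphism is preserved under complementation: if $P$ is a magic unitary realising $G \qi H$, i.e. $P\Adj(G) = \Adj(H)P$, then, using that $PJ_n = J_n = J_nP$ for any magic unitary $P$ (its rows and columns sum to $1$) together with $\Adj(L^c) = J_n - I_n - \Adj(L)$, one computes $P\Adj(G^c) = J_n - P - P\Adj(G) = J_n - P - \Adj(H)P = \Adj(H^c)P$, so $P$ is a quantum isomorphism from $G^c$ to $H^c$. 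Applying this, $G \qi H$ gives $G^c \qi H^c$, that is, $K \qi H^c$ (recall $G^c = K$). Since $K \in \Ff$ and $\Ff$ is superrigid, this forces $K = H^c$, and taking complements yields $G = K^c = H$, as wanted.

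I expect no real obstacle here: the argument is a two-case split, and the only substantive ingredient is the complement-invariance of quantum isomorphism, which is a one-line computation with the all-ones matrix $J_n$ and is in any case already invoked elsewhere in the paper.
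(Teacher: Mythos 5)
Your proof is correct and follows essentially the same route as the paper: split into the case $G\in\Ff$ (immediate from superrigidity) and $G\in\Ff^c$, pass to complements, and apply superrigidity to $G^c\in\Ff$. The only difference is that you spell out the $J_n$-computation showing quantum isomorphism is preserved under complementation, which the paper uses as a known fact without writing it out.
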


\begin{proof}
	Let $G \in \Ff \cup \Ff^c$ and let $H$ be a graph such that $G\qi H$. 
	If $G \in \Ff$, then by superrigidity $G=H$, as desired. 
	If not, then $G\in \Ff^c$, and $G^c \qi H^c$. 
	Since $G^c \in \Ff$, by superrigidity, $H^c = G^c$, so $G=H$. 
	This concludes the proof.
\end{proof}

\begin{lemma}\label{lem:superrigid_sums}
	If $\Ff$ is superrigid class, then so is $\Ff^+$.
\end{lemma}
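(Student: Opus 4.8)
The plan is to reduce to the structural description of quantum isomorphism for disconnected graphs, Theorem~\ref{thm:qi_for_disconnected_graphs}, and then invoke superrigidity of $\Ff$ on each summand. The one subtle point is that the graphs $G_1,\ldots,G_k \in \Ff$ used to write an element of $\Ff^+$ as $G_1 + \cdots + G_k$ need not themselves be connected, so the bijection on connected components supplied by Theorem~\ref{thm:qi_for_disconnected_graphs} has to be re-bundled according to which summand each connected component belongs to. Everything else is bookkeeping.

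Concretely, I would take $G \in \Ff^+$, fix a decomposition $G = G_1 + \cdots + G_k$ with each $G_i \in \Ff$, and let $H$ be an arbitrary graph with $G \qi H$. First, decompose each $G_i$ into its connected components; stacking these gives a decomposition of $G$ into connected components, and since a connected subgraph of a disjoint union lies entirely in one part, every connected component of $G$ belongs to exactly one $G_i$. Decomposing $H$ into connected components as well, Theorem~\ref{thm:qi_for_disconnected_graphs} gives a bijection $\sigma$ between the connected components of $G$ and those of $H$ with matched components quantum isomorphic.

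Next, for each $1 \le i \le k$, let $H_i$ be the sum of those connected components of $H$ that are images under $\sigma$ of connected components of $G_i$. Since the connected components of the various $G_i$ partition those of $G$ and $\sigma$ is a bijection, the $H_i$ partition the connected components of $H$, so $H = H_1 + \cdots + H_k$ as an honest disjoint union. Applying the easy (reverse) direction of Theorem~\ref{thm:qi_for_disconnected_graphs} — assembling the componentwise quantum isomorphisms into a block-diagonal one — yields $G_i \qi H_i$ for every $i$. As $G_i \in \Ff$ and $\Ff$ is superrigid, this forces $G_i = H_i$ for each $i$, and summing gives $G = H$. Hence $\Ff^+$ is superrigid.

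The main obstacle, as noted, is purely organisational: one must keep the decomposition $G = G_1 + \cdots + G_k$ fixed from the start and refine it through connected components, rather than trying to reconstruct a matching decomposition of $H$ from scratch, and then check that the re-bundled pieces $H_i$ genuinely partition the connected components of $H$ so that $H = H_1 + \cdots + H_k$ holds on the nose. Both points are immediate consequences of Theorem~\ref{thm:qi_for_disconnected_graphs} once the bijection $\sigma$ is in hand.
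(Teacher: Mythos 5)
Your proposal is correct and follows essentially the same route as the paper: decompose $G$ into connected components grouped by the fixed summands $G_i$, apply Theorem~\ref{thm:qi_for_disconnected_graphs} to match them with quantum isomorphic connected components of $H$, regroup those into $H_i$ with $G_i \qi H_i$ (via a block-diagonal quantum isomorphism), and conclude $G_i = H_i$ by superrigidity of $\Ff$. The bookkeeping points you flag are exactly the ones the paper handles, so nothing is missing.
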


\begin{proof}
	Let $G \in \Ff^+$. 
	We can write $G = G_1 + \ldots + G_k$ for some $k\geq 1$ with $G_i \in \Ff$ for all $1\leq i\leq k$. 
	For every $1\leq i\leq k$, we write $G_i = G_i^1 + \ldots + G_i^{l_i}$ the decomposition in connected components of $G_i$, where $l_i\geq 1$  is the number of connected components of $G_i$. 
	Now let $H$ be a graph such that $G\qi H$. 
	By Theorem~\ref{thm:qi_for_disconnected_graphs}, we have connected graphs $H_1^1,\ldots,H_1^{l_1},\ldots,H_k^1,\ldots,K_k^{l_k}$ such that $H_i^j \qi G_i^j$ for all $1\leq i\leq k$ and $1\leq j\leq l_i$, and such that $H = \sum_{i=1}^k \sum_{j=1}^{l_i} H_i^j$. 
	For $1\leq i\leq k$, we write $H_i = \sum_{j=1}^{l_i} H_i^j$. 
	Since $H_i^j \qi G_i^j$ for all $1\leq i\leq k$ and all $1\leq j\leq l_i$, we have that $H_i\qi G_i$. 
	Since $G_i \in \Ff$, by superrigidity, we have that $H_i = G_i$. 
	Now $H = \sum_{i=1}^k H_i = \sum_{i=1}^k G_i = G$, as desired. 
	This concludes the proof that superrigidity is preserved by sums.
\end{proof}

Hence superrigidity is preserved by taking the cograph closure.

\begin{theorem}\label{thm:superrigid_F_cographs}
	If $\Ff$ is a superrigid class of graphs, then so is $\co(\Ff)$.
\end{theorem}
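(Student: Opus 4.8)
The plan is to combine the two preceding lemmas with the description of $\co(\Ff)$ given by Lemma~\ref{lem:F_cographs}, namely $\co(\Ff) = \bigcup_{i\geq 0}\Ff_i$ where $\Ff_0 = \Ff$, $\Ff_{2i+1} = \Ff_{2i}\cup\Ff_{2i}^c$, and $\Ff_{2i+2} = \Ff_{2i+1}^+$. So the idea is simply to propagate superrigidity along this filtration, one layer at a time, and then conclude by a union argument.

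First I would prove by induction on $i\geq 0$ that every $\Ff_i$ is superrigid. The base case $\Ff_0 = \Ff$ is exactly the hypothesis. For the inductive step, suppose $\Ff_i$ is superrigid. If $i$ is even, then $\Ff_{i+1} = \Ff_i\cup\Ff_i^c$ is superrigid by Lemma~\ref{lem:superrigid_complement}; if $i$ is odd, then $\Ff_{i+1} = \Ff_i^+$ is superrigid by Lemma~\ref{lem:superrigid_sums}. In either case $\Ff_{i+1}$ is superrigid, which completes the induction.

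Then I would observe that an arbitrary union of superrigid classes is superrigid: given $G\in\co(\Ff)$ and any graph $H$ with $G\qi H$, pick an index $i$ with $G\in\Ff_i$ (possible by Lemma~\ref{lem:F_cographs}); since $\Ff_i$ is superrigid, $G=H$. Hence $\co(\Ff)$ is superrigid, as claimed.

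I do not expect any real obstacle here; the one point worth flagging is that the definition of superrigidity constrains only the graph $G$ that belongs to the class, while $H$ is allowed to be an arbitrary graph. This asymmetry is precisely what makes the passage to the union immediate (no compatibility between the $\Ff_i$ is needed), and it is also what made Lemmas~\ref{lem:superrigid_complement} and~\ref{lem:superrigid_sums} go through in the first place.
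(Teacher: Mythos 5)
Your proof is correct and follows the same route as the paper, which simply cites Lemma~\ref{lem:F_cographs}, Lemma~\ref{lem:superrigid_complement}, and Lemma~\ref{lem:superrigid_sums}; you have merely spelled out the induction along the filtration $(\Ff_i)_{i\geq 0}$ and the (correct) observation that a union of superrigid classes is superrigid.
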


\begin{proof}
	This follows immediately from Lemma~\ref{lem:F_cographs}, Lemma~\ref{lem:superrigid_complement}, and Lemma~\ref{lem:superrigid_sums}.
\end{proof}

For the rest of this section, we turn towards the computation of the classical and quantum automorphism groups of $\Ff$-cographs. 
For $\Ff$ a class of graphs, we denote by $\Ff_{conn}$ the subclass of connected graphs of $\Ff$. 

\begin{theorem}\label{thm:classical_Jor_F_cographs}
	Let $\Ff$ be a class of graphs. 
	Assume that:
	\begin{enumerate}
		\item $\Ff$ is stable by taking complements,
		\item $\Ff$ is stable by taking connected components.
	\end{enumerate}
	Then we have $\Aut(\co(\Ff)) \subset \Jor(\Aut(\Ff_{conn}))$.
\end{theorem}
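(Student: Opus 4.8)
The plan is to induct on the number of vertices $n = \abs{V(G)}$ of a graph $G \in \co(\Ff)$, proving at each step that $\Aut(G) \in \Jor(\Aut(\Ff_{conn}))$. Two facts drive the argument: the classical computation $\Aut\bigl(\sum_i a_i G_i\bigr) = \prod_i \Aut(G_i) \wr S_{a_i}$ for pairwise nonisomorphic connected $G_i$ (Theorem~\ref{thm:aut_sums}), and the fact that $\Jor(\Aut(\Ff_{conn}))$ is, by definition, stable under finite direct products and under wreath products with $S_n$. The identity $\Aut(G) = \Aut(G^c)$ from Remark~\ref{rk:auto_complement} lets us move freely between a graph and its complement, while Lemma~\ref{lem:coF_connected_component} ensures that the connected components of an $\Ff$-cograph are again $\Ff$-cographs, precisely because $\Ff$ is assumed stable under complements and under connected components.

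For the inductive step I would fix $G \in \co(\Ff)$ and apply Theorem~\ref{thm:decomposition_F_cographs}. If $G$ is disconnected, or if $G^c$ is disconnected, pick a disconnected representative $G' \in \{G, G^c\}$ and write $G' = \sum_{j=1}^{m} a_j D_j$, where the $D_j$ are its pairwise nonisomorphic connected components with multiplicities $a_j \geq 1$. By Lemma~\ref{lem:coF_connected_component} each $D_j \in \co(\Ff)$, and since $G'$ is disconnected each component is strictly smaller, $\abs{V(D_j)} < n$, so the induction hypothesis gives $\Aut(D_j) \in \Jor(\Aut(\Ff_{conn}))$. Then Theorem~\ref{thm:aut_sums} yields $\Aut(G) = \Aut(G') = \prod_{j=1}^{m} \Aut(D_j) \wr S_{a_j}$, which lies in $\Jor(\Aut(\Ff_{conn}))$ by closure under wreath products with symmetric groups and under finite products.

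The only remaining case is when $G$ and $G^c$ are both connected; then items (2) and (3) of Theorem~\ref{thm:decomposition_F_cographs} are excluded, so $G \in \Ff \cup \Ff^c$, i.e. $G \in \Ff$ or $G^c \in \Ff$. Whichever of $G$, $G^c$ lies in $\Ff$ is connected, hence belongs to $\Ff_{conn}$, so $\Aut(G) = \Aut(G^c) \in \Aut(\Ff_{conn}) \subset \Jor(\Aut(\Ff_{conn}))$, which closes the induction (the recursion is well-founded since we only ever descend to connected components of a disconnected graph, so no separate base case is needed). The one point that genuinely needs care is exactly this interplay with complementation: taking a complement does not decrease the vertex count, so one must be disciplined never to recurse on $G^c$ itself, only on the connected components of whichever of $G$, $G^c$ happens to be disconnected; once that is set up, what remains is bookkeeping, plus the minor point that Theorem~\ref{thm:aut_sums} forces one to group the connected summands by isomorphism type before reading off $\Aut(G)$ as a product of wreath products.
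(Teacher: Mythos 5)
Your proof is correct and takes essentially the same route as the paper: strong induction on the number of vertices, using Theorem~\ref{thm:decomposition_F_cographs} to pass to a disconnected representative among $G$, $G^c$, Lemma~\ref{lem:coF_connected_component} and Theorem~\ref{thm:aut_sums} for the components, and $\Aut(G)=\Aut(G^c)$ together with the closure of $\Jor(\Aut(\Ff_{conn}))$ under products and wreath products. The only cosmetic difference is that the paper runs an explicit base case at the minimal vertex count via Lemma~\ref{lem:base_case_coF} and treats $G\in\Ff\cup\Ff^c$ directly through $\Ff_{conn}$, whereas you absorb these into the case where both $G$ and $G^c$ are connected, which is equally valid.
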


\begin{proof}
	Let us prove it by induction on the number of vertices. 
	Let $G \in \co(\Ff)$ be a graph on $n_0 = \min \{ \abs{V(H)} \mid H\in \Ff\}$ vertices. 
	By Lemma~\ref{lem:base_case_coF}, we have $G\in \Ff\cup \Ff^c = \Ff$. 
	Let $G = \sum_{i=1}^k a_iG_i$ for some $k\in \N$ be the decomposition of $G$ into connected components. 
	By assumption, we have that $G_i\in \Ff$ for all $1\leq i\leq k$, and so $G_i \in \Ff_{conn}$. 
	By Theorem~\ref{thm:aut_sums}, we have that $\Aut(G) = \prod_{i=1}^k \Aut(G_i)\wr S_{a_i} \in \Jor(\Aut(\Ff_{conn}))$, as desired. 

	Now let us assume that $\Aut(G) \in \Jor(\Aut(\Ff_{conn}))$ for all $G\in \co(\Ff)$ on at most $n$ vertices for some $n\geq n_0$. 
	Let $G\in \co(\Ff)$ be a graph on $n+1$ vertices. 
	If $G\in \Ff\cup \Ff^c$, we are done by what precedes. 
	Otherwise, since $\Aut(G) = \Aut(G^c)$, we can assume that $G$ is disconnected by Theorem~\ref{thm:decomposition_F_cographs}. 
	Once again we write $G = \sum_{i=1}^k a_iG_i$ for some $k\in \N$ be the decomposition of $G$ into connected components. 
	By Lemma~\ref{lem:coF_connected_component}, we have that $G_i \in \co(\Ff)$ for all $1\leq i\leq k$. 
	By Theorem~\ref{thm:aut_sums}, we have that $\Aut(G) = \prod_{i=1}^k \Aut(G_i) \wr S_{a_i}$. 
	Since $G$ is disconnected, we have that $\abs{V(G_i)} < \abs{V(G)} = n+1$ for all $1\leq i\leq k$, so, by induction hypothesis, we have that $\Aut(G_i) \in \Jor(\Aut(\Ff_{conn}))$. 
	Since by definition this class is closed under taking wreath products with a permutation group and by direct products, this shows that $\Aut(G) \in \Jor(\Aut(\Ff_{conn}))$, as desired. 
	This concludes the proof.
\end{proof}

\begin{theorem}\label{thm:K1_vs_Jor}
	Let $\Ff$ be a class of graphs such that $K_1\in \Ff$. 
	Then $\Jor(\Aut(\Ff)) \subset \Aut(\co(\Ff))$. 
\end{theorem}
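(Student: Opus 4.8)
The plan is to prove, a bit more than the stated inclusion, that the family $\Aut(\co(\Ff))$ is itself stable under direct products and under wreath products with $S_n$ for every $n\in\N$, and that it contains $\Aut(\Ff)$. Since $\Jor(\Aut(\Ff))$ is by definition the smallest family of groups containing $\Aut(\Ff)$ and stable under direct products and wreath products with symmetric groups, this yields $\Jor(\Aut(\Ff))\subseteq\Aut(\co(\Ff))$. The inclusion $\Aut(\Ff)\subseteq\Aut(\co(\Ff))$ is immediate from $\Ff\subseteq\co(\Ff)$.

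For the wreath products, given $\Gamma=\Aut(G_0)$ with $G_0\in\co(\Ff)$ and $n\geq 1$, I would first pass to a connected representative: if $G_0$ is disconnected then $G_0^c$ is connected by Lemma~\ref{lem:complement}, it lies in $\co(\Ff)$ by stability under complements, and $\Aut(G_0^c)=\Aut(G_0)=\Gamma$ by Remark~\ref{rk:auto_complement}; so assume $G_0$ connected. Then $n.G_0=G_0+\dots+G_0\in\co(\Ff)$ by stability under sums, and Theorem~\ref{thm:aut_sums}, applied to the single connected graph $G_0$ with multiplicity $n$, gives $\Aut(n.G_0)=\Gamma\wr S_n\in\Aut(\co(\Ff))$.

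For the direct products, the device I would use is the operation $G\mapsto\phi(G):=(K_1+G)^c$. If $G\in\co(\Ff)$ is connected with $\abs{V(G)}\geq 2$, then $K_1+G\in\co(\Ff)$ (this is where $K_1\in\Ff\subseteq\co(\Ff)$ enters) is disconnected with exactly the two non-isomorphic connected components $K_1$ and $G$, so $\phi(G)$ is connected by Lemma~\ref{lem:complement}, lies in $\co(\Ff)$ by stability under complements, satisfies $\Aut(\phi(G))=\Aut(K_1+G)=\Aut(K_1)\times\Aut(G)=\Aut(G)$ by Remark~\ref{rk:auto_complement} and Theorem~\ref{thm:aut_sums}, and has $\abs{V(\phi(G))}=\abs{V(G)}+1$, hence $\phi(G)\not\cong G$. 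Now let $\Gamma_i=\Aut(G_i)$ with $G_i\in\co(\Ff)$ for $i=1,2$; I would pass to connected representatives as above. If $G_i=K_1$ for some $i$, then $\Gamma_1\times\Gamma_2\cong\Gamma_{3-i}\in\Aut(\co(\Ff))$ and we are done. Otherwise both $G_1$ and $G_2$ have at least two vertices, and if $G_1\cong G_2$ I replace $G_2$ by $\phi(G_2)$, which keeps $\Aut(G_2)=\Gamma_2$ while ensuring $G_1\not\cong G_2$. Then $G_1+G_2\in\co(\Ff)$, and since $G_1,G_2$ are connected and non-isomorphic, Theorem~\ref{thm:aut_sums} gives $\Aut(G_1+G_2)=\Gamma_1\times\Gamma_2$. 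This establishes stability under binary, hence finite, direct products and completes the argument.

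The step I expect to require the most care is the direct-product case: two groups in the Jordan closure may be realised only by mutually isomorphic connected graphs, and then a bare disjoint union yields $\Gamma\wr S_2$ instead of $\Gamma\times\Gamma$. The gadget $\phi$ resolves this by perturbing a connected $\Ff$-cograph to a non-isomorphic one with the same automorphism group, and its availability relies precisely on the hypothesis $K_1\in\Ff$; the only remaining subtlety is that this obstruction is vacuous when one of the realisers is $K_1$, which is absorbed by $1\times\Gamma\cong\Gamma$.
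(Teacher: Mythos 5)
Your proposal is correct and follows essentially the same route as the paper's proof: reduce to connected representatives via complements, realise wreath products as $\Aut(n.G_0)$ through Theorem~\ref{thm:aut_sums}, and realise products of a graph with itself via the gadget $(G+K_1)^c$, whose availability is exactly where $K_1\in\Ff$ is used. Your explicit treatment of the case where a realiser is $K_1$ is a small extra precaution (the paper glosses over it, since the trivial group is realised by $K_1$ itself), but otherwise the two arguments coincide.
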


\begin{proof}
	It is enough to show that $\Aut(\co(\Ff))$ contains $\Aut(\Ff)$ (which is clear), is stable by direct product, and by wreath product with $S_d$ for $d\in \N$. 

	Let $G\in \co(\Ff)$ and $d\in \N$. 
	We claim that $\Aut(G) \wr S_d \in \Aut(\co(\Ff)$. 
	Indeed, if $G$ is connected, set $H = d.G$, otherwise, set $H = d.G^c$. 
	Now $H\in \co(\Ff)$, and, by Theorem~\ref{thm:aut_sums}, since $\Aut(G) = \Aut(G^c)$, we have that $\Aut(G) \wr S_d = \Aut(H) \in \Aut(\co(\Ff))$, as desired. 

	Then take $G$ and $H\in \co(\Ff)$. 
	We claim that $\Aut(G) \times \Aut(H) \in \Aut(\co(\Ff))$. 
	Indeed, notice that up to taking complements we can assume both $G$ and $H$ connected. 
	If $G\neq H$, set $L = G+H \in \co(\Ff)$. 
	We have $\Aut(G)\times \Aut(H) = \Aut(L) \in \Aut(\co(\Ff))$ by Theorem~\ref{thm:aut_sums}, as desired. 
	Assume now that $G=H$. 
	Since $K_1\in \Ff$, we have that $L' = (G+K_1)^c \in \co(\Ff)$. 
	Notice that $L'\neq G$ (since they do not have the same number of vertices), that $L'$ is connected by Lemma~\ref{lem:complement}, and that $\Aut(L') = \Aut(G+K_1) = \Aut(G) \times \Aut(K_1) = \Aut(G)$ since $G$ is connected. 
	Setting $L=G+L'$, we have that $\Aut(G)^2 = \Aut(G) \times \Aut(L') = \Aut(L) \in \Aut(\co(\Ff))$, as desired. 
	
	Hence we have shown that $\Aut(\co(\Ff))$ is stable by direct product, by wreath product with $S_d$ for $d\geq 1$, and contains $\Aut(\Ff_{conn})$. 
	By definition, we have $\Jor(\Aut(\Ff)) \subset \Aut(\co(\Ff))$, as desired. 
	This concludes the proof.
\end{proof}

Combining the precedent results, we obtain the automorphism groups of $\Ff$-cographs as a function of the automorphism groups of graphs of $\Ff$.

\begin{theorem}\label{thm:Jor_vs_Fcographs}
	Let $\Ff$ be a class of graphs. 
	Assume that:
	\begin{enumerate}
		\item $K_1\in \Ff$,
		\item $\Ff$ is stable by taking complements,
		\item $\Ff$ is stable by taking connected components.
	\end{enumerate}
	Then we have that $\Aut(\co(\Ff)) = \Jor(\Aut(\Ff_{conn})) = \Jor(\Aut(\co(\Ff)))$.
\end{theorem}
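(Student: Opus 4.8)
The plan is to deduce the statement from the two inclusions already established, namely Theorem~\ref{thm:classical_Jor_F_cographs} and Theorem~\ref{thm:K1_vs_Jor}, by a sandwiching argument. The only extra ingredients needed are two soft observations: that $\Jor(\cdot)$ is monotone for inclusion of families (if $\Qq_1 \subseteq \Qq_2$, then $\Jor(\Qq_2)$ is a family closed under free products and wreath products with $S_n^+$, resp.\ with $S_n$, that contains $\Qq_1$, hence $\Jor(\Qq_1) \subseteq \Jor(\Qq_2)$ by minimality), and that the cograph-closure operator is idempotent, $\co(\co(\Ff)) = \co(\Ff)$, which is immediate from its definition as the smallest class containing $\Ff$ and stable by sums and complements, since $\co(\Ff)$ is already such a class.

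First I would collect the easy inclusions. From $\Ff_{conn} \subseteq \Ff \subseteq \co(\Ff)$ and monotonicity of $\Aut(\cdot)$ and $\Jor(\cdot)$ we get $\Jor(\Aut(\Ff_{conn})) \subseteq \Jor(\Aut(\co(\Ff)))$, and trivially $\Aut(\co(\Ff)) \subseteq \Jor(\Aut(\co(\Ff)))$. Next, hypotheses (2) and (3) are exactly those of Theorem~\ref{thm:classical_Jor_F_cographs}, which gives $\Aut(\co(\Ff)) \subseteq \Jor(\Aut(\Ff_{conn}))$. Finally, hypothesis (1) gives $K_1 \in \Ff \subseteq \co(\Ff)$, so Theorem~\ref{thm:K1_vs_Jor} applied to the class $\co(\Ff)$ in place of $\Ff$ yields $\Jor(\Aut(\co(\Ff))) \subseteq \Aut(\co(\co(\Ff))) = \Aut(\co(\Ff))$, the last equality by idempotence of $\co$. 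Chaining these four inclusions gives
\[
\Aut(\co(\Ff)) \subseteq \Jor(\Aut(\Ff_{conn})) \subseteq \Jor(\Aut(\co(\Ff))) \subseteq \Aut(\co(\Ff)),
\]
so all three families coincide, which is precisely the claimed equality.

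I do not expect a genuine obstacle: all of the real content sits in Theorems~\ref{thm:classical_Jor_F_cographs} and~\ref{thm:K1_vs_Jor}. The only point demanding a moment of attention is that Theorem~\ref{thm:K1_vs_Jor} should be invoked for $\co(\Ff)$, not for $\Ff$ — legitimate since $K_1\in\co(\Ff)$ — and that this causes no circularity because $\co(\co(\Ff))=\co(\Ff)$. (Alternatively, the equality $\Aut(\co(\Ff))=\Jor(\Aut(\Ff_{conn}))$ alone can be closed using Theorem~\ref{thm:K1_vs_Jor} for $\Ff$ itself, via $\Jor(\Aut(\Ff_{conn}))\subseteq\Jor(\Aut(\Ff))\subseteq\Aut(\co(\Ff))$, but the route above delivers the third term for free.) I would also make sure to note explicitly that the same argument works verbatim with $\Aut$ replaced by $\Qu$ and classical groups replaced by quantum permutation groups wherever the analogues of Theorems~\ref{thm:classical_Jor_F_cographs} and~\ref{thm:K1_vs_Jor} are available, which is what underlies the quantum statement Theorem~\ref{thm:quantum_Jor_vs_Fcographs}.
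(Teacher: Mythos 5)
Your proof is correct and follows essentially the same route as the paper: both sandwich the equality between Theorem~\ref{thm:classical_Jor_F_cographs} and Theorem~\ref{thm:K1_vs_Jor}, the only difference being that you apply the latter to $\co(\Ff)$ (using $\co(\co(\Ff))=\co(\Ff)$) while the paper applies it to $\Ff$ and passes through $\Jor(\Aut(\Ff))$ — a variant you yourself note, and which has the mild advantage of making the third term $\Jor(\Aut(\co(\Ff)))$ explicit.
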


\begin{proof}
	By Theorem~\ref{thm:classical_Jor_F_cographs}, we have that $\Aut(\co(\Ff)) \subset \Jor(\Aut(\Ff_{conn}))$. 
	Since $\Ff_{conn} \subset \Ff$, we have that $\Jor(\Aut(\Ff_{conn})) \subset \Jor(\Aut(\Ff))$. 
	By Theorem~\ref{thm:K1_vs_Jor}, we have that $\Jor(\Aut(\Ff)) \subset \Aut(\co(\Ff))$. 
	This concludes the proof.
\end{proof}

When $\Ff$ satisfies (QI), these results extend to the noncommutative case as well, allowing us to explicitly compute the quantum automorphism groups of $\Ff$-cographs. 

\begin{theorem}\label{thm:quantum_Jor_F_cographs}
	Let $\Ff$ be a class of graphs. 
	Assume that:
	\begin{enumerate}
		\item $\Ff$ satisfies (QI),
		\item $\Ff$ is stable by taking complements,
		\item $\Ff$ is stable by taking connected components.
	\end{enumerate}
	Then $\Qu(\co(\Ff)) \subset \Jor(\Qu(\Ff_{conn}))$.
\end{theorem}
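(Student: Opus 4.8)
The plan is to run the same induction on the number of vertices as in the proof of Theorem~\ref{thm:classical_Jor_F_cographs}, replacing $\Aut$ by $\Qu$ and Theorem~\ref{thm:aut_sums} by Theorem~\ref{thm:qu_aut_sums}, and feeding in one extra ingredient: that $\co(\Ff)$ itself satisfies (QI). Set $n_0 = \min\{\abs{V(H)} \mid H\in\Ff\}$. For the base case, a graph $G\in\co(\Ff)$ on $n_0$ vertices lies in $\Ff\cup\Ff^c=\Ff$ by Lemma~\ref{lem:base_case_coF}. Writing $G = \sum_{i=1}^k a_iG_i$ for its decomposition into connected components, grouped so that the $G_i$ are pairwise nonisomorphic, hypothesis~(3) gives $G_i\in\Ff_{conn}$, and (QI) for $\Ff$ then upgrades pairwise nonisomorphic to pairwise non quantum isomorphic. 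Theorem~\ref{thm:qu_aut_sums} yields $\Qu(G) = *_{i=1}^k \Qu(G_i)\wr S_{a_i}^+ \in \Jor(\Qu(\Ff_{conn}))$.

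For the inductive step, assume the claim for all graphs of $\co(\Ff)$ on at most $n\geq n_0$ vertices, and take $G\in\co(\Ff)$ with $n+1$ vertices. If $G\in\Ff\cup\Ff^c$, we conclude exactly as in the base case, using $\Qu(G)=\Qu(G^c)$ (the remark following Lemma~\ref{lem:mu_complement}). Otherwise, by Theorem~\ref{thm:decomposition_F_cographs} together with $\Qu(G)=\Qu(G^c)$, we may assume $G$ is disconnected. Write $G = \sum_{i=1}^k a_iG_i$ as the sum of its connected components, again grouped so that the $G_i$ are pairwise nonisomorphic; by Lemma~\ref{lem:coF_connected_component} each $G_i\in\co(\Ff)$, and since $G$ is disconnected each $G_i$ has at most $n$ vertices, so the induction hypothesis gives $\Qu(G_i)\in\Jor(\Qu(\Ff_{conn}))$.

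The crux is to apply Theorem~\ref{thm:qu_aut_sums}, whose hypothesis demands the $G_i$ to be pairwise \emph{non quantum isomorphic}, not merely nonisomorphic. This is exactly where hypothesis~(1) is used in full force: since $\Ff$ is stable by complements and by taking connected components and satisfies (QI), Theorem~\ref{thm:tractable_F_cographs}(1) shows $\co(\Ff)$ satisfies (QI), so for connected $G_i,G_j\in\co(\Ff)$ we have $G_i\qi G_j$ if and only if $G_i=G_j$; hence the pairwise nonisomorphic $G_i$ are pairwise non quantum isomorphic. Theorem~\ref{thm:qu_aut_sums} then gives $\Qu(G) = *_{i=1}^k \Qu(G_i)\wr S_{a_i}^+$, and since $\Jor(\Qu(\Ff_{conn}))$ is by definition closed under free products and under wreath products with $S_d^+$, we obtain $\Qu(G)\in\Jor(\Qu(\Ff_{conn}))$, completing the induction. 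The only genuine obstacle is this bookkeeping around quantum-versus-classical isomorphism of the connected components, and it dissolves once (QI) for $\co(\Ff)$ is invoked; everything else is a transcription of the classical argument.
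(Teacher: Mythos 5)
Your proposal is correct and follows essentially the same route as the paper's proof: induction on the number of vertices, with Lemma~\ref{lem:base_case_coF} for the base case, reduction to the disconnected case via Theorem~\ref{thm:decomposition_F_cographs} and $\Qu(G)=\Qu(G^c)$, Lemma~\ref{lem:coF_connected_component}, the fact that $\co(\Ff)$ satisfies (QI) by Theorem~\ref{thm:tractable_F_cographs}, and then Theorem~\ref{thm:qu_aut_sums} together with the closure properties of the Jordan closure. Your explicit remark that (QI) is what upgrades pairwise nonisomorphic components to pairwise non quantum isomorphic ones, which is the hypothesis of Theorem~\ref{thm:qu_aut_sums}, is exactly the point the paper uses as well.
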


\begin{proof}
	Let us prove it by induction on the number of vertices. 
	Let $G \in \co(\Ff)$ be a graph on $n_0 = \min \{ \abs{V(H)} \mid H\in \Ff\}$ vertices. 
	By Lemma~\ref{lem:base_case_coF}, we have $G\in \Ff\cup \Ff^c = \Ff$. 
	Let $G = \sum_{i=1}^k a_iG_i$ for some $k\in \N$ be the decomposition of $G$ into connected components. 
	By assumption, we have that $G_i\in \Ff$ for all $1\leq i\leq k$, and so $G_i \in \Ff_{conn}$. 
	Since $\Ff$ satisfies (QI), by Theorem~\ref{thm:qu_aut_sums}, we have that $\Qu(G) = *_{i=1}^k \Qu(G_i)\wr S_{a_i}^+ \in \Jor(\Qu(\Ff_{conn}))$, as desired. 

	Now let us assume that $\Aut(G) \in \Jor(\Qu(\Ff_{conn}))$ for all $G\in \co(\Ff)$ on at most $n$ vertices for some $n\geq n_0$. 
	Let $G\in \co(\Ff)$ be a graph on $n+1$ vertices. 
	If $G\in \Ff\cup \Ff^c$, we are done by what precedes. 
	Otherwise, since $\Qu(G) = \Qu(G^c)$, we can assume that $G$ is disconnected by Theorem~\ref{thm:decomposition_F_cographs}. 
	Once again we write $G = \sum_{i=1}^k a_iG_i$ for some $k\in \N$ be the decomposition of $G$ into connected components. 
	By Lemma~\ref{lem:coF_connected_component}, we have that $G_i \in \co(\Ff)$ for all $1\leq i\leq k$. 
	Since $\co(\Ff)$ satisfies (QI) by Theorem~\ref{thm:tractable_F_cographs}, by Theorem~\ref{thm:aut_sums}, we have that $\Qu(G) = *_{i=1}^k \Qu(G_i) \wr S_{a_i}^+$. 
	Since $G$ is disconnected, we have that $\abs{V(G_i)} < \abs{V(G)} = n+1$ for all $1\leq i\leq k$, so, by induction hypothesis, we have that $\Qu(G_i) \in \Jor(\Qu(\Ff_{conn}))$. 
	Since by definition this class is closed under taking free and wreath products with $S_l^+$ for $l\geq 1$, this shows that $\Qu(G) \in \Jor(\Qu(\Ff_{conn}))$, as desired. 
	This concludes the proof.
\end{proof}

\begin{theorem}\label{thm:quantum_K1_vs_Jor}
	Let $\Ff$ be a class of graphs such that $K_1\in \Ff$. 
	Then $\Jor(\Qu(\Ff)) \subset \Qu(\co(\Ff))$. 
\end{theorem}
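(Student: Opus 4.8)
The plan is to mirror, in the noncommutative setting, the proof of Theorem~\ref{thm:K1_vs_Jor}. Since $\Jor(\Qu(\Ff))$ is by definition the smallest family of quantum permutation groups that contains $\Qu(\Ff)$ and is stable under free products and under wreath products with $S_n^+$ for every $n\in\N$, it suffices to prove that the family $\Qu(\co(\Ff))$ enjoys these three properties. The containment $\Qu(\Ff)\subset\Qu(\co(\Ff))$ is immediate from $\Ff\subset\co(\Ff)$, so the work is in the two closure statements.

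For stability under wreath products: given $G\in\co(\Ff)$ and $d\in\N$, set $G'=G$ if $G$ is connected and $G'=G^c$ otherwise. In both cases $G'$ is connected (by Lemma~\ref{lem:complement} in the disconnected case), $G'\in\co(\Ff)$ since $\co(\Ff)$ is closed under complements, and $\Qu(G')=\Qu(G)$ (using $\Qu(G)=\Qu(G^c)$, see Lemma~\ref{lem:mu_complement}). Then $d.G'\in\co(\Ff)$, and Theorem~\ref{thm:wreath_product} gives $\Qu(d.G')=\Qu(G')\wr S_d^+=\Qu(G)\wr S_d^+$, so $\Qu(G)\wr S_d^+\in\Qu(\co(\Ff))$.

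For stability under free products — the step I expect to be the main obstacle — I would reduce as above to connected graphs $G',H'\in\co(\Ff)$ with $\Qu(G')=\Qu(G)$ and $\Qu(H')=\Qu(H)$, and set $n=\#V(G')$, $m=\#V(H')$. If $n=1$ then $G'=K_1$, $\Qu(G')$ is trivial, and $\Qu(G)*\Qu(H)=\Qu(H)\in\Qu(\co(\Ff))$; symmetrically if $m=1$. So assume $n,m\geq 2$. The difficulty relative to the classical case is that Theorem~\ref{thm:qu_aut_sums} computes $\Qu$ of a sum of connected graphs only when the summands are pairwise \emph{non} quantum isomorphic, so one cannot simply take $G'+H'$. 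If $n\neq m$, then $G'\not\qi H'$ automatically (quantum isomorphism preserves the number of vertices), and Theorem~\ref{thm:qu_aut_sums} yields $\Qu(G'+H')=\Qu(G')*\Qu(H')=\Qu(G)*\Qu(H)$ with $G'+H'\in\co(\Ff)$. If $n=m$, I would first perturb $H'$: put $H_1=(H'+K_1)^c$, which lies in $\co(\Ff)$ (here the hypothesis $K_1\in\Ff$ is used), is connected by Lemma~\ref{lem:complement}, and has $m+1$ vertices; since $m\geq 2$, the components $H'$ and $K_1$ of $H'+K_1$ have different sizes hence are not quantum isomorphic, so $\Qu(H_1)=\Qu(H'+K_1)=\Qu(H')*\Qu(K_1)=\Qu(H')=\Qu(H)$ by Theorem~\ref{thm:qu_aut_sums} and Lemma~\ref{lem:mu_complement}. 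Now $\#V(G')=n=m\neq m+1=\#V(H_1)$, so $G'\not\qi H_1$, and Theorem~\ref{thm:qu_aut_sums} gives $\Qu(G'+H_1)=\Qu(G')*\Qu(H_1)=\Qu(G)*\Qu(H)$ with $G'+H_1\in\co(\Ff)$. This establishes closure under free products, and the definition of the Jordan closure then gives $\Jor(\Qu(\Ff))\subset\Qu(\co(\Ff))$.
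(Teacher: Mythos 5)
Your proof is correct and follows essentially the same strategy as the paper: show that $\Qu(\co(\Ff))$ contains $\Qu(\Ff)$ and is stable under wreath products with $S_d^+$ (via $d.G$ or $d.G^c$ and Theorem~\ref{thm:wreath_product}) and under free products (via the perturbation $(H+K_1)^c$, which is exactly where the hypothesis $K_1\in\Ff$ enters). The one place you genuinely diverge is the case split for free products: the paper distinguishes $G\neq H$ from $G=H$ and applies the free-product formula directly to $G+H$ in the first case, whereas you distinguish according to the number of vertices and perturb whenever the counts agree. Your version is the more careful one: Theorem~\ref{thm:qu_aut_sums} requires the connected summands to be pairwise non quantum isomorphic, and $G\neq H$ alone does not rule out $G\qi H$ for an arbitrary class $\Ff$ (no (QI)-type hypothesis is assumed in this theorem), while distinct vertex counts do. So your split covers a configuration that the paper's wording glosses over, at the cost of one extra trivial subcase ($n=1$ or $m=1$); otherwise the two arguments coincide, including the use of Lemma~\ref{lem:complement}, Lemma~\ref{lem:mu_complement}, and the computation $\Qu((H'+K_1)^c)=\Qu(H')*\Qu(K_1)=\Qu(H')$.
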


\begin{proof}
	It is enough to show that $\Qu(\co(\Ff))$ contains $\Qu(\Ff)$ (which is clear), is stable by direct product, and by wreath product with $S_d$ for $d\in \N$. 

	Let $G\in \co(\Ff)$ and $d\in \N$. 
	We claim that $\Qu(G) \wr S_d \in \Qu(\co(\Ff)$. 
	Indeed, if $G$ is connected, set $H = d.G$, otherwise, set $H = d.G^c$. 
	Now $H\in \co(\Ff)$, and, by Theorem~\ref{thm:aut_sums}, since $\Qu(G) = \Qu(G^c)$, we have that $\Qu(G) \wr S_d^+ = \Qu(H) \in \Qu(\co(\Ff))$, as desired. 

	Then take $G$ and $H\in \co(\Ff)$. 
	We claim that $\Qu(G) * \Qu(H) \in \Qu(\co(\Ff))$. 
	Indeed, notice that up to taking complements we can assume both $G$ and $H$ connected. 
	If $G\neq H$, set $L = G+H \in \co(\Ff)$. 
	We have $\Qu(G)\times \Qu(H) = \Qu(L) \in \Qu(\co(\Ff))$ by Theorem~\ref{thm:aut_sums}, as desired. 
	Assume now that $G=H$. 
	Since $K_1\in \Ff$, we have that $L' = (G+K_1)^c \in \co(\Ff)$. 
	Notice that $L'\neq G$ (since they do not have the same number of vertices), that $L'$ is connected by Lemma~\ref{lem:complement}, and that $\Qu(L') = \Qu(G+K_1) = \Qu(G) * \Qu(K_1) = \Qu(G)$ since $G$ is connected. 
	Setting $L=G+L'$, we have that $\Qu(G)^2 = \Qu(G) * \Qu(L') = \Qu(L) \in \Qu(\co(\Ff))$, as desired. 
	
	Hence we have shown that $\Qu(\co(\Ff))$ is stable by free product, by wreath product with $S_d^+$ for $d\geq 1$, and contains $\Qu(\Ff_{conn})$. 
	By definition, we have $\Jor(\Qu(\Ff)) \subset \Qu(\co(\Ff))$, as desired. 
	This concludes the proof.
\end{proof}

Hence we obtain the quantum automorphism groups of $\Ff$-cographs as a function of the quantum automorphism groups of graphs of $\Ff$.

\begin{theorem}\label{thm:quantum_Jor_vs_Fcographs}
	Let $\Ff$ be a class of graphs. 
	Assume that:
	\begin{enumerate}
		\item $K_1\in \Ff$,
		\item $\Ff$ satisfies (QI),
		\item $\Ff$ is stable by taking complements,
		\item $\Ff$ is stable by taking connected components.
	\end{enumerate}
	Then we have that $\Qu(\co(\Ff)) = \Jor(\Qu(\Ff_{conn})) = \Jor(\Qu(\co(\Ff)))$.
\end{theorem}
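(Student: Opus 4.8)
The plan is to combine the two inclusions already available in the excerpt with a sandwiching argument, exactly as was done in Theorem~\ref{thm:Jor_vs_Fcographs} for the classical case. First I would invoke Theorem~\ref{thm:quantum_Jor_F_cographs}: since $\Ff$ satisfies (QI) (hypothesis (2)) and is stable by taking complements (hypothesis (3)) and connected components (hypothesis (4)), we get the inclusion $\Qu(\co(\Ff)) \subset \Jor(\Qu(\Ff_{conn}))$. Next, since $\Ff_{conn} \subset \Ff$, monotonicity of the Jordan closure gives $\Jor(\Qu(\Ff_{conn})) \subset \Jor(\Qu(\Ff))$. Finally, hypothesis (1) says $K_1 \in \Ff$, so Theorem~\ref{thm:quantum_K1_vs_Jor} applies and yields $\Jor(\Qu(\Ff)) \subset \Qu(\co(\Ff))$. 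Chaining these three inclusions forces all the containments to be equalities, so in particular $\Qu(\co(\Ff)) = \Jor(\Qu(\Ff_{conn}))$.

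It then remains to identify this common class with $\Jor(\Qu(\co(\Ff)))$ as well. Here I would argue that $\Jor(\Qu(\co(\Ff)))$ is squeezed between $\Qu(\co(\Ff))$ (since $\Qu(\co(\Ff)) \subset \Jor(\Qu(\co(\Ff)))$ trivially, as the Jordan closure contains the original family) and $\Jor(\Qu(\Ff_{conn}))$. For the latter inclusion, note $\Qu(\co(\Ff)) = \Jor(\Qu(\Ff_{conn}))$ by the previous paragraph, and $\Jor$ is idempotent — applying $\Jor$ to $\Jor(\Qu(\Ff_{conn}))$ gives back $\Jor(\Qu(\Ff_{conn}))$, since a Jordan-closed family is stable under free products and wreath products with $S_n^+$ by definition. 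Hence $\Jor(\Qu(\co(\Ff))) = \Jor(\Jor(\Qu(\Ff_{conn}))) = \Jor(\Qu(\Ff_{conn})) = \Qu(\co(\Ff))$, which completes the chain of equalities.

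The proof is essentially a bookkeeping exercise assembling previously established results, so I do not anticipate a genuine obstacle; the only point requiring a word of care is the idempotence of the Jordan closure, which follows immediately from its definition as a smallest family closed under the two operations (applying the closure operator twice cannot enlarge an already-closed family). One should also make sure the monotonicity $\mathcal{A} \subset \mathcal{B} \Rightarrow \Jor(\mathcal{A}) \subset \Jor(\mathcal{B})$ is used correctly, but this too is immediate from minimality. I would write the whole argument in three or four short sentences mirroring the proof of Theorem~\ref{thm:Jor_vs_Fcographs}.

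\begin{proof}
	By Theorem~\ref{thm:quantum_Jor_F_cographs}, hypotheses (2), (3), and (4) give $\Qu(\co(\Ff)) \subset \Jor(\Qu(\Ff_{conn}))$. Since $\Ff_{conn} \subset \Ff$, we have $\Jor(\Qu(\Ff_{conn})) \subset \Jor(\Qu(\Ff))$. By Theorem~\ref{thm:quantum_K1_vs_Jor}, hypothesis (1) gives $\Jor(\Qu(\Ff)) \subset \Qu(\co(\Ff))$. Combining these three inclusions, all containments are equalities; in particular $\Qu(\co(\Ff)) = \Jor(\Qu(\Ff_{conn}))$. Finally, applying $\Jor$ to this equality and using that $\Jor$ is idempotent (a Jordan-closed family is already stable by free product and wreath product with $S_n^+$), we get $\Jor(\Qu(\co(\Ff))) = \Jor(\Jor(\Qu(\Ff_{conn}))) = \Jor(\Qu(\Ff_{conn})) = \Qu(\co(\Ff))$. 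This concludes the proof.
\end{proof}
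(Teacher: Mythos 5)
Your proposal is correct and follows essentially the same route as the paper: the same three inclusions from Theorem~\ref{thm:quantum_Jor_F_cographs}, monotonicity of $\Jor$ applied to $\Ff_{conn}\subset\Ff$, and Theorem~\ref{thm:quantum_K1_vs_Jor}, chained into equalities. Your explicit idempotence argument for the equality with $\Jor(\Qu(\co(\Ff)))$ is a welcome addition that the paper leaves implicit.
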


\begin{proof}
	By Theorem~\ref{thm:quantum_Jor_F_cographs}, we have that $\Qu(\co(\Ff)) \subset \Jor(\Qu(\Ff_{conn}))$. 
	Since $\Ff_{conn} \subset \Ff$, we have that $\Jor(\Qu(\Ff_{conn})) \subset \Jor(\Qu(\Ff))$. 
	By Theorem~\ref{thm:quantum_K1_vs_Jor}, we have that $\Jor(\Qu(\Ff)) \subset \Qu(\co(\Ff))$. 
	This concludes the proof.
\end{proof}

The precedent results are very practical for computations by hand because in many cases it is possible to decompose a graph by alternatively taking complements and connected components into much smaller graphs.  
We conclude this section by giving two examples, the second one generalising the first one. 
In the next section, we will use the precedent results to compute the quantum (and classical) automorphism groups of tree-cographs, thus generalising Jordan's theorem about automorphism groups of trees (see~\cite{jordan1869assemblages}) to the noncommutative case. 

We start by computing the quantum and classical automorphism groups of cographs. 

\begin{theorem}\label{thm:qu_cographs}
	Let $\Ff$ be the family of cographs. 
	We have:
	\begin{itemize}
		\item $\Aut(\Ff) = \Jor(\{1\})$,
		\item $\Qu(\Ff) = \Jor(\{1\})$.
	\end{itemize}
\end{theorem}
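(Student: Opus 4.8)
The plan is to recognize that the family of cographs is nothing but $\co(\{K_1\})$, as was already observed after the definition of $\Ff$-cographs, and then to invoke Theorem~\ref{thm:Jor_vs_Fcographs} and Theorem~\ref{thm:quantum_Jor_vs_Fcographs} with the one-element class $\Ff_0 = \{K_1\}$. So the whole argument reduces to checking that $\Ff_0$ satisfies the hypotheses of those two theorems and to computing $\Aut(K_1)$ and $\Qu(K_1)$.

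First I would verify the hypotheses. We have $K_1 \in \Ff_0$ trivially. Since $K_1^c = K_1$, the class $\Ff_0$ is stable under taking complements. Since $K_1$ is connected, it is stable under taking connected components, and moreover $(\Ff_0)_{conn} = \Ff_0$. Finally, $\Ff_0$ satisfies (QI) for the trivial reason that it contains a single graph: if $G, H \in \Ff_0$ with $G \qi H$ then $G = K_1 = H$. Thus all four hypotheses of Theorem~\ref{thm:quantum_Jor_vs_Fcographs} (and the three of Theorem~\ref{thm:Jor_vs_Fcographs}) are met.

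Next I would apply the theorems. Theorem~\ref{thm:Jor_vs_Fcographs} gives $\Aut(\co(\Ff_0)) = \Jor(\Aut((\Ff_0)_{conn})) = \Jor(\Aut(\Ff_0))$, and Theorem~\ref{thm:quantum_Jor_vs_Fcographs} gives $\Qu(\co(\Ff_0)) = \Jor(\Qu((\Ff_0)_{conn})) = \Jor(\Qu(\Ff_0))$. It then remains to note that $\Aut(K_1)$ is the trivial group and that $\Qu(K_1)$ is the trivial quantum group: indeed $\Oo(\Qu(K_1)) = \Aa_s(1) = \C$ since the relations $\Pp_s$ force the single generator to equal $1$, so $\Qu(K_1) = 1$ (equivalently, by Remark~\ref{rem:qu_G_abelian}, $\Qu(K_1)$ has abelian algebra of coefficients equal to $C(\Aut(K_1)) = \C$). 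Hence $\Aut(\Ff_0) = \{1\} = \Qu(\Ff_0)$, and combining with the displayed equalities yields $\Aut(\Ff) = \Jor(\{1\})$ and $\Qu(\Ff) = \Jor(\{1\})$, as desired.

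There is no real obstacle here: the content is entirely in the structural theorems of the previous subsection, and this statement is just the instantiation at the base class $\{K_1\}$. The only points deserving a line of justification are that $(\{K_1\})_{conn} = \{K_1\}$ and that both $\Aut(K_1)$ and $\Qu(K_1)$ are trivial.
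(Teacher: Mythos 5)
Your proposal is correct and follows essentially the same route as the paper, which simply observes that cographs are the $\{K_1\}$-cographs and invokes Theorem~\ref{thm:Jor_vs_Fcographs} and Theorem~\ref{thm:quantum_Jor_vs_Fcographs}; your explicit verification of the hypotheses for $\{K_1\}$ and the triviality of $\Aut(K_1)$ and $\Qu(K_1)$ is exactly the (omitted) routine check.
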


\begin{proof}
	Notice that cographs are by definition the $\{K_1\}$-cographs. 
	This immediately implies the desired results by Theorem~\ref{thm:Jor_vs_Fcographs} and Theorem~\ref{thm:quantum_Jor_vs_Fcographs}.
\end{proof}

We now extend the precedent result. 
Let $\Gg_n = \{ G\in \Graphs \mid \# V(G) \leq n\}$. 

\begin{theorem}\label{thm:G5}
	The class $\co(\Gg_5)$ is tractable, superrigid, and $\Qu(\co(\Gg_5)) = \Jor(\Qu(\Gg_5))$.
\end{theorem}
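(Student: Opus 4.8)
The plan is to verify that $\Ff = \Gg_5$ satisfies the hypotheses of the general theorems of this section and then read off the three conclusions. The elementary stability facts are immediate: $K_1 \in \Gg_5$, and since neither passing to the complement nor to a connected component increases the number of vertices, $\Gg_5$ is stable under both operations. Granting that $\Gg_5$ is tractable and superrigid, Theorem~\ref{thm:tractable_F_cographs} gives that $\co(\Gg_5)$ is tractable, Theorem~\ref{thm:superrigid_F_cographs} gives that it is superrigid, and Theorem~\ref{thm:quantum_Jor_vs_Fcographs} gives
\[ \Qu(\co(\Gg_5)) = \Jor(\Qu((\Gg_5)_{conn})) = \Jor(\Qu(\co(\Gg_5))). \]
Since $(\Gg_5)_{conn} \subseteq \Gg_5 \subseteq \co(\Gg_5)$, applying $\Jor \circ \Qu$ sandwiches $\Jor(\Qu(\Gg_5))$ between the two equal outer terms, so $\Qu(\co(\Gg_5)) = \Jor(\Qu(\Gg_5))$, which is the displayed formula. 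It therefore remains to check that $\Gg_5$ satisfies (QI), (SA), and is superrigid; note that (QA) will then follow from (SA) by Lemma~\ref{lem:sa_qa}.

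For (QI) and superrigidity, observe first that a quantum isomorphism preserves the number of vertices (Lemma~\ref{lem:square_mu}), so any graph quantum isomorphic to a member of $\Gg_5$ is again in $\Gg_5$; hence superrigidity of $\Gg_5$ is literally the same statement as (QI) for $\Gg_5$. To prove it, let $G, H \in \Gg_5$ with $G \qi H$; by Lemma~\ref{lem:qi_same_nb_edges} they have equally many vertices $n \leq 5$ and equally many edges. The only non-planar graph on at most five vertices is $K_5$ (a subdivision of $K_5$, resp.\ $K_{3,3}$, needs at least five, resp.\ six, vertices). If $G \neq K_5$ then $G$ has at most nine edges, hence so does $H$, hence $H \neq K_5$ either, so $G$ and $H$ are both planar and $G \cong H$ by the result of Appendix~\ref{app:lovasz} that quantum isomorphic planar graphs are isomorphic. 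If $G = K_5$ then $H$ is a graph on five vertices with ten edges, so $H = K_5 = G$. This establishes (QI) and superrigidity.

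The remaining point --- (SA) for $\Gg_5$ --- is the only one requiring a finite case analysis, and I expect it to be the main obstacle. Since the implication ``Schmidt's criterion $\Rightarrow$ quantum symmetry'' always holds (Theorem~\ref{thm:qu_schmidt}), one must show that every graph on at most five vertices that does \emph{not} satisfy Schmidt's criterion has commutative quantum automorphism group. Using $\Qu(G) = \Qu(G^c)$ and Remark~\ref{rk:auto_complement}, it suffices to treat one graph from each complementary pair; moreover, for the disconnected such graphs one invokes Theorem~\ref{thm:sum_no_qs}, which reduces the question to the connected components (all of which have at most five, and in the relevant cases at most four, vertices). One is thus left with the handful of \emph{connected} graphs on four or five vertices failing Schmidt's criterion --- the paths $P_4, P_5$, the cycle $C_5$, the star $K_{1,3}$, and a few graphs such as $P_5^c$ or the ``kite'' (a $K_4$ with a pendant vertex) --- each having automorphism group $\Z_2$ or a copy of $S_3$ acting on three points. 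For each of these the argument is the same: the degree constraint of Lemma~\ref{lem:degree} (together, where needed, with the distance constraint of Lemma~\ref{lem:distance}) forces any adapted magic unitary to be block-diagonal with blocks of size at most three, on which relations $\Pp_s$ already imply commutativity since $\Oo(S_m^+)$ is commutative for $m \leq 3$, while the off-diagonal commutation relations with the adjacency matrix force the remaining blocks to coincide. Hence each such graph has no quantum symmetry, which completes the verification that $\Gg_5$ is tractable and superrigid, and with it the theorem.
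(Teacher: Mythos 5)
Your reduction to the general machinery is exactly the paper's: check that $\Gg_5$ contains $K_1$ and is stable under complements and connected components, prove $\Gg_5$ itself is tractable and superrigid, then invoke Theorem~\ref{thm:tractable_F_cographs}, Theorem~\ref{thm:superrigid_F_cographs}, and Theorem~\ref{thm:quantum_Jor_vs_Fcographs} (your sandwich $(\Gg_5)_{conn}\subseteq\Gg_5\subseteq\co(\Gg_5)$ correctly disposes of the cosmetic difference between $\Jor(\Qu((\Gg_5)_{conn}))$ and $\Jor(\Qu(\Gg_5))$, which the paper glosses over). Your treatment of (QI)/superrigidity is a genuinely different and legitimate route: the paper goes through superrigidity of tree-cographs plus a comparison of centers of the four non-tree-cographs on five vertices (pan, its complement, bull, $C_5$), whereas you use that $K_5$ is the only non-planar graph on at most five vertices, preservation of vertex and edge counts, and Corollary~\ref{coro:qi_planar}; this is slicker but leans on the Man\v{c}inska--Roberson theorem, while the paper's version stays internal to its elementary toolkit. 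There is no circularity in your use of the appendix, so that part is fine.

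The genuine gap is in your verification of (SA), specifically at $C_5$. Your blanket claim --- that Lemma~\ref{lem:degree} and Lemma~\ref{lem:distance} force any adapted magic unitary to be block-diagonal with blocks of size at most three --- is simply false for $C_5$: it is vertex-transitive (all degrees equal $2$) and distance-transitive, so neither lemma forces a single entry of the magic unitary to vanish, and no block structure arises; moreover $\Aut(C_5)=D_5$, not $\Z_2$ or $S_3$ as your list suggests. Since $C_5$ fails Schmidt's criterion (rotations have full support and any two reflections have overlapping supports), (SA) for $\Gg_5$ requires an actual proof that $C_5$ has no quantum symmetry, and this is a nontrivial computation; the paper handles it by citing Banica's Lemma~3.5 of~\cite{banica2005quantum}, after first reducing, via Lemma~\ref{lem:tree_cograph_G5} and tractability of tree-cographs, to the four graphs that are not tree-cographs and dispatching the pan, its complement, and the bull with Theorem~\ref{thm:qi_center}-type block arguments of the kind you describe. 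For those other connected cases (paths, stars, bull, pan, paw, etc.) your degree/distance argument does work, and your reduction of disconnected graphs via Theorem~\ref{thm:sum_no_schmidt} and Theorem~\ref{thm:sum_no_qs} is sound provided you organize it as an induction on the number of vertices (those theorems need (QA), (QI), and (SA) for the smaller components, as in Theorem~\ref{thm:problems_sums}); but as written the case $C_5$ is not covered by your argument and must be supplied separately, either by citing Banica or by an explicit computation with the $5\times 5$ magic unitary commuting with $\Adj(C_5)$.
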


\begin{proof}
	By Theorem~\ref{thm:tractable_G5}, we have that $\Gg_5$ is tractable. 
	Since quantum isomorphism preserves the number of vertices, $\Gg_5$ is also superrigid. 
	We then have that $\co(\Gg_5)$ is tractable by Theorem~\ref{thm:tractable_F_cographs} and is superrigid by Theorem~\ref{thm:superrigid_F_cographs}. 
	Now it is clear that $\Gg_5$ satisfies the hypothesis of Theorem~\ref{thm:quantum_Jor_vs_Fcographs}, which implies that $\Qu(\co(\Gg_5)) = \Jor(\Qu(\Gg_5))$, as desired. 
	This concludes the proof.
\end{proof}

\begin{remark}\label{rk:coG5_cotree}
	We point out that the class of $\Gg_5$-cographs is distinct from the class of tree-cographs, hence Theorem~\ref{thm:G5} and our results in the next section (Theorem~\ref{thm:tractable_tree_cographs} and Theorem~\ref{thm:jordan_tree_cographs}) can really be seen as two distinct generalisations of our results for $K_1$-cographs, that is, cographs. 
	Indeed, we have $C_5 \in \co(\Gg_5) \setminus \co(\Trees)$ and $P_6 \in \co(\Trees) \setminus \co(\Gg_5)$.
\end{remark}

\section{Forests}\label{sec:forests}

In this section, we prove that the class of forests is tractable and we compute the quantum automorphism groups of forests. 
Doing so, we extend to the noncommutative setting a theorem of Jordan~\cite{jordan1869assemblages}. 
The present work is in the continuation of work by Fulton~\cite{Fulton2006} on quantum automorphism groups of trees. 

Following Fulton's approach, we use the center of a tree in a key way. 
We notice that it naturally leads to a structure of rooted tree on a tree which better suited for computations. 
By translating the problems to rooted trees and rooted forests, their solutions appear thus naturally as a consequence of a natural inductive structure for which we can apply our previous results. 
This allows us to obtain the desired results.

We want to point out that our approach has the advantage of being explicit. 
We will mention when some results can also be obtained in a non-explicit way.

In subsection~\ref{subsec:first_trees}, we recall Fulton's proof that trees satisfy (QA), and give an non-explicit proof that trees satisfy (QI) -- both results are later proved in an explicit manner. 
In subsection~\ref{subsec:rooted_forests}, we introduce the language of rooted forests, and define their quantum automorphism groups. 
In subsection~\ref{subsec:quantum_rooted}, we use the natural inductive structure of rooted forests to prove they form a tractable class (with axioms adapted to the rooted setting), and compute their quantum automorphism groups. 
In subsection~\ref{subsec:psi}, we define a transformation associating a rooted tree to a tree and use it to prove in Theorem~\ref{thm:tractable_forests} that forests are tractable, and in Corollary~\ref{coro:superrigid_tree_cographs} that tree-cographs are superrigid. 
In subsection~\ref{subsec:noncommutative_jordan}, we compute the quantum automorphism groups of trees and obtain the noncommutative Jordan theorem in Theorem~\ref{thm:noncommutative_jordan}. 
We finish by proving that the class of tree-cographs is tractable in Theorem~\ref{thm:tractable_tree_cographs} and computing their quantum automorphism groups in Theorem~\ref{thm:jordan_tree_cographs}.

We point out that the classification of quantum automorphism groups of finite trees in Theorem~\ref{thm:noncommutative_jordan} below was recently independently obtained by De Bruyn, Kar, Roberson, Schmidt, and Zeman in~\cite{de2023quantum} through a direct computational approach. 
Here, we will obtain this result and its generalisation to tree-cographs in Theorem~\ref{thm:jordan_tree_cographs} will come up as a consequence of our more general approach.

\subsection{First results on trees}\label{subsec:first_trees}

A \textit{tree} is a finite graph such that there is a unique path joining any two vertices. 
A \textit{forest} is a graph all of whose connected components are trees. 
We point out that forests are stable by taking subgraphs, while trees are not.

In this subsection, we will focus on trees and recall from Fulton~\cite{Fulton2006} that the class of trees satisfies axiom (QA), that is, that an asymmetric tree has a trivial quantum automorphism group. 
This is the first step into understanding the structure of trees. 
We reproduce the proof here both for the sake of completeness and because, the results of Fulton being very early in the theory (the definition of the quantum automorphism group of a graph was not stabilised yet), the modern terminology simplifies the exposition. 

Let $G$ be a graph and let $\Pp$ be a partition of $V(G)$. 
In 1971, Weichsel~\cite{Weichsel1971} calls such a partition a \textit{star partition} if:
\begin{enumerate}
	\item for all $A\in \Pp$, for all $x$, $y\in A$, we have $deg(x) = deg(y)$,
	\item for all $A$, $B\in \Pp$, for all $x\in A$, if there exists $y\in B$ such that $xy \in E(G)$, then for all $x' \in A$ there exists $y'\in B$ such that $x'y'\in E(G)$.
\end{enumerate}

His motivation comes from the two following results, the second one being the main result of his paper~\cite{Weichsel1971}.

\begin{lemma}[Weichsel]\label{lem:orbits_star}
	Let $\alpha$ be an automorphism of a graph $G$. 
	Then the orbits of $\alpha$ form a star partition.
\end{lemma}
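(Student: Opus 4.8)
The statement is a direct verification, so the plan is simply to unwind the definitions. Recall that the orbits of $\alpha$ are the equivalence classes of the relation $x \approx y \iff y = \alpha^k(x)$ for some $k \in \Z$; since this is an equivalence relation on $V(G)$, these classes indeed form a partition $\Pp$ of $V(G)$, so it remains only to check the two conditions in Weichsel's definition of a star partition.

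For the first condition, I would take an orbit $A \in \Pp$ and two vertices $x, y \in A$, so that $y = \alpha^k(x)$ for some integer $k$. Since $\alpha$ is a graph automorphism, $\alpha^k$ is too, and in particular $\alpha^k$ restricts to a bijection $N(x) \to N(\alpha^k(x)) = N(y)$ (this uses that $\alpha^k$ and its inverse $\alpha^{-k}$ are both graph morphisms, which is exactly what ``automorphism'' gives us). Hence $deg(x) = \#N(x) = \#N(y) = deg(y)$, which is condition~(1).

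For the second condition, I would take two orbits $A, B \in \Pp$, a vertex $x \in A$, and suppose there is $y \in B$ with $xy \in E(G)$. Given an arbitrary $x' \in A$, write $x' = \alpha^k(x)$ for some $k \in \Z$ and set $y' = \alpha^k(y)$; then $y' \in B$ since $B$ is the orbit of $y$, and $x'y' = \alpha^k(x)\alpha^k(y) \in E(G)$ because $\alpha^k$ is a graph morphism and $xy \in E(G)$. This is exactly condition~(2), so $\Pp$ is a star partition and the proof is complete.

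\textbf{Main obstacle.} There is essentially none: the only points requiring any care are observing that the orbits genuinely partition $V(G)$ and that being an \emph{automorphism} (not merely an endomorphism) is what lets us conclude both that neighborhoods map bijectively onto neighborhoods and that the argument is symmetric in $x$ and $x'$; everything else is a one-line computation.
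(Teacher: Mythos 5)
Your proof is correct: the paper states this lemma without proof (citing Weichsel), and your direct verification — orbits partition $V(G)$, degrees are preserved because $\alpha^k$ is an automorphism, and adjacency transports along $\alpha^k$ to give condition (2) — is exactly the standard argument, mirroring the quantum analogue the paper proves in Lemma~\ref{lem:qu_orbits_star}. No gaps.
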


The main result of~\cite{Weichsel1971} is the following.

\begin{theorem}[Weichsel]\label{thm:asymmetric_tree}
	A tree is asymmetric if and only if every star partition of its vertices is trivial.
\end{theorem}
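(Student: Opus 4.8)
The word ``trivial'' here refers to the partition of $V(T)$ into singletons. One implication is immediate from Lemma~\ref{lem:orbits_star}: if $T$ is not asymmetric, choose a nontrivial automorphism $\alpha\in\Aut(T)$; its orbits form a star partition, and since $\alpha$ moves some vertex, at least one orbit is not a singleton, so this star partition is nontrivial. Hence the whole content of the statement is the converse, which I would phrase as follows: \emph{a nontrivial star partition of a tree produces a nontrivial automorphism}.

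To prove this, the plan is to root $T$ at its center $Z(T)$, which by the classical theorem of Jordan~\cite{jordan1869assemblages} is either a single vertex $c$ or a single edge $c_1c_2$; in the edge case I would delete the central edge and work with the two rooted subtrees it produces. The technical heart of the argument is a ``leveling'' lemma: for a star partition $\Pp$ of a tree, every block $A$ lies at a single distance from the center, that is, $d(x,Z(T))$ is constant for $x\in A$ (equivalently, the eccentricity $r$ is constant on each block). I would prove this by induction on the distance to the center, using the fact that in a tree a vertex at distance $k+1$ from the center has a \emph{unique} neighbour at distance $k$, together with condition~(2) of a star partition to transport adjacency patterns outward level by level. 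A first payoff is that in the vertex-centered case $\{c\}$ must itself be a block, since any vertex sharing $c$'s block would then have eccentricity equal to the radius of $T$ and hence also lie in the center.

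With the leveled structure available, I would conclude by strong induction on $\abs{V(T)}$. In the vertex-centered case, deleting $c$ yields rooted branches $T_1,\dots,T_m$ at the children of $c$; leveling forces each block of $\Pp$ to live at a single level, which makes the restriction of $\Pp$ to any branch a star partition of that branch and constrains how blocks can straddle several branches. One then shows that either the restriction of $\Pp$ to some branch $T_i$ is a nontrivial star partition of $T_i$ --- in which case induction gives a nontrivial automorphism of $T_i$, extended by the identity on the rest of $T$ --- or else $\Pp$ matches two distinct branches $T_i$ and $T_j$ by a block-preserving isomorphism $T_i\cong T_j$, and transposing these branches is a nontrivial automorphism of $T$. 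The edge-centered case is treated identically with the two subtrees hanging off the central edge: either one of them carries a nontrivial star partition, or they are isomorphic and the transposition swapping them across the central edge is a nontrivial automorphism.

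The main obstacle is the leveling lemma together with the bookkeeping in the inductive step: one has to check that a block straddling several branches genuinely exhibits them as isomorphic by an isomorphism respecting the block structure, which itself requires an internal induction propagating the matching of children along the blocks. Since the statement is precisely Weichsel's theorem~\cite{Weichsel1971}, a fully acceptable alternative is to invoke it directly rather than reconstruct the combinatorial argument.
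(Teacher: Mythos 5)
The paper does not actually prove this statement: it is recalled as the main theorem of Weichsel's note and used as a black box, so your closing remark---that one may simply invoke \cite{Weichsel1971}---coincides exactly with the paper's treatment, and the easy direction you dispatch via Lemma~\ref{lem:orbits_star} is indeed immediate.

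Your sketched reconstruction of the hard direction, however, has a genuine gap at its central step, the ``leveling'' lemma. You propose to prove that each block sits at a single distance from the center by an induction outward from the center; granting the base case, the outward step is indeed easy (if $u$ is at level $k+1$ and $v$ is in its block, then $v$ has a neighbour in the block of the parent of $u$, which by hypothesis lies in level $k$, forcing $v$ into level $k-1$ or $k+1$, and the first option contradicts the hypothesis applied to the block of $u$). But the base case is precisely the assertion that the block containing the center consists of center vertices, and you obtain that assertion only as a ``payoff'' of the leveling lemma itself (via constancy of eccentricity on blocks)---so, as written, the argument is circular. Ruling out a non-center vertex of the same degree as the center sitting in the center's block is not a routine transport of adjacency patterns: condition (2) of a star partition is existential rather than counting, so such a vertex can locally mimic the center, and excluding this is essentially where the combinatorial content of Weichsel's theorem (the comparison of branch depths against the balance condition defining the center) is concentrated. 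A second, more repairable, issue is in your inductive step: when the restriction of the partition to a branch $T_i$ is a nontrivial star partition, the unrooted induction hypothesis only provides a nontrivial automorphism of $T_i$, which need not fix the root of $T_i$, so it need not extend by the identity to an automorphism of $T$; you would need a rooted strengthening (the root is a singleton block and one produces a root-fixing automorphism), much in the spirit of the rooted-tree formalism of Section~\ref{sec:forests}. So the safe course---and the one the paper takes---is to cite \cite{Weichsel1971}; the reconstruction as sketched is not yet a proof.
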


The idea of Fulton~\cite{Fulton2006} is to extend both results to the quantum setting, that is, to magic unitaries. 
For coherence with modern terminology, we rephrase her result in Lemma~\ref{lem:qu_orbits_star}, though the technique used here is identical to the one she uses. 
Actually, she proves a stronger result than the one stated here, see in particular Lemma 4.2.1 of~\cite{Fulton2006}. 

\begin{lemma}[Fulton]\label{lem:qu_orbits_star}
	Let $G$ be a graph. 
	The quantum orbits of $\Qu(G)$ form a star partition.
\end{lemma}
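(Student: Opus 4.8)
The plan is to verify the two defining properties of a star partition directly, using the fundamental representation $u=(u_{xy})_{x,y\in V(G)}$ of $\Qu(G)$. Recall that $u$ is a magic unitary adapted to $G$ (so $u\,\Adj(G)=\Adj(G)\,u$), and that two vertices lie in the same quantum orbit exactly when $u_{xy}\neq 0$; this relation is symmetric because the antipode sends $u_{xy}$ to $u_{yx}$. I would carry out the verification inside $\Oo(\Qu(G))$ (or, if preferred, its $C^*$-envelope $C(\Qu(G))$), where within each row the coefficients $u_{x'w}$ are mutually orthogonal projections, so that a partial row-sum $\sum_{w\in T}u_{x'w}$ vanishes only if every $u_{x'w}$ with $w\in T$ vanishes --- a fact guaranteed by relations $\Pp_2$.

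Property~(1), that a quantum orbit consists of vertices of equal degree, is immediate from Lemma~\ref{lem:degree} applied to $u$: if $u_{xy}\neq 0$ then $deg(x)=deg(y)$.

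For property~(2), fix quantum orbits $A$, $B$, a vertex $x\in A$ having a neighbor $y\in B$, and an arbitrary $x'\in A$; I must produce a neighbor of $x'$ lying in $B$. Comparing $(x',y)$-entries in the identity $\Adj(G)\,u=u\,\Adj(G)$ gives
\[
\sum_{z\sim x'}u_{zy}=[\Adj(G)\,u]_{x'y}=[u\,\Adj(G)]_{x'y}=\sum_{w\sim y}u_{x'w}.
\]
On the right-hand side, $u_{x'x}$ appears as one of the (pairwise orthogonal) summands because $x\sim y$, and $u_{x'x}\neq 0$ since $x$ and $x'$ lie in the same quantum orbit $A$; hence the right-hand side, and therefore the left-hand side, is nonzero. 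Consequently some $z\sim x'$ satisfies $u_{zy}\neq 0$, i.e.\ $z$ lies in the quantum orbit of $y$, namely $B$. Thus $x'$ has a neighbor in $B$, which is exactly property~(2), and the quantum orbits form a star partition.

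I do not anticipate a genuine obstacle: the argument is a direct quantization of Weichsel's proof (replacing "$\alpha(x')\sim\alpha(y)$" by the entrywise identity above), and the only points needing care are working in an algebra where a row-sum of a magic unitary is zero iff all its entries are --- secured by $\Pp_2$ --- and using the symmetry of the quantum-orbit relation so that $x,x'\in A$ really yields $u_{x'x}\neq 0$ rather than merely $u_{xx'}\neq 0$. One can also phrase the whole proof without the $C^*$-envelope, purely at the level of the matrix identity $u\,\Adj(G)=\Adj(G)\,u$, which matches Fulton's original presentation.
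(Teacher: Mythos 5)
Your proof is correct, and it has the same overall structure as the paper's: property (1) of a star partition is obtained from Lemma~\ref{lem:degree} applied to the fundamental representation, and property (2) from the fact that this representation commutes with $\Adj(G)$. The only divergence is in how property (2) is extracted. You compare the $(x',y)$-entries of $\Adj(G)u = u\Adj(G)$ and use orthogonality of the projections within the row of $x'$ to see that $\sum_{w\sim y}u_{x'w}\neq 0$ (it dominates the nonzero $u_{x'x}$), hence some $u_{zy}\neq 0$ with $z\sim x'$. The paper instead starts from $u_{x'x}\neq 0$, multiplies by the row sum $\sum_b u_{yb}=1$, and invokes the distance-preservation Lemma~\ref{lem:distance} to get a $b$ in the orbit of $y$ with $d(x',b)=d(x,y)=1$. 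Since Lemma~\ref{lem:distance} at distance $1$ is proved by essentially your entrywise computation, the two arguments are equivalent in substance; yours is marginally more self-contained (only the intertwining relation is used, as in Weichsel's classical proof and Fulton's original treatment), while the paper's is marginally shorter because the distance lemma is already available. Both correctly rely on the standard facts about quantum orbits that you make explicit, namely $u_{x'x}\neq 0$ for $x,x'$ in the same orbit (via symmetry of the orbit relation) and $u_{zy}\neq 0$ placing $z$ in the orbit of $y$.
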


\begin{proof}
	We write $\Qu(G) = (\Oo(\Qu(G)),u)$. 
	Let $\Pp$ be the partition of quantum orbits of $\Qu(G)$. 
	By Lemma~\ref{lem:degree}, it satisfies the first axiom of star partitions. 
	So let $A$ and $B\in \Pp$ be two quantum orbits. 
	Assume there is $x \in A$ and $y\in B$ such that $xy \in E(G)$ and let $a \in A$. 
	It comes:
	\begin{align*}
		0 &\neq u_{xa} = u_{xa}\left( \sum_{b\in V(G)} u_{yb} \right)\\
		&= \sum_{b\in B} u_{xa}u_{yb},
	\end{align*}
	so there exists $b\in B$ such that $u_{xa}u_{yb} \neq 0$. 
	By Lemma~\ref{lem:distance}, we have that $d(a,b) = d(x,y) = 1$, so $ab\in E(G)$. 
	This shows that $\Pp$ also satisfies the second axiom of star partitions, and conclude the proof.
\end{proof}

Together with Theorem~\ref{thm:asymmetric_tree}, Fulton obtains the following (Theorem 4.4.3 of~\cite{Fulton2006}).

\begin{theorem}[Fulton]\label{thm:qa_for_trees}
	A tree is asymmetric if and only if its quantum automorphism group is trivial.
\end{theorem}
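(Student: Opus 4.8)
The plan is to assemble the statement from the two ingredients recalled just above: Weichsel's combinatorial characterization of asymmetric trees (Theorem~\ref{thm:asymmetric_tree}) and Fulton's observation that the quantum orbits always form a star partition (Lemma~\ref{lem:qu_orbits_star}). The logical skeleton is the usual two implications, one of which is essentially free.

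First I would dispose of the easy direction: if $\Qu(G)$ is trivial, then a fortiori $\Aut(G)$ is trivial, since $C(\Aut(G))$ is the abelianization of $C(\Qu(G))$ (Remark~\ref{rem:qu_G_abelian}); this holds for any graph and in particular for a tree, so an asymmetric tree is forced on us in that case — wait, we want the other reading. Let me restate: the implication ``$\Qu(G)$ trivial $\Rightarrow$ $G$ asymmetric'' is immediate from Remark~\ref{rem:qu_G_abelian}. So the content is the converse.

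For the converse, suppose $G$ is an asymmetric tree. Apply Lemma~\ref{lem:qu_orbits_star}: the partition $\Pp$ of $V(G)$ into quantum orbits of $\Qu(G)$ is a star partition. By Theorem~\ref{thm:asymmetric_tree}, since $G$ is an asymmetric tree, every star partition of $V(G)$ is trivial, i.e.\ $\Pp$ consists of singletons. It remains to upgrade ``all quantum orbits are singletons'' to ``$\Qu(G)$ is trivial.'' This is where the one point needing care lies: concretely, all quantum orbits being singletons means that for the fundamental representation $u=(u_{xy})$ of $\Qu(G)$ we have $u_{xy}=0$ whenever $x\neq y$; combined with the magic unitary relations $\sum_y u_{xy}=1$ this forces $u_{xx}=1$ for all $x$, so $u=I_n$, and hence $\Oo(\Qu(G))$ is generated by $1$, i.e.\ $\Qu(G)$ is trivial. (I should double-check against the paper's conventions that ``quantum orbit of $x$'' is indeed $\{y : u_{xy}\neq 0\}$, or the equivalence class generated by this relation; either way the conclusion $u_{xy}=0$ for $x\ne y$ follows.)

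The only genuine obstacle is that last upgrade step, and it is mild: it amounts to knowing that triviality of all quantum orbits is equivalent to the fundamental magic unitary being the identity matrix, which follows directly from the relations $\Pp_0\cup\Pp_1$ defining a magic unitary (the off-diagonal entries vanish, the rows sum to $1$, each diagonal entry is a projection, so each diagonal entry equals $1$). Everything else is a two-line deduction from results already in hand. I would present the proof in essentially three sentences: invoke Remark~\ref{rem:qu_G_abelian} for one direction; for the other, chain Lemma~\ref{lem:qu_orbits_star} and Theorem~\ref{thm:asymmetric_tree} to conclude the quantum orbits are trivial; then observe this forces $u=I_n$ and hence $\Qu(G)$ trivial.
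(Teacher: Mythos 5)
Your proposal is correct and follows essentially the same route as the paper: the easy direction is immediate (the paper phrases it via Corollary~\ref{coro:automorphism} applied to permutation matrices rather than via Remark~\ref{rem:qu_G_abelian}, but this is a cosmetic difference), and the converse chains Lemma~\ref{lem:qu_orbits_star} with Theorem~\ref{thm:asymmetric_tree} and then observes that trivial quantum orbits force the fundamental representation to be the identity, exactly as in the paper. Your worry about the "upgrade" step is handled the same way the paper handles it, using the magic unitary relations to get $u=I_n$ and hence $\Oo(\Qu(T))=\C$.
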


\begin{proof}
	Let $T$ be a tree. 
	First, notice that if $\Qu(T)$ is trivial, then every magic unitary adapted to $T$ is the identity matrix. 
	In particular, every permutation matrix commuting with $\Adj(T)$ is the identity. 
	By Corollary~\ref{coro:automorphism}, we have $\Aut(T) = 1$.

	Conversely, assume $T$ is asymetric. 
	Let $\Pp$ be the partition of $V(G)$ given by the quantum orbits. 
	By Lemma~\ref{lem:qu_orbits_star}, we have that $\Pp$ is a star partition. 
	By Theorem~\ref{thm:asymmetric_tree}, we have that $\Pp$ is trivial. 
	This immediately implies that the fundamental representation of $\Qu(T)$ is given by the identity matrix. 
	In particular, $\Oo(\Qu(T)) = \C$, and $\Qu(T) = 1$. 
	This concludes the proof.
\end{proof}

In our setting, Fulton thus showed that the class of trees satisfies the axiom (QA). 
Let us now show in an non-explicit way that quantum isomorphic trees are isomorphic, that is, axiom (QI). 
We use the notion of fractional isomorphism. 
Two graphs $G$ and $H$ on $n\geq 1$ vertices are \textit{fractionally isomorphic} if there exists $S \in \Mm_n([0,1])$ bistochastic (that is, the sum on rows and columns is 1) such that $S\Adj(G) = \Adj(H)S$. 
It is well-known that fractional isomorphic trees are isomorphic, so it is enough to check that quantum isomorphic graphs are fractionnally isomorphic. 
This is already obtained indirectly in~\cite{QiNotI} where it is shown that quantum isomorphism implies quantum nonsignalling isomorphism, the last notion being equivalent to fractional isomorphism (Theorem 4.5 in~\cite{QiNotI}). 
We give here a direct proof.

\begin{lemma}\label{lem:qi_imiplies_fi}
	Two quantum isomorphic graphs are fractionally isomorphic.
\end{lemma}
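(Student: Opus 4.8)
Two quantum isomorphic graphs are fractionally isomorphic.

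The plan is to extract a bistochastic intertwiner from a quantum isomorphism by composing the magic unitary with a state. Suppose $U \in \Mm_n(X)$ is a quantum isomorphism from $G$ to $H$, where $X$ is a unital $C^*$-algebra; thus each $u_{ij}$ is a projection, the rows and columns of $U$ sum to $1_X$, and $U\Adj(G) = \Adj(H)U$. Pick any state $\varphi \colon X \to \C$ (one exists since $X$ is unital, e.g.\ extend a character on $\C 1_X$, or use Hahn--Banach), and set $S = (\varphi(u_{ij}))_{1\le i,j\le n} \in \Mm_n(\C)$. First I would check that $S$ is bistochastic: since each $u_{ij}$ is a projection it is positive, so $\varphi(u_{ij}) \ge 0$, and since $\varphi(u_{ij}) \le \varphi(1_X) = 1$ we get $s_{ij} \in [0,1]$; applying $\varphi$ to $\sum_j u_{ij} = 1_X$ and to $\sum_i u_{ij} = 1_X$ (relations $\Pp_1$) gives $\sum_j s_{ij} = 1 = \sum_i s_{ij}$ by linearity of $\varphi$. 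Hence $S \in \Mm_n([0,1])$ is bistochastic.

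Next I would verify the intertwining relation $S\Adj(G) = \Adj(H)S$. This is immediate from applying $\varphi$ entrywise to the matrix identity $U\Adj(G) = \Adj(H)U$: for each pair $(i,j)$ we have
\[
[S\Adj(G)]_{ij} = \sum_{k} s_{ik} a_{kj} = \sum_k \varphi(u_{ik}) a_{kj} = \varphi\!\left(\sum_k u_{ik} a_{kj}\right) = \varphi([U\Adj(G)]_{ij}),
\]
where $a_{kj}$ are the scalar entries of $\Adj(G)$, and similarly $[\Adj(H)S]_{ij} = \varphi([\Adj(H)U]_{ij})$; since $[U\Adj(G)]_{ij} = [\Adj(H)U]_{ij}$ in $X$, applying $\varphi$ yields equality of the scalar matrices. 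Therefore $S$ witnesses that $G$ and $H$ are fractionally isomorphic, which concludes the proof.

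There is essentially no obstacle here — the argument is just ``apply a state to the magic unitary'', and every required property (positivity, the row/column sums, the intertwining) passes through linearity and positivity of $\varphi$. The only mild point worth a sentence in the write-up is the existence of a state on an arbitrary unital $C^*$-algebra, which is standard; one could even avoid choosing a state and instead use the tracial-type averaging or simply observe that $X$ admits a $*$-representation on Hilbert space and take a vector state, but invoking the existence of states is cleanest. I would also remark, as the paper already notes, that this gives a direct route to ``quantum isomorphic trees are isomorphic'' once combined with the classical fact that fractionally isomorphic trees are isomorphic.
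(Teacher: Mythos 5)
Your proof is correct, and it takes a genuinely (if mildly) different route from the paper's. The paper invokes Theorem 4.4 and Theorem 2.5 of~\cite{LupManRob} to replace the given quantum isomorphism by one whose coefficients lie in a \emph{tracial} unital $C^*$-algebra $(A,\tau)$, and then applies the trace entrywise. You instead apply an arbitrary state $\varphi$ directly to the magic unitary $U$ that witnesses the quantum isomorphism, and you correctly observe that nothing about traciality is needed: positivity and $\varphi(1)=1$ give that $S=(\varphi(u_{ij}))$ has entries in $[0,1]$ with rows and columns summing to $1$, and the intertwining $S\Adj(G)=\Adj(H)S$ follows by applying $\varphi$ entrywise to $U\Adj(G)=\Adj(H)U$, precisely because the adjacency matrices have scalar entries, so any linear functional passes through the sums. (The trace would only become relevant if one had to commute operator-valued entries past each other, which never happens here.) What your version buys is self-containedness: it removes the dependence on the external result of~\cite{LupManRob} and works verbatim with the paper's definition of quantum isomorphism, using only the standard fact that a nonzero unital $C^*$-algebra admits a state. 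The paper's version costs a citation but aligns with the usual formulation in the literature, where the tracial algebra is the natural output of the nonlocal-games framework; for this particular lemma that extra structure is unused.
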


\begin{proof}
	Let $G$ and $H$ be two quantum isomorphic graphs on $n \geq 1$ vertices. 
	By Theorem 4.4 and Theorem 2.5 of~\cite{LupManRob}, there exists a tracial unital $C^*$-algebra $(A,\tau)$ and a quantum isomorphism $U =(u_{ij})_{1\leq i,j\leq n}$ from $G$ to $H$ with coefficients in $A$. 
	Now the matrix $\tau(U) = \tau(u_{ij})_{1\leq i,j\leq n}$ is the desired bistochastic matrix.
\end{proof}

In another degree of generality, we will show in Appendix~\ref{app:lovasz} that quantum isomorphic planar graphs are isomorphic, by using the result of Mančinska and Roberson~\cite{ManRob} (see Corollary~\ref{coro:qi_planar}). 

\subsection{Rooted forests}\label{subsec:rooted_forests}

A \textit{rooted tree} is a pointed tree, i.e. a pair $(T,r)$ where $T$ is a tree and $r \in V(T)$. 
The vertex $r$ is referred to as the \textit{root} of $T$. 
Morphisms of rooted trees are graph morphisms between the underlying trees which preserve the roots. 
A \textit{rooted forest} is a disjoint union of rooted trees, we typically denote a rooted forest by $(F,\{r_1,\ldots,r_k\}) = (T_1,r_1) + \ldots + (T_k,r_k)$, where $\{r_1,\ldots,r_k\}$ is the set of roots and $T_1,\ldots,T_k$ are the connected components of $F$. 
A morphism of rooted forests is a morphism of the underlying graphs such that every root is sent to a root. 
For $(F,r)$ a rooted forest ($r$ is here a set of vertices of $F$), this leads to the natural definition of $\Aut(F,r)$, the automorphism group of the rooted forest $(F,r)$. 

Let us define the quantum automorphism group $\Qu(F,r)$ of the rooted forest $(F,r)$. 
Let $n$ be the number of vertices of $F$ and assume we have fixed an ordering on $V(F)$. 
Recall from Section~\ref{sec:preli} the relations $\Pp_s$ and $R_F$ on $n^2$ variables which are the defining relations of a magic unitary adapted to $F$. 
Define then $R_r = \{ x_{st} \mid s\in r,\ t\notin r\}$: that is, if $x$ satisfies $R_r$, then $x_{st}=0$ when $s$ is a root and $t$ is not. 
We claim the following.

\begin{lemma}\label{lem:def_qu_rooted_forest}
	Let $(A,x)$ be the universal $*$-algebra generated on $n^2$ variables $x = (x_{ij})_{1\leq i,j\leq n}$ satisfying the relations $\Pp_s \cup R_F \cup R_r$. 
	Then $(A,x)$ satisfies the axioms of a quantum permutation group.
\end{lemma}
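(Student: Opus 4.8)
The plan is to invoke Lemma~\ref{lem:qu_subgroups} directly. Setting $R = R_F \cup R_r$, we already know $A$ is the universal $*$-algebra on $n^2$ variables satisfying $\Pp_s \cup R$, so the only thing to check is that the matrix $y = (y_{ij})$ with $y_{ij} = \sum_{k=1}^n x_{ik}\otimes x_{kj}$ satisfies every relation of $R$ inside $A\otimes A$. That $y$ satisfies the relations $R_F$ (i.e.\ $[\Adj(F),y]=0$) is exactly the content of the computation in the proof of Theorem~\ref{thm:qu_G}, which goes through verbatim since it only used that $x$ commutes with $\Adj(F)$ and that $x$ satisfies $\Pp_s$. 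So the genuinely new point — and the only real step — is to verify that $y$ satisfies $R_r$, that is, that $y_{st} = 0$ whenever $s\in r$ and $t\notin r$.

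First I would record the key structural fact about roots in a rooted forest: a graph automorphism of $F$ sending a root to a root is forced to send \emph{every} root to a root, because the roots are precisely one distinguished vertex per connected component, and Theorem~\ref{thm:qi_for_connected_components} (or simply the fact that graph morphisms preserve connected components for an automorphism) shows the partition into components is respected. At the algebraic level this should manifest as follows: the relations $\Pp_s$ together with $R_r$ already imply $x_{st} = 0$ also when $t\in r$ and $s\notin r$ — indeed, for a fixed root $t$, summing $\sum_{s} x_{st} = 1$ and using that $\sum_{s\in r} x_{st} \le \sum_{s\in r}\sum_{t'\in r} x_{st'}$ together with the column relations forces $x_{st}=0$ for $s\notin r$, because the restriction of $x$ to the $r\times r$ block has all its row-sums and column-sums equal to the same projection. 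More directly: from $R_r$ we get $\sum_{t\in r} x_{st} = \sum_t x_{st} = 1$ for $s\in r$, so the $r\times r$ block of $x$ is itself a magic unitary on $|r|$ vertices (in particular square-summing on both rows and columns within $r$), whence by Lemma~\ref{lem:square_mu} applied to the complementary blocks, $x_{st}=0$ for $s\notin r,\ t\in r$ as well.

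Granting that $x_{st}=0$ whenever exactly one of $s,t$ lies in $r$, I would then compute, for $s\in r$ and $t\notin r$,
\[
y_{st} = \sum_{k=1}^n x_{sk}\otimes x_{kt} = \sum_{k\in r} x_{sk}\otimes x_{kt} + \sum_{k\notin r} x_{sk}\otimes x_{kt}.
\]
In the first sum $k\in r$ while $t\notin r$, so $x_{kt}=0$; in the second sum $k\notin r$ while $s\in r$, so $x_{sk}=0$. Hence $y_{st}=0$, so $y$ satisfies $R_r$. Combined with $[\Adj(F),y]=0$, all relations of $\Pp_s\cup R$ hold for $y$, and Lemma~\ref{lem:qu_subgroups} yields the comultiplication $\Delta(x_{ij})=\sum_k x_{ik}\otimes x_{kj}$, so $(A,x)$ is a quantum permutation group. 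The main (and only mild) obstacle is the bookkeeping argument showing $R_r$ self-improves to kill the transposed off-diagonal block; everything else is a routine re-run of earlier computations.
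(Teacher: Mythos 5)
Your proposal is correct and follows essentially the same route as the paper: reduce to Lemma~\ref{lem:qu_subgroups}, note that $y$ satisfies $\Pp_s\cup R_F$ by the computation in Theorem~\ref{thm:qu_G}, and check $R_r$ by splitting $y_{st}=\sum_k x_{sk}\otimes x_{kt}$ according to whether $k\in r$, killing the first part via $x_{kt}=0$ ($k\in r$, $t\notin r$) and the second via $x_{sk}=0$ ($s\in r$, $k\notin r$).

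One remark: the entire second paragraph, where you derive that the transposed block also vanishes ($x_{st}=0$ for $s\notin r$, $t\in r$) and which you flag as the main obstacle, is not needed. Both vanishing facts used in your final computation are literally relations of $R_r$ (first index a root, second index not), so the ``granting'' clause is vacuous and the paper does without it. This is fortunate, because your justification of that extra claim leans on order/positivity reasoning ($\sum_{s\in r}x_{st}\leq 1$, and a vanishing sum of positive elements having vanishing terms), which is not automatic in the universal $*$-algebra $A$ as opposed to a $C^*$-algebra; it could be repaired algebraically using the relations $\Pp_2$, but since the step is superfluous, the proof stands without it.
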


\begin{proof}
	Let $y_{ij} = \sum_{k=1}^n x_{ik}\otimes x_{kj}$ in $A\otimes A$ for all $1\leq i,j\leq n$. 
	By Lemma~\ref{lem:qu_subgroups}, it is enough to prove that $y = (y_{ij})_{1\leq i,j\leq n}$ satisfies the relations $\Pp_s \cup R_F \cup R_r$. 
	By Theorem~\ref{thm:qu_G}, we already know that $y$ satisfies $\Pp_s\cup R_F$, so it remains to check that $y$ satisfies $R_r$. 
	Let $s\in r$ and $t\notin r$. 
	It comes:
	\begin{align*}
		y_{st} &= \sum_{z\in V(F)} x_{sz}\otimes x_{zt}\\
		&= \sum_{z\in r} x_{sz}\otimes x_{zt}\ \text{since $x_{sz} = 0$ when $z\notin r$}\\
		&= 0
	\end{align*}
	since $x_{zt} = 0$ when $z\in r$. 
	So $y$ satisfies $R_r$ and the result follows from Lemma~\ref{lem:qu_subgroups}.
\end{proof}

We define $\Qu(F,r) = (A,x)$ with $A$ and $x$ as in Lemma~\ref{lem:def_qu_rooted_forest}. 
Notice that since a rooted tree is in particular a rooted forest, this also defines the quantum automorphism group of a rooted tree. 

We introduce rooted forests not to study them for themselves but as useful tools to study forests and trees. 
In that spirit, we extend to rooted forests some terminology and results for graphs.

A magic unitary $U$ is \textit{adapted} to a rooted tree $(T,r)$ if it is adapted to the tree $T$ and if $u_{rr} = 1$. 
Two rooted trees $(T_1,r_1)$ and $(T_2,r_2)$ are \textit{quantum isomorphic} if there is a magic unitary $U$ such that $U\Adj(T_1) = \Adj(T_2) U$ (so in particular $T_1$ and $T_2$ are quantum isomorphic) and $u_{r_2r_1} = 1$. 

A magic unitary $U$ is \textit{adapted} to $F$ if it is adapted to $F$ and it preserves the roots in the sense that for $1\leq i\leq k$ and $x\in V(F)$, if $u_{r_ix} \neq 0$ or $u_{xr_i} \neq 0$, then there exists $1\leq j\leq k$ such that $x = r_j$. 
Two rooted forests $(F_1,x)$ and $(F_2,y)$, where $x$ and $y$ are sets of vertices of $F_1$ and $F_2$, are \textit{quantum isomorphic} if there exists a magic unitary $U$ such that it conjugates the adjacency matrices of $F_1$ and $F_2$ (hence $F_1$ and $F_2$ are quantum isomorphic) and such that $U$ preserves the roots, i.e. such that $u_{rs}=0$ if $r\in x$ and $s\notin y$ or $r\notin x$ and $s\in y$.

We extend naturally quantum symmetry, the (quantum) Schmidt criterion, and the Schmidt alternative for rooted forests. 
Let us check that the rooted quantum Schmidt criterion implies quantum symmetry of the rooted forest.

\begin{lemma}\label{lem:rooted_qu_schmidt}
	Let $(F,r)$ be a rooted forest. 
	Assume that there are two nontrivial (i.e. $\neq 1$) magic unitaries $U$ and $V$ adapted to $(F,r)$ with disjoint support. 
	Then $(F,r)$ has quantum symmetry.
\end{lemma}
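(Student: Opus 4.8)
The statement is the rooted analogue of Theorem~\ref{thm:qu_schmidt}, so the plan is to mimic that proof, carefully tracking the extra bookkeeping that the roots introduce. Write $V(F)=\{1,\dots,n\}$ with root set $r$. The first reduction is to bring $U$ and $V$ to magic unitaries with nonscalar coefficients, exactly as in the proof of Theorem~\ref{thm:qu_schmidt}: if $U=P_f$ is a permutation matrix, then $f\neq\id$ is an automorphism of $F$; since $U$ is adapted to $(F,r)$ we have $f(r)=r$ setwise, so $f$ is actually an automorphism of the rooted forest $(F,r)$, and the finite-order trick $T(k)=P_{f^k}\in\Mm_n(\C^d)$ produces a magic unitary adapted to $(F,r)$ (each $P_{f^k}$ commutes with $\Adj(F)$ and preserves $r$) with nonscalar coefficients and the same support as $U$. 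Do the same for $V$. Then replace $X$, $Y$ by the $C^*$-subalgebras generated by the coefficients of $U$, $V$, so that these algebras are nontrivial.

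Next, fix an ordering of $V(F)$ putting $\Supp(U)$ first, then $\Supp(V)$, then the rest; this is legitimate since the supports are disjoint. In this ordering $U=\Diag(U_1,1,1)$ and $V=\Diag(1,V_1,1)$. Form $Z=X*_1Y$ with inclusions $i_X,i_Y$, set $W_1=i_X(U_1)$, $W_2=i_Y(V_1)$, $W_3=1$, and $W=\Diag(W_1,W_2,1)$. As in Theorem~\ref{thm:qu_schmidt}, writing $\Adj(F)$ in the corresponding $3\times3$ block form and using that both $U$ and $V$ commute with $\Adj(F)$, one checks $W\Adj(F)=\Adj(F)W$, so $W$ is a magic unitary adapted to the graph $F$. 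The only new point is to verify that $W$ \emph{preserves the roots}, i.e.\ $w_{rs}=0$ whenever exactly one of $r,s$ lies in the root set. This follows because $W$ is block-diagonal with the same block structure as $U$ and $V$: off-diagonal-block coefficients of $W$ are $0$, and within each block $W$ agrees (via $i_X$ or $i_Y$) with $U_1$ or $V_1$, both of which preserve roots because $U$ and $V$ are adapted to $(F,r)$; the identity block trivially preserves roots. Hence $W$ is a magic unitary adapted to $(F,r)$ with noncommuting coefficients, since $X\neq\C\neq Y$ implies $Z=X*_1Y$ is noncommutative and infinite-dimensional and $W$ generates it.

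Finally, by the universal property of $\Oo(\Qu(F,r))$ (Lemma~\ref{lem:def_qu_rooted_forest}), the assignment of the fundamental representation to the coefficients of $W$ extends to a surjective $*$-morphism $\Oo(\Qu(F,r))\to Z$, so $C(\Qu(F,r))$ surjects onto the nontrivial free product $X'*_1Y'$; in particular $C(\Qu(F,r))$ is nonabelian, so $(F,r)$ has quantum symmetry. I expect no serious obstacle here: the argument is essentially a transcription of Theorem~\ref{thm:qu_schmidt}, and the only genuinely new verification — that the free-product magic unitary $W$ still preserves the roots — is immediate from the block-diagonal structure and the fact that, unlike the adjacency-commutation relation, the root relation $R_r$ only constrains entries $w_{st}$ with $s$ a root and $t$ not, which are forced to $0$ by the block decomposition. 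The one place to be mildly careful is the permutation-matrix reduction: one must note that being adapted to $(F,r)$ forces $u_{rr}$-type relations that translate into $f$ stabilizing the root set, so that $T(k)=P_{f^k}$ is genuinely adapted to $(F,r)$ and not merely to $F$.
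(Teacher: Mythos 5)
Your proof is correct and follows essentially the same route as the paper: the paper's proof of this lemma simply invokes the argument of Theorem~\ref{thm:qu_schmidt} together with the one extra observation that the block-diagonal matrix $W=\Diag(i_X(U_1),i_Y(V_1),1)$ still preserves the roots, which is exactly the point you verify (along with the minor detail, implicit in the paper, that the permutation-matrix reduction $T(k)=P_{f^k}$ stays adapted to $(F,r)$).
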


\begin{proof}
	It is enough to notice that the same proof as the one of the quantum Schmidt criterion (Theorem~\ref{thm:qu_schmidt}) works, with the extra observation that, if we start with magic unitaries $U$ and $V$ preserving the roots, then the magic unitary $W = \Diag(i_X(U_1),i_Y(V_1),1)$ also preserves the roots.
\end{proof}

\subsection{Quantum properties of rooted forests}\label{subsec:quantum_rooted}

As mentioned in the previous section, we view rooted forests as a tool in order to describe the quantum structure of forests themselves. 
To achieve it, we need first to understand the quantum structure of rooted forests. 
Hence for the current section, we focus on rooted forests themselves.

Let $(F,r)$ be a rooted forest. 
We can write $(F,r) = \sum_{i=1}^k (T_i,r_i)$ with $k\geq 1$ the number of connected components of $F$ and where $(T_i,r_i)$ is a rooted tree for all $1\leq i\leq k$. 
Let $T$ be the tree obtained by adding a vertex $x$ to $F$ and adding an edge $\{x, r_i\}$ for all $1\leq i\leq k$. 
Taking $x$ to be the root, we obtain a rooted tree $(T,x)$. 
Since this procedure will often come up, we will denote it by $(T,x) = x. \sum_{i=1}^k (T_i,r_i)$, or also $(T,x) = x. \left( (T_1,r_1) + \ldots + (T_k,r_k) \right)$. 

Now let $(T,r)$ be a rooted tree on at least two vertices. 
Then $T\setminus \{r\}$ is naturally a rooted forest, where we take $N(r)$ as our new set of roots. 
Since a rooted forest is a sum of rooted trees, this gives us an inductive structure on rooted trees that we want to exploit in order to determine the quantum properties of rooted trees. 
This is done by the following essential lemma.

\begin{lemma}\label{lem:rooted_forest_vs_rooted_tree}
	Let $(T,r)$ be a rooted tree on at least two vertices and let $(F,\{r_i\}) = T\setminus \{r\}$ be the rooted forest obtained by removing the root. 
	Let $U$ be a magic unitary adapted to the rooted tree $(T,r)$. 
	Then $U$ is of the form
	\[ U = \begin{pmatrix}
		V & 0\\
		0 & 1
	\end{pmatrix}\]
	when enumerating the vertices with $r$ being the last one, and $V$ is a magic unitary adapted to $(F,(r_i))$. 
	Conversely, if $V$ is a magic unitary adapted to the rooted forest $(F,\{r_i\})$, then $\Diag(V,1)$ is a magic unitary adapted to $(T,r)$.
\end{lemma}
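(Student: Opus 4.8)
The plan is to push everything through the block decomposition of $\Adj(T)$ obtained by enumerating the vertices of $T$ with the root $r$ last. Writing $V(F)=V(T)\setminus\{r\}$ and recalling that in a tree $r$ is adjacent precisely to $r_1,\dots,r_k$, one has
\[ \Adj(T) = \begin{pmatrix} \Adj(F) & c \\ c^{T} & 0 \end{pmatrix}, \]
where $c\in\{0,1\}^{V(F)}$ is the characteristic vector of $\{r_1,\dots,r_k\}$. The whole argument is then a translation between matrix identities in this block form and the defining conditions of being adapted to a rooted tree, resp. a rooted forest.

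For the forward implication I would start from a magic unitary $U$ adapted to $(T,r)$, so $u_{rr}=1$. From the row and column relations $\Pp_1$ one gets $\sum_{y\ne r}u_{ry}=0=\sum_{x\ne r}u_{xr}$, and multiplying these by $u_{ry_0}$ and $u_{x_0r}$ respectively and using the orthogonality relations $\Pp_2$ forces $u_{rx}=u_{xr}=0$ for all $x\ne r$. Hence $U=\Diag(V,1)$ with $V$ indexed by $V(F)$, and $V$ is again a magic unitary (a principal submatrix, with the deleted row and column trivial). Expanding $U\Adj(T)=\Adj(T)U$ blockwise yields at once $V\Adj(F)=\Adj(F)V$ (so $V$ is adapted to the graph $F$), $Vc=c$, and $c^{T}V=c^{T}$. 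To finish I would turn these last two identities into the root-preservation condition: for $x\notin\{r_j\}$, $Vc=c$ gives $\sum_j v_{xr_j}=0$, and multiplying by $v_{xr_{j_0}}$ and using $\Pp_2$ gives $v_{xr_{j_0}}=0$ for every $j_0$; symmetrically $c^{T}V=c^{T}$ gives $v_{r_ix}=0$ for $x\notin\{r_j\}$. Thus $V$ is adapted to the rooted forest $(F,\{r_i\})$.

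For the converse I would take a magic unitary $V$ adapted to $(F,\{r_i\})$ and set $U=\Diag(V,1)$, which is clearly a magic unitary with $u_{rr}=1$. It then suffices to verify $U\Adj(T)=\Adj(T)U$, which via the block form reduces to $V\Adj(F)=\Adj(F)V$ (given) together with $Vc=c$ and $c^{T}V=c^{T}$. These I would check by splitting on whether the index is a root: if $x=r_i$ is a root, root-preservation kills the terms $v_{xy}$ with $y$ a non-root, so $\sum_j v_{xr_j}=\sum_{y\in V(F)}v_{xy}=1$ by $\Pp_1$; if $x$ is not a root, all $v_{xr_j}$ vanish by root-preservation; and symmetrically for $c^{T}V=c^{T}$ using column sums. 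Hence $U$ is adapted to $(T,r)$, which finishes the proof.

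I do not anticipate a real obstacle here: the two slightly delicate points are simply (i) recognizing that the off-diagonal block of $\Adj(T)$ is exactly the characteristic vector of $N(r)=\{r_1,\dots,r_k\}$, which is immediate in a tree, and (ii) the dictionary between the matrix equations $Vc=c$, $c^{T}V=c^{T}$ and the combinatorial root-preservation condition, which is the short computation with $\Pp_1$ and $\Pp_2$ sketched above.
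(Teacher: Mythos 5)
Your proof is correct, and it follows the same overall route as the paper: enumerate the vertices with $r$ last, observe that $u_{rr}=1$ forces $U=\Diag(V,1)$, and read off the required identities from the block form of $\Adj(T)$ with off-diagonal block the characteristic vector $c$ of $N(r)=\{r_1,\dots,r_k\}$. The one step you handle differently is root preservation in the forward direction: the paper gets it in one line from the distance-preservation lemma (Lemma~\ref{lem:distance}), since $u_{xy}u_{rr}\neq 0$ forces $d(y,r)=d(x,r)=1$, whereas you extract it purely algebraically from the identities $Vc=c$ and $c^{T}V=c^{T}$ produced by the commutation, killing the offending entries with $\Pp_1$ and $\Pp_2$ (indeed, positivity of the projections already suffices once their sum vanishes). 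Your variant is more self-contained, needing nothing beyond the magic unitary relations, while the paper's is shorter given the machinery already developed; in the converse direction you explicitly verify both $Vc=c$ and $c^{T}V=c^{T}$, where the paper writes out only $LV=L$ and treats the companion identity as clear, so your write-up is if anything slightly more complete.
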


\begin{proof}
	Let $U$ be a magic unitary adapted to $(T,r)$. 
	By definition, $U = \Diag(V,1)$, where the 1 corresponds to $r$, $V$ is a magic unitary, and $U$ commutes with $\Adj(T)$. 
	Notice that we have
	\[ \Adj(T) = \begin{pmatrix}
		\Adj(F) & L^*\\
		L & 0
	\end{pmatrix},\]
	with $L$ a certain line matrix. 
	Since $[U,\Adj(T)]= 0$, we find that $[V,\Adj(F)] = 0$, so $V$ is a magic unitary adapted to the forest $F$. 
	All that is left to check is that $V$ preserves the roots of $F$, that is, $N(r)$. 
	Let $x\in N(r)$ and $y\in V(T)$ such that $u_{xy}\neq 0$. 
	Then $0 \neq u_{xy}1 = u_{xy}u_{rr}$. 
	By Lemma~\ref{lem:distance}, we have that $d(y,r) = d(x,r) = 1$, so $y\in N(r)$, as desired. 
	This concludes the proof of the first part of the lemma.

	For the second part, we consider $V$ a magic unitary adapted to $(F,(r_i))$ and we set $U = \Diag(V,1)$. 
	We only need to check that $U$ commutes with $\Adj(T)$. 
	Using the presentation of $\Adj(T)$ as above, it is clear that this happens if $LV=L$. 
	Notice that for $x \in V(F)$, writing $l_x = [L]_x$, we have $l_x = 1_{N(r)}(x)$. 
	Let $x \in N(r)$. 
	Since $V$ is adapted to $(F,(r_i))$, and since $\{r_i\} = N(r)$, we have that $v_{yx} = 0$ for $y\notin N(r)$. 
	It comes:
	\begin{align*}
		[LV]_x &= \sum_{y\in V(F)} l_yv_{yx}\\
		&= \sum_{y\in N(r)} v_{yx}\\
		&= \sum_{y \in V(F)} v_{yx}\\
		&=1\\
		&= l_x.
	\end{align*}
	Now consider $x\in V(F) \setminus N(r)$. 
	Similarly, we obtain:
	\begin{align*}
		[LV]_x &= \sum_{y\in V(F)} l_yv_{yx}\\
		&= \sum_{y\in N(r)} v_{yx}\\
		&=0\\
		&= l_x.
	\end{align*}
	Hence, we have that $LV = L$, and $U$ commutes with $\Adj(T)$, as desired. 
	This concludes the proof.
\end{proof}

From Lemma~\ref{lem:rooted_forest_vs_rooted_tree}, we obtain the following theorem.

\begin{theorem}\label{thm:rooted_forest_vs_rooted_tree}
	Let $(T,r)$ be a rooted tree on at least two vertices. 
	Then we have:
	\begin{itemize}
		\item $\Qu(T,r) = \Qu(T\setminus\{r\})$, where both quantum groups are taken with the respect to the rooted structure,
		\item $(T,r)$ satisfies the rooted Schmidt criterion if and only if the rooted forest $T\setminus \{r\}$ does,
		\item for $(T,x)$ and $(S,y)$ two rooted trees on at least two vertices, we have $(T,x) \qi (S,y)$ if and only if $T\setminus \{x\} \qi S\setminus\{y\}$ as rooted forests.
	\end{itemize}
\end{theorem}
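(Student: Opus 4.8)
The plan is to deduce all three assertions from the block decomposition of Lemma~\ref{lem:rooted_forest_vs_rooted_tree}, which sets up a bijection between magic unitaries $U$ adapted to the rooted tree $(T,r)$ and magic unitaries $V$ adapted to the rooted forest $T\setminus\{r\}$, via $U=\Diag(V,1)$ once $r$ is enumerated last; for the quantum-isomorphism statement I would first prove the two-tree refinement of that lemma.

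For the first bullet, I would produce mutually inverse $*$-morphisms between $\Oo(\Qu(T,r))$ and $\Oo(\Qu(T\setminus\{r\}))$ by universality. Writing $u$ for the fundamental representation of $\Qu(T,r)$, Lemma~\ref{lem:rooted_forest_vs_rooted_tree} gives $u=\Diag(v,1)$ with $v$ a magic unitary adapted to $T\setminus\{r\}$, so the universal property of $\Qu(T\setminus\{r\})$ (Lemma~\ref{lem:def_qu_rooted_forest}) yields a morphism $x_{st}\mapsto u_{st}$. Conversely, if $x$ is the fundamental representation of $\Qu(T\setminus\{r\})$ — viewed inside its $C^*$-envelope, where positivity forces $x$ to be genuinely block-diagonal for the partition roots/non-roots — then $\Diag(x,1)$ is a magic unitary adapted to $(T,r)$, and universality of $\Qu(T,r)$ gives a morphism the other way. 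These are inverse on generators and intertwine the comultiplications, since $\Delta(u_{st})=\sum_k u_{sk}\otimes u_{kt}=\sum_{k\neq r}u_{sk}\otimes u_{kt}$ matches $\Delta(x_{st})$; hence $\Qu(T,r)=\Qu(T\setminus\{r\})$.

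For the second bullet, the correspondence $U=\Diag(V,1)\leftrightarrow V$ sends $U\neq I$ to $V\neq I$ and back (the appended block is the scalar $1$) and preserves supports, because $u_{rr}=1$ forces $r\notin\Supp(U)$ and then $\Supp(U)=\Supp(V)$. So a pair of nontrivial magic unitaries adapted to $(T,r)$ with disjoint supports corresponds to such a pair for $T\setminus\{r\}$, and conversely; specialising to permutation matrices (and using the evident support-preserving bijection $\Aut(T,r)\cong\Aut(T\setminus\{r\})$) gives the same equivalence for the classical criterion. Either way, $(T,r)$ satisfies the rooted Schmidt criterion if and only if $T\setminus\{r\}$ does.

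The third bullet is where the real work lies. I would show that a magic unitary $U$ with $U\Adj(T)=\Adj(S)U$ and $u_{yx}=1$ must, after enumerating $x$ and $y$ last, have the form $\Diag(V,1)$ — orthogonality of rows and columns of a magic unitary forces $u_{yb}=0$ for $b\neq x$ and $u_{ax}=0$ for $a\neq y$ — and then, writing
\[
\Adj(T)=\begin{pmatrix}\Adj(F_T)&L_T^*\\ L_T&0\end{pmatrix},\qquad
\Adj(S)=\begin{pmatrix}\Adj(F_S)&L_S^*\\ L_S&0\end{pmatrix},
\]
with $F_T=T\setminus\{x\}$, $F_S=S\setminus\{y\}$ and $L_T,L_S$ the row indicators of $N_T(x),N_S(y)$, the relation $U\Adj(T)=\Adj(S)U$ becomes $V\Adj(F_T)=\Adj(F_S)V$ together with $L_T=L_SV$ (the block $VL_T^*=L_S^*$ then follows, $V$ being unitary). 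Exactly as in Lemma~\ref{lem:rooted_forest_vs_rooted_tree} — or from $L_T=L_SV$ via column sums and positivity, or via Lemma~\ref{lem:distance} applied to $U$ and to $U^*$ — one checks that $V$ respects the root sets, hence is a quantum isomorphism of rooted forests $T\setminus\{x\}\qi S\setminus\{y\}$; the converse, promoting such a $V$ back to $\Diag(V,1)$ and recomputing the block identities, is similar. The main obstacle is precisely this two-tree refinement: one must track the row/column conventions and, above all, verify that $V$ preserves the distinguished root sets in both directions, which is where positivity of the coefficients (or Lemma~\ref{lem:distance}) does the decisive work. Once it is in place, the stated equivalence reads off at once.
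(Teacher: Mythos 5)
Your proposal is correct and follows essentially the same route as the paper: all three bullets are reduced to the block decomposition $U=\Diag(V,1)$ of Lemma~\ref{lem:rooted_forest_vs_rooted_tree} (and its two-tree analogue), with root preservation in the quantum-isomorphism case obtained from Lemma~\ref{lem:distance} applied to $U$ and $U^*$, exactly as in the paper's argument. Your treatment of the first bullet is merely more explicit than the paper's one-line reduction (spelling out the universality maps, the comultiplication check, and the positivity argument forcing the fundamental representation to be block-diagonal on roots), but it is the same proof in substance.
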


\begin{proof}
	The first point follows immediately from Lemma~\ref{lem:rooted_forest_vs_rooted_tree}. 
	For the second point, simply notice that classical automorphisms are also given by magic unitaries (specifically, the ones with 1-dimensional coefficients) and that writing $U = \Diag(V,1)$ we have $\Supp(U) = \Supp(V)$.

	Let us prove the last point. 
	First, take $V$ a quantum isomorphism between $T\setminus \{x\}$ and $S\setminus \{y\}$. 
	It is clear that $\Diag(V,1)$ is a magic unitary establishing a quantum isomorphism between $(T,x)$ and $(S,y)$, with the root enumerated last.

	Conversely, consider $U$ a quantum isomorphism from $(T,x)$ to $(S,y)$. 
	By definition, $U$ is of the form $\Diag(V,1)$ with $V$ a magic unitary. 
	Write $(F_1,r_1) = T\setminus \{x\}$ and $(F_2,r_2) =  S\setminus \{y\}$. 
	A direct computation shows that $V$ conjugates the adjacency matrices of $F_1$ and $F_2$. 
	Let us check that $u_{ab} = 0$ when $a \notin r_2$ and $b\in r_1$. 
	Let $a \in V(S)$ and $b\in V(T)$. 
	Recall that $r_1 = N(x)$ and $r_2 = N(y)$. 
	Assume that $b \in N(y)$ and that $u_{ab}\neq 0$. 
	Now $u_{xy}u_{ab} = u_{ab} \neq 0$ so by Lemma~\ref{lem:distance} we have that $d_T(x,a) = d_S(y,b) = 1$, hence $a\in N(x) = r_1$, as desired. 
	This shows that $u_{ab} = 0$ if $a\notin r_2$ and $b\in r_1$. 
	To obtain that $u_{ab} = 0$ when $a\in r_2$ and $b\notin r_1$, apply what precedes to $U^*$, exchanging the roles of $(T,x)$ and $(S,y)$. 
	This concludes the proof.
\end{proof}

We are now able to adapt Theorem~\ref{thm:qi_for_connected_components} and Theorem~\ref{thm:qi_for_disconnected_graphs} to rooted forests.

\begin{theorem}\label{thm:qi_rooted_trees}
	Let $(F_1,R_1)$ and $(F_2,R_2)$ be two rooted forests and let $U$ be a quantum isomorphism from $(F_1,R_1)$ to $(F_2,R_2)$ with coefficients in a unital $C^*$-algebra $X$. 
	Let $x \in V(F_1)$ and $a \in V(F_2)$ such that $u_{xa} \neq 0$. 
	Set $V = U[C(x),C(a)]$, and let $r$ be the root of $C(x)$, and $s$ be the root of $C(a)$. 
	Then there exists a nonzero projection $p\in X$ such that:
	\begin{enumerate}
		\item for every $y\in C(x)$, we have $\sum_{b\in V(C(a))} v_{yb} = p$,
		\item for every $b\in C(a)$, we have $\sum_{y\in V(C(x))} v_{yb} = p$,
		\item $V\Adj(C(a)) = \Adj(C(x))V$,
		\item $V$ preserves the roots of $C(x)$ and $C(a)$.
	\end{enumerate}
	In particular, $V$ is a square matrix, and it is a quantum isomorphism of rooted trees from $(C(x),r)$ to $(C(a),s)$ with coefficients in the $C^*$-subalgebra of $X$ generated by the coefficients of $V$.
\end{theorem}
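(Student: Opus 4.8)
The plan is to reduce everything to Theorem~\ref{thm:qi_for_connected_components} and then check the one new ingredient, namely that the restricted magic unitary $V$ preserves the roots. First I would forget about the rooted structure and simply regard $U$ as a quantum isomorphism between the underlying graphs $F_1$ and $F_2$; since $u_{xa}\neq 0$, Theorem~\ref{thm:qi_for_connected_components} applies verbatim and yields, after setting $V = U[C(x),C(a)]$, a nonzero projection $p$ in $X$ satisfying assertions (1), (2), and (3), with $V$ square (by Lemma~\ref{lem:square_mu}) and a quantum isomorphism from $C(a)$ to $C(x)$ with coefficients in the $C^*$-subalgebra $Y$ generated by the coefficients of $V$, in which $p$ is the unit. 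So the only genuinely new thing to prove is assertion (4).

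For (4), I would argue exactly as in the last point of the proof of Theorem~\ref{thm:rooted_forest_vs_rooted_tree}, using Lemma~\ref{lem:distance}. Let $r$ be the root of $C(x)$ and $s$ the root of $C(a)$. Because $U$ is a quantum isomorphism of rooted forests, $u_{rs}\neq 0$ is forced: indeed, by (1) and (2) the block $V$ already establishes that $u_{rs}$ lies in the row-sum and column-sum constraints restricted to $C(x)\times C(a)$, and the root-preservation property of $U$ together with $u_{xa}\neq 0$ pins $r$ to map only to roots inside $C(a)$, of which $s$ is the only one. More carefully: $1 = p = \sum_{b\in V(C(a))} v_{rb}$, and by root-preservation of $U$ any $b$ with $v_{rb}\neq 0$ must be a root of $F_2$, hence a root of $C(a)$, hence equal to $s$; therefore $v_{rs} = p\neq 0$. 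Now take $b\in C(a)$ a root (so $b = s$) — wait, I instead want the general statement: suppose $b\in V(C(a))$ is such that $b$ is a root of $C(a)$, i.e. $b = s$, and $y\in V(C(x))$ with $v_{yb}\neq 0$; then $u_{yb}\neq 0$, and since $u_{rs}\neq 0$ we get $u_{rs}u_{yb}\neq 0$, so by Lemma~\ref{lem:distance} applied in $F_1,F_2$ we obtain $d_{F_1}(r,y) = d_{F_2}(s,b) = d_{F_2}(s,s) = 0$, hence $y = r$. Symmetrically, using that $v_{rs}\neq 0$ and applying Lemma~\ref{lem:distance} the other way, if $v_{rb}\neq 0$ then $d_{F_2}(s,b) = d_{F_1}(r,r) = 0$, so $b = s$. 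This says precisely that $V$ maps the root $r$ to the root $s$ and nothing else to $s$ and $r$ to nothing else, i.e. $V$ preserves the roots of $C(x)$ and $C(a)$.

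Finally, combining (1)–(4) with the conclusion of Theorem~\ref{thm:qi_for_connected_components}, the matrix $V$, viewed as having coefficients in the unital $C^*$-algebra $Y$ (with unit $p$), is a magic unitary by (1) and (2), conjugates the adjacency matrices by (3), and preserves the roots by (4); by the definition of quantum isomorphism of rooted trees this exactly says $V$ is a quantum isomorphism from $(C(x),r)$ to $(C(a),s)$. The main obstacle is getting the root-tracking bookkeeping right: one has to be careful that ``$V$ preserves roots'' is an $\emph{iff}$-style condition ($v_{yb}=0$ whenever exactly one of $y,b$ is a root), so both directions need Lemma~\ref{lem:distance}, once for $U$ and once for $U^*$, mirroring the structure of the argument in Theorem~\ref{thm:rooted_forest_vs_rooted_tree}; everything else is a direct transcription of the already-proved graph case.
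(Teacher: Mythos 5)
Your reduction of items (1)--(3), the squareness of $V$, and the final statement to Theorem~\ref{thm:qi_for_connected_components} is exactly the paper's route, and your explicit observation that $v_{rs}=p$ (so the root-to-root entry is the unit of the subalgebra $Y$) is a correct and even useful addition. The problem is in your argument for item (4), in the case $b=s$, $y\neq r$: you claim that since $u_{rs}\neq 0$ and $u_{yb}\neq 0$ we get $u_{rs}u_{yb}\neq 0$. That inference is invalid --- two nonzero projections in a $C^*$-algebra can perfectly well have zero product --- and in this very situation it is automatically false whenever $y\neq r$: the elements $u_{rs}$ and $u_{ys}$ lie in the same column of the magic unitary $U$, so the relations $\Pp_2$ force $u_{rs}u_{ys}=0$. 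Hence Lemma~\ref{lem:distance} tells you nothing here, and as written your proof does not exclude $v_{ys}\neq 0$ for $y\neq r$, i.e.\ half of assertion (4) is not established.

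The repair is immediate and is precisely the paper's one-line justification: the definition of $U$ preserving the roots of the rooted forests is symmetric, namely $u_{yb}=0$ whenever \emph{exactly one} of $y\in R_1$, $b\in R_2$ holds. Since $V$ is a submatrix of $U$, and the unique root of $F_1$ lying in $C(x)$ is $r$ while the unique root of $F_2$ lying in $C(a)$ is $s$, this gives directly $v_{yb}=0$ whenever exactly one of $y=r$, $b=s$ holds --- no distance argument is needed in either direction. You already use exactly this observation in your case $y=r$, $b\neq s$ (``any $b$ with $v_{rb}\neq 0$ must be a root of $F_2$''); applying the same observation to the other case closes the gap, and the rest of your proof then goes through as in the paper. (A minor further point: writing $1=p$ is an abuse, since $p$ need only be the unit of $Y$, not of $X$; you clearly intend the former, but it is worth being explicit.)
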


\begin{proof}
	By Theorem~\ref{thm:qi_for_connected_components}, we already have the existence of $p$ and items (1), (2), and (3), as well as the fact that $V$ induces a quantum isomorphism of graphs between the trees $C(x)$ and $C(a)$. 
	Hence, all there is to check is that $V$ preserves the roots. 
	But this is immediate since $U$ does. 
	This concludes the proof.
\end{proof}

\begin{theorem}\label{thm:qi_rooted_forests}
	Let $(F_1,R_1) = \sum_{i=1}^k (T_i,x_i)$ and $(F_2,R_2) = \sum_{j=1}^l (S_j,y_j)$ be two rooted forests. 
	Assume that they are quantum isomorphic. 
	Then $k=l$ and up to relabelling we have that $(T_i,x_i)$ is quantum isomorphic to $(S_i,y_i)$ for all $1\leq i\leq k$.
\end{theorem}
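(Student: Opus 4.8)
The plan is to transcribe the proof of Theorem~\ref{thm:qi_for_disconnected_graphs} into the rooted setting, using the rooted analogue Theorem~\ref{thm:qi_rooted_trees} of Theorem~\ref{thm:qi_for_connected_components}. Let $U$ be a quantum isomorphism of rooted forests between $(F_1,R_1)$ and $(F_2,R_2)$; in particular $U$ is a quantum isomorphism of the underlying graphs $F_1 = T_1 + \ldots + T_k$ and $F_2 = S_1 + \ldots + S_l$, written as sums of their connected components. I would first apply Lemma~\ref{lem:p_ij} to $U$ (ignoring the roots, since nothing in that lemma refers to them): this already yields $k = l$, together with a magic unitary $P = (p_{ij})_{1\le i,j\le k}$, whose entry $p_{ij}$ is the common value, for any vertex $a$ in one designated component, of the sum of the entries of $U$ over the other designated component.

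Next I would invoke Lemma~\ref{lem:hall_mu}: since $P$ is a magic unitary of size $k$, there is a permutation $\sigma\in S_k$ with $p_{i\sigma(i)}\neq 0$ for all $1\le i\le k$. Fixing $i$ and any vertex $a$ in the $\sigma(i)$-th component of the second forest, the identity $p_{i\sigma(i)} = \sum_{y\in V(T_i)} u_{ya}$ (up to the indexing convention) forces the existence of a vertex $y\in V(T_i)$ with $u_{ya}\neq 0$. Now $y$ and $a$ lie in prescribed connected components of $F_1$ and $F_2$ and satisfy $u_{ya}\neq 0$, so Theorem~\ref{thm:qi_rooted_trees} applies to the pair $(y,a)$: the matrix $V := U[C(y),C(a)]$ restricts to a quantum isomorphism of rooted trees between $(T_i,x_i)$ and $(S_{\sigma(i)},y_{\sigma(i)})$, root-preservation being exactly assertion (4) of that theorem. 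Relabelling the components of $(F_2,R_2)$ by $\sigma^{-1}$ then gives $(T_i,x_i)\qi (S_i,y_i)$ for every $i$, which is the claim.

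I expect no genuine obstacle here: the proof is a routine assembly of three results already established (Lemma~\ref{lem:p_ij}, Lemma~\ref{lem:hall_mu}, Theorem~\ref{thm:qi_rooted_trees}), exactly as in the unrooted case. The only point requiring a moment's care is to check that Lemma~\ref{lem:p_ij} may be applied verbatim to a root-preserving magic unitary — which it can, since such a $U$ is first and foremost a quantum isomorphism of the underlying graphs — and that the component matrix extracted in Theorem~\ref{thm:qi_rooted_trees} is still root-preserving, which is built into the statement of that theorem. Everything else is bookkeeping about which forest's components carry the index $i$ and which carry $\sigma(i)$.
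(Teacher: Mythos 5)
Your argument is correct and is essentially the paper's own proof: both assemble Lemma~\ref{lem:p_ij}, Lemma~\ref{lem:hall_mu}, and Theorem~\ref{thm:qi_rooted_trees} in exactly this order to extract root-preserving quantum isomorphisms of the matched components. The only (cosmetic) difference is that the paper first cites Theorem~\ref{thm:qi_for_disconnected_graphs} to get $k=l$ before redoing the block argument, whereas you obtain $k=l$ directly from Lemma~\ref{lem:p_ij}, which is fine.
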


\begin{proof}
	Since quantum isomorphism of $(F_1,R_1)$ and $(F_2,R_2)$ as rooted forests implies quantum isomorphism of $F_1$ and $F_2$ as graphs, we already have by Theorem~\ref{thm:qi_for_disconnected_graphs} that $k=l$ and that up to relabelling we get that $T_i$ is quantum isomorphic to $S_i$. 
	So we just need to prove that such quantum isomorphisms can be taken with $u_{y_ix_i} = 1$.

	Let $U$ be a quantum isomorphism between $(F_1,R_1)$ and $(F_2,R_2)$ with coefficients in a unital $C^*$-algebra $X$. 
	We write it as a block matrix $(U_{ij})$ where each block is indexed by vertices in $V(S_i)\times V(T_j)$. 
	By Lemma~\ref{lem:p_ij} we have a magic unitary $P = (p_{ij}(U))_{1\leq i,j\leq k} = (p_{ij})_{1\leq i,j\leq k}$. 
	Applying Lemma~\ref{lem:hall_mu} and relabelling if necessary, we can assume that $p_{ii} \neq 0$ for all $1\leq i\leq k$. 
	Fix $1\leq i\leq k$. 
	Let $X_i$ be the $C^*$-subalgebra of $X$ generated by the coefficients of $U_{ii}$. 
	By Theorem~\ref{thm:qi_rooted_trees}, we know that $X_i$ is unital wit unit $p_{ii}(U)$, that $U_{ii}$ is a square matrix, and that it is a quantum isomorphism of rooted trees from $(T_i,x_i)$ to $(S_i,y_i)$ when viewed with coefficients in $X_i$. 
	This concludes the proof.
\end{proof}

\begin{remark}\label{rk:after_qi_rooted_forests}
	The converse of Theorem~\ref{thm:qi_rooted_forests} is obviously true.
\end{remark}

We are ready to show that rooted forests satisfy (QI), that is, that quantum isomorphic rooted forests are isomorphic.

\begin{theorem}\label{thm:qi_equals_i_for_rooted_forests}
	Two rooted forests are quantum isomorphic if and only if they are isomorphic.
\end{theorem}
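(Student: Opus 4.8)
The converse implication is immediate: an isomorphism of rooted forests is a permutation matrix whose rows and columns sum to $1$, which therefore is a magic unitary, and it preserves roots by definition, so it is a quantum isomorphism of rooted forests. The plan for the nontrivial implication is an induction on the number of vertices, feeding on the two structural results just proved: Theorem~\ref{thm:qi_rooted_forests} to split a rooted forest into rooted trees, and Theorem~\ref{thm:rooted_forest_vs_rooted_tree} to peel off the root of a rooted tree.

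First I would set up the induction on $n = \abs{V(F_1)}$. The base case $n=1$ is trivial, since the only rooted forest on one vertex is $(K_1,r)$ with $r$ its unique vertex, and a quantum isomorphic rooted forest has the same number of vertices. For the inductive step, let $(F_1,R_1)$ and $(F_2,R_2)$ be quantum isomorphic rooted forests on $n$ vertices, and write them as sums of rooted trees $(F_1,R_1) = \sum_{i=1}^k (T_i,x_i)$ and $(F_2,R_2) = \sum_{i=1}^l (S_i,y_i)$. By Theorem~\ref{thm:qi_rooted_forests} we have $k=l$, and after relabelling $(T_i,x_i)\qi (S_i,y_i)$ for every $1\leq i\leq k$. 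If $k\geq 2$, then each $T_i$ (hence each $S_i$) has strictly fewer than $n$ vertices, so applying the induction hypothesis to the rooted forests $(T_i,x_i)$ and $(S_i,y_i)$ yields isomorphisms of rooted trees $(T_i,x_i)\cong (S_i,y_i)$; taking their disjoint union gives $(F_1,R_1)\cong (F_2,R_2)$, as desired.

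It remains to treat the case $k=1$, i.e.\ when $(F_1,R_1)=(T,x)$ and $(F_2,R_2)=(S,y)$ are quantum isomorphic rooted trees. If $T=K_1$, then $S=K_1$ and the two are trivially isomorphic as rooted trees. Otherwise $T$ has at least two vertices, and by the third item of Theorem~\ref{thm:rooted_forest_vs_rooted_tree} the rooted forests $T\setminus\{x\}$ and $S\setminus\{y\}$ are quantum isomorphic. Since $T\setminus\{x\}$ has $n-1<n$ vertices, the induction hypothesis provides an isomorphism of rooted forests $\varphi\colon T\setminus\{x\}\to S\setminus\{y\}$. Extending $\varphi$ by $x\mapsto y$ gives a bijection $V(T)\to V(S)$ which is a graph isomorphism: on $T\setminus\{x\}$ it is $\varphi$, and the edges incident to $x$ are precisely those joining $x$ to the roots $N_T(x)$ of $T\setminus\{x\}$, which $\varphi$ sends bijectively onto the roots $N_S(y)$ of $S\setminus\{y\}$ because $\varphi$ preserves roots; since moreover $x\mapsto y$, this bijection is an isomorphism of rooted trees $(T,x)\cong (S,y)$. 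This completes the induction.

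The real content here sits in Theorems~\ref{thm:rooted_forest_vs_rooted_tree} and~\ref{thm:qi_rooted_forests}, which are already established; the only points requiring care in the argument above are that removing a root strictly decreases the vertex count (so the induction is well-founded) and that extending a root-preserving forest isomorphism across the deleted root genuinely yields a rooted-tree isomorphism — both of which hinge on roots being preserved by the maps involved.
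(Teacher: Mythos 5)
Your proof is correct and follows essentially the same route as the paper: induction on the number of vertices, using Theorem~\ref{thm:qi_rooted_forests} to match connected components and Theorem~\ref{thm:rooted_forest_vs_rooted_tree} to delete the root in the single-tree case, then re-attaching it. The only differences are cosmetic — you spell out the root-extension argument and the trivial $T=K_1$ subcase a bit more explicitly than the paper does.
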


\begin{proof}
	Let $(F_1,R_1)$ and $(F_2,R_2)$ be two quantum isomoprhic rooted forests. 
	We need to prove that they are isomorphic. 
	Since they are quantum isomorphic, we already have that $F_1$ and $F_2$ are quantum isomorphic as graphs, hence they have the same number of vertices $n\in \N$. 
	We will prove it by induction on $n\geq 1$. 
	For $n=1$, this is immediate. 
	So we assume that quantum isomorphism implies isomorphism for any rooted forest on at most $n-1$ vertices for some $n\geq 2$. 
	Let us write $(F_1,R_1) = (T_1,x_1) + \ldots + (T_k,x_k)$. 
	By Theorem~\ref{thm:qi_rooted_forests}, we have that $F_2$ has $k$ connected components as well, that we can enumerate as $(F_2,R_2) = (S_1,y_1) + \ldots + (S_k,y_k)$ such that for any $1\leq i\leq k$ we have that $(T_i,x_i)$ is quantum isomorphic to $(S_i,y_i)$. 
	
	Let us first assume that $k>1$. 
	In that case, for $1\leq i\leq k$, we have $\# V(T_i) < n$, so by induction hypothesis $(T_i,x_i)$ and $(S_i,y_i)$ are isomorphic. 
	This being true for all $i$, we have that $(F_1,R_1) = \sum_i (T_i,x_i) = \sum_i (S_i,y_i) = (F_2,R_2)$, so $(F_1,R_1)$ and $(F_2,R_2)$ are isomorphic, as desired.

	Now assume that $k=1$. 
	Then $(F_1,R_1) = (T,x)$ and $(F_2,R_2) = (S,y)$ are rooted trees. 
	By Theorem~\ref{thm:rooted_forest_vs_rooted_tree}, the rooted forests $T\setminus \{x\}$ and $S\setminus \{y\}$ are quantum isomorphic, so they are isomorphic by induction hypothesis. 
	But this implies that $(T,x)$ and $(S,y)$ are isomorphic as well, since they are obtained by adding a root respectively to $T\setminus \{x\}$ and $S\setminus\{y\}$, which concludes the induction.
\end{proof}

These results now allow us to compute the quantum automorphism group of a forest as a function of the quantum automorphism group of its connected components in Theorem~\ref{thm:qu_rooted_forest}. 
We start with a few prepartory lemmas.

\begin{lemma}\label{lem:qu_rooted_forest}
	Let $k\geq 1$ and let $(F,r) = \sum_{i=1}^k a_i.(T_i,r_i)$ be a rooted forest, where $a_1,\ldots,a_k\in \N$, and $(T_1,r_1),\ldots,(T_k,r_k)$ are two-by-two non isomorphic rooted trees. 
	Then $\Qu(F,r) = *_{i=1}^k \Qu(a_i.(T_i,r_i))$. 
\end{lemma}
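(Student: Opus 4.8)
The plan is to reduce everything to the block structure of the magic unitaries adapted to the rooted forest $(F,r)$, and then read the conclusion off from the universal properties defining the quantum automorphism groups and the free product. Write $F_i = a_i.(T_i,r_i)$ and fix an enumeration of $V(F)$ compatible with the decomposition $V(F) = \bigsqcup_{i=1}^k V(F_i)$, so that $\Adj(F) = \Diag(\Adj(F_1),\dots,\Adj(F_k))$.

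The central step is a block-diagonality lemma: every magic unitary $U$ adapted to $(F,r)$, with coefficients in a unital $C^*$-algebra, has the form $U = \Diag(U_1,\dots,U_k)$ with each $U_i$ a magic unitary adapted to the rooted forest $F_i$. To see this, suppose $x$ lies in a connected component of $F$ isomorphic (as a rooted tree) to $T_i$ and $a$ in one isomorphic to $T_j$, with $u_{xa}\neq 0$; then Theorem~\ref{thm:qi_rooted_trees} shows that $U[C(x),C(a)]$ is a quantum isomorphism of rooted trees $C(x)\qi C(a)$, whence $C(x)$ and $C(a)$ are isomorphic rooted trees by Theorem~\ref{thm:qi_equals_i_for_rooted_forests}, forcing $i=j$. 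Taking the contrapositive, $u_{xa}=0$ whenever $x$ and $a$ belong to distinct blocks $V(F_i)$ and $V(F_j)$, which is exactly the asserted block-diagonality. Each diagonal block $U_i$ then has its rows and columns summing to $1$, commutes with $\Adj(F_i)$ (as $U$ and $\Adj(F)$ are block-diagonal with the same blocks), and preserves the roots of $F_i$ (the roots of $F$ lying in $V(F_i)$), so $U_i$ is adapted to $F_i$.

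Applying this to the fundamental representation $u$ of $\Qu(F,r)$, regarded as a magic unitary adapted to $(F,r)$ with coefficients in $C(\Qu(F,r))$ (recall $\Oo(\Qu(F,r))$ embeds in $C(\Qu(F,r))$), we obtain $u = \Diag(u^{[1]},\dots,u^{[k]})$ with $u^{[i]} = (u_{st})_{s,t\in V(F_i)}$ a magic unitary adapted to $F_i$ over $\Oo(\Qu(F,r))$. Let $\Cc = *_{i=1}^k \Oo(\Qu(F_i))$ with canonical inclusions $\iota_i$, and let $w = \Diag(\iota_1(u^{(1)}),\dots,\iota_k(u^{(k)}))$ be the fundamental representation of $*_{i=1}^k \Qu(F_i)$, where $u^{(i)}$ is the fundamental representation of $\Qu(F_i)$. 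The matrix $w$ is block-diagonal with magic-unitary blocks, each commuting with $\Adj(F_i)$ and preserving roots, so it is directly checked to be a magic unitary adapted to $(F,r)$; the universal property of $\Oo(\Qu(F,r))$ then yields a unital $*$-morphism $\varphi$ with $\varphi(u_{st}) = w_{st}$. Conversely, each $u^{[i]}$ being adapted to $F_i$ over $\Oo(\Qu(F,r))$, the universal property of $\Oo(\Qu(F_i))$ gives $*$-morphisms $f_i\colon \Oo(\Qu(F_i)) \to \Oo(\Qu(F,r))$ with $f_i(u^{(i)}_{st}) = u_{st}$, and these assemble, via the universal property of the free product, into $\psi\colon \Cc \to \Oo(\Qu(F,r))$ with $\psi\circ\iota_i = f_i$.

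A check on generators — using the block-diagonality of $u$ and $w$ to dispose of the off-diagonal entries, which vanish on both sides — shows $\psi\circ\varphi = \id$ and $\varphi\circ\psi = \id$. Since $\varphi$ carries the fundamental representation $u$ of $\Qu(F,r)$ entrywise to the fundamental representation $w$ of $*_{i=1}^k\Qu(F_i)$, it automatically intertwines the comultiplications, hence is an isomorphism of quantum permutation groups, proving $\Qu(F,r) = *_{i=1}^k \Qu(a_i.(T_i,r_i))$. The only genuine obstacle is the block-diagonality lemma, i.e. verifying that the rooted analogues Theorem~\ref{thm:qi_rooted_trees} and Theorem~\ref{thm:qi_equals_i_for_rooted_forests} apply as intended; after that the argument is a routine universal-property computation, parallel to (and slightly simpler than) the proof of Theorem~\ref{thm:qu_aut_sums}.
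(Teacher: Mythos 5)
Your proof is correct and follows essentially the same route as the paper: the key step in both is the block-diagonality of any magic unitary adapted to $(F,r)$, obtained exactly as you do from Theorem~\ref{thm:qi_rooted_trees} together with Theorem~\ref{thm:qi_equals_i_for_rooted_forests}, applied to the pairwise non-isomorphic rooted trees $(T_i,r_i)$. The only difference is that you also write out the universal-property verification (the mutually inverse morphisms $\varphi$ and $\psi$ and the compatibility with the comultiplications), which the paper leaves implicit in the phrase ``which will imply the result.''
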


\begin{proof}
	Let $U$ be the fundamental representation of $\Qu(F,r)$. 
	We claim that it is diagonal by block, where each block is adapted to $a_i(T_i,r_i)$, which will imply the result.

	Indeed, let $1\leq i,j\leq k$ and let $x$, $a\in V(F)$. 
	Assume that $x$ is a vertex in some copy of $(T_i,r_i)$ and $a$ is a vertex in some copy of $(T_j,r_j)$. 
	Assume moreover that $u_{xa}\neq 0$. 
	Now by Theorem~\ref{thm:qi_rooted_trees} we have that $(T_i,r_i)$ is quantum isomorphic to $(T_j,r_j)$, hence $i=j$ by Theorem~\ref{thm:qi_equals_i_for_rooted_forests}. 
	This concludes the proof.
\end{proof}

We then extend Theorem~\ref{thm:wreath_product}.

\begin{lemma}\label{lem:wreath_rooted_tree}
	Let $(T,x)$ be a rooted tree and let $d\geq 1$. 
	Then $\Qu(d.(T,x)) = \Qu(T,x) \wr S_d^+$.
\end{lemma}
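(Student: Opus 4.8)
The plan is to deduce the lemma from Theorem~\ref{thm:wreath_product} by a quotient argument, using that a rooted tree $(T,x)$ is in particular a connected graph and that the rooted quantum automorphism group is a quotient of the graph one by the root-preserving relations. Fix an enumeration of $V(T)$ with the root $x$ as the last vertex $n$, so that, writing $u$ for the fundamental representation of $\Qu(T)$, we have $\Oo(\Qu(T,x)) = \Oo(\Qu(T))/\langle R_x\rangle$ with $R_x = \{u_{nl} \mid l\neq n\}$ by Lemma~\ref{lem:def_qu_rooted_forest}. Denoting the vertices of $d.T$ by pairs $(i,k)$ with $1\le i\le d$ and $1\le k\le n$, the set of roots is $\{(i,n)\}_{1\le i\le d}$, and similarly $\Oo(\Qu(d.(T,x))) = \Oo(\Qu(d.T))/\langle R_r\rangle$ with $R_r = \{v_{ij,nl} \mid 1\le i,j\le d,\ l\neq n\}$, where $v=(v_{ij,kl})$ is the fundamental representation of $\Qu(d.T)$.

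First I would invoke Theorem~\ref{thm:wreath_product}: since $d.T$ consists of $d$ copies of the connected graph $T$, there is an isomorphism of quantum groups $\varphi\colon \Oo(\Qu(d.T)) \to \Oo(\Qu(T)\wr S_d^+)$ with $\varphi(v_{ij,kl}) = s_{ij}\nu_i(u_{kl})$, where $s$ is the fundamental representation of $S_d^+$ and $\nu_i$ the $i$-th insertion. The key step is to compute the image of the ideal $\langle R_r\rangle$ under $\varphi$. As $\varphi$ is an isomorphism, $\varphi(\langle R_r\rangle)$ is the ideal generated by $\{s_{ij}\nu_i(u_{nl}) \mid 1\le i,j\le d,\ l\neq n\}$; since the rows of $s$ sum to $1$, we have $\sum_{j=1}^d s_{ij}\nu_i(u_{nl}) = \nu_i(u_{nl})$, so this ideal coincides with the one generated by $\bigcup_{i=1}^d \nu_i(R_x)$. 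On the other side, because the unital free product commutes with quotients and the wreath relations $[\nu_i(a),p_{ij}]$ survive the quotient, one has $\Oo(\Qu(T)\wr S_d^+)/\langle \bigcup_i \nu_i(R_x)\rangle = \Oo(\Qu(T,x)\wr S_d^+)$. Hence $\varphi$ descends to a $*$-isomorphism $\Oo(\Qu(d.(T,x))) \to \Oo(\Qu(T,x)\wr S_d^+)$ which maps $v_{ij,kl}$ to the image of $s_{ij}\nu_i(u_{kl})$, the $(ij,kl)$-entry of the fundamental representation of $\Qu(T,x)\wr S_d^+$; by uniqueness of the comultiplication this is an isomorphism of quantum permutation groups, which is the claim.

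The main obstacle is nothing more than careful bookkeeping of ideals: one must verify that $\varphi$ carries the ideal of root-preserving relations exactly onto the corresponding ideal on the wreath-product side (this is where the identity $\sum_j s_{ij}\nu_i(u_{nl}) = \nu_i(u_{nl})$ is used), and that quotienting the wreath product $\Oo(\Qu(T)\wr S_d^+)$ by $\bigcup_i\nu_i(R_x)$ indeed produces $\Oo(\Qu(T,x)\wr S_d^+)$. As an alternative one could simply re-run the proof of Theorem~\ref{thm:wreath_product} verbatim, adding at each construction of an auxiliary magic unitary ($W$, the $Z_i$, and $P(V)$) the observation that it preserves the roots — the same kind of extra remark used in the proof of Lemma~\ref{lem:rooted_qu_schmidt} — but the quotient argument above is shorter.
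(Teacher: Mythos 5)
Your proposal is correct, but it takes a genuinely different route from the paper. The paper essentially follows your ``alternative'' suggestion: it re-runs the universal-property argument of Theorem~\ref{thm:wreath_product} with root-preservation added at each step. Concretely, it shows that the wreath-product magic unitary $W=(\nu_i(v_{kl})p_{ij})$ is adapted to the rooted forest $d.(T,x)$ (using that the fundamental representation of $\Qu(T,x)$ kills the off-root entries), giving a surjection $\Oo(\Qu(d.(T,x)))\to\Oo(\Qu(T,x)\wr S_d^+)$, and conversely decomposes the fundamental representation $U$ of $\Qu(d.(T,x))$ via Lemma~\ref{lem:p_ij} into blocks $p_{ij}U_i$ and checks $[U_i]_{xx}=1$ by summing $u_{ir,xy}$ over roots, giving the surjection the other way; the two maps are then mutually inverse quantum group morphisms. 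Your argument instead treats Theorem~\ref{thm:wreath_product} as a black box and transports the root relations through the isomorphism $\varphi$, identifying the ideal $\langle\{s_{ij}\nu_i(u_{nl})\}\rangle$ with $\langle\bigcup_i\nu_i(R_x)\rangle$ via $\sum_j s_{ij}=1$, and then identifying $\Oo(\Qu(T)\wr S_d^+)/\langle\bigcup_i\nu_i(R_x)\rangle$ with $\Oo(\Qu(T,x)\wr S_d^+)$. All the steps are sound: the rooted algebra is by definition the quotient of the unrooted one by the ideal generated by $R_r$ (and the magic unitary relations then force the symmetric vanishing automatically), the two ideals do coincide, and the quotient identification is the standard fact that a free product of quotients is the quotient of the free product, with the commutation relations imposable in either order. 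Your route is shorter and cleaner, at the cost of that one quotient-compatibility step which deserves the line of justification just indicated; the paper's route is longer but stays entirely at the level of explicit magic unitaries and universal properties, consistent with the explicit style used throughout, and avoids any discussion of ideals.
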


\begin{proof}
	Let $n = \# V(T)$. 
	Let $U$ be the fundamental representation of $\Qu(d.(T,x))$. 
	Let $W$ be the fundamental representation of size $dn$ of $\Qu(T,x)\wr S_d^+$ obtained by Section 7.2.2 of~\cite{FreslonBook}. 
	
	Let $W$ be the fundamental representation of size $dn$ of $\Qu(T,x)\wr S_d^+$ obtained by Section 7.2.2 of~\cite{FreslonBook}. 
	Recall that it is given by coefficients $w_{ij,kl} = \nu_i(v_{kl})p_{ij}$, with $1\leq i,j\leq d$ and $1\leq k,l\leq n$, where $W$ is in a block form $W = (W_{ij})_{1\leq i,j\leq d}$, where each block is indexed by a copy of $T$. 
	Here $P = (p_{ij})_{1\leq i,j\leq d}$ is the fundamental representation of $S_d^+$ and $V = (v_{kl})_{1\leq k,l\leq n}$ is the fundamental representation of $\Qu(T,x)$. 
	In particular, $V$ is a magic unitary adapted to $T$. 
	By Theorem~\ref{thm:wreath_product}, this implies that $W$ commutes with $\Adj(d.T)$ (since $W$ is then the image of the fundamental representation of $\Qu(d.T)$, which commutes with the scalar matrix $\Adj(T)$). 
	Moreover, we claim that $W$ preserves the roots: indeed, assume that $x$ is enumerated first in $V(T)$, let $1\leq i,j\leq d$, and let $2\leq k\leq n$. 
	Since $V$ is adapted to $(T,x)$, we have $w_{ij,1k} = \nu_i(v_{1k})p_{ij} = 0$ since $v_{1k} = 0$, and similarly $v_{k1} = 0$ so $w_{ij,k1} = 0$. 
	This shows that $W$ is a magic unitary adapted to the rooted forest $d.(T,x)$. 
	Hence by universality the mapping $u_{ij,kl}\mapsto w_{ij,kl}$ extends to a surjective $*$-morphism from $\Oo(\Qu(d.(T,x))$ to $\Oo(\Qu(T,x)\wr S_d^+)$. 

	Let us show the converse. 
	For this, we know that $U$ is a magic unitary adapted to $d.T$, so, by Theorem~\ref{thm:wreath_product}, we have that $U$ is of the form $U = (p_{ij}U_i)_{1\leq i\leq d}$ for $p = (p_{ij}(U))_{1\leq i,j\leq d}$ the magic unitary given by Lemma~\ref{lem:p_ij} and with $U_1,\ldots,U_d$ being magic unitaries adapted to $T$. 
	We claim that $U_i$ is actually adapted to $(T,x)$ for all $1\leq i\leq d$, that is, that $U_i$ preserves the root. 
	Indeed, it comes:
	\begin{align*}
		1 &= \sum_{r=1}^d \sum_{y\in V(T)} u_{ir,xy}\\
		&= \sum_{r=1}^d u_{ir,xx}\ \text{since $U$ is adapted to the rooted forest $d.(T,x)$}\\
		&= [U_i]_{xx}.
	\end{align*}
	This shows that $U_i$ is actually a magic unitary adapted to $(T,x)$. 
	Hence by universality the mapping $w_{ij,kl}\mapsto u_{ij,kl}$ extends to a surjective $*$-morphism from $\Oo(\Qu(T,x)\wr S_d^+)$ to $\Oo(\Qu(d.(T,x))$. 

	The two constructed morphisms are clearly inverse of each other (since they are on the generators), and they are quantum group morphisms since they preserve the matrix structure. 
	Hence they are the desired isomorphisms of quantum groups between $\Qu(d.(T,x))$ and $\Qu(T,x)\wr S_d^+$. 
	This concludes the proof.
\end{proof}

We can now compute the quantum automorphism group of a rooted forest in the function of the quantum automorphism groups of its connected components.

\begin{theorem}\label{thm:qu_rooted_forest}
	Let $k\geq 1$ and let $(F,r) = \sum_{i=1}^k a_i.(T_i,r_i)$ be a rooted forest, where $a_1,\ldots,a_k\in \N$, and $(T_1,r_1),\ldots,(T_k,r_k)$ are two-by-two non isomorphic rooted trees. 
	Then $\Qu(F,r) = *_{i=1}^k \Qu(T_i,r_i)\wr S_{a_i}^+$. 
\end{theorem}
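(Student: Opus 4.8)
The plan is to deduce this statement directly from the two preceding lemmas, which already carry all the content. First I would invoke Lemma~\ref{lem:qu_rooted_forest}: writing $(F,r) = \sum_{i=1}^k a_i.(T_i,r_i)$ with the $(T_i,r_i)$ pairwise non-isomorphic rooted trees, that lemma gives the free product decomposition
\[ \Qu(F,r) = *_{i=1}^k \Qu(a_i.(T_i,r_i)). \]
This is the step where the hypothesis that the $(T_i,r_i)$ are two-by-two non-isomorphic is used (through Theorem~\ref{thm:qi_equals_i_for_rooted_forests}, which underpins Lemma~\ref{lem:qu_rooted_forest}): it guarantees that the fundamental representation of $\Qu(F,r)$ is block-diagonal along the partition of $V(F)$ into the copies of the distinct $T_i$'s, so no ``cross'' coefficients between copies of $T_i$ and copies of $T_j$ survive for $i\neq j$.

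Next I would apply Lemma~\ref{lem:wreath_rooted_tree} to each factor: for every $1\leq i\leq k$, taking $(T,x) = (T_i,r_i)$ and $d = a_i$, we obtain
\[ \Qu(a_i.(T_i,r_i)) = \Qu(T_i,r_i)\wr S_{a_i}^+. \]
Here I should note the harmless boundary case $a_i = 1$: since $S_1^+$ is trivial, the wreath product $\Qu(T_i,r_i)\wr S_1^+$ is just $\Qu(T_i,r_i)$, in agreement with $\Qu(1.(T_i,r_i))$, so the formula is uniform over all $a_i \geq 1$.

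Substituting the second identity into the first then yields
\[ \Qu(F,r) = *_{i=1}^k \Qu(T_i,r_i)\wr S_{a_i}^+, \]
which is the claim. I do not expect any genuine obstacle here: the substance of the argument has been isolated in Lemma~\ref{lem:qu_rooted_forest} (the block decomposition coming from the analysis of quantum isomorphisms of rooted forests in Theorems~\ref{thm:qi_rooted_trees} and~\ref{thm:qi_rooted_forests}) and in Lemma~\ref{lem:wreath_rooted_tree} (the rooted analogue of Bichon's wreath product theorem), so the remaining task is purely the assembly of these two facts, together with the bookkeeping remark on the trivial case $a_i=1$. The only point requiring a line of care is to make sure the free product and wreath product are taken in the category of quantum permutation groups with the natural actions, which is exactly how both lemmas are phrased, so the composition of the two isomorphisms of quantum groups is again an isomorphism of quantum groups.
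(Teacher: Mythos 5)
Your proposal is correct and matches the paper's (implicit) argument exactly: the paper states Theorem~\ref{thm:qu_rooted_forest} as an immediate consequence of Lemma~\ref{lem:qu_rooted_forest} and Lemma~\ref{lem:wreath_rooted_tree}, which is precisely the assembly you carry out. Your remark on the trivial case $a_i=1$ is a harmless but sensible addition.
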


Let us now treat quantum asymmetry. 
Notice that we will later prove Lemma~\ref{lem:unrooting}, which could be proved independently. 
The result would then follow from Fulton's result, Theorem~\ref{thm:qa_for_trees}. 
Here we will stay in the spirit of rooted trees and prove it using the inductive structure. 

\begin{theorem}\label{thm:qa_for_rooted_forests}
	A rooted forest is asymmetric if and only if it is quantum asymmetric.
\end{theorem}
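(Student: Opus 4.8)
The plan is to establish the two implications separately: the forward direction (quantum asymmetric $\Rightarrow$ asymmetric) is formal, and the converse is proved by strong induction on the number of vertices, using the inductive structure of rooted forests developed in this subsection.

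For "quantum asymmetric $\Rightarrow$ asymmetric", I would argue as follows. Suppose $\Oo(\Qu(F,r)) = \C$. For any $\sigma \in \Aut(F,r)$, the permutation matrix $P_\sigma$ commutes with $\Adj(F)$ by Corollary~\ref{coro:automorphism} and preserves the rooted structure because $\sigma$ does, so $P_\sigma$ is a magic unitary adapted to $(F,r)$; by universality it induces a unital $*$-morphism $\Oo(\Qu(F,r)) \to \C$ sending the fundamental representation to $P_\sigma$. Since $\C$ admits a unique unital $*$-endomorphism, all such morphisms coincide, so the one attached to $\sigma$ equals the one attached to $\id$, whence $P_\sigma = P_{\id} = I_n$ and $\sigma = \id$. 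Thus $\Aut(F,r)$ is trivial.

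For the converse, I would induct on $n = \#V(F)$, the case $n=1$ being immediate. Assume the statement for all rooted forests on fewer than $n$ vertices and let $(F,r)$ be asymmetric on $n$ vertices. If $F$ is disconnected, write $(F,r) = \sum_{i=1}^k a_i.(T_i,r_i)$ with the $(T_i,r_i)$ pairwise non-isomorphic rooted trees. Asymmetry forces $a_i = 1$ for all $i$ (swapping two copies of a component is a nontrivial root-preserving automorphism) and forces each $(T_i,r_i)$ to be asymmetric (a nontrivial automorphism of one component, extended by the identity, is again a nontrivial root-preserving automorphism of $F$). Each $(T_i,r_i)$ has fewer than $n$ vertices, so by the induction hypothesis $\Qu(T_i,r_i)$ is trivial, and Theorem~\ref{thm:qu_rooted_forest} gives $\Qu(F,r) = *_{i=1}^k \Qu(T_i,r_i)\wr S_{1}^+ = *_{i=1}^k 1 = 1$.

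It remains to handle the connected case, where $(F,r) = (T,r)$ is a rooted tree with $n \geq 2$ vertices. Deleting the root produces the rooted forest $T\setminus\{r\}$ on $n-1$ vertices, with $N(r)$ as its set of roots. Any nontrivial automorphism of the rooted forest $T\setminus\{r\}$ extends, by fixing $r$, to a nontrivial root-preserving automorphism of $(T,r)$; so asymmetry of $(T,r)$ forces $T\setminus\{r\}$ to be asymmetric. By the induction hypothesis $\Qu(T\setminus\{r\})$ is trivial, and by Theorem~\ref{thm:rooted_forest_vs_rooted_tree} we get $\Qu(T,r) = \Qu(T\setminus\{r\}) = 1$, closing the induction. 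The only delicate point is keeping the rooted structure intact when moving between a rooted tree and the rooted forest obtained by deleting its root; but this bookkeeping is precisely what Theorem~\ref{thm:rooted_forest_vs_rooted_tree} and the accompanying lemmas of this subsection handle, so I do not expect a genuine obstacle.
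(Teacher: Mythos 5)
Your proof is correct and follows essentially the same route as the paper: a strong induction on the number of vertices driven by Theorem~\ref{thm:qu_rooted_forest} and Theorem~\ref{thm:rooted_forest_vs_rooted_tree}, with the reverse implication treated as formal. The only difference is organizational — the paper first reduces forests to trees by attaching a new root $x.\sum_i(T_i,r_i)$ and then inducts on rooted trees, whereas you induct directly on all rooted forests, splitting into the disconnected case (component decomposition) and the connected case (root deletion) — the same ingredients in a slightly different order.
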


\begin{proof}
	The reverse implication is clear, so let us show that an asymmetric forest is quantum asymmetric. 
	We claim it is enough to consider the case of rooted trees. 
	Let us assume the result proved for rooted trees. 
	Let $(F,r) = \sum_{i=1}^k (T_i,r_i)$ be a rooted forest with $k\geq 2$. 
	Assume that $\Aut(F,r)$ is trivial, we want to show that $\Qu(F,r)$ is as well.
	Let $(T,x) = x. \sum_{i=1}^k (T_i,r_i)$. 
	By Theorem~\ref{thm:rooted_forest_vs_rooted_tree}, we have that $\Aut(T,x) = \Aut(F,r) = 1$, so, assuming the result proved for rooted trees, we would have that $\Qu(T,x) = 1$ and by Theorem~\ref{thm:rooted_forest_vs_rooted_tree} again that $\Qu(F,r) = \Qu(T,x) = 1$. 
	This shows that it is enough to prove the result for rooted trees.

	Let us now prove the result for rooted trees by strong induction on the number of vertices. 
	It is clearly true for a tree on 1 vertex. 
	So we assume it proved for all trees on at most $n$ vertices, for some $n\geq 1$. 
	Let $(T,x)$ be an asymmetric rooted tree on $n+1$ vertices. 
	Since $n+1\geq 1$, we can write $(T,x) = x.\sum_{i=1}^k a_i.(T_i,r_i)$ for some $k\geq 1$, some $a_1,\ldots,a_k \in \N$, and such that $(T_i,r_i) \neq (T_j,r_j)$ for $i\neq j$. 
	Let $U$ be a magic unitary adapted to $(T,x)$. 
	Notice that by Theorem~\ref{thm:rooted_forest_vs_rooted_tree}, we have that $\Aut(F,r) = \Aut(T,x) = 1$. 
	In particular, for all $1\leq i\leq k$, we have that $\Aut(T_i,r_i) = 1$ and $a_i=1$. 
	Moreover, for $1\leq i\leq k$, since $\abs{V(T_i,r_i)} < n+1$, by induction hypothesis, we have that $\Qu(T_i,r_i) = 1$. 
	By Theorem~\ref{thm:qu_rooted_forest} and Theorem~\ref{thm:rooted_forest_vs_rooted_tree}, we have that $\Qu(T,x) = \Qu(F,r) = *_{i=1}^k \Qu(T_i,r_i) \wr S_1^+ = *_{i=1}^k 1 \wr 1 = 1$. 
	Thus we have proved that $\Qu(T,x) = 1$, which concludes the induction and the proof of the theorem.
\end{proof}

Finally, let us show that rooted forests satisfy the Schmidt alternative.

\begin{lemma}\label{lem:rooted_no_schmidt}
	Let $n\in \N$ and consider $n$ nonisomorphic rooted trees $(T_1,r_1),\ldots,(T_n,r_n)$ as well as $n$ nonzero integers $a_1,\ldots,a_n \geq 1$. 
	Let $(F,r) = \sum_{i=1}^n a_i.(T_i,r_i)$. 
	Then $(F,r)$ does not satisfy the rooted Schmidt criterion if and only if there is an integer $1\leq l\leq n$ such that:
	\begin{itemize}
		\item for every $i\neq l$, we have $a_i = 1$ and $\Aut(T_i,r_i) = 1$,
		\item $a_l\leq 3$,
		\item $(T_l,r_l)$ does not satisfy Schmidt's criterion,
		\item if $a_l >1$, then $\Aut((T_l,r_l)) = 1$.
	\end{itemize}
\end{lemma}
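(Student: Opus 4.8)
The plan is to transcribe the proof of Theorem~\ref{thm:sum_no_schmidt} into the category of rooted forests. The only new input required is the rooted analogue of Theorem~\ref{thm:aut_sums}: when $(T_1,r_1),\ldots,(T_n,r_n)$ are pairwise non-isomorphic rooted trees and $a_1,\ldots,a_n\geq 1$, one has $\Aut\left(\sum_{i=1}^n a_i.(T_i,r_i)\right) = \prod_{i=1}^n \Aut(T_i,r_i)\wr S_{a_i}$, with the natural action. To see this, note that an automorphism of the rooted forest is in particular a graph automorphism, hence permutes the connected components, and it preserves the set of roots; since each connected component contains exactly one root, it must carry the root of a component to the root of that component's image, so it restricts to an isomorphism of rooted trees between the two. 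As the $(T_i,r_i)$ are pairwise non-isomorphic, such an automorphism can only permute the $a_i$ copies of each fixed $(T_i,r_i)$ among themselves while acting by a rooted automorphism on each copy, which is exactly the asserted wreath-product decomposition.

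For the reverse implication, I would assume such an $l$ exists. By the decomposition above together with condition (1), $\Aut(F,r) = \Aut(T_l,r_l)\wr S_{a_l}$ acting naturally, so $(F,r)$ satisfies the rooted Schmidt criterion if and only if $a_l.(T_l,r_l)$ does. If $a_l=1$ this reduces to $(T_l,r_l)$, which does not satisfy Schmidt's criterion by condition (3). If $a_l>1$ then $\Aut(T_l,r_l)=1$ by condition (4), so $\Aut(a_l.(T_l,r_l))=S_{a_l}$ acts by permuting the $a_l\leq 3$ copies; any nontrivial such permutation moves at least two copies, so no two nontrivial automorphisms have disjoint support, and $a_l.(T_l,r_l)$ does not satisfy the rooted Schmidt criterion. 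Either way $(F,r)$ does not.

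For the forward implication, I would choose $l$ exactly as in the proof of Theorem~\ref{thm:sum_no_schmidt}: if some $a_i>1$, take $l=i$; otherwise, if some $\Aut(T_j,r_j)\neq 1$, take $l=j$; otherwise take $l=1$. One then checks the four conditions by contradiction, using the elementary fact that whenever a union of whole connected components of $(F,r)$ satisfies the rooted Schmidt criterion, so does $(F,r)$ (extend the two disjoint-support automorphisms by the identity on the remaining components). Thus: $a_l\geq 4$ would make $4.(T_l,r_l)$ satisfy the rooted Schmidt criterion, giving (2); $(T_l,r_l)$ satisfying Schmidt's criterion would pass to $(F,r)$, giving (3); $a_l>1$ with $\Aut(T_l,r_l)\neq 1$ would make $2.(T_l,r_l)$ satisfy the rooted Schmidt criterion (transpose the two copies versus a nontrivial rooted automorphism of one of them), giving (4); and for $i\neq l$ the same case split as in Theorem~\ref{thm:sum_no_schmidt} --- via $2.(T_l,r_l)+2.(T_i,r_i)$ and $2.(T_l,r_l)+(T_i,r_i)$ when $a_l>1$, via $(T_l,r_l)+2.(T_i,r_i)$ and $(T_i,r_i)+(T_l,r_l)$ when $a_l=1$ and $\Aut(T_l,r_l)\neq 1$, and directly from the construction of $l$ otherwise --- forces $a_i=1$ and $\Aut(T_i,r_i)=1$, giving (1).

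I do not expect a genuine obstacle here: the whole argument is a formal copy of the proof of Theorem~\ref{thm:sum_no_schmidt} carried out in the rooted setting. The only points deserving (minor) attention are the rooted analogue of Theorem~\ref{thm:aut_sums} described above, and the observation that $a$ disjoint copies of a rooted tree with trivial rooted automorphism group fail the rooted Schmidt criterion for $a\leq 3$; both are immediate.
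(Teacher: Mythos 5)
Your proposal follows exactly the route the paper takes: the paper's own proof of Lemma~\ref{lem:rooted_no_schmidt} is literally the single remark that the proof of Theorem~\ref{thm:sum_no_schmidt} translates to the rooted context, and your write-up supplies precisely the details that translation needs, in particular the rooted analogue of Theorem~\ref{thm:aut_sums}, which you justify correctly (roots go to roots, each component contains exactly one root, so components can only be permuted within rooted-isomorphism classes).

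One step as you state it would fail, though it is trivially repaired. In your verification of condition (4) you claim that when $a_l>1$ and $\Aut(T_l,r_l)\neq 1$ the rooted forest $2.(T_l,r_l)$ satisfies the rooted Schmidt criterion via ``the transposition of the two copies versus a nontrivial rooted automorphism of one of them.'' These two automorphisms do not have disjoint supports: the transposition moves every vertex of both copies, so its support is all of $V(2.(T_l,r_l))$ and in particular meets the support of any nontrivial automorphism of a single copy. The correct witnesses are a nontrivial rooted automorphism of the first copy and a nontrivial rooted automorphism of the second copy, each extended by the identity elsewhere; their supports lie in distinct copies and are therefore disjoint. With that substitution your argument is complete and coincides with the paper's intended proof.
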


\begin{proof}
	The proof of Theorem~\ref{thm:sum_no_schmidt} naturally translates to the rooted context.
\end{proof}

\begin{theorem}\label{thm:rooted_sa}
	The class of rooted forests satisfies the rooted Schmidt alternative.
\end{theorem}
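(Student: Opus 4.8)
The plan is to establish both implications separately; the forward implication is nearly immediate, while the reverse one is an induction on the number of vertices that follows the inductive structure of rooted forests exploited in Section~\ref{subsec:quantum_rooted}.

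For the ``if'' direction, suppose $(F,r)$ satisfies the rooted Schmidt criterion, so that it has two nontrivial automorphisms $f$ and $g$ of the rooted forest with $\Supp(f)\cap\Supp(g)=\varnothing$. An automorphism of a rooted forest permutes the set of roots, so the permutation matrices $P_f$ and $P_g$ are magic unitaries adapted to $(F,r)$; they are nontrivial, and $\Supp(P_f)=\Supp(f)$, $\Supp(P_g)=\Supp(g)$ are disjoint. Lemma~\ref{lem:rooted_qu_schmidt} then shows that $(F,r)$ has quantum symmetry.

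For the ``only if'' direction, I would prove by strong induction on $\abs{V(F)}$ that a rooted forest $(F,r)$ which does not satisfy the rooted Schmidt criterion has $\Oo(\Qu(F,r))$ abelian. The base case $\abs{V(F)}=1$ is trivial. For the inductive step, write $(F,r)=\sum_{i=1}^k a_i.(T_i,r_i)$ with the $(T_i,r_i)$ pairwise non-isomorphic rooted trees. If $(F,r)=(T,r)$ is a single rooted tree (the case $k=1$, $a_1=1$) on at least two vertices, Theorem~\ref{thm:rooted_forest_vs_rooted_tree} gives $\Qu(T,r)=\Qu(T\setminus\{r\})$ and shows that $T\setminus\{r\}$ does not satisfy the rooted Schmidt criterion either; as $T\setminus\{r\}$ is a rooted forest on fewer vertices, the induction hypothesis makes its coefficient algebra abelian, hence so is that of $\Qu(T,r)$. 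Otherwise, applying Lemma~\ref{lem:rooted_no_schmidt} produces an index $l$ with the listed properties, and Theorem~\ref{thm:qu_rooted_forest} gives $\Qu(F,r)=*_{i=1}^k\Qu(T_i,r_i)\wr S_{a_i}^+$. For $i\neq l$ we have $a_i=1$ and $\Aut(T_i,r_i)=1$, so these factors are trivial by Theorem~\ref{thm:qa_for_rooted_forests}. For $i=l$: if $a_l\geq 2$ then $\Aut(T_l,r_l)=1$, so $\Qu(T_l,r_l)=1$ and the factor is $S_{a_l}^+$ with $a_l\leq 3$, whose coefficient algebra is abelian (Example~\ref{ex:sn}); if $a_l=1$ then the factor is $\Qu(T_l,r_l)$, and $(T_l,r_l)$ is a rooted tree on strictly fewer vertices than $F$ which does not satisfy the rooted Schmidt criterion, so $\Oo(\Qu(T_l,r_l))$ is abelian by induction. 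In every case $\Oo(\Qu(F,r))$ is the unital free product of a single $\Oo(\Qu(T_l,r_l)\wr S_{a_l}^+)$ with abelian coefficients and several copies of $\C$, hence abelian; that is, $(F,r)$ has no quantum symmetry, completing the induction.

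The bookkeeping of the inductive step is the only real obstacle: one must separate out the ``single rooted tree'' case (reducing via deletion of the root, since $\Qu(T,r)=\Qu(T,r)$ would be circular), check that $(T_l,r_l)$ genuinely has fewer vertices when $a_l=1$ (which forces $k\geq 2$), and use that ``no quantum symmetry'' means precisely ``abelian coefficient algebra'' together with $\Oo(\Gamma)*_1\C\cong\Oo(\Gamma)$. Beyond that the proof is a direct assembly of Lemma~\ref{lem:rooted_qu_schmidt}, Lemma~\ref{lem:rooted_no_schmidt}, Theorem~\ref{thm:rooted_forest_vs_rooted_tree}, Theorem~\ref{thm:qu_rooted_forest}, Theorem~\ref{thm:qa_for_rooted_forests}, and the commutativity of $\Oo(S_n^+)$ for $n\leq 3$.
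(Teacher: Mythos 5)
Your proof is correct and follows essentially the same route as the paper: both directions rest on Lemma~\ref{lem:rooted_qu_schmidt} for the easy implication, and on a strong induction over the number of vertices combining Lemma~\ref{lem:rooted_no_schmidt}, Theorem~\ref{thm:rooted_forest_vs_rooted_tree}, Theorem~\ref{thm:qu_rooted_forest}, Theorem~\ref{thm:qa_for_rooted_forests}, and the commutativity of $\Oo(S_n^+)$ for $n\leq 3$. The only difference is bookkeeping: the paper first cones a rooted forest up to a rooted tree via $x.(F,r)$ and inducts over rooted trees, whereas you induct directly over rooted forests and delete the root in the single-tree case -- the two reductions are equivalent.
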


\begin{proof}
	Recall that, by Lemma~\ref{lem:rooted_qu_schmidt}, we have that a rooted forest satsfying the rooted Schmidt criterion has quantum symmetry. 
	So it is enough to prove the converse. 
	Moreover, is enough to prove it for rooted trees. 
	Indeed, assume it is true for rooted trees and let $(F,r)$ be a rooted forest. 
	Assume that $(F,r)$ does not satisfy the rooted Schmidt criterion. 
	Then letting $(T,x) = x.(F,r)$ we have by Theorem~\ref{thm:rooted_forest_vs_rooted_tree} that $(T,x)$ does not satisfy the rooted Schmidt criterion neither. 
	Thus, by assumption, $(T,x)$ does not have quantum symmetry, and by the same theorem we have that $\Qu(F,r) = \Qu(T,x)$ so $(F,r)$ does not have quantum symmetry neither. 
	This is what we wanted. 

	Let us now prove by strong induction on the number of vertices that a rooted tree which does not satisfy the rooted Schmidt criterion does not have quantum symmetry. 
	It is clearly true for a rooted tree on 1 vertex, so let us assume it true for all rooted trees on at most $m$ vertices, for some $m\geq 1$. 
	Let $(T,x)$ be a rooted tree on $m+1$ vertices and write it $(T,x) = x.\left(\sum_{i=1}^n a_i.(T_i,r_i)\right)$ with $n\in \N$ and $a_1,\ldots,a_n \in \N$ and such that $(T_i,r_i) \neq (T_j,r_j)$ for $i\neq j$. 
	Assume that $(T,x)$ does not satisfy the rooted Schmidt criterion and let $1\leq l\leq n$ be the integer given by Lemma~\ref{lem:rooted_no_schmidt}. 
	By Theorem~\ref{thm:qa_for_rooted_forests}, Theorem~\ref{thm:qu_rooted_forest}, and Theorem~\ref{thm:rooted_forest_vs_rooted_tree}, we have that $\Qu(T,x) = *_{i=1}^n \Qu(T_i,r_i)\wr S_{a_i}^+ = \Qu(T_l,r_l) \wr S_{a_l}^+$. 
	If $a_l >1$, we also have that $\Aut(T_l,r_l) = 1$, so $\Qu(T_l,r_l) = 1$ and $\Qu(T,x) = S_{a_l}^+$. 
	Since $a_l\leq 3$, we have that $\Oo(S_{a_l}^+)$ is commutative, hence $(T,x)$ does not have quantum symmetry. 
	Now assume that $a_l = 1$. 
	We have that $(T_l,r_l)$ does not satisfy the rooted Schmidt criterion. 
	Since $\abs{V(T_l,r_l)} \leq m$, by induction hypothesis, $(T_l,r_l)$ does not have quantum symmetry. 
	This means that $\Oo(\Qu(T,x)) = \Oo(\Qu(T_l,r_l))$ is abelian, hence $(T,x)$ does not have quantum symmetry. 
	Therefore the induction is complete and the proof finished.
\end{proof}

Hence we have proved that rooted forests are tractable in the sense of Section~\ref{sec:tractable} when adapted to the rooted context. 
We can now go bak to trees and forests.

\subsection{The transformation $\Psi$}\label{subsec:psi}

Let us now build useful transformations allowing us to transport our results from the rooted context to the unrooted one. 
We start by recalling without proof the following well-known lemma. 
We encountered it first in the article of Jordan~\cite{jordan1869assemblages}. 
Since we are not aware of any earlier mention, we attribute it to Jordan.

\begin{lemma}[Jordan]\label{lem:Z(T)}
	The center of a tree is isomorphic to $K_1$ or $K_2$.
\end{lemma}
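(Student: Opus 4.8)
The plan is to prove that the center of a finite tree $T$ has either one or two vertices (and since the center is itself a connected induced subtree, this means it is $K_1$ or $K_2$). I would argue by induction on the number of vertices $n = \#V(T)$, using the classical "leaf-peeling" argument that also gives the well-definedness of the center.

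First I would handle the base cases: if $n=1$ then $T = K_1$ and $Z(T) = T = K_1$; if $n = 2$ then $T = K_2$ and both vertices achieve eccentricity $1$, so $Z(T) = T = K_2$. For the inductive step, assume $n \geq 3$ and let $T'$ be the tree obtained from $T$ by deleting all leaves (vertices of degree $1$). Since $T$ is a tree on at least $3$ vertices it has at least two leaves and $T'$ is a nonempty tree with strictly fewer vertices. The key observation is that for every non-leaf vertex $x$, a vertex $y$ with $d_T(x,y) = r_T(x)$ must be a leaf (otherwise one could extend the path, contradicting maximality), and moreover $r_T(x) = r_{T'}(x) + 1$: any longest path from $x$ ends at a leaf, and deleting that leaf decreases the distance by exactly one, uniformly in $x$. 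Hence $r_T(\cdot) = r_{T'}(\cdot) + 1$ on $V(T')$, so the set of vertices minimizing $r_T$ among non-leaves equals $Z(T')$. Combined with the fact that no leaf can lie in $Z(T)$ when $n \geq 3$ (a leaf $\ell$ has a neighbor $v$ with $r_T(v) < r_T(\ell)$, as $r_T(\ell) = r_T(v)+1$ unless $T=K_2$), we get $Z(T) = Z(T')$. By the induction hypothesis $Z(T')$ is $K_1$ or $K_2$, and we are done.

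The main obstacle — really the only nontrivial point — is justifying the identity $r_T(x) = r_{T'}(x) + 1$ for all $x \in V(T')$ simultaneously, i.e. that the leaf-deletion operation shifts \emph{every} eccentricity down by exactly one. This requires the remark that in a tree any vertex at maximal distance from $x$ is a leaf, which in turn uses the uniqueness of paths characterizing trees. Once that is in hand, the equality $Z(T) = Z(T')$ and hence the induction go through cleanly. Alternatively, one can bypass the induction by invoking the standard structural fact that in any tree the eccentricity function is "unimodal" along the diameter path, so the set of minimizers is a single vertex or a single edge — but since the excerpt has set up the center via $r_G$ and has just stated this as a folklore lemma, the leaf-peeling induction is the most self-contained route and is what I would write out.
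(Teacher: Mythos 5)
The paper does not actually prove this lemma: it is stated as a folklore result, ``recalled without proof'' and attributed to Jordan, so there is no in-paper argument to compare against. Your leaf-peeling induction is the standard proof and it is correct: the base cases $n=1,2$ give $K_1$ and $K_2$; for $n\geq 3$ the tree $T'$ obtained by deleting all leaves is a nonempty tree on fewer vertices, distances between vertices of $T'$ are unchanged (the unique $T$-path between two internal vertices uses only internal vertices), eccentricities satisfy $r_T = r_{T'}+1$ on $V(T')$, no leaf minimizes $r_T$, hence $Z(T)=Z(T')$ as induced subgraphs and the induction closes. Two points are worth tightening if you write it out in full. First, the identity $r_T(x)=r_{T'}(x)+1$ needs both inequalities: a farthest vertex $y$ from $x$ in $T$ is a leaf whose predecessor on the $x$--$y$ path lies in $T'$, giving $r_{T'}(x)\geq r_T(x)-1$; conversely, a farthest vertex $z$ from $x$ in $T'$ has degree at least $2$ in $T$, so it has a neighbor off the $x$--$z$ path (or, if $z=x$, use $r_T(x)\geq 1$), giving $r_T(x)\geq r_{T'}(x)+1$ --- you name the first half but the second deserves a sentence. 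Second, your opening parenthetical ``since the center is itself a connected induced subtree'' should be dropped: connectedness of the center is not established in the paper and is essentially equivalent to the lemma; fortunately your induction never uses it, since $Z(T)=Z(T')$ as induced subgraphs and adjacency in the two-vertex case is inherited from the base case $T'=K_2$.
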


The following result exhibits a useful link between rooted forests and trees.

\begin{lemma}\label{lem:unrooting}
	Let $(F,r)$ be a rooted forest. 
	Then there exists a tree $G$ which contains $F$ as a proper subgraph and such that:
	\begin{itemize}
		\item every magic unitary adapted to $G$ (in particular, every automorphism of $G$) preserves $F$ and is the identity when restricted to $G\setminus F$,
		\item every magic unitary adapted to $F$ (in particular, every automorphism of $F$) extends by the identity to a magic unitary adapted to $G$ (in particular, to an automorphism of $G$).
	\end{itemize}
	In particular, we have that $\Aut(G) = \Aut(F)$ and $\Qu(G) = \Qu(F)$.
\end{lemma}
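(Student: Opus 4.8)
The plan is to build $G$ from $(F,r)$ by attaching, to each root $r_i$ of $(F,r)$, a long enough ``pendant path'' (a rigid tail) whose length depends on the structure of $F$ so that no automorphism or magic unitary can move one tail onto another tail or into $F$. Concretely, I would write $(F,r) = \sum_{i=1}^{k}(T_i,r_i)$, and for each $i$ attach to $r_i$ a path $P^{(i)}$ of distinct lengths that are also larger than the diameter of $F$; then join all the far endpoints of these paths together through a single new vertex (or a suitable gadget) so that the result $G$ is a tree containing $F$ as a proper subgraph. The key design constraint is that the vertices of $F$ should be exactly the vertices of $G$ lying within a controlled distance of the ``hub,'' and that $F$'s roots should be characterized metrically inside $G$ (e.g.\ as the unique vertices of $F$ that are the feet of the attached tails, which are detectable by degree and distance conditions).

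The proof then proceeds in the following steps. \emph{Step 1:} Verify $G$ is a tree and that $F$ is a proper induced subgraph. \emph{Step 2:} Show that any magic unitary $U$ adapted to $G$ fixes the vertices of $G\setminus F$ and preserves $F$. Here I would use Lemma~\ref{lem:degree} (magic unitaries preserve degree) and Lemma~\ref{lem:distance} / Lemma~\ref{lem:qi_r} (they preserve graph distances and eccentricities): the attached tails are the unique long pendant paths of $G$, so the vertices along them are pinned down by their degree ($=2$ on the path, $=1$ at the leaf, etc.) together with their distance profile to the rest of $G$; since the tails have pairwise distinct lengths, $U$ cannot permute them, and an easy induction from the leaves inward forces $u_{vv}=1$ for every $v\in V(G)\setminus V(F)$, with all off-diagonal entries in those rows/columns equal to $0$. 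Restricting $U$ to the block indexed by $V(F)$ then gives a magic unitary $V$ with $U=\Diag(V,\text{id})$ (after reordering), and the commutation $U\Adj(G)=\Adj(G)U$ forces $V\Adj(F)=\Adj(F)V$ plus the root-preservation condition (because the feet of the tails, i.e.\ the roots $r_i$, are distinguished among $V(F)$ by the distance-profile argument above), so $V$ is adapted to the rooted forest $(F,r)$. \emph{Step 3:} Conversely, given a magic unitary $V$ adapted to $(F,r)$, set $U=\Diag(V,\text{id})$; a direct computation (as in Lemma~\ref{lem:rooted_forest_vs_rooted_tree}) using that $V$ preserves the roots shows $U$ commutes with $\Adj(G)$, so $U$ is adapted to $G$. \emph{Step 4:} These two maps are mutually inverse and compatible with comultiplication (they are identities on the relevant generators, and the extra generators of $\Oo(\Qu(G))$ are all sent to $0$ or $1$), giving $\Qu(G)=\Qu(F)$; specializing to scalar-valued magic unitaries gives $\Aut(G)=\Aut(F)$.

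The main obstacle is \emph{Step 2}: carefully arranging the pendant tails (and the hub gadget) so that no symmetry — classical or quantum — of $G$ can swap two tails, move a tail into $F$, or act nontrivially on the tails, while still not introducing any new structure that would create symmetries of $F$ not already present, and while keeping $G$ a tree. Distinct prime lengths exceeding $\diam(F)$ plus an asymmetric hook at the hub should do it, but writing this so that the metric/degree invariants preserved by magic unitaries (Lemmas~\ref{lem:degree}, \ref{lem:distance}, \ref{lem:qi_r}) genuinely pin everything down — including the subtle point that the \emph{roots} $r_i$ become metrically distinguishable inside $V(F)$ so that ``adapted to $G$'' really translates to ``adapted to $(F,r)$'' — is where the real work lies. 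Everything after that is the same block-matrix bookkeeping already used for Lemma~\ref{lem:rooted_forest_vs_rooted_tree}.
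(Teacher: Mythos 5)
Your overall strategy (rigid gadgets plus degree/distance pinning plus block bookkeeping) is in the right spirit, but the construction itself has a genuine flaw that no amount of care in Step~2 can repair: attaching tails of \emph{pairwise distinct} lengths to the individual roots $r_i$ destroys precisely the symmetries of $(F,r)$ that permute isomorphic rooted components. If $(T_1,r_1)\cong(T_2,r_2)$ (repeated components are allowed, and are essential for the later applications, e.g.\ realizing $\Qu(T,s)\wr S_a^+$ as the quantum automorphism group of a tree), then the swap of the two components is an automorphism of $(F,r)$, but its extension by the identity on the tails is not a graph morphism of your $G$: the edge $r_1t$, with $t$ the first vertex of the tail at $r_1$, would have to map to the non-edge $r_2t$. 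So your second bullet fails, and $\Aut(G)\subsetneq\Aut(F,r)$, $\Qu(G)\neq\Qu(F,r)$ in general. Using equal lengths for isomorphic components does not save the design either: then a magic unitary (or automorphism) of $G$ can swap two isomorphic components only by also swapping their tails, so the first bullet's requirement ``identity on $G\setminus F$'' fails instead. There are also secondary problems: in your $G$ the tail vertices are not leaves (their far ends meet the hub), so the announced ``induction from the leaves inward'' has nothing to start from, and in the degenerate case $k=1$ with $F=K_1$ your $G$ is a bare path, whose flip violates both bullets (your ``asymmetric hook'' would cure this last point, but not the component-swapping issue, which is structural).

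The paper's construction avoids all of this by never touching the components of $F$: it adds a single new hub $x$ adjacent to \emph{all} the roots, and hangs the distinguishing rigid tags off $x$ itself, namely $d=1+\max\{\deg_F(v)\}$ rooted paths of pairwise distinct lengths, chosen quantum asymmetric, pairwise non quantum isomorphic, and not quantum isomorphic to any $(T_i,r_i)$. Then $\deg(x)$ strictly exceeds every other degree, so Lemma~\ref{lem:degree} pins $x$, and the rooted machinery already in place (Lemma~\ref{lem:rooted_forest_vs_rooted_tree}, Theorem~\ref{thm:qu_rooted_forest}, Theorem~\ref{thm:qi_rooted_trees} together with Theorem~\ref{thm:qi_equals_i_for_rooted_forests}) forces any magic unitary adapted to $G$ to be the identity on the tag trees and to restrict to a magic unitary adapted to $(F,r)$, while conversely any magic unitary adapted to $(F,r)$ extends by the identity because the tags are attached to $x$, not to the $r_i$. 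If you want to salvage your write-up, the repair is essentially to adopt this hub-with-tags design (tags attached to the new vertex, components of $F$ left intact), after which your Steps~3 and~4 go through as block-matrix bookkeeping and no bespoke metric induction is needed.
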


\begin{proof}
	Let us write $(F,r) = \sum_{i=1}^k (T_i,r_i)$. 
	The construction is as follows. 
	Let $d = 1+\max \{ \deg(x) \mid x\in V(F)\}$. 
	Notice that $d\geq 2$. 
	Assume we have $(S_1,s_1),\ldots,(S_d,s_d)$ such that, for all $1\leq i\leq d$, we have:
	\begin{enumerate}
		\item $(S_i,s_i)$ is a quantum asymmetric rooted tree, 
		\item the maximal degree of $S_i$ is at most $d$, 
		\item $(S_i,s_i) \not \qi (S_j,s_j)$ for $j\neq i$,
		\item $(S_i,s_i) \not \qi (T_j,r_j)$ for all $1\leq j\leq k$.
	\end{enumerate}
	Let $(G,x) = x.\left( (F,r) + \sum_{i=1}^d (S_i,s_i) \right)$. 
	We claim that $G$ has the desired properties. 
	First, $G$ is a well-defined tree, and it clearly contains $F$ as a proper subgraph. 
	Then we have $\deg(x) = k+d \geq d+1$, so by Lemma~\ref{lem:degree} every magic unitary adapted to $G$ fixes $x$ and is thus a magic unitary adapted to $(G,x)$. 
	Now let $U$ be a magic unitary adapted to $(G,x)$. 
	By Lemma~\ref{lem:rooted_forest_vs_rooted_tree} we have that $U$ is adapted to $G\setminus \{x\}$ and, by Theorem~\ref{thm:qu_rooted_forest} and by assumption, it is actually the identity when restricted to $\sum_{i=1}^d (S_i,s_i)$. 
	Moreover, if $V$ is a magic unitary adapted to $(F,r)$, then extending it by the identity immediately gives a magic unitary adapted to $G\setminus \{x\}$, and so adding an extra 1 we obtain by Lemma~\ref{lem:rooted_forest_vs_rooted_tree} a magic unitary adapted to $(G,x)$ and in particular to $G$. 
	This is what we wanted.

	It remains to show we can always find such $(S_i,s_i)$. 
	We can for instance take $d$ paths rooted at one extremity of different lengths: they are clearly quantum asymmetrical, of maximum degree 2, two-by-two not quantum isomorphic as well as not quantum isomorphic to any of the $(T_i,r_i)$ if chosen of a different length, in case one of them is a path rooted at one extremity as well.
\end{proof}

We define a transformation $\Psi \colon \Trees \to \RTrees$ which allows us to transfer to trees what we have proved for rooted trees.

Let $T$ be a tree. 
If $Z(T) = K_1$, let $x$ be the vertex such that $Z(T) = \{x\}$. 
We simply set $\Psi(T) = (T,x)$. 
Let us now assume that $Z(T) = \{x,y\}$. 
We define $T'$ to be the graph obtained by subdividing the edge $xy$ once, adding a new vertex that we call $z$: $T'$ is clearly a tree -- for instance because it is connected and we have $\# V(T') = \#V(T) + 1 = \# E(T) + 2 = \# E(T') + 1$. 
We set $\Psi(T) = (T',z)$.

We now extend $\Psi$ to magic unitaries adapted to a rooted tree.

Let $T$ be a tree and let $U$ be a magic unitary adapted to it. 
If $Z(T) = K_1$, we set $\Psi(U) = U$. 
If $Z(T) = K_2$, we set $\Psi(U) = \Diag(U,1)$, where the extra coefficient corresponds to the new vertex. 

Notice that we are doing a slight abuse here: technically we are defining $\Psi(T,U)$ where $T$ is a tree and $U$ is a magic unitary adapted to it.\footnote{If we want to realise $\Psi$ as a function, we can for instance restrict ourselves to magic unitaries with coefficients being operators on a fixed separable Hilbert space, since all $C^*$-algebras considered are separable.}
However, we will maintain this abuse since it shall always be clear from the context with respect to which tree we are associating a given magic unitary.

\begin{lemma}\label{lem:qi_psi}
	Let $T_1$ and $T_2$ be two quantum isomorphic trees. 
	If $U$ is a quantum isomorphism between them, then $\Psi(U)$ is a quantum isomorphism of rooted trees between $\Psi(T_1)$ and $\Psi(T_2)$. 
	Conversely, every quantum isomorphism between $\Psi(T_1)$ and $\Psi(T_2)$ is of the form $\Psi(U)$ for $U$ a quantum isomorphism between $T_1$ and $T_2$.
\end{lemma}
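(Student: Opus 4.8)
The plan is to split the argument according to Jordan's dichotomy for the center of a tree, and to reduce everything to the already-established structural facts about quantum isomorphisms and centers. First I would check that the dichotomy is legitimate: since $U$ witnesses $T_1\qi T_2$, Theorem~\ref{thm:qi_center} (applied with $i_0=\min r_{T_1}(V(T_1))$) shows that the sub-block of $U$ indexed by $Z(T_2)\times Z(T_1)$ is itself a quantum isomorphism from $Z(T_1)$ to $Z(T_2)$; as quantum isomorphism preserves the number of vertices (Lemma~\ref{lem:square_mu}) and the center of a tree is $K_1$ or $K_2$ (Lemma~\ref{lem:Z(T)}), the centers $Z(T_1)$ and $Z(T_2)$ are of the same type, so the definition of $\Psi$ treats $T_1$ and $T_2$ the same way. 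I would also recall for repeated use that, by Corollary~\ref{coro:center}, any quantum isomorphism $U$ preserves centers: $u_{ab}=0$ whenever exactly one of $a,b$ lies in the corresponding center.

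The case $Z(T_i)=K_1$, say $Z(T_i)=\{x_i\}$, is immediate: here $\Psi(T_i)=(T_i,x_i)$ and $\Psi$ is the identity on magic unitaries. For the forward direction the block of $U$ indexed by $\{x_2\}\times\{x_1\}$ is a $1\times1$ magic unitary, so $u_{x_2x_1}=1$ and $U=\Psi(U)$ is a quantum isomorphism of the rooted trees. Conversely a quantum isomorphism of $(T_1,x_1)$ and $(T_2,x_2)$ is in particular a quantum isomorphism of $T_1$ and $T_2$ and coincides with its own image under $\Psi$.

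The case $Z(T_i)=K_2$, say $Z(T_i)=\{x_i,y_i\}$, is the substantial one. Here $T_i'$ is obtained from $T_i$ by replacing the central edge $x_iy_i$ with a path $x_i\,z_i\,y_i$, $\Psi(T_i)=(T_i',z_i)$, and $\Psi(U)=\Diag(U,1)$ with the new row and column indexed by $z_2$ and $z_1$. Ordering $z_i$ last, I would write, with $A_i=\Adj(T_i)$, with $E_i$ the adjacency matrix of the single edge $x_iy_i$, and with $f_i=\mathbf{1}_{\{x_i,y_i\}}$,
\[ \Adj(T_i')=\begin{pmatrix} A_i-E_i & f_i\\ f_i^{T} & 0\end{pmatrix}. \]
A block computation then shows that $\Psi(U)\Adj(T_1')=\Adj(T_2')\Psi(U)$ is equivalent to the three identities $Uf_1=f_2$, $f_2^{T}U=f_1^{T}$ and $UE_1=E_2U$ (using $UA_1=A_2U$ to absorb the top-left block). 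The first two follow from center preservation together with the fact that $U$ has unit row and column sums. For the third, all contributions away from the $2\times2$ center block vanish by center preservation, while on the center block the matrix $U[\{x_2,y_2\},\{x_1,y_1\}]$ is, by Theorem~\ref{thm:qi_center}, a $2\times2$ magic unitary, hence of the symmetric shape $\left(\begin{smallmatrix} p & 1-p\\ 1-p & p\end{smallmatrix}\right)$ — which is precisely what lets it commute past $E_i$. Since $\Psi(U)_{z_2z_1}=1$ by construction, $\Psi(U)$ is a quantum isomorphism of rooted trees. For the converse, given a quantum isomorphism $V$ of $(T_1',z_1)$ and $(T_2',z_2)$, the equality $v_{z_2z_1}=1$ forces (row/column sums) $V=\Diag(W,1)$ with $W$ a magic unitary on $V(T_2)\times V(T_1)$; unwinding $V\Adj(T_1')=\Adj(T_2')V$ as above gives $Wf_1=f_2$ and $f_2^{T}W=f_1^{T}$, which force the $2\times2$ center block of $W$ to have all row and column sums equal to $1$, hence (its entries being projections) to be a $2\times2$ magic unitary, hence symmetric, hence $WE_1=E_2W$ and so $WA_1=A_2W$. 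Thus $W$ is a quantum isomorphism of $T_1$ and $T_2$ with $\Psi(W)=V$.

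The only point I expect to require genuine care is the identity $UE_1=E_2U$ (and its analogue for $W$): verifying that subdividing the central edge does not destroy the intertwining. Everything there hinges on the elementary observation that a $2\times2$ magic unitary is automatically symmetric, applied to the center block produced by Theorem~\ref{thm:qi_center}; the rest is routine block-matrix bookkeeping on top of the already-established facts that quantum isomorphisms preserve centers and have unit row and column sums.
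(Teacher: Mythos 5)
Your proof is correct and follows essentially the same route as the paper's: the Jordan dichotomy on the center, Theorem~\ref{thm:qi_center}/Corollary~\ref{coro:center} to localize everything to the center block, and the observation that a $2\times2$ magic unitary has the form $pI+(1-p)\sigma$ and hence commutes with the swap; your two-block bookkeeping with the correction term $E_i$ is just a cosmetic variant of the paper's three-block decomposition (non-central vertices, center, new vertex). One small point to make explicit in the converse direction: the identity $WE_1=E_2W$ also requires $w_{ab}=0$ whenever exactly one of $a,b$ is a central vertex, which is not implied by symmetry of the center block alone but follows at once from the equations $Wf_1=f_2$ and $f_2^{T}W=f_1^{T}$ you already derived, since a sum of projections equal to $0$ forces each term to vanish.
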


\begin{proof}
	Let $U$ be a quantum isomorphism between $T_1$ and $T_2$. 
	First assume that $Z(T_1) = K_1$. 
	Then $Z(T_2) = K_1$ by Theorem~\ref{thm:qi_center} and $\Psi(U) = U$. 
	We also have $\Psi(T_i) = (T_i,r_i)$ with $Z(T_i) = \{r_i\}$ for $i\in \{1,2\}$. 
	Since $U$ is a quantum isomorphism between $T_1$ and $T_2$, we only need to show that $u_{r_2r_1} = 1$. 
	But this follows from Theorem~\ref{thm:qi_center}.

	Now let us assume that $Z(T_1) = K_2$. 
	We then also have $Z(T_2) = K_2$ by Theorem~\ref{thm:qi_center}. 
	This gives $\Psi(T_i) = (T_i',z_i)$ for $i\in \{1,2\}$, where $z_i$ is the added vertex, and we have $\Psi(U) = \Diag(U,1)$. 
	By Theorem~\ref{thm:qi_center} again, we can write $U$ in the form $U = \Diag(V,W)$, where $V$ and $W$ are magic unitaries, and $W$ is indexed by $Z(T_2)\times Z(T_1)$. 
	So actually $\Psi(U) = \Diag(V,W,1)$ is block diagonal. 
	Let us write $A = \Adj(T_1)$, $A' = \Adj(T_1')$, $B= \Adj(T_2)$, and $B' = \Adj(T_2')$. 
	By construction, we have $\Psi(U)_{z_2z_1} = 1$, and we already know that $\Psi(U)$ is a magic unitary (of the correct dimensions). 
	So to complete the proof we only need to check that $\Psi(U)A' = B'\Psi(U)$. 
	Let us write the matrices in the block decomposition adapted to the diagonal form of $\Psi(U) = \Diag(V,W,1)$. 
	Writing $L = (1,1)$, we have:
	\[ A' = \begin{pmatrix}
		A_{11} & A_{12} & 0\\
		A_{21} & 0 & L^T\\
		0 & L & 0
	\end{pmatrix}\ \text{and}\ B' = \begin{pmatrix}
		B_{11} & B_{12} & 0\\
		B_{21} & 0 & L^T\\
		0 & L & 0
	\end{pmatrix}.\]
	It comes:
	\[\Psi(U)A'\Psi(U)^* = \begin{pmatrix}
		VA_{11}V^* & VA_{12} W^* & 0\\
		W^*A_{21}V & 0 & WL^T\\
		0 & LW^* & 0
	\end{pmatrix}.\]
	Now since $UAU^* = B$ we already get $VA_{11}V^* = B_{11}$, $VA_{12}W^* = B_{12}$, and $W^*A_{21}V = B_{21}$. 
	So to conclude we just need to check that $WL^T = L^T$. 
	But this follows immediately from the facts that $L=(1,1)$ and $W$ is a magic unitary. 
	This completes the proof that $\Psi(U)$ is a quantum isomorphism between $\Psi(T_1)$ and $\Psi(T_2)$.

	Conversely, let us now take $V$ a quantum isomorphism between $\Psi(T_1)$ and $\Psi(T_2)$. 
	In the case where $Z(T_1) = K_1$, we also have $Z(T_2) = K_1$ by Theorem~\ref{thm:qi_center}. 
	So there is nothing to prove: $V$ is already a quantum isomorphism between $T_1$ and $T_2$ and by construction we have $\Psi(V) = V$. 
	Let us assume now that $Z(T_1) = K_2$. 
	Then we have $Z(T_2) = K_2$, and we write $\Psi(T_i) = (T_i',z_i)$ again. 
	Since $V$ is a quantum isomorphism of rooted trees, we have $V_{z_2z_1} = 1$. 
	So $V = \Diag(U,1)$, where $U$ is a magic unitary. 
	Writing $A = \Adj(T_1)$ and $B=\Adj(T_2)$, we only need to show that $UA = BU$ in order to complete the proof. 
	We decompose the matrices in blocks adapted to the centers, writing $A = (A_{ij})_{1\leq i,j\leq 2}$, and similarly for $B$. 
	Since $V$ preserves the roots, and the roots form the centers, we have by Theorem~\ref{thm:qi_center} that $U = \Diag(U_1,U_2)$, where $U_2$ is of size 2. 
	The fact that $V$ conjugates $\Adj(T_1')$ and $\Adj(T_2')$ implies that $U_1A_{11} = B_{11}U_1$, $U_1 A_{12} = B_{12}U_2$, and $U_2A_{21} = B_{21}U_1$. 
	It remains to check that $U_2 A_{22} = B_{22}U_2$. 
	But we have:
	\[ A_{22} = \begin{pmatrix}
		0 & 1\\
		1 & 0
	\end{pmatrix} = B_{22},\]
	and it commutes with $U_2$ since $U_2$ is a 2-dimensional magic unitary. 
	This completes the proof.
\end{proof}

We gather useful facts about the transformation $\Psi$ in the next lemma.

\begin{lemma}\label{lem:psi}
	Let $T$, $T_1$, $T_2$ be trees and let $U$ be a magic unitary adapted to $T$ with coefficients in a unital $C^*$-algebra $X$. 
	We have:
	\begin{itemize}
		\item if $\Psi(T_1) = \Psi(T_2)$ and $T_1$ and $T_2$ have the same number of vertices, then $T_1 = T_2$,
		\item $\Supp(U) = \Supp(\Psi(U))$,
		\item the coefficients of $U$ commute if and only if the ones of $\Psi(U)$ do,
		\item $U$ is a submatrix of $\Psi(U)$,
		\item the coefficients of $U$ generate the same $C^*$-subalgebra of $X$ as the ones of $\Psi(U)$,
		\item $\Psi$ induces an isomorphism of the groups $\Aut(T)$ and $\Aut(\Psi(T))$,
		\item $\Psi$ induces an isomorphism of the quantum groups $\Qu(T)$ and $\Qu(\Psi(T))$.
	\end{itemize}
\end{lemma}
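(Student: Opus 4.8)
The plan is to verify each bullet of Lemma~\ref{lem:psi} by unwinding the definition of $\Psi$ according to the two cases $Z(T) = K_1$ and $Z(T) = K_2$, using the structural results already established about the center. Recall that by Lemma~\ref{lem:Z(T)} the center of a tree is either $K_1$ or $K_2$, so these two cases are exhaustive. In the first case $\Psi(T) = (T,x)$ with $Z(T) = \{x\}$ and $\Psi(U) = U$, so all the bullets concerning the single magic unitary $U$ (support, commutativity, submatrix, generated $C^*$-subalgebra) are trivial; the only content is the identity $\Aut(T) = \Aut(\Psi(T))$ and $\Qu(T) = \Qu(\Psi(T))$, which holds because every automorphism and, more generally (by Corollary~\ref{coro:center} applied with $H = G$, i.e. the fact that quantum automorphisms preserve the center), every magic unitary adapted to $T$ fixes the unique central vertex $x$, hence is exactly a magic unitary adapted to $(T,x)$. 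In the second case $\Psi(T) = (T',z)$ where $T'$ subdivides the central edge $xy$ by a new vertex $z$, and $\Psi(U) = \Diag(U,1)$; here I would argue that $z$ is the unique central vertex of $T'$ of degree $2$ whose two neighbors both lie in $Z(T)$ (a direct eccentricity computation), and again every magic unitary adapted to $T'$ fixes $z$ by Corollary~\ref{coro:center}, so it is of the form $\Diag(V,1)$ with $V$ a magic unitary adapted to $T$; the computation that $V \mapsto \Diag(V,1)$ respects the adjacency relation is the block computation already carried out in the proof of Lemma~\ref{lem:qi_psi} (taking $T_1 = T_2 = T$), so I would simply invoke that.

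Concretely, I would prove the bullets in the following order. First, the injectivity statement: if $\Psi(T_1) = \Psi(T_2)$ and $\#V(T_1) = \#V(T_2)$, then either both have $Z = K_1$ (and then $T_1 = \Psi(T_1) = \Psi(T_2) = T_2$ as the underlying graph is recovered by forgetting the root), or both have $Z = K_2$ (and then $T_1' = T_2'$, and since $T_1'$ contains a unique vertex, namely the root $z$, whose removal and identification of its two neighbors recovers the original tree — equivalently, $T_i$ is obtained from $T_i'$ by contracting one of the two edges at $z$ — we get $T_1 = T_2$; the hypothesis on the number of vertices rules out the mixed case since subdivision increases the vertex count by one). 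The bullets $\Supp(U) = \Supp(\Psi(U))$, "coefficients of $U$ commute iff those of $\Psi(U)$ do", "$U$ is a submatrix of $\Psi(U)$", and "same generated $C^*$-subalgebra" are all immediate from $\Psi(U) \in \{U, \Diag(U,1)\}$: adjoining a single $1$ on the diagonal changes neither the set of vertices $v$ with $u_{vv} \neq 1$, nor the commutation relations among coefficients, nor the generated algebra, and $U$ sits in the top-left block. Finally, the two isomorphism statements $\Aut(T) \cong \Aut(\Psi(T))$ and $\Qu(T) \cong \Qu(\Psi(T))$: in the $K_1$ case these are equalities as noted; in the $K_2$ case, the map $U \mapsto \Psi(U) = \Diag(U,1)$ is a bijection between magic unitaries adapted to $T$ and those adapted to $(T',z)$ (surjectivity because any magic unitary adapted to $T'$ fixes $z$ and its restriction is adapted to $T$ by the same block argument as in Lemma~\ref{lem:rooted_forest_vs_rooted_tree}/Lemma~\ref{lem:qi_psi}), and this bijection is multiplicative on coefficients, hence at the level of $\Oo(\Qu(-))$ it induces a $*$-isomorphism compatible with the comultiplication; restricting to scalar (permutation-matrix) coefficients gives the group isomorphism on $\Aut$.

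The only genuinely non-formal point is identifying $z$ canonically inside $T'$, i.e. checking that the subdivided tree "knows" which of its vertices is the added root — this is what makes $\Psi(T)$ well defined as a rooted tree and is needed for the injectivity bullet and for the statement that magic unitaries adapted to $T'$ fix $z$. I would handle it by an eccentricity argument: in $T'$ the vertex $z$ is the unique center (by Lemma~\ref{lem:Z(T)} applied to $T'$, $Z(T')$ is $K_1$ or $K_2$, and a short computation with $r_{T'}$ shows $Z(T') = \{z\}$ precisely because subdividing the central edge of a tree with a two-vertex center produces a tree with a one-vertex center), so $z$ is recovered as the unique central vertex and Corollary~\ref{coro:center} forces every magic unitary adapted to $T'$ to fix it. This is the main obstacle in the sense that it is the one step requiring an actual (if elementary) graph-metric verification rather than bookkeeping; everything else is unwinding definitions and quoting the block computations already performed earlier in the section.
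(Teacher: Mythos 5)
Your proof is correct and takes essentially the same route as the paper, whose own proof simply notes that the last two bullets follow from Lemma~\ref{lem:qi_psi} and that the remaining ones are elementary observations about $\Psi(U)\in\{U,\Diag(U,1)\}$. Your extra eccentricity argument identifying $z$ as the unique center of $T'$ is sound but not strictly necessary, since the root is part of the data of the rooted tree $\Psi(T)$, so fixing $z$ is already built into the definition of a magic unitary adapted to $(T',z)$ and into the equality $\Psi(T_1)=\Psi(T_2)$ of rooted trees.
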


\begin{proof}
	The last two statements follow from Lemma~\ref{lem:qi_psi} and the others are elementary observations.
\end{proof}

Thanks to $\Psi$, we obtain that the class of forests is tractable.

\begin{theorem}\label{thm:tractable_forests}
	The class of forests is tractable. 
	In other words, the following are true:
	\begin{itemize}
		\item[(QA)] a forest is asymmetric if and only if it has a trivial quantum automorphism group,
		\item[(QI)] two quantum isomorphic forests are isomorphic,
		\item[(SA)] a forest has quantum symmetry if and only if it satisfies Schmidt's criterion.
	\end{itemize}
\end{theorem}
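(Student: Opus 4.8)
The plan is to deduce the theorem by combining the rooted results of Subsection~\ref{subsec:quantum_rooted} (transported via the transformation $\Psi$) with the sum machinery of Subsection~\ref{subsec:sums}. Write $\Ff$ for the class of trees. Then the class of forests is precisely $\Ff^+$, and since every connected component of a forest is a tree, $\Ff$ contains the connected components of all its elements (trivially, as trees are connected). By Theorem~\ref{thm:problems_sums}, if $\Ff$ satisfies (QA), (QI), and (SA), then so does $\Ff^+$. So the whole task reduces to proving that the class of trees is tractable.

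For (QA) I would simply invoke Fulton's Theorem~\ref{thm:qa_for_trees}; alternatively it follows from Theorem~\ref{thm:qa_for_rooted_forests} together with the isomorphisms $\Qu(T)\cong\Qu(\Psi(T))$ and $\Aut(T)\cong\Aut(\Psi(T))$ recorded in Lemma~\ref{lem:psi}. For (QI), let $T_1$ and $T_2$ be quantum isomorphic trees and let $U$ be a quantum isomorphism between them; in particular $T_1$ and $T_2$ have the same number of vertices. By Lemma~\ref{lem:qi_psi}, $\Psi(U)$ is a quantum isomorphism of rooted trees between $\Psi(T_1)$ and $\Psi(T_2)$, so Theorem~\ref{thm:qi_equals_i_for_rooted_forests} gives that $\Psi(T_1)$ and $\Psi(T_2)$ are isomorphic as rooted trees. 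Working up to isomorphism (Remark~\ref{rk:iso_graphs}) we may treat them as equal, and then the first item of Lemma~\ref{lem:psi}, using that $T_1$ and $T_2$ have the same number of vertices, yields $T_1=T_2$.

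For (SA), note first that the classical Schmidt criterion — a special case of Theorem~\ref{thm:qu_schmidt} — already shows a tree satisfying Schmidt's criterion has quantum symmetry, so only the converse is at stake. Fix a tree $T$. Since $\Psi$ induces an isomorphism $\Aut(T)\cong\Aut(\Psi(T))$ that preserves supports (by the equality $\Supp(U)=\Supp(\Psi(U))$ of Lemma~\ref{lem:psi}), and hence preserves disjointness of supports, $T$ satisfies Schmidt's criterion if and only if $\Psi(T)$ satisfies the rooted Schmidt criterion. Likewise, since $\Psi$ induces an isomorphism $\Qu(T)\cong\Qu(\Psi(T))$ and commutativity of the coefficients of a magic unitary is preserved by $\Psi$, the tree $T$ has quantum symmetry if and only if $\Psi(T)$ does. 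Applying Theorem~\ref{thm:rooted_sa} (the rooted Schmidt alternative) to $\Psi(T)$ and chaining the three equivalences shows that $T$ has quantum symmetry if and only if it satisfies Schmidt's criterion. This establishes (SA) for trees.

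Having shown the class of trees is tractable, Theorem~\ref{thm:problems_sums} upgrades this to the tractability of $\Ff^+$, i.e.\ of the class of forests, which is the statement. I expect the only delicate point to be the bookkeeping with $\Psi$ in the (SA) step: one must be sure that passing from $T$ to $\Psi(T)$ alters the underlying $C^*$-algebra of a magic unitary only by a scalar block, so that neither quantum symmetry, nor the Schmidt criterion, nor the vertex count is affected. But this is exactly what Lemma~\ref{lem:qi_psi} and Lemma~\ref{lem:psi} package up, so the proof is in the end an assembly of already-established facts rather than a new computation.
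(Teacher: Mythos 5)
Your proposal is correct and follows essentially the same route as the paper: reduce to trees via Theorem~\ref{thm:problems_sums}, then transport each axiom to the rooted setting through $\Psi$ using Lemma~\ref{lem:qi_psi} and Lemma~\ref{lem:psi}, invoking Theorem~\ref{thm:qa_for_rooted_forests} (or Fulton's Theorem~\ref{thm:qa_for_trees}) for (QA), Theorem~\ref{thm:qi_equals_i_for_rooted_forests} for (QI), and the rooted Schmidt alternative (Theorem~\ref{thm:rooted_sa}) for (SA). The only difference is cosmetic bookkeeping in the (SA) equivalences, which Lemma~\ref{lem:psi} already packages, exactly as you note.
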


\begin{proof}
	By Theorem~\ref{thm:problems_sums}, it is enough to show that the class of trees is tractable. 

	By Theorem~\ref{thm:qa_for_trees}, we know that trees satisfy (QA). 
	We can also obtain it explicitly as follows. 
	Let $T$ be an asymmetric tree and let $U$ be a magic unitary adapted to $T$. 
	By Lemma~\ref{lem:psi}, we have that $\Psi(U)$ is adapted to the rooted tree $\Psi(T)$, and we also have that $\Psi(T)$ is asymmetric. 
	By Theorem~\ref{thm:qa_for_rooted_forests}, we have that $\Psi(T)$ is quantum asymmetric, so $\Psi(U)$ is the identity matrix. 
	So by Lemma~\ref{lem:psi} we have that $U$ is the identity matrix too. 
	Applying this to the fundamental representation of $\Qu(T)$ we obtain that $\Qu(T) = 1$, as desired. 
	So trees satisfy (QA).

	Let $T$ and $S$ be two quantum isomorphic trees and let $U$ be a quantum isomorphism from $T$ to $S$. 
	By Lemma~\ref{lem:qi_psi}, we have that $\Psi(U)$ is a quantum isomorphism of rooted trees from $\Psi(T)$ to $\Psi(S)$. 
	By Theorem~\ref{thm:qi_rooted_forests}, we have that $\Psi(T) = \Psi(S)$. 
	This and the fact that $T$ and $S$ have the same number of vertices implies that $T=S$ by Lemma~\ref{lem:psi}. 
	Thus quantum isomorphic trees are isomorphic.

	Finally, let $T$ be a tree not satisfying Schmidt's criterion. 
	Then $\Psi(T)$ does not satisfy Schmidt's criterion neither by Lemma~\ref{lem:psi}. 
	Since the class of rooted forests satisfies the (rooted) Schmidt alternative by Theorem~\ref{thm:rooted_sa}, we have that $\Psi(T)$ does not have quantum symmetry. 
	But then $T$ does not have quantum symmetry by Lemma~\ref{lem:psi}. 
	This proves that the class of trees satisfies the Schmidt alternative.

	Thus the class of trees is tractable, and so is the class of forests by Theorem~\ref{thm:problems_sums}. 
	This concludes the proof.
\end{proof}

Let us conclude this section by showing that trees and tree-cographs are superrigid. 

\begin{theorem}\label{thm:superrigid_trees}
	The class of trees is superrigid, that is, if $T$ is a tree and $G$ is a graph such that $T\qi G$, then $G=T$. 
\end{theorem}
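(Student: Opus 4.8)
The plan is to reduce superrigidity of the class of trees to the tractability of forests already established in Theorem~\ref{thm:tractable_forests}, by showing that any graph quantum isomorphic to a tree is itself a tree. The point is that quantum isomorphism preserves just enough combinatorial data — connectedness, vertex count, edge count — to pin down the property of being a tree, after which the (QI) axiom finishes the job.

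First I would let $T$ be a tree, $G$ a graph with $T\qi G$, and set $n = \#V(T)$. Since $T$ is connected, Corollary~\ref{coro:connected_qi} gives that $G$ is connected. By Lemma~\ref{lem:square_mu} (preservation of the number of vertices) we have $\#V(G)=n$, and by Lemma~\ref{lem:qi_same_nb_edges} we have $\#E(G)=\#E(T)=n-1$. Now invoke the elementary fact that a connected graph on $n$ vertices with exactly $n-1$ edges is a tree; hence $G$ is a tree.

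At this stage $T$ and $G$ are two quantum isomorphic trees, so the (QI) part of Theorem~\ref{thm:tractable_forests} (forests are tractable, and trees form a subclass) yields $T=G$, where the equality is understood up to isomorphism as in Remark~\ref{rk:iso_graphs}. This is exactly the statement of superrigidity for trees.

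I do not expect any real obstacle: every ingredient is already in place. The only steps requiring a moment's attention are confirming that the preservation of vertex and edge counts under quantum isomorphism is available here (it is, via Lemma~\ref{lem:square_mu} and Lemma~\ref{lem:qi_same_nb_edges}) and citing the characterization of trees as connected graphs with $n-1$ edges; the substantial work — quantum isomorphic graphs have quantum isomorphic connected components, and forests satisfy (QI) — was carried out earlier.
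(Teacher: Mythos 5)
Your proof is correct and follows essentially the same route as the paper: use preservation of connectedness, vertex count, and edge count under quantum isomorphism to conclude that $G$ is a tree, then apply the (QI) axiom for forests from Theorem~\ref{thm:tractable_forests}. No gaps to report.
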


\begin{proof}
	Let $G$ be a graph and $T$ a tree such that $G\qi T$. 
	Recall that a graph $H$ is a tree if and only if $H$ is connected and $\abs{V(H)} = \abs{E(H)} + 1$. 
	Since quantum isomorphism preserves connectedness (by Corollary~\ref{coro:connected_qi}), the number of vertices (by definition), and the number of edges (by Lemma~\ref{lem:qi_same_nb_edges}), then $G$ is a tree. 
	So $G=T$ because trees satisfy (QI) by Theorem~\ref{thm:tractable_forests}. 
	This shows that the class of trees is superrigid.
\end{proof}

\begin{corollary}\label{coro:superrigid_tree_cographs}
	The class of tree-cographs is superrigid.
\end{corollary}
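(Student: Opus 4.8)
The plan is to observe that the class of tree-cographs is by definition $\co(\Trees)$, so that the statement is an immediate consequence of the general stability result for superrigidity under the cograph closure together with the superrigidity of trees. In other words, the whole argument is a chaining of two results already proved in the excerpt, and no new ideas are needed.

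First I would recall that the class $\Trees$ of finite trees is superrigid: this is Theorem~\ref{thm:superrigid_trees}, which itself rests on the facts that quantum isomorphism preserves connectedness (Corollary~\ref{coro:connected_qi}), the number of vertices (by definition of a quantum isomorphism), and the number of edges (Lemma~\ref{lem:qi_same_nb_edges}), so that any graph quantum isomorphic to a tree is again a tree, together with the axiom (QI) for trees established in Theorem~\ref{thm:tractable_forests}. Then I would apply Theorem~\ref{thm:superrigid_F_cographs} with $\Ff = \Trees$: since $\Trees$ is superrigid, so is $\co(\Trees)$. As $\co(\Trees)$ is precisely the class of tree-cographs, this gives the claim, namely that whenever $G$ is a tree-cograph and $H$ is any graph with $G \qi H$, one has $H = G$.

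I expect no genuine obstacle here, since the proof is just a one-line deduction; the real work has already been carried out. For completeness the dependency chain is worth pointing out: Theorem~\ref{thm:superrigid_F_cographs} follows from the structural description of $\co(\Ff)$ in Lemma~\ref{lem:F_cographs} together with Lemma~\ref{lem:superrigid_complement} and Lemma~\ref{lem:superrigid_sums}, which show that superrigidity is inherited when passing to complements and to disjoint sums; and Lemma~\ref{lem:superrigid_sums} in turn uses Theorem~\ref{thm:qi_for_disconnected_graphs} to split a quantum isomorphism along connected components. Thus the corollary reduces entirely to invoking Theorem~\ref{thm:superrigid_trees} and Theorem~\ref{thm:superrigid_F_cographs}.
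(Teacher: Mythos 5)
Your proposal is correct and follows exactly the paper's own proof: the paper likewise deduces the corollary by applying Theorem~\ref{thm:superrigid_F_cographs} to the superrigid class $\Trees$ (Theorem~\ref{thm:superrigid_trees}), since tree-cographs are $\co(\Trees)$. The extra dependency chain you trace is accurate but not needed beyond citing those two results.
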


\begin{proof}
	This follows immediately from Theorem~\ref{thm:superrigid_trees} by Theorem~\ref{thm:superrigid_F_cographs}.
\end{proof}

\subsection{The noncommutative Jordan theorem}\label{subsec:noncommutative_jordan}

In this subsection, we compute the quantum automorphism groups of forests in Corollary~\ref{coro:qu_forest_equal_tree} and Theorem~\ref{thm:noncommutative_jordan}. 
We then show that tree-cographs are tractable in Theorem~\ref{thm:tractable_tree_cographs}. 
Thanks to the results of Section~\ref{sec:tractable}, we obtain the classical and quantum automorphism groups of tree-cographs in Theorem~\ref{thm:aut_tree_cographs_vs_trees} and Theorem~\ref{thm:jordan_tree_cographs}.

\begin{lemma}\label{lem:for_jordan}
	Let $T$ be a tree. 
	Then there exists a tree $H$ such that $T$ is not quantum isomorphic to $H$ and $\Qu(T) = \Qu(H)$. 
\end{lemma}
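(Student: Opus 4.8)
The plan is to reduce to the rooted setting via the transformation $\Psi$ and then invoke the unrooting construction of Lemma~\ref{lem:unrooting}.

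First I would put $(T',r) = \Psi(T)$, so that $(T',r)$ is a rooted tree: its underlying tree $T'$ is either $T$ itself (when $Z(T) = K_1$) or $T$ with its central edge subdivided (when $Z(T) = K_2$), so in particular $T'$ is a tree with $\abs{V(T')} \ge \abs{V(T)}$. By the last item of Lemma~\ref{lem:psi}, we have $\Qu(T) = \Qu(\Psi(T)) = \Qu(T',r)$, the last term being the quantum automorphism group of the rooted tree $(T',r)$.

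Next I would apply Lemma~\ref{lem:unrooting} to the rooted forest $(T',r)$ (a rooted tree is in particular a rooted forest). This yields a tree $H$ which contains $T'$ as a proper subgraph and satisfies $\Qu(H) = \Qu(T',r)$. Combining with the previous step gives $\Qu(H) = \Qu(T)$, which is the second requirement. (Concretely, the construction in the proof of Lemma~\ref{lem:unrooting} attaches a new root together with at least two further rooted trees to $T'$, so $H$ has strictly more vertices than $T'$.)

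It then remains to see that $T \not\qi H$. Since $T'$ and $H$ are both trees and $H$ contains $T'$ as a proper subgraph, one has $\abs{V(H)} > \abs{V(T')}$: otherwise $H$ and $T'$ would share the same vertex set, hence, being trees, the same number of edges, forcing $E(H) = E(T')$ and contradicting properness of the inclusion. Therefore $\abs{V(H)} > \abs{V(T')} \ge \abs{V(T)}$, so $H$ and $T$ have different numbers of vertices and cannot be quantum isomorphic, since quantum isomorphism preserves the number of vertices (for instance by Lemma~\ref{lem:square_mu}). Thus $H$ is the sought-after tree. There is no genuine obstacle in this argument; the only point needing care is the detour through $\Psi$, which is necessary because $T$ carries no canonical root whereas Lemma~\ref{lem:unrooting} requires a rooted forest as input.
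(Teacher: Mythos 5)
Your proposal is correct and follows essentially the same route as the paper: apply Lemma~\ref{lem:unrooting} to $\Psi(T)$, use $\Qu(T)=\Qu(\Psi(T))=\Qu(H)$ from Lemma~\ref{lem:psi}, and rule out $T\qi H$ by the vertex count. Your extra care in comparing vertex numbers (via $T'$ rather than $T$ itself) is a slight refinement of the paper's wording but not a different argument.
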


\begin{proof}
	Let $H$ be the tree given by Lemma~\ref{lem:unrooting} applied to $\Psi(T)$. 
	Since $T$ is a proper subgraph of $H$, they do not have the same number of vertices and thus are not quantum isomorphic. 
	And we have $\Qu(T) = \Qu(\Psi(T)) = \Qu(H)$, as desired.
\end{proof}

We recall Jordan's classification of automorphism groups of finite trees obtained by Jordan in~\cite{jordan1869assemblages}.

\begin{theorem}[Jordan]\label{thm:jordan}
	The class of automorphism groups of finite trees is the smallest family $\Jj$ of finite groups such that:
\begin{enumerate}
	\item $\Jj$ contains the trivial group,
	\item if $A$ and $B \in \Jj$, then $A\times B \in \Jj$,
	\item for $A\in \Jj$ and $n\geq 1$, we have $A\wr S_n \in \Jj$.
\end{enumerate}
\end{theorem}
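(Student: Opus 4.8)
The statement asserts the equality between $\Jj$ and the class $\mathcal T$ of automorphism groups of finite trees. Since $\Jj$ is \emph{defined} as the smallest family of finite groups containing the trivial group and stable under direct products and under wreath products with $S_n$, the plan is to prove two things: (i) $\Aut(T)\in\Jj$ for every finite tree $T$, which gives $\mathcal T\subseteq\Jj$; and (ii) that $\mathcal T$ contains the trivial group and is itself stable under $\times$ and under $\wr S_n$, which by minimality of $\Jj$ forces $\Jj\subseteq\mathcal T$.

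For (i), I would transfer the question to rooted trees through the transformation $\Psi$. By Lemma~\ref{lem:psi}, $\Aut(T)$ is isomorphic to $\Aut(\Psi(T))$, so it suffices to show $\Aut(R,r)\in\Jj$ for every rooted tree $(R,r)$, which I would do by strong induction on $\#V(R)$. The base case $R=K_1$ is the trivial group. If $\#V(R)\ge 2$, Lemma~\ref{lem:rooted_forest_vs_rooted_tree} identifies $\Aut(R,r)$ with $\Aut(R\setminus\{r\})$ as rooted forests; writing the rooted forest $R\setminus\{r\}=\sum_{i=1}^k a_i.(T_i,r_i)$ with the $(T_i,r_i)$ pairwise non-isomorphic rooted trees, the classical rooted analogue of Theorem~\ref{thm:qu_rooted_forest} (proved exactly as Theorem~\ref{thm:aut_sums}) gives $\Aut(R\setminus\{r\})=\prod_{i=1}^k\Aut(T_i,r_i)\wr S_{a_i}$. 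Each $(T_i,r_i)$ has strictly fewer vertices than $R$, so $\Aut(T_i,r_i)\in\Jj$ by the induction hypothesis, and stability of $\Jj$ under wreath products with $S_{a_i}$ and under finite direct products yields $\Aut(R,r)\in\Jj$.

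For (ii), the trivial group is $\Aut(K_1)$. For stability under $\wr S_n$: given $A=\Aut(T)=\Aut(\Psi(T))$, the rooted forest $n.\Psi(T)$ has automorphism group $A\wr S_n$, and Lemma~\ref{lem:unrooting} produces an honest tree $G$ with $\Aut(G)=\Aut(n.\Psi(T))=A\wr S_n$. For stability under $\times$: given $A=\Aut(T)$ and $B=\Aut(S)$, put $(R_1,r_1)=\Psi(T)$ and let $(R_2,r_2)$ be obtained from $\Psi(S)$ by attaching at its root a pendant path of length larger than both $\#V(R_1)$ and the eccentricity of the root of $\Psi(S)$. Since a root-preserving automorphism must fix the (now unique) far endpoint of this path, hence the whole path pointwise, one gets $\Aut(R_2,r_2)=\Aut(\Psi(S))=B$; and since $\#V(R_2)>\#V(R_1)$, the rooted trees $(R_1,r_1)$ and $(R_2,r_2)$ are non-isomorphic, so the rooted forest $(R_1,r_1)+(R_2,r_2)$ has automorphism group exactly $A\times B$. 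Lemma~\ref{lem:unrooting} then delivers a tree realizing $A\times B$.

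The only subtle point is the product step: one must force the two rooted summands to be non-isomorphic, for otherwise one obtains $A\wr S_2$ instead of $A\times B$; this is precisely what the pendant-path padding achieves, and it is why $K_1$ needs no special treatment here, unlike in the cograph setting of Theorem~\ref{thm:K1_vs_Jor}. Everything else is a routine passage between the rooted and unrooted pictures via $\Psi$ and Lemma~\ref{lem:unrooting}. I also note that both inclusions fall out of the noncommutative Jordan theorem (Theorem~\ref{thm:noncommutative_jordan}) once it is available, by abelianizing the quantum statements and passing to classical points; but the argument sketched above only relies on material already developed in this section.
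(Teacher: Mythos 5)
Your proof is correct, but be aware that the paper does not actually prove this statement: Theorem~\ref{thm:jordan} is only recalled, with a citation to Jordan's 1869 article, and what gets proved in the paper is the noncommutative analogue, Theorem~\ref{thm:noncommutative_jordan}. Your argument is essentially the classical shadow of that proof, built from the same toolkit ($\Psi$ and Lemma~\ref{lem:psi}, Lemma~\ref{lem:rooted_forest_vs_rooted_tree}, the rooted-forest decomposition, and Lemma~\ref{lem:unrooting}), and it is sound at the paper's level of rigour: the only unproved ingredients you invoke, the classical rooted analogues of Theorem~\ref{thm:aut_sums} and Theorem~\ref{thm:qu_rooted_forest}, have the same ``well-known, stated without proof'' status as Theorem~\ref{thm:aut_sums} itself, and can anyway be read off the quantum statements by restricting to permutation matrices. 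The genuine divergence from the paper's handling of the noncommutative version is the product step: the paper resolves the equal-factors problem with Lemma~\ref{lem:for_jordan} (replacing $T$ by a non-quantum-isomorphic tree with the same quantum automorphism group) and works with unrooted forests plus Lemma~\ref{lem:qu_tree_forest}, whereas you pad one rooted summand with a long pendant path so the two rooted trees have different vertex counts and then unroot via Lemma~\ref{lem:unrooting}; both devices work, and yours has the small advantage that, classically, plain non-isomorphism suffices. Your slight ambiguity about which vertex of $R_2$ is the root is harmless: with either choice the eccentricity condition forces any root-preserving automorphism to fix the pendant path pointwise, giving $\Aut(R_2,r_2)\cong B$.

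One caveat on your closing aside: deducing the classical theorem from Theorem~\ref{thm:noncommutative_jordan} ``by abelianization'' is not completely immediate, since it tacitly uses that abelianization converts free products and free wreath products with $S_n^+$ into direct products and classical wreath products with $S_n$. This is true, and visible on the graphs realizing these operations, but it is nowhere stated in the paper, so the direct argument you sketched is the safer route.
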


We now extend it to the noncommutative setting. 
Let us start with a useful lemma.

\begin{lemma}\label{lem:qu_tree_forest}
	Let $G$ be a quantum permutation group. 
	Then the following are equivalent:
	\begin{enumerate}
		\item $G = \Qu(T)$ for some tree $T$,
		\item $G = \Qu(F)$ for some forest $F$,
		\item $G = \Qu(T,r)$ for some rooted tree $(T,r)$,
		\item $G = \Qu(F,(r_i)_{1\leq i\leq k})$ for some rooted forest $(F,(r_i)_{1\leq i\leq k}$.
	\end{enumerate}
\end{lemma}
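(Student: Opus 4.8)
The plan is to prove the cycle $(1)\Rightarrow(3)\Rightarrow(4)\Rightarrow(1)$ and, separately, $(1)\Rightarrow(2)\Rightarrow(4)$; together these give the equivalence of all four conditions. Most of the arrows are essentially free. For $(1)\Rightarrow(3)$: if $G=\Qu(T)$ for a tree $T$, then $\Psi(T)$ is a rooted tree and $\Qu(\Psi(T))\cong\Qu(T)=G$ by the last item of Lemma~\ref{lem:psi}. For $(3)\Rightarrow(4)$: a rooted tree is a rooted forest. For $(1)\Rightarrow(2)$: a tree is a forest. For $(4)\Rightarrow(1)$: apply Lemma~\ref{lem:unrooting} to the rooted forest at hand to obtain a tree $H$ with $\Qu(H)=\Qu(F,r)=G$.

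The one genuinely substantial implication is $(2)\Rightarrow(4)$, where the task is to realise $G=\Qu(F)$, for a forest $F$, as $\Qu(\mathcal F)$ for a suitable rooted forest $\mathcal F$. I would start by grouping the connected components of $F$ by isomorphism type, writing $F=\sum_{j=1}^{k}a_j T_j$ with the $T_j$ pairwise non-isomorphic trees; since trees satisfy (QI) by Theorem~\ref{thm:tractable_forests}, they are pairwise non--quantum-isomorphic, so Theorem~\ref{thm:qu_aut_sums} yields $G=*_{j=1}^{k}\Qu(T_j)\wr S_{a_j}^+$. The obvious candidate $\mathcal F=\sum_j a_j.\Psi(T_j)$ fails directly, because Lemma~\ref{lem:qu_rooted_forest} requires the rooted trees to be pairwise non-isomorphic, and $\Psi$ need not preserve this (for instance $\Psi(K_2)=\Psi(P_3)$). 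This is the main obstacle. I would get around it by an inflation argument: iterating the construction of Lemma~\ref{lem:for_jordan} (each step strictly increases the number of vertices), choose for each $j$ a tree $H_j$ with $\Qu(H_j)=\Qu(T_j)$ and with $\abs{V(H_1)}<\ldots<\abs{V(H_k)}$ having gaps at least $2$. Then $(W_j,w_j):=\Psi(H_j)$ are rooted trees with $\Qu(W_j,w_j)=\Qu(H_j)=\Qu(T_j)$ by Lemma~\ref{lem:psi}, and since $\abs{V(W_j)}\in\{\abs{V(H_j)},\abs{V(H_j)}+1\}$ the spacing forces the $(W_j,w_j)$ to be pairwise non-isomorphic.

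Finally I would assemble: put $\mathcal F=\sum_{j=1}^{k}a_j.(W_j,w_j)$. By Lemma~\ref{lem:qu_rooted_forest} and Lemma~\ref{lem:wreath_rooted_tree},
\[
\Qu(\mathcal F)=*_{j=1}^{k}\Qu(a_j.(W_j,w_j))=*_{j=1}^{k}\Qu(W_j,w_j)\wr S_{a_j}^+=*_{j=1}^{k}\Qu(T_j)\wr S_{a_j}^+=G,
\]
so $G=\Qu(\mathcal F)$, proving $(4)$. Chaining $(2)\Rightarrow(4)\Rightarrow(1)\Rightarrow(2)$ closes the second loop, and $(1)\Rightarrow(3)\Rightarrow(4)\Rightarrow(1)$ the first, so all four statements are equivalent. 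The only delicate point is the inflation step needed to avoid $\Psi$-collisions; everything else is an immediate application of the structure theorems for sums of (rooted) graphs established earlier.
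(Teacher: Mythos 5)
Your proposal is correct and follows essentially the same route as the paper: the same easy implications via $\Psi$ (Lemma~\ref{lem:psi}) and Lemma~\ref{lem:unrooting}, and the same decomposition $\Qu(F)=*_j \Qu(T_j)\wr S_{a_j}^+$ for the substantial implication $(2)\Rightarrow(4)$. The only divergence is how the $\Psi$-collision $\Psi(T_i)=\Psi(T_j)$ is repaired: the paper attaches paths of different lengths at the roots so that the root degrees of the new rooted trees differ (a construction in the spirit of Lemma~\ref{lem:unrooting}), while you iterate the inflation of Lemma~\ref{lem:for_jordan} to force well-separated vertex counts before applying $\Psi$; both are variants of the same ``enlarge the tree without changing its quantum automorphism group'' idea, and your version has the small advantage of reusing an already-proved lemma and making the pairwise non-isomorphism check immediate (distinct vertex counts) rather than relying on a sketched degree argument.
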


\begin{proof}
	It is clear that (1) implies (2). 
	We have that (1) implies (3) by Lemma~\ref{lem:psi} and it is clear that (3) implies (4). 

	Let us show that (4) implies (1). 
	Let $(F,r)$ be a rooted forest and let $T$ be the tree given by applying Lemma~\ref{lem:unrooting} to $(F,r)$. 
	Then $\Qu(F,r) = \Qu(T)$. 
	This shows that (4) implies (1). 

	This shows that (1), (3), and (4) are equivalent. 
	
	Let us show that (2) implies (4). 
	Let $F = \sum_{i=1}^k a_iT_i$ be a forest, where $k\geq 1$, $T_i$ is a tree and $a_i\geq 1$ for $1\leq i\leq k$, and $T_i\neq T_j$ when $i\neq j$. 
	By Theorem~\ref{thm:qu_aut_sums} and Theorem~\ref{thm:tractable_forests}, we have that $\Qu(F) = *_{i=1}^k \Qu(T_i) \wr S_{a_i}^+$. 
	Assume first that $\Psi(T_i)\neq \Psi(T_j)$ for all $i\neq j$. 
	Then, setting $(F',r) = \sum_{i=1}^k a_i\Psi(T_i)$ we have by Theorem~\ref{thm:qu_rooted_forest} that $\Qu(F',r) = \Qu(F)$. 
	So it remains to treat the case where there exist $i\neq j$ with $\Psi(T_i) = \Psi(T_j)$. 
	Let $(S_i,r_i) = \Psi(T_i)$. 
	By adding paths of different lengths starting at $r_i$, we can ensure, in a way similar to the construction in the proof of Lemma~\ref{lem:unrooting}, to obtain new rooted trees $(W_i,r_i)$ such that $\deg_{W_i}(r_i) \neq \deg_{W_j}(r_j)$ with $\Qu(W_i,r_i) = \Qu(S_i,r_i)$. 
	Now by Theorem~\ref{thm:qu_rooted_forest} we have that $\Qu(\sum_{i=1}^k a_i(W_i,r_i)) = *_{i=1}^k \Qu(S_i,r_i) \wr S_{a_i}^+ = *_{i=1}^k \Qu(T_i) \wr S_{a_i}^+ = \Qu(F)$, as desired. 
	This shows that (2) implies (4). 

	This concludes the proof.
\end{proof}

\begin{corollary}\label{coro:qu_forest_equal_tree}
	The family of quantum automorphism groups of forests is equal to the family of quantum automorphism groups of trees.
\end{corollary}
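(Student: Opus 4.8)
The plan is to read the statement off Lemma~\ref{lem:qu_tree_forest}, specifically the equivalence of its items (1) and (2). First I would observe that one inclusion is trivial: every tree is in particular a forest, so the family of quantum automorphism groups of trees is contained in the family of quantum automorphism groups of forests.

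For the reverse inclusion I would invoke the implication (2) $\Rightarrow$ (1) of Lemma~\ref{lem:qu_tree_forest}. Unwinding it, given a forest $F$ one writes $F = \sum_{i=1}^k a_i T_i$ with the $T_i$ pairwise non-isomorphic trees, so that Theorem~\ref{thm:qu_aut_sums} together with the tractability of forests (Theorem~\ref{thm:tractable_forests}) gives $\Qu(F) = *_{i=1}^k \Qu(T_i)\wr S_{a_i}^+$; one then passes to the rooted trees $\Psi(T_i)$, separating any coinciding pairs by attaching rooted paths of distinct lengths at the roots (which leaves the quantum automorphism groups unchanged, exactly as in the proof of Lemma~\ref{lem:unrooting}), thereby realising $\Qu(F)$ as the quantum automorphism group of a suitable rooted forest, and finally applies Lemma~\ref{lem:unrooting} to replace that rooted forest by a tree with the same quantum automorphism group.

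Since all of this is already packaged inside the cited lemma, I do not expect any genuine obstacle: the corollary is an immediate consequence of the equivalence $(1)\Leftrightarrow(2)$ there. The only subtle point in the argument — the case where two distinct connected components of $F$ have the same image under $\Psi$, so that one cannot simply apply $\Psi$ componentwise — is precisely the one already resolved in the proof of Lemma~\ref{lem:qu_tree_forest} by the path-attaching trick, which raises the root degrees in a controlled way to make the resulting rooted trees pairwise non-quantum-isomorphic without altering their quantum automorphism groups.
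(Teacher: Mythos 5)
Your proposal is correct and matches the paper exactly: the corollary is stated as an immediate consequence of the equivalence of items (1) and (2) in Lemma~\ref{lem:qu_tree_forest}, with the trivial inclusion coming from trees being forests. Your unwinding of the lemma's proof (the decomposition via Theorem~\ref{thm:qu_aut_sums}, the passage to rooted trees via $\Psi$, the path-attaching trick for coinciding components, and Lemma~\ref{lem:unrooting}) faithfully reproduces the argument already contained in the paper's proof of that lemma.
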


We can now prove the noncommutative version of Jordan's theorem.

\begin{theorem}\label{thm:noncommutative_jordan}
	The family of quantum automorphism groups of finite trees is the smallest family $\Ii$ of compact quantum groups such that:
	\begin{enumerate}
		\item $\Ii$ contains the trivial quantum group,
		\item if $A$ and $B\in \Ii$, then $A * B \in \Ii$,
		\item for $A\in \Ii$ and $n\geq 1$, we have $A \wr S_n^+ \in \Ii$.
	\end{enumerate}
\end{theorem}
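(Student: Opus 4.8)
The statement is the assertion that the family of quantum automorphism groups of finite trees coincides with $\Jor(\{1\})$, the smallest family of compact quantum groups containing the trivial one and closed under free products and under wreath products with $S_n^+$. Writing $\Qu(\Trees)$ for this family, I would prove the two inclusions $\Ii \subseteq \Qu(\Trees)$ and $\Qu(\Trees) \subseteq \Ii$ separately.

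For $\Ii \subseteq \Qu(\Trees)$, since $\Ii$ is by definition the smallest family with the three listed closure properties, it suffices to check that $\Qu(\Trees)$ has them. The trivial quantum group is $\Qu(K_1)$. For stability under wreath products, given a tree $T$ and $n\geq 1$, write $\Psi(T) = (S,r)$; by Lemma~\ref{lem:psi} we have $\Qu(T) = \Qu(S,r)$, and by Lemma~\ref{lem:wreath_rooted_tree} the rooted forest $n.(S,r)$ satisfies $\Qu(n.(S,r)) = \Qu(S,r)\wr S_n^+ = \Qu(T)\wr S_n^+$, so by Lemma~\ref{lem:qu_tree_forest} this is $\Qu(H)$ for some tree $H$. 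For stability under free products, given trees $T_1,T_2$, pass by Lemma~\ref{lem:qu_tree_forest} to rooted trees $(S_1,r_1),(S_2,r_2)$ with $\Qu(S_i,r_i)=\Qu(T_i)$. If $(S_1,r_1)\not\cong(S_2,r_2)$, then Theorem~\ref{thm:qu_rooted_forest} gives $\Qu((S_1,r_1)+(S_2,r_2)) = \Qu(S_1,r_1)*\Qu(S_2,r_2) = \Qu(T_1)*\Qu(T_2)$, which lies in $\Qu(\Trees)$ again by Lemma~\ref{lem:qu_tree_forest}. If $(S_1,r_1)\cong(S_2,r_2)$, I first replace one of them, exactly as in the proof of Lemma~\ref{lem:qu_tree_forest}, by attaching to its root a sufficiently long path: this yields a rooted tree $(S_2',r_2)$ with $\deg(r_2)\neq\deg(r_1)$, hence non-isomorphic to $(S_1,r_1)$, and with $\Qu(S_2',r_2)=\Qu(S_2,r_2)$, after which the previous case applies.

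For $\Qu(\Trees)\subseteq\Ii$, by Lemma~\ref{lem:psi} every tree $T$ has $\Qu(T)=\Qu(\Psi(T))$ with $\Psi(T)$ a rooted tree, so it is enough to show $\Qu(T,r)\in\Ii$ for every rooted tree $(T,r)$, which I do by strong induction on $\#V(T)$. If $\#V(T)=1$ then $\Qu(T,r)$ is trivial, hence in $\Ii$. If $\#V(T)\geq 2$, write the rooted forest $T\setminus\{r\}$ as $\sum_{i=1}^k a_i.(T_i,r_i)$ with the $(T_i,r_i)$ pairwise non-isomorphic rooted trees and $a_i\geq 1$, each $T_i$ having at most $\#V(T)-1$ vertices. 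By Theorem~\ref{thm:rooted_forest_vs_rooted_tree} and Theorem~\ref{thm:qu_rooted_forest},
\[ \Qu(T,r) = \Qu(T\setminus\{r\}) = *_{i=1}^k \Qu(T_i,r_i)\wr S_{a_i}^+ . \]
By the induction hypothesis each $\Qu(T_i,r_i)\in\Ii$, hence $\Qu(T_i,r_i)\wr S_{a_i}^+\in\Ii$ by closure under wreath products, and the iterated free product lies in $\Ii$ by closure under free products; thus $\Qu(T,r)\in\Ii$, completing the induction and the proof.

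The only step in this scheme that is more than formal bookkeeping of the results of Sections~\ref{sec:qi_sums} and~\ref{sec:forests} is the perturbation used to make two isomorphic rooted trees non-isomorphic while preserving their quantum automorphism group, in the free-product half of the argument; since this is precisely the device already used in the proof of Lemma~\ref{lem:qu_tree_forest}, I expect no genuine obstacle there.
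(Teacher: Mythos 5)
Your proposal is correct and follows essentially the same route as the paper: both directions rest on the same ingredients, namely the rooted decomposition $\Qu(T,r)=*_i\Qu(T_i,r_i)\wr S_{a_i}^+$ via Theorem~\ref{thm:rooted_forest_vs_rooted_tree} and Theorem~\ref{thm:qu_rooted_forest} for the inclusion of tree quantum groups into $\Ii$, and Lemma~\ref{lem:qu_tree_forest} to convert back to trees for the reverse inclusion. The only (inessential) variation is that you verify closure under free and wreath products entirely at the level of rooted forests, using Lemma~\ref{lem:wreath_rooted_tree} and the path-attachment perturbation from the proof of Lemma~\ref{lem:qu_tree_forest}, whereas the paper works with unrooted sums via Theorem~\ref{thm:qu_aut_sums}, Theorem~\ref{thm:wreath_product}, and Lemma~\ref{lem:for_jordan}.
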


\begin{proof}
	Let $\Jj$ be the family of quantum automorphism groups of trees. 
	We start by showing that $\Jj \subset \Ii$. 
	By Lemma~\ref{lem:qu_tree_forest}, we have that $\Jj$ is equal to the family of quantum automorphism groups of rooted trees. 
	So it is enough to show by strong induction on the number of vertices that $\Qu(T,r) \in \Ii$ for every rooted tree $(T,r)$. 
	It is clearly true for one vertex. 
	Let us assume that $\Qu(X,x) \in \Ii$ for all rooted trees $(X,x)$ on at most $n$ vertices for some $n\geq 1$. 
	Let $(T,r)$ be a rooted tree on $n+1$ vertices. 
	Since $n+1 \geq 2$, we can write $(T,r) = r. \left(\sum_{i=1}^k a_i.(T_i,r_i) \right)$ for some $k\in \N$, some integers $a_1,\ldots,a_k$, and with $(T_i,r_i) \neq (T_j,r_j)$ when $i\neq j$. 
	By Theorem~\ref{thm:qu_rooted_forest} and Theorem~\ref{thm:rooted_forest_vs_rooted_tree}, we have that $\Qu(T,r) = *_{i=1}^k \Qu(T_i,r_i)\wr S_{a_i}^+$. 
	Notice that for all $1\leq i\leq k$ we have $\abs{V(T_i,r_i)} \leq n$, so, by induction hypothesis, we have that $\Qu(T_i,r_i) \in \Ii$. 
	This shows that $\Qu(T,r) \in \Ii$, as desired. 
	Hence we have proved by induction that $\Jj \subset \Ii$.

	Now let us show that $\Ii \subset \Jj$. 
	For this, it is enough to show that $1\in \Jj$ (clear) and that $\Jj$ is stable by free product and wreath product. 
	Let $T$ and $S$ be two trees. 
	If $T=S$, then, by Lemma~\ref{lem:for_jordan}, we can take a tree $X$ such that $\Qu(T) = \Qu(X)$ and $T\neq X$. 
	So we can assume that $T\neq S$.
	Then we have that $\Qu(T+S) = \Qu(T)*\Qu(S)$ by Theorem~\ref{thm:qu_aut_sums}. 
	By Lemma~\ref{lem:qu_tree_forest}, there is a tree $Y$ such that $\Qu(Y) = \Qu(T+S) = \Qu(T)*\Qu(S)$. 
	Hence $\Jj$ is stable by free product.

	Now, let $d\in \N$. 
	We have $\Qu(d.T) = \Qu(T)\wr S_d^+$ by Theorem~\ref{thm:wreath_product}. 
	Moreover, by Lemma~\ref{lem:qu_tree_forest}, there is a tree $Z$ such that $\Qu(Z) = \Qu(d.T) = \Qu(T)\wr S_d^+$. 
	Hence $\Jj$ is stable by wreath product.

	All in all, $\Jj$ contains the trivial quantum group, is stable by free product and wreath product, so $\Ii \subset \Jj$. 
	This shows that $\Jj = \Ii$ and concludes the proof.
\end{proof}

Thanks to our general approach, our results naturally extend to the class of tree-cographs. 
In order to apply the results of Section~\ref{sec:tractable}, we need to have a family closed under complement and taking connected components. 
Thus we start by characterising the closure of the class of trees under these two operations.

\begin{lemma}\label{lem:connected_tree_complement}
	Let $T$ be a tree and assume that $T^c$ is disconnected. 
	Then there is an integer $n \in \N$ such that $T = K_{1,n}$.
\end{lemma}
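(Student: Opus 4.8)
The claim is that if $T$ is a tree whose complement $T^c$ is disconnected, then $T$ must be a star $K_{1,n}$ for some $n \in \N$. The key input is Lemma~\ref{lem:complement}: a disconnected graph has complement of diameter at most $2$, which is connected. I will use the contrapositive of this, or rather the structural consequence it suggests. The plan is to combine the fact that $T^c$ disconnected forces $T$ to have diameter at most $2$ with the basic fact that a tree of diameter at most $2$ is a star.

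**Main argument.** First I would observe that since $T^c$ is disconnected, applying Lemma~\ref{lem:complement} to $T^c$ (whose complement is $(T^c)^c = T$) gives that $T = (T^c)^c$ has diameter at most $2$. In other words, $d_T(x,y) \le 2$ for all $x, y \in V(T)$ (and here $T$ is automatically connected since it is a tree, so no distance is infinite). Next I would show that a tree of diameter at most $2$ is a star. Suppose $T$ has at least two vertices. Pick a vertex $r$ of maximum degree; if $\deg(r) = \abs{V(T)} - 1$ we are done, $T = K_{1,n}$ with $n = \abs{V(T)}-1$. Otherwise there is a vertex $v$ not adjacent to $r$; since $d_T(r,v) \le 2$ there is a vertex $w$ adjacent to both $r$ and $v$. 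But now consider whether $r$ has another neighbor $u \ne w$: in a tree, the unique path from $u$ to $v$ would be $u - r - w - v$, which has length $3$, contradicting diameter $\le 2$ — unless $w$ is the \emph{only} neighbor of $r$, i.e.\ $\deg(r) = 1$. If $\deg(r) = 1$, then every vertex has degree at most $1$, so $T$ has at most one edge, hence $T = K_2 = K_{1,1}$ or $T = K_1 = K_{1,0}$, which are stars. In all cases $T = K_{1,n}$.

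**Handling small cases and loose ends.** I would be careful with the degenerate cases $\abs{V(T)} \le 2$: if $\abs{V(T)} = 1$ then $T = K_1 = K_{1,0}$ and $T^c$ is a single vertex, which is connected, so the hypothesis is vacuously not satisfied — but the conclusion still holds with $n=0$. If $\abs{V(T)} = 2$, $T = K_2 = K_{1,1}$ and again $T^c$ is disconnected (two isolated vertices), consistent with the statement. These are all subsumed by $K_{1,n}$. The only slightly delicate point is making the ``diameter $\le 2$ in a tree implies star'' step airtight; the cleanest route is the path-length argument above: if $T$ is a tree with two vertices $r, u$ both of degree $\ge 2$, then $T$ contains a path of length $\ge 3$ (extend on both sides), so $\diam(T) \ge 3$. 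Hence in a tree of diameter $\le 2$ at most one vertex has degree $\ge 2$, which is exactly the defining property of a star.

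**Expected main obstacle.** There is no serious obstacle here — the result is essentially immediate from Lemma~\ref{lem:complement} plus the elementary characterization of stars among trees. The only thing requiring a moment's care is writing the ``tree of small diameter'' step without hand-waving, i.e.\ justifying cleanly that a tree with two distinct vertices of degree $\ge 2$ must contain an induced path $P_4$ (equivalently, have diameter $\ge 3$). I would phrase this via the unique-path property of trees.
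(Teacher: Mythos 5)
Your proof is correct, but it takes a different route from the paper. You deduce from Lemma~\ref{lem:complement} (applied to the disconnected graph $T^c$) that $T=(T^c)^c$ has diameter at most $2$, and then invoke the elementary fact that a tree of diameter at most $2$ is a star, which you justify via the unique-path property: two distinct vertices of degree at least $2$ in a tree force a path, hence a geodesic, of length at least $3$. The paper instead works directly inside $T^c$: writing $T^c=G_1+\ldots+G_k$ with $k\geq 2$, it rules out two components of size at least $2$ (this would create a $C_4$ in $T$), rules out $k\geq 3$ (three pairwise non-adjacent vertices of $T^c$ give a triangle in $T$), and then shows the remaining component must be complete, so $T^c=K_n+K_1$ and $T=K_{1,n}$. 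Your argument is shorter and reuses an already-stated lemma of the paper, trading the paper's explicit description of $T^c$ for the diameter bound; the paper's argument is self-contained at the level of forbidden cycles and yields the complement's structure directly. Either way the small cases ($K_1$, $K_2$) are handled consistently, and no step of your argument fails.
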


\begin{proof}
	Write $T^c = G_1 + \ldots + G_k$ with $k\geq 2$ and $G_1,\ldots,G_k$ connected. 
	Assume that two distinct connected components of $T^c$ have at least two vertices. 
	This means there is $i\neq j$ and $x$, $y\in V(G_i)$ and $a$, $b\in V(G_j)$ with $x\neq y$ and $a\neq b$. 
	Now $(x,a,y,b) = C_4$ is a cycle in $T$, which is a contradiction. 
	Hence there is $1\leq i_0\leq k$ such that if $i\neq i_0$ then $G_i = K_1$. 

	Let us show now that $k=2$. 
	Indeed, assume $k\geq 3$. 
	Then we can take $x$, $y$, $z \in V(G)$ two-by-two nonadjacent, so $(x,y,z) = K_3$ is a cycle in $T$, a contradiction. 
	So $k=2$.

	We now write $T^c = G + K_1$, with $G = G_{i_0}$. 
	Notice that if there is $x \neq y\in V(G)$ such that $xy \notin E(G)$, then, letting $z$ be the vertex of $K_1$, we have that $(x,y,z) = K_3$ is a cycle in $T^c$, a contradiction. 
	Hence $G$ is a complete graph. 
	Letting $n= \abs{V(G)}$, we have that $G = K_n$ so $T^c = K_n + K_1$. 
	This gives us $T = K_{1,n}$, as desired.
\end{proof}

\begin{lemma}\label{lem:F_for_tree_cographs}
	Let $F$ be the smallest class containing all trees and closed under taking complements and connected components. 
	Then the following are true:
	\begin{enumerate}
		\item $F = \Trees \cup \Trees^c \cup \{K_n \mid n\geq 1\} \cup \{ n.K_1 \mid n\geq 1\}$,
		\item $F$ is tractable,
		\item $\co(F) = \co(\Trees) = \co(\Forests)$.
	\end{enumerate}
\end{lemma}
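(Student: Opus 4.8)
The plan is to prove the three items in order, using (1) to feed (2) and (3). For (1), write $\Ss$ for the right-hand side. First I would check that $\Ss$ is a class of graphs containing every tree and closed under complements and under passing to connected components; by minimality of $F$ this gives $F\subseteq\Ss$. Closure under complement is immediate since $\Trees^{cc}=\Trees$, $K_n^c=n.K_1$ and $(n.K_1)^c=K_n$. For closure under connected components, a tree and a $K_n$ are connected, the components of $n.K_1$ are copies of $K_1$, and for $T^c$ with $T$ a tree Lemma~\ref{lem:connected_tree_complement} says either $T^c$ is connected or $T=K_{1,n}$, in which case $T^c=K_n+K_1$ has components $K_n,K_1\in\Ss$. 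Conversely, $\Trees\subseteq F$ and $\Trees^c\subseteq F$ by closure under complement; the star $K_{1,n}$ is a tree, so $K_{1,n}^c=K_n+K_1\in F$, and taking a connected component gives $K_n\in F$ for all $n\ge1$; finally $n.K_1=K_n^c\in F$. Hence $\Ss\subseteq F$ and (1) holds.

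For (2), the key remark is that by (1) we can write $F=\Cc\cup\Cc^c$ where $\Cc=\Trees\cup\{n.K_1\mid n\ge1\}$, and every graph in $\Cc$ is a forest, so $\Cc\subseteq\Forests$. Since forests are tractable (Theorem~\ref{thm:tractable_forests}) and tractability passes to subfamilies, $\Cc$ is tractable; in particular it satisfies (QA) and (SA), so Lemma~\ref{lem:tractable_complement} gives that $F=\Cc\cup\Cc^c$ satisfies (QA) and (SA). The one point needing care --- and the main (mild) obstacle --- is (QI), which is \emph{not} automatic, since (QI) is not preserved under complementation in general. I would argue it directly by cases on the four types from (1): if $G$ is a tree and $G\qi H$ then $H=G$ since trees are superrigid (Theorem~\ref{thm:superrigid_trees}); if $G=n.K_1$ then $H$ has the same number of vertices (by definition) and the same number of edges (Lemma~\ref{lem:qi_same_nb_edges}), hence $H=n.K_1=G$; if $G=T^c$ with $T$ a tree, then any quantum isomorphism $G\to H$ is also one $G^c\to H^c$ (as $\Adj(G^c)=J_n-I_n-\Adj(G)$ and a magic unitary commutes with $J_n$ and $I_n$, cf.\ Lemma~\ref{lem:mu_complement}), so $H^c\qi T$, and superrigidity of trees gives $H^c=T$, i.e.\ $H=G$; and if $G=K_n$ the same complementation trick reduces to the edgeless case. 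Thus $F$ satisfies (QI) and is tractable.

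For (3), I would use monotonicity of $\co(\cdot)$: if $\AA\subseteq\BB$ then $\co(\AA)\subseteq\co(\BB)$, because $\co(\BB)$ is a class closed under sums and complements that contains $\AA$. From $\Trees\subseteq F$ we get $\co(\Trees)\subseteq\co(F)$; and by (1) we have $F\subseteq\co(\Trees)$ (indeed $\Trees,\Trees^c\subseteq\co(\Trees)$, $n.K_1$ is a finite sum of copies of $K_1$, and $K_n=(n.K_1)^c$), so $\co(F)\subseteq\co(\Trees)$ and hence $\co(F)=\co(\Trees)$. Likewise $\Trees\subseteq\Forests$ gives $\co(\Trees)\subseteq\co(\Forests)$, while every forest is a finite sum of trees, so $\Forests\subseteq\co(\Trees)$ and therefore $\co(\Forests)=\co(\Trees)$. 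Combining, $\co(F)=\co(\Trees)=\co(\Forests)$, completing the proof. Apart from the (QI) step just mentioned, everything here is routine bookkeeping with complements, connected components, and the closure operators.
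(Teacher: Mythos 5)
Your proposal is correct, and items (1) and (3) follow the paper's argument essentially verbatim (same use of Lemma~\ref{lem:connected_tree_complement} for closure under components, same double inclusion via $K_{1,n}^c = K_n + K_1$, same chain of inclusions between the closures). The only genuine divergence is in item (2). For (QA) and (SA) you and the paper make mirror-image choices of which ``half'' to feed into Lemma~\ref{lem:tractable_complement}: you take $\Cc = \Trees \cup \{n.K_1 \mid n\geq 1\} \subseteq \Forests$ and invoke tractability of forests (Theorem~\ref{thm:tractable_forests}) plus inheritance by subfamilies, while the paper takes $\Trees \cup \{K_n \mid n\geq 1\}$ and checks the complete graphs by hand; these are interchangeable. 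For (QI) the paper disposes of it in one line by noting $F \subseteq \co(\Trees)$ and citing superrigidity of tree-cographs (Corollary~\ref{coro:superrigid_tree_cographs}), whereas you give a direct four-case argument using only superrigidity of trees (Theorem~\ref{thm:superrigid_trees}), preservation of vertex and edge counts (Lemma~\ref{lem:qi_same_nb_edges}), and the fact that a quantum isomorphism $G\qi H$ yields $G^c \qi H^c$ (since $UJ_n = J_n = J_nU$ for a magic unitary, the intertwining relation passes to $\Adj(G^c) = J_n - I_n - \Adj(G)$). Your route is more elementary and self-contained — it avoids the general machinery behind Theorem~\ref{thm:superrigid_F_cographs} — and in fact proves slightly more, namely superrigidity of $F$ itself rather than just (QI); the paper's route is shorter but leans on the heavier closure result. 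Both are legitimate here, since all the results cited precede the lemma, and your explicit flagging that (QI) does not pass through complementation in general is exactly the right point of care.
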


\begin{proof}
	We start by proving (1). 
	Let $L$ be the right-hand side of the equality in (1). 
	It is clear that $L$ is closed under complement and it follows immediately from Lemma~\ref{lem:connected_tree_complement} that it is closed under taking connected components. 
	Since $\Trees \subset L$, we have by definition that $F \subset L$. 
	Let us show the converse. 
	We have $\Trees \cup \Trees^c \subset F$. 
	Let $n\geq 1$. 
	Then $K_{1,n} \in \Trees$ and $K_{1,n}^c = K_n + K_1$ so $K_n \in F$. 
	This implies that $K_n^c = n.K_1 \in F$. 
	This being true for all $n\geq 1$, we have that $L\subset F$, so $L=F$ as wanted. 

	Let us show (2). 
	For (QI), simply notice that by (1) we have that $F\subset \co(\Trees)$, and the latter is superrigid by Corollary~\ref{coro:superrigid_tree_cographs}, so in particular $F$ satisfies (QI).
	Now by Lemma~\ref{lem:tractable_complement}, it is enough to show that $\Trees \cup \{ K_n \mid n\geq 1\}$ satisfies (QA) and (SA). 
	But $\Trees$ does by Theorem~\ref{thm:tractable_forests} and it is clear that $\{ K_n \mid n\geq 1\}$ does too. 
	This shows that $F$ is tractable, as desired.

	Let us show (3). 
	We have $\Trees \subset F$ so $\co(\Trees) \subset \co(F)$. 
	By (1), we have $F \subset \co(\Forests)$, so $\co(F) \subset \co(\Forests)$. 
	Finally it is clear that $\Forests \subset \co(\Trees)$, so $\co(\Forests) \subset \co(\Trees)$. 
	So we have shown that $\co(\Trees) \subset \co(F) \subset \co(\Forests) \subset \co(\Trees)$ which implies (3). 
	This concludes the proof.
\end{proof}

We now obtain the tractability of tree-cographs.

\begin{theorem}\label{thm:tractable_tree_cographs}
	The class of tree-cographs is tractable, that is:
	\begin{itemize}
		\item[(QA)] a tree-cograph is asymmetric if and only if it is quantum asymmetric,
		\item[(QI)] two quantum isomorphic tree-cographs are isomorphic,
		\item[(SA)] a tree-cograph has quantum symmetry if and only if it satisfies Schmidt's criterion.
	\end{itemize}
\end{theorem}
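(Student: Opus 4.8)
The plan is to deduce this immediately from the general machinery of Section~\ref{sec:tractable} together with Lemma~\ref{lem:F_for_tree_cographs}, which has just been established. The key point is that the class of tree-cographs is, by definition, $\co(\Trees)$, and we already know how $\co(-)$ interacts with the three axioms of tractability.

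First I would recall that the class $F$ from Lemma~\ref{lem:F_for_tree_cographs} is, by its very definition, the smallest class of graphs containing all trees and stable under taking complements and connected components; in particular $F$ itself is stable under these two operations. By Lemma~\ref{lem:F_for_tree_cographs}(2), $F$ is tractable, i.e.\ it satisfies (QA), (QI), and (SA). I would then apply Theorem~\ref{thm:tractable_F_cographs} with $\Ff = F$: since $F$ is stable by taking complements and connected components, and satisfies (QI), (QA), and (SA), item~(3) of that theorem gives that $\co(F)$ is tractable as well.

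Finally I would invoke Lemma~\ref{lem:F_for_tree_cographs}(3), which asserts $\co(F) = \co(\Trees)$, and observe that $\co(\Trees)$ is exactly the class of tree-cographs. Spelling out the three conclusions: a tree-cograph is asymmetric if and only if it is quantum asymmetric (QA); two quantum isomorphic tree-cographs are isomorphic (QI); and a tree-cograph has quantum symmetry if and only if it satisfies Schmidt's criterion (SA). This completes the proof.

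There is no real obstacle at this stage: all the genuine work has already been done — the structural results on quantum isomorphism of disconnected graphs (Theorem~\ref{thm:qi_for_disconnected_graphs}), the behaviour of the axioms under sums (Theorem~\ref{thm:problems_sums}) and under $\co(-)$ (Theorem~\ref{thm:tractable_F_cographs}), the tractability of forests (Theorem~\ref{thm:tractable_forests}), and the explicit description and tractability of $F$ (Lemma~\ref{lem:F_for_tree_cographs}). The only thing to be careful about is that Theorem~\ref{thm:tractable_F_cographs} requires the \emph{base} class to be stable under complements and connected components — which is why one passes through $F$ rather than trying to apply it directly to $\Trees$, since $\Trees$ is not stable under complementation. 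Lemma~\ref{lem:F_for_tree_cographs} is precisely the device that resolves this.
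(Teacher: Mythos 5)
Your proposal is correct and is essentially identical to the paper's proof: the paper also obtains the result by applying Theorem~\ref{thm:tractable_F_cographs} to the class $F$ of Lemma~\ref{lem:F_for_tree_cographs} and using $\co(F)=\co(\Trees)$. Your remark about why one passes through $F$ rather than $\Trees$ (lack of stability under complementation) is exactly the point of that lemma.
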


\begin{proof}
	It follows immediately from applying Theorem~\ref{thm:tractable_F_cographs} to the class $F$ of Lemma~\ref{lem:F_for_tree_cographs}.
\end{proof}

Finally, let us compute the classical and quantum automorphism groups of tree-cographs.

\begin{theorem}\label{thm:aut_tree_cographs_vs_trees}
	The family of automorphism groups of tree-cographs is exactly the smallest family of finite groups containing the trivial group and stable by product and wreath product. 
	In particular, it is the class of quantum automorphism group of trees.
\end{theorem}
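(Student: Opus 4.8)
The plan is to combine the classical Jordan theorem (Theorem~\ref{thm:jordan}) with the structural results already obtained for $\Ff$-cographs. First I would recall from Lemma~\ref{lem:F_for_tree_cographs} that the class $F = \Trees \cup \Trees^c \cup \{K_n \mid n\geq 1\} \cup \{n.K_1\mid n\geq 1\}$ is stable by complements and connected components, contains $K_1$, and satisfies $\co(F) = \co(\Trees)$; moreover its connected graphs are exactly $F_{conn} = \Trees \cup \{K_n \mid n\geq 1\}$, since by Lemma~\ref{lem:connected_tree_complement} the only disconnected members of $\Trees^c$ are complements of stars, and $\{n.K_1\}$ gives only disconnected graphs. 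This puts us in position to invoke Theorem~\ref{thm:Jor_vs_Fcographs}, which gives $\Aut(\co(F)) = \Jor(\Aut(F_{conn}))$.

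Next I would compute $\Jor(\Aut(F_{conn}))$. Since $\Aut(K_n) = S_n$ and $S_n = 1 \wr S_n$ lies in the Jordan closure of $\{1\}$, and since $\Aut(T)$ for $T$ a tree already lies in $\Jor(\{1\})$ by the classical Jordan theorem (Theorem~\ref{thm:jordan}), we get $\Aut(F_{conn}) \subset \Jor(\{1\})$, hence $\Jor(\Aut(F_{conn})) \subset \Jor(\{1\})$. Conversely $1 = \Aut(K_1) \in \Aut(F_{conn})$, so $\Jor(\{1\}) \subset \Jor(\Aut(F_{conn}))$. Therefore $\Aut(\co(\Trees)) = \Aut(\co(F)) = \Jor(\{1\})$, which is precisely the smallest family of finite groups containing the trivial group and stable by product and wreath product.

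For the last sentence, I would observe that by Theorem~\ref{thm:jordan} the class $\Jor(\{1\})$ is exactly the class of automorphism groups of finite trees, so the two families coincide. Alternatively one can cite Theorem~\ref{thm:jordan} directly: it identifies $\Jj$, the automorphism groups of trees, with the smallest family containing $1$ and stable under $\times$ and $\wr S_n$, which is $\Jor(\{1\})$ by definition of the Jordan closure.

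The steps here are all routine once the structural machinery is in place; the only point requiring a little care is the identification of $F_{conn}$ — one must check that no ``new'' connected graphs with large automorphism groups sneak in via $\Trees^c$, which is exactly the content of Lemma~\ref{lem:connected_tree_complement} (complements of disconnected trees are complete bipartite, in fact stars, so their complements-as-connected-graphs are just the $K_n$'s already accounted for). I do not expect a genuine obstacle; the proof is essentially a one-line application of Theorem~\ref{thm:Jor_vs_Fcographs} to the class $F$ of Lemma~\ref{lem:F_for_tree_cographs}, together with the classical Jordan theorem to evaluate the resulting Jordan closure.
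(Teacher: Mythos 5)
Your proposal is correct and follows essentially the same route as the paper: apply Theorem~\ref{thm:Jor_vs_Fcographs} to the auxiliary class $F$ of Lemma~\ref{lem:F_for_tree_cographs}, use $\co(F)=\co(\Trees)$, and evaluate the resulting Jordan closure via the classical Jordan theorem (the paper phrases this as $\Aut(F)=\Ii$ and $\Jor(\Ii)=\Ii$ rather than passing through $\Jor(\{1\})$, but that is only bookkeeping).

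One small slip: your identification $F_{conn}=\Trees\cup\{K_n\mid n\geq 1\}$ is not accurate. Lemma~\ref{lem:connected_tree_complement} says the only trees with \emph{disconnected} complement are stars, so $F_{conn}$ also contains the complements of all non-star trees (e.g.\ $P_5^c$), which are neither trees nor complete graphs. This is harmless for your argument, since $\Aut(T^c)=\Aut(T)$ (Remark~\ref{rk:auto_complement}) keeps $\Aut(F_{conn})\subset\Jor(\{1\})$, but the description of $F_{conn}$ should be corrected.
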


\begin{proof}
	Let $\Ii$ be the smallest family of finite groups containing the trivial group and stable by product and wreath product. 
	By Theorem~\ref{thm:jordan}, we have that $\Ii = \Aut(\Trees)$. 
	Now let $F$ be the family of Lemma~\ref{lem:F_for_tree_cographs}. 
	We claim that $\Aut(F) = \Ii$. 

	Since $\Trees \subset F$, we have $\Ii \subset \Aut(F)$. 
	Conversely, since $S_n \in \Ii$ for all $n\in \N$, we have $\Aut(F) \subset \Ii$ by (1) of Lemma~\ref{lem:F_for_tree_cographs}. 
	So we have shown that $\Aut(F) = \Ii$.

	Now by Theorem~\ref{thm:Jor_vs_Fcographs} we have that $\Aut(\co(F)) = \Jor(\Aut(F))$. 
	We have $\co(\Trees) = \co(F)$ by (3) of Lemma~\ref{lem:F_for_tree_cographs}, so it comes:
	\[ \Aut(\co(\Trees)) = \Aut(\co(F)) =  \Jor(\Aut(F))) = \Jor(\Ii) = \Ii,\]
	as desired. 
	This concludes the proof.
\end{proof}

We have the noncommutative version as well.

\begin{theorem}\label{thm:jordan_tree_cographs}
	The family of quantum automorphism groups of tree-cographs is exactly the smallest family of quantum permutation groups containing the trivial group and stable by free product and wreath product. 
	In particular, it is the class of quantum automorphism group of trees.
\end{theorem}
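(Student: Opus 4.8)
The plan is to mirror the proof of Theorem~\ref{thm:noncommutative_jordan}, replacing $\Jj = \Qu(\Trees)$ by $\Jj' = \Qu(\co(\Trees))$ and using the machinery of Section~\ref{sec:tractable}. Let $\Ii$ denote the smallest family of quantum permutation groups containing the trivial group and stable by free product and wreath product with $S_n^+$; by definition this is exactly $\Jor(\{1\})$. First I would invoke Lemma~\ref{lem:F_for_tree_cographs}: the class $F$ defined there is tractable, contains $K_1$, is stable by complements and by taking connected components, and satisfies $\co(F) = \co(\Trees)$. In particular $F$ satisfies (QI), so Theorem~\ref{thm:quantum_Jor_vs_Fcographs} applies and gives
\[ \Qu(\co(\Trees)) = \Qu(\co(F)) = \Jor(\Qu(F_{conn})). \]

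The next step is to identify $\Jor(\Qu(F_{conn}))$ with $\Ii = \Jor(\{1\})$. By part~(1) of Lemma~\ref{lem:F_for_tree_cographs}, the connected graphs in $F$ are precisely the trees together with the complete graphs $K_n$ (and $K_1$, which is both). Now $\Qu(K_n) = S_n^+ = 1 \wr S_n^+ \in \Jor(\{1\})$, and by Theorem~\ref{thm:noncommutative_jordan} (the ordinary noncommutative Jordan theorem already proved) the quantum automorphism group of any tree lies in the smallest family containing $1$ and stable under free and wreath products, i.e. in $\Jor(\{1\}) = \Ii$. Hence $\Qu(F_{conn}) \subseteq \Ii$, and since $\Ii$ is a Jordan-closed family we get $\Jor(\Qu(F_{conn})) \subseteq \Ii$. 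Conversely $1 = \Qu(K_1) \in \Qu(F_{conn})$, so $\Ii = \Jor(\{1\}) \subseteq \Jor(\Qu(F_{conn}))$. Combining the two inclusions yields $\Jor(\Qu(F_{conn})) = \Ii$, and therefore $\Qu(\co(\Trees)) = \Ii$.

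For the final sentence of the statement ("it is the class of quantum automorphism groups of trees"), I would simply recall Corollary~\ref{coro:qu_forest_equal_tree} together with Theorem~\ref{thm:noncommutative_jordan}, which already establish that $\Qu(\Trees) = \Ii$; since $\Qu(\Trees) \subseteq \Qu(\co(\Trees)) \subseteq \Ii = \Qu(\Trees)$, equality holds throughout, and in particular every quantum automorphism group of a tree-cograph is realised by a tree.

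The only genuinely delicate point is checking that the hypotheses of Theorem~\ref{thm:quantum_Jor_vs_Fcographs} are met by $F$ rather than by $\Trees$ itself — the class of trees is \emph{not} closed under complements or connected components, so one really must pass to the enlarged class $F$ of Lemma~\ref{lem:F_for_tree_cographs} and use the identity $\co(F) = \co(\Trees)$; everything else is a formal manipulation of Jordan closures. Concretely the write-up is:

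\begin{proof}
	Let $\Ii$ be the smallest family of quantum permutation groups containing the trivial group and stable by free product and wreath product with $S_n^+$ for $n\in\N$; by definition $\Ii = \Jor(\{1\})$. Let $F$ be the class of graphs from Lemma~\ref{lem:F_for_tree_cographs}. By that lemma, $F$ is tractable (in particular satisfies (QI)), contains $K_1$, and is stable by taking complements and connected components, and $\co(F) = \co(\Trees)$. Applying Theorem~\ref{thm:quantum_Jor_vs_Fcographs} to $F$, we obtain
	\[ \Qu(\co(\Trees)) = \Qu(\co(F)) = \Jor(\Qu(F_{conn})). \]
	By part~(1) of Lemma~\ref{lem:F_for_tree_cographs}, the connected members of $F$ are the trees and the complete graphs $K_n$ for $n\geq 1$. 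For a tree $T$ we have $\Qu(T) \in \Ii$ by Theorem~\ref{thm:noncommutative_jordan}, and $\Qu(K_n) = S_n^+ = 1\wr S_n^+ \in \Ii$. Hence $\Qu(F_{conn}) \subseteq \Ii$, and since $\Ii$ is closed under free and wreath products this gives $\Jor(\Qu(F_{conn})) \subseteq \Ii$. Conversely $1 = \Qu(K_1) \in \Qu(F_{conn})$, so $\Ii = \Jor(\{1\}) \subseteq \Jor(\Qu(F_{conn}))$. Therefore $\Qu(\co(\Trees)) = \Jor(\Qu(F_{conn})) = \Ii$.

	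Finally, by Corollary~\ref{coro:qu_forest_equal_tree} and Theorem~\ref{thm:noncommutative_jordan} we have $\Qu(\Trees) = \Ii$, and since $\Trees \subseteq \co(\Trees)$ we get $\Ii = \Qu(\Trees) \subseteq \Qu(\co(\Trees)) = \Ii$. Thus the family of quantum automorphism groups of tree-cographs coincides with the family of quantum automorphism groups of trees, which concludes the proof.
\end{proof}
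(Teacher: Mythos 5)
Your proposal is correct and follows essentially the same route as the paper's own proof: pass to the enlarged class $F$ of Lemma~\ref{lem:F_for_tree_cographs}, use $\co(F)=\co(\Trees)$ together with Theorem~\ref{thm:quantum_Jor_vs_Fcographs}, and identify the resulting Jordan closure with $\Jor(\{1\})$ via Theorem~\ref{thm:noncommutative_jordan}. One small inaccuracy to fix in the write-up: by part (1) of Lemma~\ref{lem:F_for_tree_cographs}, the connected members of $F$ are not only the trees and the complete graphs $K_n$; the complements of trees other than the stars $K_{1,n}$ are also connected (Lemma~\ref{lem:connected_tree_complement}) and hence belong to $F_{conn}$. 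This does not harm the argument, since $\Qu(T^c)=\Qu(T)\in \Ii$ for every tree $T$, so the inclusion $\Qu(F_{conn})\subseteq \Ii$ still holds; the paper avoids the issue by bounding $\Qu(F)$ itself (trees, complements of trees, $K_n$, $n.K_1$) inside $\Jj$ and then applying the same theorem.
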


\begin{proof}
	Let $\Jj$ be the smallest family of quantum permutation groups containing the trivial group and stable by free product and wreath product. 
	By Theorem~\ref{thm:noncommutative_jordan}, we have that $\Jj = \Qu(\Trees)$. 
	Now let $F$ be the family of Lemma~\ref{lem:F_for_tree_cographs}. 
	We claim that $\Qu(F) = \Jj$. 

	Since $\Trees \subset F$, we have $\Jj \subset \Qu(F)$. 
	Conversely, since $S_n^+ \in \Jj$ for all $n\in \N$, we have $\Qu(F) \subset \Jj$ by (1) of Lemma~\ref{lem:F_for_tree_cographs}. 
	So we have shown that $\Qu(F) = \Jj$.

	Now by Theorem~\ref{thm:quantum_Jor_vs_Fcographs} we have that $\Qu(\co(F)) = \Jor(\Qu(F))$. 
	We have $\co(\Trees) = \co(F)$ by (3) of Lemma~\ref{lem:F_for_tree_cographs}, so it comes:
	\[ \Qu(\co(\Trees)) = \Qu(\co(F)) =  \Jor(\Qu(F))) = \Jor(\Jj) = \Jj,\]
	as desired. 
	This concludes the proof.
\end{proof}

\section*{Acknowledgements}

I would like to warmly thank Amaury Freslon for his many pieces of advice and suggestions during this project, for our valuable discussions, and for his helpful comments on the first version of this article. 
I would also like to deeply thank Pegah Pournajafi for many fruitful discussions, in particular for her expertise in graph theory, and for her feedback on the first version of this article. 
Finally, I would also like to thank Frédéric Meunier for insightful discussions.

\appendix

\section{Lovász' formula for graph classes and $\Ff$-isomorphism}\label{app:lovasz}

In~\cite{LovOpStruct}, Lovász studied the category of relational structures, that is, finite sets with $n$-ary relations for some integer $n\geq 1$. 
His aim was to show a cancellation law, and he achieved so by proving a formula for the number of morphisms from one structure to another, leading to his celebrated result that, if $A$ and $B$ are two structures such that $\# \Hom(C,A) = \# \Hom(C,B)$ for all structures $C$, then $A \simeq B$. 

The methods used by Lovász are categorical in flavor and were immediately generalised, see for instance~\cite{pultr1973isomorphism} by Pultr for a purely categorical reformulation. 
They also apply naturally to the category of graphs and allow for reformulations of the isomorphism theorem when restricting the attention to specific graph classes, though it seems uneasy to find explicit references in the literature. 
This is of immediate interest for the notion of $\Ff$-isomorphism introduced in~\cite{ManRob}, where the main theorem is that quantum isomorphism is equal to $\Ff$-isomorphism for $\Ff$ the class of planar graphs. 

Hence, in this appendix, we translate Lovász' formula for graphs in Theorem~\ref{thm:lovasz_formula} and Corollary~\ref{coro:lovasz_formula_graph_classes}. 
For the sake of completeness, we include the proofs written in modern terminology. 
This allows us to obtain general isomorphism results for $\Ff$-isomorphism in Theorem~\ref{thm:F_iso} and Theorem~\ref{thm:connected_F_iso}. 
As an immediate consequence, we obtain that quantum isomorphic planar graphs are isomorphic in Corollary~\ref{coro:qi_planar}. 

Let us point out that contrarily to the rest of the present article, the approach in this section is not explicit, and involves elementary category theory.

Since in this appendix we work in the category of graphs, we lift temporarily the restriction given by Remark~\ref{rk:iso_graphs} and talk about graphs as represented with sets, and not as equivalence classes\footnote{Albeit we might choose to work only with graphs whose vertex set is of the form $\{1,\ldots,n\}$ for some $n\in \N$, or use any other similar logical device in order to work with a category with a set of objects.}.
Indeed, the category is doing the job of dealing with isomorphisms by itself.

Let $G$ and $H$ be two graphs. 
Recall that a graph morphism is a function $f \colon V(G)\to V(H)$ such that, for all $x$ and $y\in V(G)$ such that $xy \in E(G)$, we have $f(x)f(y) \in V(H)$. 
We denote the set of graph morphisms from $G$ to $H$ by $\Hom(G,H)$ and its cardinality by $\hom(G,H)$. 
Similarly, we denote the set of monomorphisms (resp. epimorphisms) from $G$ to $H$ by $\Mon(G,H)$ (resp. $\Epi(G,H)$), and its cardinality by $\mon(G,H)$ (resp. $\epi(G,H)$). 
We also denote by $\Aut(G)$ the set of automorphisms of $G$ and its cardinality by $\aut(G)$. 

The following elementary characterisation will be often used without explicit reference.

\begin{lemma}\label{lem:isomorphisms}
	A graph morphism $f \colon G\to H$ is an isomorphism if and only if it is a bijection and the map induced between the sets of edges is also bijective.
\end{lemma}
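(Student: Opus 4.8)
The plan is a straightforward unwinding of the categorical definition of isomorphism given in Remark~\ref{rk:iso_graphs} together with the definition of a graph morphism. First I would record that any graph morphism $f\colon G\to H$ induces a map $E(f)\colon E(G)\to E(H)$ sending $\{x,y\}\in E(G)$ to $\{f(x),f(y)\}$; this is well-defined precisely because $\{f(x),f(y)\}\in E(H)$, and in particular $f(x)\neq f(y)$, whenever $\{x,y\}\in E(G)$, so that $\{f(x),f(y)\}$ has cardinality $2$ and is genuinely an edge. It is also immediate that $E(g\circ f)=E(g)\circ E(f)$ and $E(1_G)=1_{E(G)}$, so $E$ is functorial; this makes the forward direction essentially automatic.

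For the forward implication, suppose $f$ is an isomorphism, i.e.\ there is a graph morphism $g\colon H\to G$ with $gf=1_G$ and $fg=1_H$. Then $f$ is a bijection on vertices with inverse $g$, and applying the functoriality of $E$ to $gf=1_G$ and $fg=1_H$ shows that $E(g)$ is a two-sided inverse of $E(f)$, hence $E(f)$ is a bijection.

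For the converse, suppose $f\colon V(G)\to V(H)$ is a bijection and $E(f)\colon E(G)\to E(H)$ is a bijection. Set $g=f^{-1}\colon V(H)\to V(G)$. The only thing to check is that $g$ is a graph morphism: given $\{a,b\}\in E(H)$, surjectivity of $E(f)$ furnishes $\{x,y\}\in E(G)$ with $\{f(x),f(y)\}=\{a,b\}$, whence $\{g(a),g(b)\}=\{x,y\}\in E(G)$. Since $gf=1_G$ and $fg=1_H$ hold as functions, and the identity maps are trivially graph morphisms, $f$ is an isomorphism in the category of graphs.

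There is essentially no obstacle here; the only points requiring a moment's care are checking that a graph morphism really does send edges to edges of the correct cardinality, so that $E(f)$ is defined at all, and, in the converse, noting that it is surjectivity of $E(f)$ that is doing the work — bijectivity of $f$ on vertices alone only forces $E(f)$ to be injective, and one needs a preimage edge to verify that $f^{-1}$ preserves edges.
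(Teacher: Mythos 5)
Your proof is correct and complete. The paper states this lemma without proof, treating it as an elementary characterisation, and your argument is exactly the intended one: functoriality of the induced edge map gives the forward direction, and in the converse the key point — which you rightly flag — is that surjectivity of the edge map is what makes $f^{-1}$ a graph morphism, since bijectivity on vertices alone would not suffice. Nothing further is needed.
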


\begin{lemma}\label{lem:epi_mono}
	In the category of graphs, monomorphisms are exactly injective morphisms, and epimorphisms are exactly surjective morphisms. 
	Moreover, given some graphs $G$ and $H$, if $f \in \Mon(G,H)$, then $G$ is a subgraph of $H$, and $f$ is an inclusion of subgraphs.
\end{lemma}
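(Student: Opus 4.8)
The statement to prove is Lemma~\ref{lem:epi_mono}: in the category of graphs, monomorphisms are exactly the injective morphisms, epimorphisms are exactly the surjective morphisms, and a monomorphism exhibits its source as a subgraph of its target.

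The plan is to prove the four implications separately, handling the two easy directions first. If $f\colon G\to H$ is injective, then it is left-cancellable as a function, hence left-cancellable as a morphism (since precomposition of morphisms is precomposition of functions), so $f$ is a monomorphism. Dually, if $f$ is surjective it is right-cancellable as a function and therefore an epimorphism. For the converse for monomorphisms, I would use the standard trick: suppose $f$ is a monomorphism and $f(x)=f(y)$ for vertices $x,y\in V(G)$. Consider the one-vertex graph $K_1$ and the two morphisms $g_x,g_y\colon K_1\to G$ sending the unique vertex to $x$ and $y$ respectively; these are graph morphisms since $K_1$ has no edges, so the edge-preservation condition is vacuous. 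Then $f\circ g_x = f\circ g_y$, and cancellability gives $g_x=g_y$, i.e.\ $x=y$. Hence $f$ is injective.

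For epimorphisms the converse is the only slightly delicate point. Suppose $f\colon G\to H$ is an epimorphism but not surjective, so there is $v\in V(H)\setminus f(V(G))$. I would build two distinct morphisms $H\to L$ agreeing on $f(V(G))$. A clean choice: let $L$ be the graph obtained from $H$ by adding one extra isolated vertex $v'$ (a disjoint copy of $K_1$), let $\iota\colon H\to L$ be the inclusion, and let $h\colon H\to L$ be equal to $\iota$ except $h(v)=v'$. One must check $h$ is a graph morphism: since $v\notin f(V(G))$ — wait, that is not enough; $v$ may have neighbours in $H$. The fix is to choose $v$ more carefully or to choose the codomain more carefully: take $L = H$ with one vertex $v$ "doubled" — formally, let $V(L)=V(H)\sqcup\{v'\}$ and $E(L)=E(H)\cup\{\,\{v',w\}\mid \{v,w\}\in E(H)\,\}$, so $v$ and $v'$ are twins. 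Then both $\iota\colon H\hookrightarrow L$ and the map $h$ sending $v\mapsto v'$ and fixing everything else are graph morphisms (edge-preservation for $h$ follows because $v'$ has exactly the neighbourhood $v$ had). Since $\iota\neq h$ but $\iota\circ f = h\circ f$ (as $f$ never hits $v$), $f$ is not an epimorphism, a contradiction. So epimorphisms are surjective. I expect this twin-vertex construction to be the main obstacle — not deep, but the naive "add an isolated vertex" argument fails and one needs the right test object.

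Finally, for the last sentence: if $f\in\Mon(G,H)$, then by the above $f$ is injective, so identifying $V(G)$ with its image $f(V(G))\subset V(H)$, the morphism condition gives $E(G)\subset E(H)$ under this identification, which is precisely the definition of $G$ being a subgraph of $H$ with $f$ the inclusion. (Note $f$ need not be an inclusion of \emph{induced} subgraphs, since a morphism may fail to reflect edges.) This completes the proof. All steps beyond the twin-vertex construction are routine, so I would keep the writeup brief, spending the bulk of the exposition on the epimorphism-implies-surjective direction.
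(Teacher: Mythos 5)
Your proof is correct and complete; the paper itself dismisses this lemma as ``an easy exercise'' and gives no argument, so there is nothing to compare against. The two delicate points are exactly the ones you handle: the epimorphism-implies-surjective direction genuinely needs the twin-vertex test object (your own remark that the naive ``add an isolated vertex'' map fails to be a graph morphism is the right diagnosis, and the doubled-neighbourhood graph $L$ fixes it, since in a simple graph every edge at $v$ has the form $\{v,w\}$ with $w\neq v$ and is sent to the edge $\{v',w\}\in E(L)$); and the final clause must be read up to the identification of $G$ with its image $f(G)$, which is consistent with the paper's convention (Remark~\ref{rk:iso_graphs}) of working with graphs up to isomorphism, and your parenthetical that the image need not be an \emph{induced} subgraph is a correct and worthwhile caution.
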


\begin{proof}
	The proof is en easy exercise.
\end{proof}

However, a morphism which is both a monomorphism and an epimorphism is not an isomorphism in general: consider for instance an inclusion $2K_1 \hookrightarrow K_2$. 
Hopefully, we can recover isomorphisms with some slightly stronger requirements which will be quite useful, as shown in the next lemmas.

A graph morphism $f \in \Hom(G,H)$ is said to be a \textit{quotient} if it is surjective and if for every edge $ab \in E(H)$ there is $xy \in E(G)$ such that $f(x)=a$ and $f(y) = b$. 
In other words, $f$ is a quotient if it is surjective both on vertices and edges. 
Given $G$ and $H$ two graphs, we denote by $\Quo(G,H)$ the set of quotients from $G$ to $H$, and we denote by $\quo(G,H)$ the cardinality of $\Quo(G,H)$. 

\begin{lemma}\label{lem:quotient_iso}
	Let $G$ and $H$ be two graphs and let $f\colon G\to H$ be a graph morphism. 
	If $f$ is both a quotient and a monomorphism, then it is an isomorphism.
\end{lemma}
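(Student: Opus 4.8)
The plan is to show that a morphism $f\colon G\to H$ which is both a quotient and a monomorphism is a bijection on vertices and induces a bijection on edges, so that Lemma~\ref{lem:isomorphisms} applies. First I would observe that $f$ is injective by Lemma~\ref{lem:epi_mono} (a monomorphism of graphs is injective on vertices), and $f$ is surjective on vertices by the definition of a quotient. Hence $f$ is a bijection $V(G)\to V(H)$, which means the induced map on edges $E(G)\to E(H)$, sending $xy$ to $f(x)f(y)$, is automatically injective (since $f$ is injective, distinct pairs of distinct vertices map to distinct pairs). It is also surjective on edges precisely because $f$ is a quotient: every $ab\in E(H)$ is of the form $f(x)f(y)$ for some $xy\in E(G)$.

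The only point needing a word of care is that the edge map $E(G)\to E(H)$ is well-defined as a map of \emph{edge sets}, i.e. that $f(x)\neq f(y)$ whenever $xy\in E(G)$, so that $f(x)f(y)$ is genuinely an edge (a two-element set) rather than a loop; but this is immediate from injectivity of $f$. Putting this together, the induced edge map is a bijection, $f$ is a bijection on vertices, and Lemma~\ref{lem:isomorphisms} gives that $f$ is an isomorphism. I do not anticipate any real obstacle here — the statement is essentially bookkeeping combining Lemma~\ref{lem:epi_mono}, Lemma~\ref{lem:isomorphisms}, and the definition of quotient. Here is the proof.

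\begin{proof}
	By Lemma~\ref{lem:epi_mono}, since $f$ is a monomorphism, the underlying map $f\colon V(G)\to V(H)$ is injective. Since $f$ is a quotient, it is surjective on vertices, hence $f\colon V(G)\to V(H)$ is a bijection. Consider the induced map on edge sets $\varphi\colon E(G)\to E(H)$, $\varphi(\{x,y\}) = \{f(x),f(y)\}$; this is well-defined, as $f$ injective forces $f(x)\neq f(y)$ for $xy\in E(G)$, so $\{f(x),f(y)\}$ has two elements. If $\{x,y\}$ and $\{x',y'\}$ are edges of $G$ with $\{f(x),f(y)\} = \{f(x'),f(y')\}$, then injectivity of $f$ gives $\{x,y\} = \{x',y'\}$, so $\varphi$ is injective. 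Finally, $f$ being a quotient means that for every $ab\in E(H)$ there exist $x,y\in V(G)$ with $xy\in E(G)$, $f(x)=a$, and $f(y)=b$, i.e. $\varphi(\{x,y\}) = \{a,b\}$; hence $\varphi$ is surjective. Thus $f$ is a bijection on vertices inducing a bijection on edges, so by Lemma~\ref{lem:isomorphisms} it is an isomorphism.
\end{proof}
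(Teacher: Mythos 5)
Your proof is correct and follows essentially the same route as the paper: injectivity from Lemma~\ref{lem:epi_mono}, surjectivity on vertices and edges from the definition of a quotient, and then the characterisation of isomorphisms via bijectivity on vertices and edges (Lemma~\ref{lem:isomorphisms}). The extra care you take about the induced edge map being well-defined is a fine, if minor, elaboration of the same argument.
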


\begin{proof}
	The function $f$ is injective by Lemma~\ref{lem:epi_mono} and surjective since it is a quotient, so it is a bijection. 
	Since it is injective, it induces an injective map from $E(G)$ to $E(H)$, which is also surjective since $f$ is a quotient. 
	So $f$ is actually a graph isomorphism.
\end{proof}

For a result more general than the next lemma, see Lemma~1.5 in~\cite{pultr1973isomorphism}.

\begin{lemma}\label{lem:mono_mono}
	Let $G$ and $H$ be two graphs and let $f \colon G\to H$ and $h\colon H\to G$ be two graph morphisms. 
	Assume that both $f$ and $h$ are monomorphisms. 
	Then they are both isomorphisms.
\end{lemma}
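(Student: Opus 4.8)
The plan is to reduce the statement to a pure finiteness count on the vertex and edge sets, using the identification of monomorphisms with injective morphisms from Lemma~\ref{lem:epi_mono} and the characterisation of isomorphisms from Lemma~\ref{lem:isomorphisms}. No operator-algebraic or categorical input beyond what is already recorded is needed.

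First I would use Lemma~\ref{lem:epi_mono} to note that $f$ and $h$ are injective on vertices. Since $V(G)$ and $V(H)$ are finite, the two inequalities $\abs{V(G)} \le \abs{V(H)}$ and $\abs{V(H)} \le \abs{V(G)}$ force $\abs{V(G)} = \abs{V(H)}$, so both $f$ and $h$ are bijections between vertex sets.

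Next I would pass to the edge sets. Because $f$ is injective on vertices, the assignment $\{x,y\} \mapsto \{f(x),f(y)\}$ defines a map $E(G) \to E(H)$: its image lies in $E(H)$ since $f$ is a graph morphism, and $\{f(x),f(y)\}$ is a genuine two-element set because $f(x) \neq f(y)$. This map is injective, since $\{f(x),f(y)\} = \{f(x'),f(y')\}$ gives $\{x,y\} = \{x',y'\}$ by injectivity of $f$. The same construction for $h$ yields an injection $E(H) \to E(G)$, so by finiteness $\abs{E(G)} = \abs{E(H)}$ and the edge map induced by $f$ is in fact a bijection. Thus $f$ is bijective on vertices and on edges, and Lemma~\ref{lem:isomorphisms} gives that $f$ is an isomorphism; the identical argument applied to $h$ finishes the proof.

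I do not anticipate a genuine obstacle: the whole argument rests on the finiteness of $V(G)$, $V(H)$, $E(G)$, $E(H)$. The one point that merits a line of care is checking that an injective morphism induces an \emph{injective} (not merely well-defined) map on edges, which is immediate from injectivity on vertices; once that is in place the two counting steps close the argument.
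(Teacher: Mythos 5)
Your proof is correct, but it follows a different route from the one in the paper. You count: injectivity of $f$ and $h$ on vertices forces $\abs{V(G)}=\abs{V(H)}$, the induced injections on edge sets force $\abs{E(G)}=\abs{E(H)}$, hence $f$ is bijective on vertices and on edges and Lemma~\ref{lem:isomorphisms} concludes; your one delicate point, that an injective morphism induces an injective (well-defined) map on edges, is handled correctly. The paper instead argues categorically: it forms $a=h\circ f$ and $b=f\circ h$, notes that these are injective self-maps of finite vertex sets, hence permutations of finite order, so $a^k=1_G$ and $b^l=1_H$ for some $k,l\geq 1$, and then exhibits explicit two-sided inverses ($fa^{k-1}$ and $b^{l-1}f$ for $h$, and symmetrically for $f$) built from graph morphisms. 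The trade-off is that your argument is more elementary and graph-specific, leaning on the combinatorial characterisation of isomorphisms in Lemma~\ref{lem:isomorphisms}, whereas the paper's argument never touches edge counts and works verbatim in any category of finite structures in which monomorphisms are injective on underlying sets --- this is why the paper can point to Pultr's more general categorical statement. Both proofs are complete for the lemma as stated.
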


\begin{proof}
	Let $a = h\circ f\colon G\to G$ and $b = f\circ h\colon H\to H$. 
	Since $a$ is injective from the finite set $V(G)$ to itself (by Lemma~\ref{lem:epi_mono}), it is a bijection; similarly, $b$ is a bijection as well. 
	So there are $k\geq 1$ and $l\geq 1$ such that $a^k = 1_G$ and $b^l = 1_H$. 
	Now we have that $fa^{k-1}$ is a right-inverse to $h$ and that $b^{l-1}f$ is a left-inverse to $h$, so $h$ is an isomorphism. 
	Similarly, $a^{k-1}h$ is a left-inverse to $f$, and $hb^{l-1}$ is a right-inverse to $f$, so $f$ is an isomorphism. 
	This concludes the proof.
\end{proof}

\begin{lemma}\label{lem:factorisation_quotient_mono}
	Let $f \colon G\to H$ be a graph morphism. 
	Then there exists a graph $A$ isomorphic to $f(G)$ and a pair $(q,m) \in \Quo(G,A)\times \Mon(A,H)$ such that $f = mq$. 
	Morevoer, if $A'$ is a graph such that there exists $(q',m') \in \Quo(G,A') \times \Mon(A',H)$ with $f = m'q'$, then there exists an isomorphism $\alpha \colon A\to A'$ such that $q' = \alpha q$ and $m' = \alpha m$. 
\end{lemma}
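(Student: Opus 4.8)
\textbf{Proof proposal for Lemma~\ref{lem:factorisation_quotient_mono}.}

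The plan is to use the standard epi--mono (here quotient--mono) factorization through the image. First I would define $A_0$ to be the graph with vertex set $V(A_0) = f(V(G)) \subset V(H)$ and edge set $E(A_0) = \{ f(x)f(y) \mid xy \in E(G)\}$. One checks immediately that $E(A_0) \subset \Pp(V(A_0))$ and that its elements have cardinality $2$ (since $f$ maps edges to edges of $H$, no edge collapses to a point), so $A_0$ is a genuine graph, and that $A_0$ is a subgraph of $H$ with the inclusion $m_0 \colon A_0 \hookrightarrow H$ being a monomorphism by Lemma~\ref{lem:epi_mono}. The corestriction $q_0 \colon G \to A_0$, $x \mapsto f(x)$, is a well-defined graph morphism: it is surjective on vertices by construction of $V(A_0)$, and surjective on edges by construction of $E(A_0)$, hence a quotient. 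Clearly $f = m_0 q_0$. Taking $A = A_0$ gives the existence statement (with $A$ even equal, not just isomorphic, to $f(G)$, but I keep the ``isomorphic'' phrasing for the subsequent uniqueness argument).

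Next I would prove uniqueness. Suppose $A'$ is a graph with $(q',m') \in \Quo(G,A') \times \Mon(A',H)$ and $f = m'q'$. Since $q'$ is surjective on vertices, every vertex of $A'$ is $q'(x)$ for some $x \in V(G)$, and $m'(q'(x)) = f(x)$; since $m'$ is injective, this shows $m'$ restricts to a bijection $V(A') \to f(V(G)) = V(A)$. I would then define $\alpha \colon A \to A'$ on vertices to be the inverse of this bijection, i.e. $\alpha(f(x)) = q'(x)$ (well-defined because $m'$ injective: if $f(x) = f(x')$ then $m'q'(x) = m'q'(x')$, so $q'(x) = q'(x')$). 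It is a graph morphism: an edge of $A$ is $f(x)f(y)$ with $xy \in E(G)$, and $\alpha$ sends it to $q'(x)q'(y)$, which is an edge of $A'$ because $q'$ is a morphism. By construction $q' = \alpha q_0$, and $m' = m_0 \alpha^{-1}$ on vertices, equivalently $m_0 = m'\alpha$; I must still check $m' = \alpha m_0$ — wait, that is a type error, so the intended reading is $m_0 = m' \alpha$ and $q' = \alpha q_0$, which I would phrase accordingly (matching the commuting-triangle picture). Finally, to see $\alpha$ is an isomorphism I would build an inverse symmetrically: define $\beta \colon A' \to A$ by $\beta(q'(x)) = f(x)$ (well-defined since $q'$ surjective and, if $q'(x) = q'(x')$ then $m'q'(x) = m'q'(x')$ so $f(x) = f(x')$); it is a morphism because $q'$ is a quotient, hence surjective on edges, so every edge of $A'$ has the form $q'(x)q'(y)$ with $xy \in E(G)$, mapped by $\beta$ to $f(x)f(y) \in E(A)$. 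Then $\alpha\beta = 1_{A'}$ and $\beta\alpha = 1_A$ on vertices and hence on edges, so $\alpha$ is an isomorphism.

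The only mildly delicate point — the ``main obstacle'' such as it is — is the verification that $\beta$ (equivalently $\alpha^{-1}$) is a graph morphism: this is exactly where the hypothesis that $q'$ is a \emph{quotient} rather than merely an epimorphism is used, since one needs edge-surjectivity of $q'$ to lift an arbitrary edge of $A'$ back to an edge of $G$ and then push it to $A$. Everything else is routine diagram-chasing with the characterizations of mono (injective) and quotient (surjective on vertices and edges) already recorded in Lemma~\ref{lem:epi_mono}, Lemma~\ref{lem:quotient_iso}, and the definitions. Alternatively, one can package the conclusion via Lemma~\ref{lem:mono_mono} or Lemma~\ref{lem:quotient_iso} once $\alpha$ is known to be both a quotient and a monomorphism, but the explicit inverse $\beta$ seems cleanest.
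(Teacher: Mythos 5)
Your proof is correct and follows essentially the same route as the paper: the image graph $A=f(G)$ with the corestriction as quotient and the inclusion as mono, and the comparison map $\alpha$ (your $\alpha(f(x))=q'(x)$ is exactly the paper's $m'^{-1}\circ m$), the only cosmetic difference being that you exhibit an explicit inverse $\beta$ while the paper checks bijectivity on vertices and edges directly. Your reading of the statement's ill-typed ``$m'=\alpha m$'' as $m=m'\alpha$ is also what the paper's own proof establishes.
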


\begin{proof}
	Let $X = f(V(G)) \subset V(H)$. 
	Define a graph $A$ by setting $V(A) = X$ and $E(A) = f(E(G)) \subset E(H)$ -- in other words, we have $A = f(G)$. 
	Let $q \colon V(G) \to X$ be the corestriction of $f$, that is, $q(x) = f(x)$ for all $x \in V(G)$. 
	We claim that $q$ gives rise to a quotient morphism from $G$ to $A$. 
	Indeed, it is a graph morphism since $f$ is, and by construction it is surjective on vertices and edges, as desired. 
	Now $A$ is by construction a subgraph of $H$, let $m \colon A\to H$ be the inclusion morphism. 
	It is a monomorphism by Lemma~\ref{lem:epi_mono}. 
	Finally, we clearly have $f = mq$, so $A$, $q$, an $m$ have the desired properties.

	Let $A'$ be a graph with $(q',m') \in \Quo(G,A')\times \Mon(A',H)$ such that $f = m'q'$. 
	Notice that $m'(V(A')) = m'(q'(V(G))) = f(V(G)) = X$ since $q'$ is a quotient map. 
	Since $m'$ is injective and $m(V(A)) = X$, the function $\alpha \colon m'{}\inv\circ m \colon V(A) \to V(A')$ is well-defined and injective. 
	Let us check that it is a graph morphism. 
	Consider $a$, $b\in V(A)$ such that $ab\in E(A)$. 
	Since $q$ is a quotient map, there are $x$, $y\in V(G)$ with $xy\in E(G)$ such that $q(x)=a$ and $q(y) = b$. 
	It comes:
	\begin{align*}
		\alpha(a) &= m'{}\inv m(a) = m'{}\inv mq(x)\\
		&= m'{}\inv f(x) = m'{}\inv m' q'(x) = q'(x),
	\end{align*}
	and, similarly, $\alpha(b) = q'(y)$. 
	So $\alpha(a)\alpha(b) = q'(x)q'(y) \in E(A')$ since $xy\in E(G)$ and $q'$ is a graph morphism. 

	Finally, let us check that $\alpha$ is the desired graph isomorphism. 
	We already know it is injective; let $c\in V(A')$. 
	We saw that $m'(c) \in X = m(V(A))$, hence there exists $a\in V(A)$ with $m(a) = m'(c)$, so that $\alpha(a) = m'{}\inv m(a) = m'{}\inv m'(c) = c$, and $\alpha$ is surjective as well. 
	Hence for $\alpha$ to be a graph isomorphism, it remains to check that $\alpha$ is surjective on edges as well. 
	Let $a'$, $b'\in V(A')$ be such that $a'b'\in E(A')$. 
	Since $q'$ is a quotient map, there are $x$ and $y\in V(G)$ with $xy\in E(G)$ and $q'(x)=a'$ and $q'(y) = b'$. 
	Now letting $u = f(x)$ and $v = f(y)$ we have by construction $uv \in E(A)$. 
	Then it comes:
	\begin{align*}
		m'(a') &= m'(q'(x)) = f(x)\\
		&= u = m(u),
	\end{align*}
	so $a' = m'{}\inv m(u) = \alpha(u)$. 
	Similarly we obtain that $b' = \alpha(v)$, thus $a'b' = \alpha(u)\alpha(v) \in \alpha(E(A))$ and $\alpha$ is also surjective on edges. 
	Hence we have proved that $\alpha \colon A\to A'$ is a graph isomorphism. 

	Finally, let $x \in V(G)$. 
	We have $mq(x) = f(x) = m'q'(x)$, so $q'(x) = m'{}\inv mq (x) = \alpha q(x)$, which shows that $q' = \alpha q$. 
	And for $a\in V(A)$, we have $m'\alpha(a) = m' m'{}\inv m(a) = m(a)$, so $m'\alpha = m$. 
	This concludes the proof.
\end{proof}

We refer to the graph $A$ given by Lemma~\ref{lem:factorisation_quotient_mono} as the \textit{image} of $f$. 

\begin{lemma}\label{lem:aut_acts_morphisms}
	Let $G$ and $H$ be two graphs. 
	Let $f\in \Hom(G,H)$ and let $A$ be the image of $f$. 
	Then $\# \{ (q,m) \in \Quo(G,A)\times \Mon(A,H) \mid mq=f\} = \aut(A)$.
\end{lemma}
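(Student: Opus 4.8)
The plan is to fix the factorisation $f = m_0 q_0$ provided by Lemma~\ref{lem:factorisation_quotient_mono}, and to produce a bijection between the set $S = \{(q,m) \in \Quo(G,A)\times \Mon(A,H) \mid mq = f\}$ and $\Aut(A)$. The natural candidate map is $\Phi \colon \Aut(A) \to S$ defined by $\Phi(\alpha) = (\alpha q_0, m_0 \alpha^{-1})$. First I would check this is well-defined: if $\alpha \in \Aut(A)$, then $\alpha q_0 \in \Quo(G,A)$ since a quotient post-composed with an isomorphism is a quotient (isomorphisms are surjective on vertices and edges), and $m_0 \alpha^{-1} \in \Mon(A,H)$ since a monomorphism pre-composed with an isomorphism is a monomorphism; moreover $(m_0\alpha^{-1})(\alpha q_0) = m_0 q_0 = f$, so indeed $\Phi(\alpha) \in S$.

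Next I would show $\Phi$ is injective: if $\Phi(\alpha) = \Phi(\beta)$, then in particular $\alpha q_0 = \beta q_0$, and since $q_0$ is surjective on vertices it is an epimorphism in the category of graphs (Lemma~\ref{lem:epi_mono}), hence right-cancellable, giving $\alpha = \beta$. Then I would show $\Phi$ is surjective: given $(q,m) \in S$, apply the uniqueness part of Lemma~\ref{lem:factorisation_quotient_mono} to the two factorisations $f = m_0 q_0$ and $f = m q$ (both through graphs — here both through $A$, or through a graph isomorphic to the image; one should be slightly careful and phrase Lemma~\ref{lem:factorisation_quotient_mono} so that $A$ is literally the image). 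This yields an isomorphism $\alpha \colon A \to A$, i.e.\ $\alpha \in \Aut(A)$, with $q = \alpha q_0$ and $m_0 = m\alpha$, the latter rewriting as $m = m_0 \alpha^{-1}$. Hence $(q,m) = \Phi(\alpha)$.

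Combining injectivity and surjectivity, $\Phi$ is a bijection, so $\# S = \aut(A)$, which is the claim.

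The main technical point to be careful about is the bookkeeping in the uniqueness clause of Lemma~\ref{lem:factorisation_quotient_mono}: that lemma as stated compares a \emph{chosen} factorisation $f = mq$ through $A = f(G)$ with an \emph{arbitrary} one through some $A'$, and I want to apply it with $A' = A$. This is legitimate since the lemma's hypothesis only requires $(q',m') \in \Quo(G,A')\times\Mon(A',H)$ with $f = m'q'$, and the conclusion then gives an isomorphism $\alpha\colon A \to A$ with $q' = \alpha q$ and $m' = \alpha m$ — but note the asymmetry: the lemma conjugates on the side of $q$ and $m$ as $q' = \alpha q$, $m m' $ ... precisely $m'\alpha = m$. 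I would simply unwind these identities carefully to land on the pair $(\alpha q_0, m_0\alpha^{-1})$, inverting $\alpha$ where needed. No real obstacle beyond this careful matching of which morphism gets composed on which side.
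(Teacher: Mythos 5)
Your proposal is correct and follows essentially the same route as the paper: fix a factorisation $f=m_0q_0$, map $\alpha\mapsto(\alpha q_0,\,m_0\alpha^{-1})$, get injectivity from $q_0$ being an epimorphism and surjectivity from the uniqueness clause of Lemma~\ref{lem:factorisation_quotient_mono}. Your careful unwinding of which side $\alpha$ composes on (the clause really gives $m'\alpha=m$, so $m=m_0\alpha^{-1}$) is exactly the bookkeeping the paper's proof relies on implicitly.
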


\begin{proof}
	Let $X_f = \{ (q,m) \in \Quo(G,A) \times \Mon(A,H) \mid f = mq\}$. 
	Fix $x = (q,m) \in X_f$. 
	We define a function $\psi_x \colon \Aut(A) \to X_f$ given by $\psi_x(\alpha) = (\alpha q,m\alpha\inv)$. 
	It is a well-defined function and it is surjective by Lemma~\ref{lem:factorisation_quotient_mono}. 
	Now assume that $\psi_x(\alpha) = \psi_x(\beta)$ for some $\alpha$, $\beta\in \Aut(A)$. 
	This means that $\alpha q = \beta q$ as graph morphisms from $G$ to $H$. 
	Since $q$ is an epimorphism by Lemma~\ref{lem:epi_mono}, we conclude that $\alpha = \beta$. 
	This shows that $\psi_x$ is injective. 
	Hence we have shown it is bijective, and we can conclude that $\# X_f = \# \Aut(A) = \aut(A)$.
\end{proof}

Recall that, given an equivalence relation $R$ on a set $X$, a \textit{system of representatives} for $R$ is a subset $S\subset X$ such that, for every $x\in X$, there exists a unique $y\in S$ such that $(x,y) \in R$. 

\begin{theorem}\label{thm:lovasz_formula}
	Let $\Ss \subset \Graphs$ be a system of representatives for the graph isomorphism relation. 
	Let $G$ and $H$ be two graphs. 
	We have:
	\[ \hom(G,H) = \sum_{A \in \Ss} \frac{\quo(G,A)}{\aut(A)} \mon(A,H).\]
\end{theorem}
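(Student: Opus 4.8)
The plan is to decompose the set $\Hom(G,H)$ according to the image of each morphism. By Lemma~\ref{lem:factorisation_quotient_mono}, every $f \in \Hom(G,H)$ factors as $f = mq$ with $q \in \Quo(G,A)$ and $m \in \Mon(A,H)$, where $A$ is the image of $f$, well-defined up to isomorphism. Since $\Ss$ is a system of representatives for the isomorphism relation, there is a unique $A \in \Ss$ with $A$ isomorphic to the image of $f$, so the assignment $f \mapsto A$ partitions $\Hom(G,H)$ into fibers indexed by $A \in \Ss$. Thus
\[ \hom(G,H) = \sum_{A \in \Ss} \#\{ f \in \Hom(G,H) \mid \text{the image of } f \text{ is isomorphic to } A\}.\]

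First I would fix $A \in \Ss$ and count the morphisms $f$ whose image is (isomorphic to) $A$. The natural approach is to set up a map $\Phi_A \colon \Quo(G,A) \times \Mon(A,H) \to \Hom(G,H)$ by $\Phi_A(q,m) = mq$, and show that its image is exactly the set of $f$ with image isomorphic to $A$, and that every such $f$ has exactly $\aut(A)$ preimages under $\Phi_A$. The first point: if $f = mq$ with $q$ a quotient and $m$ a monomorphism, then by the uniqueness part of Lemma~\ref{lem:factorisation_quotient_mono} the image of $f$ is isomorphic to $A$; conversely, if $f$ has image isomorphic to $A$, then composing the canonical factorization through the actual image $f(G)$ with a chosen isomorphism $f(G) \to A$ gives a factorization $f = mq$ through $A$. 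The fiber count is precisely Lemma~\ref{lem:aut_acts_morphisms}, which states that $\#\{(q,m) \in \Quo(G,A) \times \Mon(A,H) \mid mq = f\} = \aut(A)$ whenever $A$ is the image of $f$. Hence
\[ \#\{ f \in \Hom(G,H) \mid \text{image of } f \simeq A\} = \frac{\#(\Quo(G,A) \times \Mon(A,H))}{\aut(A)} = \frac{\quo(G,A)\,\mon(A,H)}{\aut(A)}.\]

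Summing over $A \in \Ss$ then yields the claimed formula. One technical wrinkle to address carefully: Lemma~\ref{lem:aut_acts_morphisms} is phrased for the literal image of $f$ (a subgraph of $H$), while here $A$ ranges over $\Ss$; I would handle this by transporting along a fixed isomorphism between $f(G)$ and its representative $A \in \Ss$, noting that $\aut$ and the cardinalities $\quo(G,-)$, $\mon(-,H)$ are isomorphism invariants, so the bijective correspondence survives. I would also note that for a given $A$ the fiber of $f \mapsto A$ can be empty (if no morphism has that image), in which case the corresponding term either has $\quo(G,A) = 0$ or $\mon(A,H) = 0$, so the formula remains correct. The main obstacle is purely bookkeeping — making the passage between "the image of $f$" and "the chosen representative $A \in \Ss$" rigorous without circular reasoning, and ensuring the preimage-counting argument of Lemma~\ref{lem:aut_acts_morphisms} is applied to the right object — rather than anything conceptually deep.
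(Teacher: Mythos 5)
Your proposal is correct and follows essentially the same route as the paper's proof: partition $\Hom(G,H)$ by the isomorphism type of the image, use Lemma~\ref{lem:factorisation_quotient_mono} for surjectivity of the composition map $\Quo(G,A)\times\Mon(A,H)\to\Hom(G,H)$ onto that part, and Lemma~\ref{lem:aut_acts_morphisms} for the fiber count $\aut(A)$. The bookkeeping point you flag (transporting between $f(G)$ and its representative in $\Ss$) is handled the same way, implicitly, in the paper via the uniqueness clause of Lemma~\ref{lem:factorisation_quotient_mono}.
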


\begin{proof}
	First, notice that the sum makes sense, since $\aut(A)\neq 0$ for all $A\in \Ss$, and $\mon(A,H) = 0$ if $A$ is not a subgraph of $H$, so the sum has only finitely many nonzero terms. 

	For every $A\in \Ss$, let $Y_A = \{ f\in \Hom(G,H) \mid A \simeq f(G)\}$. 
	Let $X_A = \Quo(G,A)\times \Mon(A,H)$ and notice that for $(q,m) \in X_A$, we have $mq \in Y_A$. 
	Thus the composition $(q,m)\mapsto mq$ defines a function $\psi_A \colon X_A \to Y_A$. 
	Moreover, for $f\in Y_A$, we have $\psi_A\inv(\{f\}) = \{ (q,m) \in X_A \mid mq = f\}$, which has cardinality $\aut(A)$ by Lemma~\ref{lem:aut_acts_morphisms}. 
	Since $\psi_A$ is surjective by Lemma~\ref{lem:factorisation_quotient_mono}, it comes:
	\begin{align*}
		\quo(G,A).\mon(A,H) &= \# X_A\\
		&= \sum_{f\in Y_A} \# \psi_A\inv(\{f\})\\
		&= \aut(A).\# Y_A.
	\end{align*}
	Hence we have $\# Y_A = \frac{\quo(G,A)}{\aut(A)} \mon(A,H)$.

	Now again by Lemma~\ref{lem:factorisation_quotient_mono} we have that $\Hom(G,H) = \bigsqcup_{A\in S} Y_A$. 
	Thus we obtain
	\[\hom(G,H) = \sum_{A\in \Ss} \# Y_A = \sum_{A\in \Ss} \frac{\quo(G,A)}{\aut(A)}\mon(A,H),\]
	as desired. 
	This concludes the proof.
\end{proof}

It implies the following corollary.

\begin{corollary}\label{coro:lovasz_formula_graph_classes}
	Let $\Ff$ be a class of graphs stable by taking subgraphs. 
	Let $G$ be a graph and let $H\in \Ff$. 
	Then
	\[ \hom(G,H) = \sum_{A\in \Ff} \frac{\quo(G,A)}{\aut(A)} \mon(A,H).\]
\end{corollary}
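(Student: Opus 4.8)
The plan is to derive Corollary~\ref{coro:lovasz_formula_graph_classes} directly from Theorem~\ref{thm:lovasz_formula} by checking that, under the hypothesis that $\Ff$ is closed under taking subgraphs, every term that could a priori be missing from the sum indexed by $\Ff$ actually vanishes, and every term indexed by $\Ff$ is the same whether computed inside $\Ff$ or inside a full system of representatives. First I would fix a system of representatives $\Ss$ for the graph isomorphism relation such that $\Ss \cap \Ff$ is itself a system of representatives for the isomorphism relation restricted to $\Ff$; this is harmless since $\Ff$, being closed under isomorphism (as a class of graphs in the sense of Remark~\ref{rk:iso_graphs}, where we identify isomorphic graphs), can be represented by a subset of any chosen $\Ss$.

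The main computation is then to invoke Theorem~\ref{thm:lovasz_formula} to write
\[ \hom(G,H) = \sum_{A \in \Ss} \frac{\quo(G,A)}{\aut(A)} \mon(A,H), \]
and to argue that $\mon(A,H) = 0$ whenever $A \notin \Ff$. Indeed, by Lemma~\ref{lem:epi_mono}, if $f \in \Mon(A,H)$ then $A$ is (isomorphic to) a subgraph of $H$; since $H \in \Ff$ and $\Ff$ is stable by taking subgraphs, this forces $A \in \Ff$ (again using that $\Ff$ is closed under isomorphism). Hence every summand with $A \notin \Ff$ contributes $0$, so the sum over $\Ss$ collapses to the sum over $\Ss \cap \Ff$, which is a system of representatives for isomorphism classes in $\Ff$; relabelling, this is exactly $\sum_{A \in \Ff} \frac{\quo(G,A)}{\aut(A)} \mon(A,H)$ with the understanding that the index runs over isomorphism classes. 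This yields the stated formula.

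I do not anticipate a serious obstacle here: the only subtlety is the bookkeeping around "class of graphs" versus "set of isomorphism-class representatives", i.e.\ making precise the compatibility between the indexing set $\Ff$ in the corollary and the indexing set $\Ss$ in the theorem. The cleanest way to handle this is to observe that, by Remark~\ref{rk:iso_graphs}, a class of graphs is already a set of isomorphism classes, so the sum $\sum_{A \in \Ff}$ is literally a sum over $\Ss \cap \Ff$ for the canonical $\Ss$ of graphs on vertex sets $\{1,\dots,n\}$, and Theorem~\ref{thm:lovasz_formula} applies verbatim. The remaining verification—that $\mon(A,H)=0$ for $A \notin \Ff$—is immediate from Lemma~\ref{lem:epi_mono} and closure under subgraphs, so the proof is genuinely short and the write-up amounts to spelling out these two observations.
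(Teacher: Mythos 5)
Your proposal is correct and follows the paper's own argument: the paper likewise deduces the corollary from Theorem~\ref{thm:lovasz_formula} by observing that $\mon(A,H)=0$ whenever $A$ is not a subgraph of $H$ (Lemma~\ref{lem:epi_mono}), so closure of $\Ff$ under subgraphs makes all terms outside $\Ff$ vanish. Your extra bookkeeping about identifying $\Ff$ with a subset of a system of representatives (via Remark~\ref{rk:iso_graphs}) is exactly the implicit convention the paper uses, so nothing is missing.
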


\begin{proof}
	This is immediate since the summand is null when $A$ is not a subgraph of $H$ by Lemma~\ref{lem:epi_mono}.
\end{proof}

It will be useful to be able to compute $\hom(G,H)$ as a function of the number of morphisms between the connected components of $G$ and $H$.

\begin{lemma}\label{lem:connected_morphisms}
	Let $G = G_1 + \ldots + G_r$ be a graph decomposed into its connected components. 
	Let $H$ be a graph. 
	We have:
	\begin{enumerate}
		\item $\hom(G,H) = \prod_{i=1}^r \hom(G_i,H)$,
		\item if $H$ is connected, we also have $\hom(H,G) = \sum_{i=1}^r \hom(H,G_i)$.
	\end{enumerate}
\end{lemma}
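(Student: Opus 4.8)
The plan is to prove both identities directly from the definitions of graph morphisms and disjoint union, exploiting the fact that a morphism out of a disjoint union is determined by its restrictions to the components, while a morphism \emph{into} a graph from a connected graph has image contained in a single component.

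For part (1), I would argue as follows. A disjoint union $G = G_1 + \ldots + G_r$ has $V(G) = \bigsqcup_{i=1}^r V(G_i)$ and $E(G) = \bigsqcup_{i=1}^r E(G_i)$. Given a graph morphism $f \colon G \to H$, its restriction $f_i = f|_{V(G_i)}$ is a graph morphism $G_i \to H$, since every edge of $G_i$ is an edge of $G$. Conversely, given an $r$-tuple $(f_1,\ldots,f_r)$ with $f_i \in \Hom(G_i,H)$, the function $f \colon V(G) \to V(H)$ defined by $f(x) = f_i(x)$ for $x \in V(G_i)$ is well-defined (the $V(G_i)$ partition $V(G)$) and is a graph morphism: any edge of $G$ lies entirely in some $E(G_i)$, so it is mapped to an edge of $H$ by $f_i$. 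These two assignments are mutually inverse, giving a bijection $\Hom(G,H) \cong \prod_{i=1}^r \Hom(G_i,H)$, hence $\hom(G,H) = \prod_{i=1}^r \hom(G_i,H)$.

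For part (2), assume $H$ is connected and let $f \colon H \to G$ be a graph morphism. I claim $f(V(H)) \subset V(G_i)$ for exactly one $i$. Indeed, since $H$ is connected, for any two vertices $x,y \in V(H)$ there is a walk $x = z_0 \sim z_1 \sim \ldots \sim z_m = y$; each $z_{j}z_{j+1}$ is an edge of $H$, so $f(z_j)f(z_{j+1})$ is an edge of $G$, hence $f(z_j)$ and $f(z_{j+1})$ lie in the same connected component of $G$; by induction $f(x)$ and $f(y)$ lie in the same component. Thus $f(V(H))$ meets exactly one $V(G_i)$, and $f$ corestricts to a morphism $H \to G_i$ (here one uses that $G_i$ is an \emph{induced} subgraph of $G$, so an edge of $G$ between two vertices of $V(G_i)$ is already an edge of $G_i$). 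Conversely every morphism $H \to G_i$ composes with the inclusion $G_i \hookrightarrow G$ to give a morphism $H \to G$, and the component index $i$ is uniquely determined by $f$ (the $V(G_i)$ being disjoint and nonempty, as connected components), so $\Hom(H,G) = \bigsqcup_{i=1}^r \Hom(H,G_i)$ as a disjoint union, whence $\hom(H,G) = \sum_{i=1}^r \hom(H,G_i)$.

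Neither part presents a genuine obstacle; the only point requiring a little care is the connectedness argument in part (2) --- specifically that the image of a connected graph under a morphism stays within one component of the target, which is where the walk-propagation argument is needed --- and the observation that the corestriction to $G_i$ is again a graph morphism, which uses that connected components are induced subgraphs. I would state part (2) only under the stated hypothesis that $H$ is connected, since without it the statement fails (the image could spread across several components).
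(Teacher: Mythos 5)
Your proof is correct, and part (2) follows exactly the paper's argument: a morphism from the connected graph $H$ has connected image, hence lands in a unique component $G_i$, giving the disjoint-union decomposition of $\Hom(H,G)$. For part (1) the paper simply cites that $\Hom(\cdot,H)$ is a contravariant functor sending colimits to limits and that the sum of graphs is a coproduct, whereas you verify the same fact by hand via the restriction/gluing bijection; this is the universal property of the coproduct spelled out explicitly, so the mathematical content is identical, your version being more self-contained and the paper's more economical.
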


\begin{proof}
	Recall that fixing $H$ we have a natural contravariant functor $\Hom(\cdot ,H) \colon \Graphs \to \Set$ which sends colimits to limits. 
	In particular, since the sum of graphs is a coproduct (and so a colimit), we have a bijection between $\Hom(\sum_i G_i,H)$ and $\prod_i \Hom(G_i,H)$, hence (1).

	Let us prove (2). 
	Let $f\in \Hom(H,G)$. 
	Since $H$ is connected, we have that $f(H)$ is also connected, so there is a unique $i$ such that $f(H)\subset G_i$. 
	Writing this index as $i(f)$, we obtain a function $i\colon \Hom(H,G) \to \{1,\ldots,r\}$ such that $f(H) \subset G_{i(f)}$ for all $f\in \Hom(H,G)$. 
	Now $\varphi \colon f \mapsto f^{|G_{i(f)}}$ is a well-defined mapping from $\Hom(H,G)$ to $\sqcup_{i=1}^r \Hom(H,G_i)$. 
	It is clearly surjective and is injective by what precedes, hence it is a bijection between $\Hom(H,G)$ and $\sqcup_{i=1}^r \Hom(H,G_i)$, which gives the result.
\end{proof}

We obtain the following corollary.

\begin{corollary}\label{coro:connected_morphisms}
	Let $G = \sum_{i=1}^k G_i$ and $H = \sum_{j=1}^l H_j$ be two graphs written as sums of their connected components.
	Then we have:
	\[ \hom(G,H) = \prod_{i=1}^k \sum_{j=1}^l \hom(G_i,H_j).\]
\end{corollary}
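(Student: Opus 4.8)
The plan is to derive Corollary~\ref{coro:connected_morphisms} by combining the two parts of Lemma~\ref{lem:connected_morphisms}. The statement asserts that if $G = \sum_{i=1}^k G_i$ and $H = \sum_{j=1}^l H_j$ are the decompositions into connected components, then $\hom(G,H) = \prod_{i=1}^k \sum_{j=1}^l \hom(G_i,H_j)$.

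First I would apply part (1) of Lemma~\ref{lem:connected_morphisms} to the decomposition $G = G_1 + \ldots + G_k$: since the $G_i$ are the connected components of $G$, this gives
\[ \hom(G,H) = \prod_{i=1}^k \hom(G_i, H). \]
Next, for each fixed $i$ with $1\leq i\leq k$, the graph $G_i$ is connected, so I can apply part (2) of Lemma~\ref{lem:connected_morphisms} with the roles reversed: taking $H$ (connected, in the notation of that lemma) to be $G_i$ and the decomposed graph (called $G$ in that lemma) to be $H = H_1 + \ldots + H_l$. This yields
\[ \hom(G_i, H) = \sum_{j=1}^l \hom(G_i, H_j). \]
Substituting this into the product over $i$ gives exactly
\[ \hom(G,H) = \prod_{i=1}^k \sum_{j=1}^l \hom(G_i, H_j), \]
which is the claimed formula.

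There is no real obstacle here — the corollary is a purely formal consequence of the lemma, and the only point requiring a moment's attention is bookkeeping: one must be careful that the hypotheses of each part of Lemma~\ref{lem:connected_morphisms} are met (that the summands are genuinely the connected components, and that $G_i$ is connected when invoking part (2)), and that the two applications are not circular since they concern different graphs. Both checks are immediate from the setup. I would write this up in two or three lines, citing Lemma~\ref{lem:connected_morphisms}(1) and then Lemma~\ref{lem:connected_morphisms}(2) applied componentwise.
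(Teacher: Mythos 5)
Your proof is correct and is exactly the intended argument: the paper states this corollary as an immediate consequence of Lemma~\ref{lem:connected_morphisms}, obtained by applying part (1) to the decomposition of $G$ and then part (2) componentwise to each connected $G_i$ mapping into $H = H_1 + \ldots + H_l$. Nothing further is needed.
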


In the following, all classes are assumed to be nonempty. 
Given $\Ff$ a class of graphs, we follow Section 7 of~\cite{ManRob} and say that two graphs $G$ and $H$ are \textit{$\Ff$-isomorphic} if for all $A\in \Ff$ we have $\hom(A,G) = \hom(A,H)$. 
Recall that $\Ff^+ = \{ G_1 + \ldots + G_k \mid k\geq 1,\ G_1,\ldots,G_k \in \Ff\}$.

\begin{lemma}\label{lem:connected_F_iso}
	Let $\Ff$ be a class of connected graphs. 
	Then two graphs are $\Ff$-isomorphic if and only if they are $\Ff^+$-isomorphic.
\end{lemma}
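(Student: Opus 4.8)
The plan is to prove both implications directly from the definition of $\Ff$-isomorphism, using Corollary~\ref{coro:connected_morphisms} to handle the passage between a class and its closure under disjoint unions. The forward implication is trivial: if $G$ and $H$ are $\Ff^+$-isomorphic, then since $\Ff \subset \Ff^+$ (taking $k=1$), in particular $\hom(A,G) = \hom(A,H)$ for all $A \in \Ff$, so $G$ and $H$ are $\Ff$-isomorphic. The content is entirely in the reverse implication.

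For the reverse implication, suppose $G$ and $H$ are $\Ff$-isomorphic, and let $B \in \Ff^+$ be arbitrary. Write $B = B_1 + \ldots + B_k$ as a sum of graphs in $\Ff$ (this is possible by definition of $\Ff^+$, though note the $B_i$ need not be the connected components of $B$ — but since every element of $\Ff$ is connected here, they in fact are). By Corollary~\ref{coro:connected_morphisms} applied to $B$ and to $G$ (decomposed into its own connected components), we have $\hom(B,G) = \prod_{i=1}^k \hom(B_i, G)$, and similarly $\hom(B,H) = \prod_{i=1}^k \hom(B_i,H)$. Wait — Corollary~\ref{coro:connected_morphisms} as stated requires decomposing both arguments into connected components; the cleaner tool is part (1) of Lemma~\ref{lem:connected_morphisms}, which gives $\hom(B,G) = \prod_{i=1}^k \hom(B_i,G)$ directly once $B = B_1 + \ldots + B_k$ with each $B_i$ connected (which holds since $B_i \in \Ff$ and $\Ff$ consists of connected graphs). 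Then since each $B_i \in \Ff$ and $G, H$ are $\Ff$-isomorphic, we get $\hom(B_i,G) = \hom(B_i,H)$ for every $i$, hence $\hom(B,G) = \prod_i \hom(B_i,G) = \prod_i \hom(B_i,H) = \hom(B,H)$. As $B \in \Ff^+$ was arbitrary, $G$ and $H$ are $\Ff^+$-isomorphic.

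I do not anticipate a genuine obstacle here — the statement is essentially a bookkeeping consequence of the multiplicativity of $\hom(-, G)$ over disjoint unions in the first argument (Lemma~\ref{lem:connected_morphisms}(1)). The one point requiring a moment's care is to make sure the decomposition $B = B_1 + \ldots + B_k$ used to apply Lemma~\ref{lem:connected_morphisms}(1) has connected summands: this is exactly where the hypothesis that $\Ff$ is a class of \emph{connected} graphs is used, since the $B_i$ are then automatically the connected components of $B$ (or at worst can be further refined into them, but refinement is unnecessary — Lemma~\ref{lem:connected_morphisms}(1) already applies verbatim). If $\Ff$ contained disconnected graphs, the product formula would still hold but one would need the $B_i$ to be connected, which could fail, so the hypothesis is genuinely needed for this clean argument. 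Thus the whole proof is short, and I would present it in two short paragraphs: one sentence for the easy direction, and the computation above for the other.
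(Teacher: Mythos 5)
Your proof is correct and follows essentially the same route as the paper: the easy direction via $\Ff\subset\Ff^+$, and the reverse direction by decomposing an arbitrary $B\in\Ff^+$ into its (connected, hence in $\Ff$) summands and applying the multiplicativity of $\hom(\cdot,G)$ from Lemma~\ref{lem:connected_morphisms}(1). Your remark that the summands are automatically the connected components of $B$ is exactly the observation the paper makes as well.
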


\begin{proof}
	It is clear that $\Ff^+$-isomorphism implies $\Ff$-isomorphism, let us show the converse. 
	Let $A \in \Ff^+$, we write $A = A_1 + \ldots + A_r$ as a sum of its connected components, notice that $A_i\in \Ff$ for all $1\leq i\leq r$. 
	Let $G$ and $H$ be two $\Ff$-isomorphic graphs. 
	Using Lemma~\ref{lem:connected_morphisms}, we have:
	\[ \hom(A,G) = \prod_{i=1}^r \hom(A_i,G) = \prod_{i=1}^r \hom(A_i,H) = \hom(A,H).\]
	This shows that $G$ and $H$ are $\Ff^+$-isomorphic, as desired.
\end{proof}

The next theorem, together with Theorem~\ref{thm:connected_F_iso}, form the main motivation of this appendix. 
Though the statement is new, the proof is the same as Lovász' proof (see II, end of paragraph 3 of\cite{LovOpStruct}).

\begin{theorem}\label{thm:F_iso}
	Let $\Ff$ be a class of graphs stable by taking subgraphs. 
	If $G$ and $H\in \Ff$ are $\Ff$-isomorphic, then they are isomorphic.
\end{theorem}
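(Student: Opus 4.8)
The plan is to use Lovász' formula for graph classes, namely Corollary~\ref{coro:lovasz_formula_graph_classes}, together with the basic lemmas on monomorphisms and quotients, to run a ``triangularity'' argument. Since $G,H\in\Ff$ and $\Ff$ is closed under subgraphs, I can expand $\hom(A,G)$ and $\hom(A,H)$ over $\Ff$ for every $A\in\Ff$:
\[ \hom(A,G) = \sum_{B\in\Ff} \frac{\quo(A,B)}{\aut(B)}\mon(B,G), \qquad \hom(A,H) = \sum_{B\in\Ff} \frac{\quo(A,B)}{\aut(B)}\mon(B,H). \]
First I would observe that $\Ff$-isomorphism gives $\hom(A,G)=\hom(A,H)$ for all $A\in\Ff$, so setting $\delta(B) = \mon(B,G)-\mon(B,H)$ we get $\sum_{B\in\Ff}\frac{\quo(A,B)}{\aut(B)}\delta(B)=0$ for every $A\in\Ff$. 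The goal is to conclude $\delta\equiv 0$, in particular $\mon(G,G)=\mon(G,H)$, so that there is a monomorphism $G\to H$; by symmetry there is also a monomorphism $H\to G$, and then Lemma~\ref{lem:mono_mono} finishes the proof.

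The key point is a triangularity/ordering argument on the (finitely supported, for fixed $A$ — but here we need to be careful since $\Ff$ may be infinite) system. The right partial order is by number of vertices, refined by number of edges, or more robustly: say $B \preceq A$ if there is a quotient $A\twoheadrightarrow B$, equivalently $\quo(A,B)\neq 0$. I would first note $\quo(A,B)\neq 0$ forces $\abs{V(B)}\le\abs{V(A)}$ and $\abs{E(B)}\le\abs{E(A)}$, with equality in both exactly when the quotient is an isomorphism (by Lemma~\ref{lem:quotient_iso}, a quotient that is also a monomorphism — and an equality of vertex and edge counts makes a surjection injective — is an isomorphism); hence $\quo(A,A)=\aut(A)$ and $\quo(A,B)\neq 0$ with $B\not\simeq A$ implies $B$ is strictly smaller in the lexicographic order on $(\abs{V},\abs{E})$. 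Now I induct on this well-founded order restricted to the support of $\delta$: suppose $\delta(B)=0$ for all $B$ strictly smaller than some minimal $A$ in $\mathrm{supp}(\delta)$; plugging this $A$ into the relation $\sum_{B\in\Ff}\frac{\quo(A,B)}{\aut(B)}\delta(B)=0$, every term with $B\not\simeq A$ either vanishes because $\quo(A,B)=0$ or because $\delta(B)=0$ (as $B$ is then strictly smaller), leaving $\frac{\quo(A,A)}{\aut(A)}\delta(A) = \delta(A) = 0$, a contradiction. Hence $\delta\equiv 0$, and in particular $\mon(G,G)-\mon(G,H)=\delta(G)=0$; since $\mon(G,G)\ge\aut(G)\ge 1$ we get $\mon(G,H)>0$, i.e.\ there is a monomorphism $G\to H$, and symmetrically one $H\to G$.

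The main obstacle is the bookkeeping around infiniteness of $\Ff$: the sum $\sum_{B\in\Ff}$ is only finite because $\mon(B,G)=0$ unless $B$ is (isomorphic to) a subgraph of $G$, so in fact only finitely many $B$ contribute to $\hom(A,G)$ and $\hom(A,H)$, and I should phrase everything with a fixed system of representatives $\Ss\cap\Ff$ and restrict attention to $B$ with $\abs{V(B)}\le\max(\abs{V(G)},\abs{V(H)})$ — this makes $\mathrm{supp}(\delta)$ finite, so the minimal-element induction is unproblematic. A second small point to get right is that the coefficient $\quo(A,B)/\aut(B)$ is the correct one appearing in Corollary~\ref{coro:lovasz_formula_graph_classes} (with $A$ the ``small'' source and $B$ ranging over $\Ff$), and that $\quo(A,A)=\aut(A)$, which follows since any quotient $A\to A$ is a surjection of a finite vertex set onto itself, hence a bijection, hence (being also edge-surjective) an isomorphism. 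Once these are in place the argument is a routine unitriangular inversion, and Lemma~\ref{lem:mono_mono} converts the two-sided existence of monomorphisms into an isomorphism $G\simeq H$.
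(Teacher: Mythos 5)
Your proposal is correct and follows essentially the same route as the paper: you expand $\hom(A,\cdot)$ via Corollary~\ref{coro:lovasz_formula_graph_classes}, isolate the diagonal term using Lemma~\ref{lem:quotient_iso} and Lemma~\ref{lem:epi_mono} (so that $\quo(A,A)=\aut(A)$ and all other same-size terms vanish), deduce $\mon(A,G)=\mon(A,H)$ for all $A\in\Ff$, and finish with monomorphisms in both directions and Lemma~\ref{lem:mono_mono}. The only difference is presentational: the paper runs a direct induction on the number of vertices, whereas you take a minimal element of the support of $\delta$ under the lexicographic order on $(\abs{V},\abs{E})$ — the same unitriangular argument in different packaging.
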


\begin{proof}
	Let $G$ and $H\in \Ff$ be $\Ff$-isomorphic. 
	Let us prove by induction on $v(A) \geq 1$ that $\mon(A,G) = \mon(A,H)$ for all $A\in \Ff$. 
	Since $\Ff$ is stable by taking subgraphs, we have $K_1 \in \Ff$, and $\mon(K_1,G) = \hom(K_1,G) = \hom(K_1,H) = \mon(K_1,H)$, as desired. 
	Now let $A\in \Ff$ and assume that, for all $B\in \Ff$ with $v(B)<v(A)$, we have $\mon(B,G) = \mon(B,H)$. 
	Let $L\in \Ff$. 
	By Corollary~\ref{coro:lovasz_formula_graph_classes}, it comes:
	\begin{align*}
		\hom(A,L) &= \sum_{B\in \Ff} \frac{\quo(A,B)}{\aut(B)} \mon(B,L)\\
		&= \sum_{v(B) = v(A)} \frac{\quo(A,B)}{\aut(B)} \mon(B,L) + \sum_{v(B) < v(A)} \frac{\quo(A,B)}{\aut(B)}\mon(B,L)\\
		&= \mon(A,L) + \sum_{v(B) < v(A)} \frac{\quo(A,B)}{\aut(B)}\mon(B,L)
	\end{align*}
	since if $v(A) = v(B)$ and $\quo(A,B)\neq 0$ then $A=B$ by Lemma~\ref{lem:quotient_iso} and Lemma~\ref{lem:epi_mono}. 
	Applying the precedent computation to $G$ and $H$, and using the fact that $\hom(A,G) = \hom(A,H)$, we obtain:
	\[ \mon(A,G) - \mon(A,H) = \sum_{v(B)<v(A)} \frac{\quo(A,B)}{\aut(B)} \left( \mon(B,H) - \mon(B,G) \right)  = 0\]
	since $\mon(B,G) = \mon(B,H)$ by induction hypothesis. 
	This concludes the proof that $\mon(A,G) = \mon(A,H)$ for all $A\in \Ff$.

	Now, since $G\in \Ff$, we have that $1\leq \mon(G,G) = \mon(G,H)$, and similarly $\mon(H,G)\leq 1$. 
	Thus $G$ is isomorphic to $H$ by Lemma~\ref{lem:mono_mono}. 
	This finishes the proof.
\end{proof}

\begin{theorem}\label{thm:connected_F_iso}
	Let $\Ff$ be a family of connected graphs closed under taking connected subgraphs. 
	If $G$ and $H\in \Ff^+$ are $\Ff$-isomorphic, then they are isomorphic.
\end{theorem}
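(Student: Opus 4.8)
The plan is to deduce Theorem~\ref{thm:connected_F_iso} from Theorem~\ref{thm:F_iso} applied to the larger class $\Ff^+$. First I would record that, since $\Ff$ consists of connected graphs, Lemma~\ref{lem:connected_F_iso} tells us $G$ and $H$ are $\Ff^+$-isomorphic. As $G,H \in \Ff^+$ by hypothesis, Theorem~\ref{thm:F_iso} (with $\Ff^+$ in the role of its class) would finish the proof immediately, \emph{provided} $\Ff^+$ is stable under taking subgraphs; so the remaining point is that stability.

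For this, let $A = A_1 + \ldots + A_k \in \Ff^+$ with each $A_i \in \Ff$ connected, and let $B$ be a subgraph of $A$. Since a path in $B$ is in particular a path in $A$, each connected component $C$ of $B$ has all of its vertices inside a single $A_i$; because $E(A) = E(A_1) \sqcup \ldots \sqcup E(A_k)$, every edge of $C$ (which lies in $\Pp(V(A_i))$) must belong to $E(A_i)$, so $C$ is a connected subgraph of $A_i$, hence $C \in \Ff$ by hypothesis. Writing $B$ as the sum of its connected components exhibits $B$ as an element of $\Ff^+$. The only exceptional case is $B$ with no vertices: this is harmless, since one may freely adjoin the null graph to $\Ff^+$ without altering the $\Ff^+$-isomorphism relation (every graph receives exactly one morphism from the null graph, so the extra condition it imposes is automatic). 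Hence $\Ff^+$ is a class of graphs stable under subgraphs, containing both $G$ and $H$, with respect to which they are $\Ff^+$-isomorphic, and Theorem~\ref{thm:F_iso} yields $G \simeq H$.

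I do not expect a genuine obstacle: all the substance is already in Theorem~\ref{thm:F_iso} (the Lovász counting argument via Corollary~\ref{coro:lovasz_formula_graph_classes}) and in Lemma~\ref{lem:connected_F_iso}. The conceptual content is simply that the hypothesis ``$\Ff$ closed under \emph{connected} subgraphs'' is exactly what makes $\Ff^+$ closed under \emph{arbitrary} subgraphs, so the connected statement follows formally from the general one; the only mild care needed is the bookkeeping around the empty graph noted above.
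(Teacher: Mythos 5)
Your proposal is correct and follows essentially the same route as the paper: upgrade $\Ff$-isomorphism to $\Ff^+$-isomorphism via Lemma~\ref{lem:connected_F_iso}, check that $\Ff^+$ is closed under taking subgraphs by decomposing a subgraph into connected components each sitting inside a single summand, and then invoke Theorem~\ref{thm:F_iso}. Your extra remark on the empty subgraph is a harmless refinement the paper silently skips (empty graphs never occur as images of morphisms, so they play no role in the Lov\'asz counting argument).
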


\begin{proof}
	First, we claim that $\Ff^+$ is closed under taking subgraphs. 
	Indeed, let $G_1,\ldots,G_r\in \Ff$ and let $G = G_1 + \ldots + G_r$. 
	Now let $H\subset G$. 
	For every $1\leq i\leq r$, let $H_i = H[V(G_i)]$, and write $H_i = \sum_{j=1}^{l_i} H_{ij}$ the decomposition of $H_i$ into its connected components, where $l_i\geq 1$ is the number of its connected components. 
	Since $G_i \in \Ff$, we have that $H_{ij} \in \Ff$ for all $1\leq i\leq r$ and all $1\leq j\leq l_i$ by assumption. 
	Notice that, since there are no edges between $G_i$ and $G_j$ for $i\neq j$, we have that for $1\leq i\leq r$ the connected components of $H_i$ are connected components of $H$. 
	So $H = \sum_{i=1}^r \sum_{j=1}^{l_i} H_{ij}$, which shows that $H\in \Ff^+$. 
	Hence $\Ff^+$ is closed under taking subgraphs.

	Now we claim that $H = \sum_{i=1}^r H_i$: indeed, since there are no edges between $G_i$ and $G_j$ for $i\neq j$, and since $H\subset G$, there are no edges in $H$ between vertices in $H_i$ and vertices in $H_j$. 
	Now for all $1\leq i\leq r$ we have that $H_i$ is a subgraph of $G_i$, so by assumption $H_i \in \Ff$. 
	This shows that $H\in \Ff^+$.

	Now let $G$ and $H \in \Ff^+$ be two $\Ff$-isomorphic graphs. 
	By Lemma~\ref{lem:connected_F_iso}, they are $\Ff^+$-isomorphic. 
	Since by what precedes we have that $\Ff^+$ is closed under taking subgraphs, we can apply Theorem~\ref{thm:F_iso}. 
	So $G$ and $H$ are isomorphic, as desired.
\end{proof}

We can now obtain the following.

\begin{theorem}\label{thm:tree_planar_iso}
	Two tree-isomorphic forests are isomorphic, and two planar-isomorphic planar graphs are isomorphic.
\end{theorem}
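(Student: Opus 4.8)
The plan is to deduce Theorem~\ref{thm:tree_planar_iso} directly from Theorem~\ref{thm:connected_F_iso} by checking that the two relevant classes of connected graphs satisfy its hypotheses. For the first statement, I take $\Ff = \Trees$, the class of all finite trees. Trees are connected by definition, so I only need to verify that $\Trees$ is closed under taking connected subgraphs: if $T$ is a tree and $H \subset T$ is a connected subgraph, then $H$ is connected and acyclic (any cycle in $H$ would be a cycle in $T$), hence $H$ is a tree. Moreover $\Trees^+$ is exactly the class of forests, since a forest is precisely a disjoint union of trees and conversely. Thus two tree-isomorphic forests are $\Trees$-isomorphic graphs in $\Trees^+$, and Theorem~\ref{thm:connected_F_iso} applies to give that they are isomorphic.

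For the second statement, I take $\Ff$ to be the class of connected planar graphs. Again these are connected by construction, so the only thing to check is closure under connected subgraphs: any subgraph of a planar graph is planar (a planar embedding of the ambient graph restricts to a planar embedding of the subgraph), so in particular a connected subgraph of a connected planar graph is a connected planar graph. Then $\Ff^+$ is the class of all planar graphs: a disjoint union of connected planar graphs is planar (embed the pieces in disjoint disks), and conversely every planar graph is the disjoint union of its connected components, each of which is connected and planar. Since a planar-isomorphism (i.e. having the same number of homomorphisms from every planar graph) in particular gives the same homomorphism counts from every \emph{connected} planar graph, two planar-isomorphic planar graphs are $\Ff$-isomorphic elements of $\Ff^+$, and Theorem~\ref{thm:connected_F_iso} gives that they are isomorphic.

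There is essentially no hard step here: the theorem is a straightforward specialization of the already-established Theorem~\ref{thm:connected_F_iso}, and the only content is the elementary verification that $\Trees$ and the connected planar graphs are each closed under connected subgraphs and have the expected $(\cdot)^+$-closures. The one subtlety worth stating carefully is the distinction between $\Ff$-isomorphism with respect to connected planar graphs versus all planar graphs; Lemma~\ref{lem:connected_F_iso} shows these coincide for classes of connected graphs, so no information is lost in restricting to connected witnesses. I would present the proof as two short paragraphs, one per class, each ending with the invocation of Theorem~\ref{thm:connected_F_iso}.
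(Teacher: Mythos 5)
Your proof is correct. For the forest half it is exactly the paper's argument: take $\Ff = \Trees$, note trees are connected and closed under connected subgraphs, observe $\Trees^+$ is the class of forests, and invoke Theorem~\ref{thm:connected_F_iso}. For the planar half you take a slightly different route than the paper: you apply Theorem~\ref{thm:connected_F_iso} with $\Ff$ the class of \emph{connected} planar graphs, verifying that $\Ff^+$ is all planar graphs and that planar-isomorphism restricts to $\Ff$-isomorphism, whereas the paper simply notes that the class of all planar graphs is closed under taking arbitrary subgraphs and applies Theorem~\ref{thm:F_iso} directly. Both are valid; the paper's choice is more economical because it needs no claim about $\Ff^+$ or about restricting the witness class, while yours is a harmless detour (Theorem~\ref{thm:connected_F_iso} is itself proved by reducing to Theorem~\ref{thm:F_iso}), and your remark that restricting to connected witnesses loses no information is correct but, as you essentially note, only the trivial direction (connected planar graphs form a subclass of planar graphs) is actually used.
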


\begin{proof}
	Trees are connected and closed under taking connected subgraphs, so we can apply Theorem~\ref{thm:connected_F_iso} for the first part, and planar graphs are closed under taking subgraphs, so we can apply Theorem~\ref{thm:F_iso} for the second.
\end{proof}

Using the theorem of Mančinska and Roberson~\cite{ManRob}, we have the following consequence.

\begin{corollary}\label{coro:qi_planar}
	Two quantum isomorphic planar graphs are isomorphic.
\end{corollary}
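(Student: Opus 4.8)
The plan is to deduce Corollary~\ref{coro:qi_planar} from Theorem~\ref{thm:tree_planar_iso} together with the characterisation of quantum isomorphism obtained by Mančinska and Roberson~\cite{ManRob}. Recall that their main theorem states that two graphs $G$ and $H$ are quantum isomorphic if and only if they are \emph{planar-isomorphic} in the sense used in this appendix, i.e. $\hom(A,G) = \hom(A,H)$ for every planar graph $A$. (One should be slightly careful here: the notion of ``planar graph'' in~\cite{ManRob} refers to the same class of finite simple graphs embeddable in the plane, and the homomorphism-counting statement is exactly the one phrased as $\Ff$-isomorphism in this appendix for $\Ff$ the class of planar graphs.)

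Given this, the argument is short. Let $G$ and $H$ be two planar graphs with $G \qi H$. By the Mančinska–Roberson theorem, $G$ and $H$ are planar-isomorphic. Since $G$ and $H$ are both planar, the hypotheses of the second half of Theorem~\ref{thm:tree_planar_iso} are met, and we conclude that $G$ is isomorphic to $H$. This is essentially a one-line deduction once the two inputs are in place.

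The only real work, and the main obstacle, is making sure the two notions of ``planar-isomorphism'' line up: the $\Ff$-isomorphism of this appendix is phrased in terms of $\hom(A,\cdot)$ for $A$ ranging over planar graphs, whereas~\cite{ManRob} may state the homomorphism-count condition with a specific (but equivalent) normalisation or with planar graphs on one side versus the other. Since $\hom$ here counts unlabelled graph morphisms and the class of planar graphs is closed under subgraphs (indeed under taking minors), Corollary~\ref{coro:lovasz_formula_graph_classes} and the machinery of Theorem~\ref{thm:F_iso} apply without modification, so once the dictionary between the two statements is fixed there is nothing further to check. I would therefore structure the proof as: (i) cite~\cite{ManRob} to get planar-isomorphism from quantum isomorphism; (ii) invoke Theorem~\ref{thm:tree_planar_iso}; (iii) conclude. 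The proof as written in the excerpt does exactly this, and I see no reason to depart from it.

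\begin{proof}
	By the main result of Mančinska and Roberson~\cite{ManRob}, two quantum isomorphic graphs $G$ and $H$ satisfy $\hom(A,G) = \hom(A,H)$ for every planar graph $A$; that is, they are planar-isomorphic in the sense of this appendix. If moreover $G$ and $H$ are themselves planar, then by Theorem~\ref{thm:tree_planar_iso} they are isomorphic. This concludes the proof.
\end{proof}
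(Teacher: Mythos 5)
Your proof is correct and follows exactly the paper's own argument: invoke the Mančinska--Roberson theorem to pass from quantum isomorphism to planar-isomorphism, then apply Theorem~\ref{thm:tree_planar_iso} to the two planar graphs. Nothing further is needed.
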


\begin{proof}
	By Theorem 7.16 of~\cite{ManRob}, quantum isomorphism is equal to planar-isomorphism, so the result follows from Theorem~\ref{thm:tree_planar_iso}.
\end{proof}

\section{Graphs on at most 5 vertices}\label{app:small_graphs}

In this section, we show in Theorem~\ref{thm:tractable_G5} that the class of all graphs on at most 5 vertices is tractable. 
These results are used in Section~\ref{sec:tractable}.

\begin{lemma}\label{lem:tree_cograph_G5}
	The graphs on at most 5 vertices which are not tree-cographs are the pan and its complement, the bull, and $C_5$.
\end{lemma}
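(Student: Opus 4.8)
The plan is to use the structural characterization of $\Ff$-cographs from Theorem~\ref{thm:decomposition_F_cographs} with $\Ff=\Trees$, together with a short finite case analysis organized by the number of edges. Recall that a graph is a tree-cograph exactly when it lies in $\co(\Trees)$, and that Theorem~\ref{thm:decomposition_F_cographs} says $G\in\co(\Trees)$ if and only if $G$ or $G^c$ is a tree, or $G$ (resp.\ $G^c$) is a disjoint union of at least two tree-cographs. First I would record the auxiliary fact that \emph{every graph on at most four vertices is a tree-cograph}. This goes by induction on the number of vertices: the base case $K_1$ is a tree; if $G$ or $G^c$ is disconnected, it is a disjoint union of graphs on strictly fewer vertices, which are tree-cographs by induction, so $G\in\co(\Trees)$; and the only connected graph on at most four vertices whose complement is also connected is $P_4$ (a connected $4$-vertex graph has at least three edges, so a connected complement is a spanning tree, hence $P_4$ or $K_{1,3}$, and $K_{1,3}^c=K_3+K_1$ is disconnected), which is a tree.

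Next I would treat a graph $G$ on exactly five vertices. If $G$ or $G^c$ is disconnected, the same argument as above (splitting off components, now on at most four vertices) gives $G\in\co(\Trees)$. So assume both $G$ and $G^c$ are connected; then $|E(G)|\geq 4$ and $|E(G)|=10-|E(G^c)|\leq 6$. If $|E(G)|=4$ then $G$ is a spanning tree and if $|E(G)|=6$ then $G^c$ is a spanning tree, so in both cases $G\in\co(\Trees)$. The only remaining possibility is $|E(G)|=5$, i.e.\ $G$ is connected unicyclic on five vertices; the five such graphs are $C_5$, the pan ($C_4$ with one pendant vertex), the complement of the pan (a triangle with a path of length two attached at one vertex), the bull, and the graph $G_0$ obtained from $C_3$ by attaching two pendant vertices at one of its vertices.

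Finally I would dispose of these five graphs. The graph $G_0$ has a vertex of degree four, so $G_0^c$ has an isolated vertex and is disconnected, whence $G_0$ is a tree-cograph by the disconnected case. For each of $C_5$, the pan, the complement of the pan, and the bull, neither the graph nor its complement is a tree (all four are unicyclic, and a direct edge count shows $C_5^c=C_5$, that the pan and the triangle-with-a-length-two-tail are mutually complementary, and that the bull is self-complementary); since all four graphs and their complements are connected and $\co(\Trees)$ is closed under complementation, Theorem~\ref{thm:decomposition_F_cographs} shows that none of them lies in $\co(\Trees)$. This proves that the non-tree-cographs on at most five vertices are precisely the pan, its complement, the bull, and $C_5$.

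The only real work — the main place an error could hide — is the bookkeeping in the unicyclic case: checking that the five listed graphs exhaust the connected unicyclic graphs on five vertices (a $C_3$, $C_4$, or $C_5$ with the remaining vertices hung on as a forest), and verifying the three complement identities; I would carry out the latter by listing non-edges explicitly, e.g.\ writing the pan as $C_4=v_1v_2v_3v_4$ with pendant $v_5\sim v_1$ and reading off that its complement is the triangle $v_2v_4v_5$ together with the tail $v_5-v_3-v_1$.
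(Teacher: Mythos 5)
Your proof is correct, and its skeleton matches the paper's: first show every graph on at most four vertices is a tree-cograph, then reduce the five-vertex case to graphs $G$ with both $G$ and $G^c$ connected, and finally invoke Theorem~\ref{thm:decomposition_F_cographs} (a connected tree-cograph with connected complement must be a tree or the complement of a tree) to exclude the pan, its complement, the bull, and $C_5$. Where you differ is in how the finite verification is done: the paper handles the at-most-four-vertex case via the $P_4$-free characterisation of cographs (Theorem~\ref{thm:cographs}) and then simply cites the catalogue of small graphs~\cite{graphclasses} to list the eight connected five-vertex graphs with connected complement ($P_5$, the chair, the pan, their complements, the bull, $C_5$), whereas you replace both citations with self-contained counting: an induction plus a spanning-tree argument for $n\leq 4$, and for $n=5$ an edge count $4\leq\abs{E(G)}\leq 6$ that reduces the problem to the five connected unicyclic graphs, which you enumerate by cycle length and then sort (the cricket has a dominating vertex, hence disconnected complement, hence is a tree-cograph; the other four and their complements are unicyclic, so never trees). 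This buys independence from an external list at the cost of a slightly longer, but entirely checkable, case analysis; your complement computations (banner versus tadpole, bull and $C_5$ self-complementary) are right. One cosmetic slip: the claim that $P_4$ is the \emph{only} connected graph on at most four vertices with connected complement overlooks $K_1$, but since $K_1$ is your induction base case (and a tree) this does not affect the argument.
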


\begin{proof}
	By Theorem~\ref{thm:cographs}, a graph which is not a cograph contains $P_4$ as an induced subgraph. 
	This shows that all graphs on at most 4 vertices are cographs except $P_4$, which is a tree. 
	So all graphs on at most 4 vertices are tree-cographs.

	Let $G$ be a graph on 5 vertices which is not a tree-cograph. 
	In particular, since all graphs on at most 4 vertices are tree-cographs by what precedes, $G$ is connected and its complement is connected. 
	Inspecting the list~\cite{graphclasses}, we see immediately that the only graphs on 5 vertices which are connected as well as their complement are the 4 mentioned graphs together with $P_5$ and its complement, and the chair and its complement (see Figure~\ref{fig:small_graphs}). 
	Since $P_5$ and the chair are trees, they are tree-cographs, so $G$ is either the pan or its complement, or the bull, or $C_5$. 
	Moreover, by Theorem~\ref{thm:decomposition_F_cographs}, every connected tree-cograph with a connected complement is either a tree or the complement of a tree, and none of these 4 graphs is one. 
	Hence they are not tree-cographs. 
	This concludes the proof.
\end{proof}

\begin{center}
	\begin{figure}[h]
		\includegraphics[width=12cm]{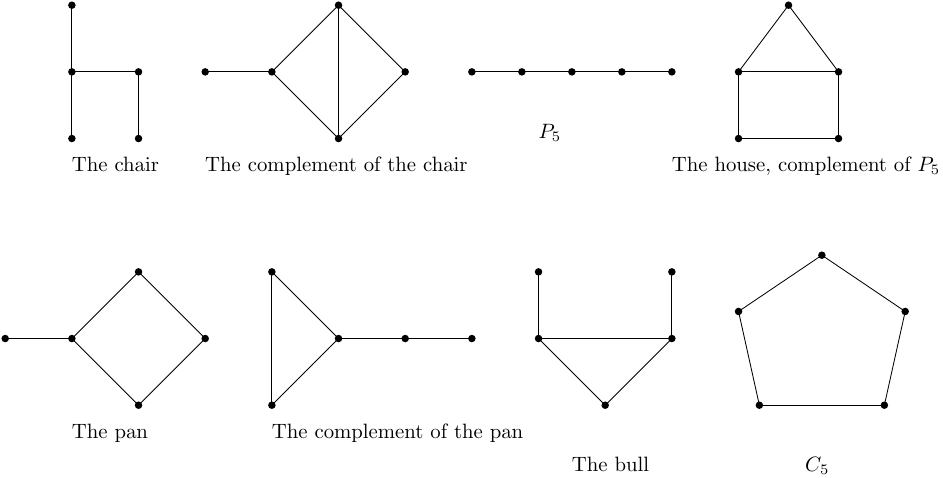}
		\caption{The connected graphs on 5 vertices with a connected complement. The bull and $C_5$ are self-complementary.}\label{fig:small_graphs}
	\end{figure}
\end{center}

\begin{theorem}\label{thm:tractable_G5}
	The class $\Gg_5$ is tractable.
\end{theorem}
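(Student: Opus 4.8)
The plan is to verify the three tractability axioms (QA), (QI), (SA) for $\Gg_5=\{G:\#V(G)\le 5\}$ one at a time, using that this class is finite up to isomorphism, stable under complements, and contains the connected components of its members.

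\textbf{(QA).} Every graph on at most $5$ vertices other than $K_1$ has a nontrivial automorphism, since the smallest asymmetric graph has $6$ vertices. Conversely, Remark~\ref{rem:qu_G_abelian} exhibits $C(\Aut(G))$ as a quotient of $C(\Qu(G))$, so $\Qu(G)$ trivial forces $\Aut(G)$ trivial. Hence for $G\in\Gg_5$ one has $\Qu(G)$ trivial $\iff G=K_1\iff\Aut(G)$ trivial, which is (QA).

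\textbf{(QI).} Given $G,H\in\Gg_5$ with $G\qi H$, Theorem~\ref{thm:qi_for_disconnected_graphs} reduces us to the connected case, as the connected components of $G$ and $H$ again lie in $\Gg_5$. For connected $G,H$ on $n\le 5$ vertices, note a quantum isomorphism $U$ is a unitary in $\Mm_n(X)$, so $U\Adj(G)^kU^*=\Adj(H)^k$ for all $k$; taking the $(x,x)$-entry, using $u_{xa}u_{xb}=\delta_{a,b}u_{xa}$, and summing over $x$ collapses to $\operatorname{tr}(\Adj(G)^k)\,1_X=\operatorname{tr}(\Adj(H)^k)\,1_X$, so $G$ and $H$ are cospectral. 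A direct inspection of the finitely many connected graphs on at most $5$ vertices — supplemented if needed by preservation of the degree sequence (Lemmas~\ref{lem:hall_mu} and~\ref{lem:degree}) and of edge count (Lemma~\ref{lem:qi_same_nb_edges}) — shows no two non-isomorphic ones are cospectral, so $G\cong H$. (Alternatively, since every graph on at most $5$ vertices except $K_5$ is planar and $\Gg_5\setminus\{K_5\}$ is closed under subgraphs, one may instead apply Theorem~\ref{thm:F_iso} together with the description of quantum isomorphism as planar-isomorphism, treating $K_5$ separately via connectedness and edge count.)

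\textbf{(SA).} The direction ``Schmidt's criterion $\Rightarrow$ quantum symmetry'' is Theorem~\ref{thm:qu_schmidt}, so only the converse is at issue. By Lemma~\ref{lem:tree_cograph_G5}, every graph on at most $5$ vertices is a tree-cograph except the pan, its complement, the bull, and $C_5$; for tree-cographs the equivalence is Theorem~\ref{thm:tractable_tree_cographs}. For the four exceptions one first checks they fail Schmidt's criterion (the bull, the pan, and the co-pan each have a single nontrivial automorphism; every nontrivial automorphism of $C_5$ has support of size $\ge 4$, so no two are disjoint), so it remains to prove none has quantum symmetry; the complement of the pan is covered by $\Qu(G)=\Qu(G^c)$ (Lemma~\ref{lem:mu_complement}). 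For the bull and the pan, let $U$ be a magic unitary adapted to the graph: Lemma~\ref{lem:degree} makes $U$ respect the degree partition, pinning $u_{xx}=1$ at the vertices of unique degree, and then tracking the pendant edge (bull) or the distance-$2$ structure through Lemma~\ref{lem:distance} (pan) forces $u_{xx}=1$ on all but a single pair of vertices, on which $U$ restricts to a $2\times2$ magic unitary; hence $\Oo(\Qu(G))$ is generated by one self-adjoint projection and is abelian, so $G$ has no quantum symmetry.

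The main obstacle is $C_5$: it is vertex-transitive, so Lemma~\ref{lem:degree} gives nothing, and $\Adj(C_5)^2+\Adj(C_5)-I=J$, so every power of $\Adj(C_5)$ lies in the commutative algebra $\operatorname{span}(I,J,\Adj(C_5))$ and imposes no relation on a magic unitary beyond $[U,\Adj(C_5)]=0$. One must therefore exploit the pentagon's circular structure directly — a transfer-matrix style analysis of the projections $u_{0k}$ via Lemma~\ref{lem:distance} — or else invoke the known computation $\Qu(C_5)=D_5$; either way $C_5$ has no quantum symmetry, which completes (SA) and the proof that $\Gg_5$ is tractable.
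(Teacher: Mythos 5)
Your proposal is correct in substance, but it follows a partly different route from the paper, and two points need attention. For (SA) you and the paper coincide in outline: both reduce via Lemma~\ref{lem:tree_cograph_G5} and Theorem~\ref{thm:tractable_tree_cographs} to the pan, its complement, the bull, and $C_5$, and both ultimately dispose of $C_5$ by invoking Banica's computation (the paper cites Lemma~3.5 of~\cite{banica2005quantum}); your proposed ``transfer-matrix'' analysis is never carried out, so the citation is what actually closes that case, exactly as in the paper. For the pan and the bull the paper argues through center preservation (Theorem~\ref{thm:qi_center}), while you use the degree and distance lemmas; both work, but your intermediate claim that $u_{xx}=1$ on ``all but a single pair'' of vertices is false for the bull, which has an automorphism moving four vertices. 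What the degree partition actually gives there is two $2\times 2$ blocks (on the two degree-$3$ vertices and on the two pendant vertices), and it is the pendant-edge commutation relation $(UA)_{pa}=(AU)_{pa}$ --- the mechanism you name --- that forces the two blocks to be equal, so $U$ is still built from a single projection and $\Oo(\Qu(G))$ is abelian; the conclusion stands, but state it this way.

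For (QI) your route is genuinely different from the paper's: the paper invokes superrigidity of tree-cographs (Corollary~\ref{coro:superrigid_tree_cographs}) and then separates the four exceptional graphs by comparing their centers via Theorem~\ref{thm:qi_center} ($C_5$, $K_3$, $P_3$, $K_2$), whereas your primary argument rests on the assertion that non-isomorphic connected graphs on at most $5$ vertices are never cospectral --- true, but a nontrivial finite inspection that you do not actually perform. Your parenthetical alternative (every graph on at most $5$ vertices except $K_5$ is planar, apply Corollary~\ref{coro:qi_planar}, and handle $K_5$ by vertex and edge count) is fully rigorous within the paper's toolkit and is arguably cleaner than both; I would promote it to the main argument. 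For (QA) your direct argument (every graph on $2$ to $5$ vertices has a nontrivial automorphism, plus the abelianisation quotient) is fine and more elementary than the paper's derivation of (QA) from (SA) via Lemma~\ref{lem:sa_qa}.
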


\begin{proof}
	We start with (SA). 
	Let $G\in \Gg_5$ and suppose that $G$ does not satisfy Schmidt's criterion. 
	If $G$ is a tree-cograph, then $G$ does not have quantum symmetry by Theorem~\ref{thm:tractable_tree_cographs}. 
	So we can assume that $G$ is not a tree-cograph. 
	Hence it is one of the 4 graphs given by Lemma~\ref{lem:tree_cograph_G5}. 
	But by Theorem~\ref{thm:qi_center} it is easy to see that neither the pan, nor its complement, nor the bull have quantum symmetry. 
	So it remains to show that $C_5$ does not have quantum symmetry. 
	But this was done by Banica in Lemma 3.5 of~\cite{banica2005quantum}. 
	Hence $\Gg_5$ satisfies~(SA).

	Since (SA) implies (QA) by Lemma~\ref{lem:sa_qa}, we have that $\Gg_5$ satisfies (QA) as well.

	Finally, let us show that $\Gg_5$ satisfies (QI). 
	Let $G$ and $H$ be two quantum isomorphic graphs on at most 5 vertices. 
	Since tree-cographs are superrigid by Corollary~\ref{coro:superrigid_tree_cographs}, we can assume that neither $G$ nor $H$ are tree-cographs. 
	Hence by Lemma~\ref{lem:tree_cograph_G5} they are both among the 4 graphs mentioned. 
	By Theorem~\ref{thm:qi_center}, the centers of $G$ and $H$ are quantum isomorphic. 
	Notice that $Z(C_5) = C_5$, $Z(\text{bull}) = K_3$, $Z(\text{pan}) = P_3$, and $Z(\text{pan}^c) = K_2$, which are two-by-two not quantum isomorphic (for instance because they do not have the same number of vertices and edges). 
	This immediately implies that $G=H$, as desired. 
	Hence $\Gg_5$ satisfies~(QI).

	This concludes the proof.
\end{proof}

\end{document}